% !TEX TS-program = pdflatex
% !TEX encoding = UTF-8 Unicode

% This is a simple template for a LaTeX document using the "article" class.
% See "book", "report", "letter" for other types of document.

\documentclass[11pt]{article} % use larger type; default would be 10pt

\usepackage[utf8]{inputenc} % set input encoding (not needed with XeLaTeX)
\usepackage{authblk}

%%% Examples of Article customizations
% These packages are optional, depending whether you want the features they provide.
% See the LaTeX Companion or other references for full information.

%%% PAGE DIMENSIONS
\usepackage{geometry} % to change the page dimensions
\geometry{a4paper} % or letterpaper (US) or a5paper or....
% \geometry{margins=2in} % for example, change the margins to 2 inches all round
% \geometry{landscape} % set up the page for landscape
%   read geometry.pdf for detailed page layout information

\usepackage{graphicx} % support the \includegraphics command and options

% \usepackage[parfill]{parskip} % Activate to begin paragraphs with an empty line rather than an indent

%%% PACKAGES
\usepackage{booktabs} % for much better looking tables
\usepackage{array} % for better arrays (eg matrices) in maths
\usepackage{paralist} % very flexible & customisable lists (eg. enumerate/itemize, etc.)
\usepackage{verbatim} % adds environment for commenting out blocks of text & for better verbatim
\usepackage{subfig} % make it possible to include more than one captioned figure/table in a single float
% These packages are all incorporated in the memoir class to one degree or another...
\usepackage{amsmath}
\usepackage{amssymb}
\usepackage{amsthm}
\usepackage{listings}
\usepackage{color}
\usepackage{mathtools}
\usepackage{amsfonts}
\usepackage{caption}
\usepackage{amscd}
\usepackage{url}
\usepackage{lscape}
\usepackage{multirow}
\usepackage{tikz-cd}

\mathtoolsset{showonlyrefs}
\newtheorem{theorem}{Theorem}[section]
\newtheorem{proposition}{Proposition}[section]
\newtheorem{lemma}{Lemma}[section]
\newtheorem{corollary}{Corollary}[section]

\theoremstyle{definition}
\newtheorem{definition}{Definition}[section]
\newtheorem{example}{Example}[section]
\newtheorem{remark}{Remark}[section]

%%% HEADERS & FOOTERS
\usepackage{fancyhdr} % This should be set AFTER setting up the page geometry
\pagestyle{fancy} % options: empty , plain , fancy
 % customise the layout...
\lhead{}\chead{}\rhead{}
\lfoot{}\cfoot{\thepage}\rfoot{}

%%% SECTION TITLE APPEARANCE
\usepackage{sectsty}
\allsectionsfont{\sffamily\mdseries\upshape} % (See the fntguide.pdf for font help)
% (This matches ConTeXt defaults)

%%% ToC (table of contents) APPEARANCE
\usepackage[nottoc,notlof,notlot]{tocbibind} % Put the bibliography in the ToC
\usepackage[titles,subfigure]{tocloft} % Alter the style of the Table of Contents

 % No bold!

%%% END Article customizations

\newcommand{\V}[1]{\vspace{#1}}

\newcommand{\R}{\mathbb{R}}

\newcommand{\Pa}{\partial}

\newcommand{\Ker}{\operatorname{Ker}}

\renewcommand{\Im}{\operatorname{Im}}

\newcommand{\rank}{\operatorname{rank}}
\newcommand{\corank}{\operatorname{corank}}

\newcommand{\GL}{\operatorname{GL}}

%%% The "real" document content comes below...

\title{Constraint Qualification for Generic Parameter Families of Constraints in Optimization}
\author[1]{Naoki Hamada}
\author[2]{Kenta Hayano}
\author[3]{Hiroshi Teramoto}
\affil[1]{KLab Inc. Roppongi Hills Mori Tower 6-10-1 Roppongi, Minato-ku, Tokyo, 106-6122, Japan}
\affil[2]{Department of Mathematics, Faculty of Science and Technology, Keio University, Yokohama,223-8522}
\affil[3]{Department of Mathematics, Faculty of Engineering Science, Kansai University, 3-3-35 Yamate-cho, Suita-shi, Osaka, Japan}
\date{} % Activate to display a given date or no date (if empty),
         % otherwise the current date is printed 

\numberwithin{equation}{section}

\setlength{\voffset}{-30pt}
\setlength{\textheight}{651pt}
\setlength{\hoffset}{-10pt}
\setlength{\textwidth}{438pt}

\begin{document}
\maketitle
\begin{abstract}

Constraint qualifications (CQs) are central to the local analysis of constrained optimization.  
In this paper, we completely determine the validity of the four classical CQs---LICQ, MFCQ, ACQ, and GCQ---for constraint map-germs that arise in generic four-parameter families.  
Our approach begins by proving that all four CQs are invariant under the action of the group $\mathcal{K}[G]$ and under the operation of reduction.  
As a consequence, the verification of CQ-validity for a generic constraint reduces to checking CQ-validity on the $\mathcal{K}[G]$-normal forms of fully reduced map-germs.  
Such normal forms have been classified in our recent work.  
In the present paper, we verify which CQs hold in each germ appearing in the classification tables from that work.  
This analysis provides a complete picture of the generic landscape of the four classical CQs.  
Most notably, we find that there exist numerous generic map-germs for which GCQ holds while all stronger CQs fail, showing that the gap between GCQ and the other qualifications is not an exceptional phenomenon but arises generically.  

\end{abstract}

\tableofcontents

\section{Introduction}

A constrained optimization problem asks for the minimization of objective functions subject to a collection of equality and inequality constraints. 
Formally, one seeks minimizers (or more generally, ``Pareto solutions'', for its definition, see e.g.,~\cite{Miettinen_book}) of functions $f_1(x),\ldots, f_p(x)$ under the conditions $g_1(x),\ldots, g_q(x) \leq 0$ and $h_1(x)=\cdots =h_r(x)=0$.
A fundamental tool for characterizing solutions is the Karush–Kuhn–Tucker (KKT) condition, which requires the existence of Lagrange multipliers such that first-order stationarity (the gradients of the objective and active constraints are balanced) and complementary slackness (inactive constraints have zero corresponding multipliers) hold \cite{BertsekasNP,Bazaraa2006,Nocedal2006,Luenberger2008,Miettinen_book}.
In unconstrained optimization, this reduces to the familiar first-order condition $\nabla f(x)=0$, which every local minimizer satisfies.
In constrained problems, by contrast, the existence of multipliers does not automatically follow from local minimality. 
This fact is precisely what motivates constraint qualifications (CQs): they are assumptions placed only on the constraint system, ensuring the existence of multipliers and thus the validity of the KKT condition at all local minimizers. 

%A fundamental tool for characterizing solutions is the Karush–Kuhn–Tucker (KKT) condition, which asserts that, under appropriate assumptions, a local minimizer admits Lagrange multipliers such that the first-order stationarity and complementary slackness relations hold. {\color{red} \cite{BertsekasNP,Bazaraa2006,Nocedal2006,Luenberger2008,Miettinen_book}}
%In essence, the KKT condition relies only on first-order information of $f_i,g_j,h_k$, and provides a necessary optimality condition whenever multipliers exist.
%In the unconstrained setting, the KKT condition reduces to the classical first-order optimality condition and always applies. With constraints, however, the existence of suitable multipliers is no longer automatic. This is precisely the role of constraint qualifications (CQs): they are conditions imposed on the constraint system alone (independently of the objective function) that guarantee the validity of the KKT condition at every local minimizer.

In this paper we focus on four classical CQs that have been most widely studied in optimization theory and applied practice (see Definitions~\ref{def:LICQ}--\ref{def:GCQ} for the precise definitions of them):

\begin{enumerate}

\item \textbf{Linear Independence Constraint Qualification (LICQ) \cite{Hestenes1966}:} requires linear independence of the gradients of all active constraints.
It implies uniqueness of the associated Lagrange multipliers.
\item \textbf{Mangasarian–Fromovitz Constraint Qualification (MFCQ) \cite{Mangasarian1967}:} weaker than LICQ, guaranteeing nonemptiness and boundedness of the multiplier set \cite{Gauvin1977}.
It also underlies stability results of feasible sets \cite{Guddat1986}.

\item \textbf{Abadie Constraint Qualification (ACQ) \cite{Abadie1967}:} equivalent to metric regularity in the differentiable convex inequality setting \cite{doi:10.1137/S1052623495287927}.
\item \textbf{Guignard Constraint Qualification (GCQ) \cite{Guignard1969}:} the weakest among the four; it is necessary and sufficient for the KKT condition to hold at local minimizers of any objection functions \cite{01bf7f42-b4ce-3a05-b3a1-87856b488f37}.
\end{enumerate}

\noindent
These conditions form a strict hierarchy \cite{Peterson1973}:
\begin{equation}
  \mathrm{LICQ}\ \Longrightarrow\ \mathrm{MFCQ}\ \Longrightarrow\ \mathrm{ACQ}\ \Longrightarrow\ \mathrm{GCQ}. \label{eq:strict_hierarchy}
\end{equation}
\noindent
In practice, LICQ and MFCQ are frequently invoked because they can be checked by simple rank conditions or directional arguments. 
By contrast, ACQ and GCQ, despite their generality, require analyzing tangent cones and are therefore much harder to verify directly.
It is also known that each implication above is \emph{strict}: none of the reverse implications hold in general.
Concrete counterexamples witnessing the failure of the converses (MFCQ $\nRightarrow$ LICQ, ACQ $\nRightarrow$ MFCQ, GCQ $\nRightarrow$ ACQ) are well documented in the literature (see, e.g., \cite{Peterson1973}).
However, the available constructions tend to be rather bespoke and leave open whether such separations occur \emph{generically}—that is, in typical parameterized constraint families encountered in practice. 
In Appendix~\ref{sec:appendix}, we investigate genericity of some of the counterexamples in the literature and show that some of them are far from generic.
One of our aims is to address this gap by identifying generic constraints in which a stronger CQ fails while a weaker one still holds.
As shown by our later results, such gaps indeed arise generically; see Theorem~\ref{thm:generic_CQ_classification} and Tables~\ref{tab:eq-only}, \ref{tab:ineq-only}, and \ref{tab:mixed} for details.

The aim of this work is to determine, for each point of a feasible set arising from a generic parameter family of constraints, whether or not a given constraint qualification (CQ) holds.
CQ-validity is a local property: whether a CQ holds at a point depends only on the germ of the constraint at that point.
In \cite{HamadaHayanoTeramoto2025_1}, we classify the \emph{full reductions} of constraint map-germs appearing in generic four parameter families up to $\mathcal{K}[G]$-equivalence (see Section~\ref{sec:prelim} for the definitions of (full) reduction and $\mathcal{K}[G]$-equivalence).
\emph{As we establish below, the four CQs under study are invariant under $\mathcal{K}[G]$-equivalence and reduction; hence the classification of \cite{HamadaHayanoTeramoto2025_1} is the natural vehicle for deciding CQ-validity.}
We first prove that reduction commutes with $\mathcal{K}[G]$-equivalence (Lemmas~\ref{lem:K[G]-equiv if reduction is so} and \ref{lem:K[G]-equiv if original is so}) and that LICQ, MFCQ, ACQ, and GCQ are invariant under $\mathcal{K}[G]$-equivalence and reduction (Theorems~\ref{thm:invariance four CQs under K[G]-eq} and \ref{thm:invariance four CQs under reduction}).
Consequently, it suffices to check, for each reduced normal-form listed in \cite{HamadaHayanoTeramoto2025_1}, which of the four CQs hold.
We carry out this verification and compile complete validity tables across all generic classes (Theorem~\ref{thm:generic_CQ_classification}). 
In particular, we find that GCQ is satisfied
in a broader range of the generic classes than any of the other stronger CQs (LICQ,MFCQ, ACQ), underscoring its role as an essential fallback condition that guarantees the existence of Lagrange multipliers when those stronger qualifications fail to hold. These results collectively provide a generic picture of how and when each CQ is satisfied.

Beyond the immediate theoretical contributions, we anticipate our framework will support applications in the following ways:
\begin{itemize}
    \item \textbf{Generic examples for CQ research:} By exhibiting constraint systems that satisfy, say, MFCQ but violate LICQ, or that satisfy ACQ but not MFCQ, etc., one can confirm the well-known strictness relations or investigate potential new CQs lying in the gaps. The genericity of these families underscores that these degeneracies are neither rare nor pathological in practical contexts.
    
\item \textbf{Benchmarking and algorithmic impact:} Optimization algorithms often assume or exploit certain CQs. Having a collection of representative examples---classified by which CQs hold---can facilitate more rigorous testing of algorithmic reliability and performance. 
    
\item \textbf{Path to efficient recognition of ACQ and GCQ:} In principle, verifying whether ACQ or GCQ holds can be challenging because they require analysis of tangent cones. In upcoming work, we plan to design recognition algorithms that use the singularity classification of \cite{HamadaHayanoTeramoto2025_1} to decide systematically which class a given constraint system belongs to. Once that class is identified, our results immediately specify whether ACQ or GCQ is satisfied. This can significantly simplify the process of CQ checking in software for large-scale or complex problems.
\end{itemize}

We note certain limitations in this study. First, our results restrict to problems involving $C^\infty$ smooth equality and inequality constraints, leaving out non-smooth or more abstract constraint structures (such as cone constraints). Second, we focus exclusively on CQs that are invariant under the action of $\mathcal{K} \left[ G \right]$ studied in \cite{HamadaHayanoTeramoto2025_1}, thereby excluding conditions like Pseudonormality \cite{Bertsekas2002}, Slater’s CQ (specific to convex settings) \cite{Slater2014} or linearity conditions \cite{BertsekasNP,Bazaraa2006,Nocedal2006,Luenberger2008}. 
Third, we target CQs that appear in a \emph{generic} sense as determined by the equivalences in \cite{HamadaHayanoTeramoto2025_1}, leaving aside specialized constraints like the constant rank CQ (CRCQ). However, we remark that the condition CRCQ often implies MFCQ for suitably reformulated problems \cite{Lu2011}, so our omission of CRCQ does not substantially affect the generic analysis.

The rest of this paper is organized as follows. In Section~\ref{sec:prelim}, we briefly recall the necessary background on standard CQs and the main classification results from \cite{HamadaHayanoTeramoto2025_1}. Then, in Section~\ref{sec:CQ-definition_and_properties}, we introduce the mathematical definition for each CQ, and show that these are invariant under $\mathcal{K}[G]$-equivalence and reduction. 
In Section~\ref{sec:verification_of_CQ_for_each_class}, we verify, for each class, which of the standard CQs (LICQ, MFCQ, ACQ, GCQ) are satisfied and discuss the resulting hierarchy in detail.

\section{Preliminaries: constraints, $\mathcal{K}[G]$-equivalence, reduction, and classification of constraint map-germs}
\label{sec:prelim}

In this section, we recall the basic notions of constraint map-germs, feasible set-germs, and the group $\mathcal{K}[G]$, following \cite{HamadaHayanoTeramoto2025_1}. We then discuss the notion of reduction of a constraint map-germ and state Theorem 5.1 from \cite{HamadaHayanoTeramoto2025_1}, which classifies constraint map-germs appearing in generic parameter families of constraints with up to $4$-parameters.

The precise definition of constraint map-germ and feasible set is as follows. 
\begin{definition}[Constraint map-germ and feasible set]\label{def:constraint}
Let $g = (g_1,\dots,g_q)\colon (\mathbb{R}^n,0)\to \mathbb{R}^q$ and 
$h = (h_1,\dots,h_r)\colon (\mathbb{R}^n,0)\to(\mathbb{R}^r,0)$ 
be smooth map-germs at $0 \in \mathbb{R}^n$. We call $(g,h)$ a \emph{constraint map-germ} at $0$, where $g_i$ represent \emph{inequality} constraint functions and $h_j$ represent \emph{equality} constraint functions. 
The corresponding \emph{feasible set-germ} $M(g,h)$ is the germ at $0$ of the set
\[
M(g,h) \;=\; \{\, x \in \left( \mathbb{R}^n, 0 \right) \;\mid\; g_i(x)\,\le\,0 \ \ \text{for all}\ 1 \le i \le q, \ \ h_j(x)\,=\,0 \ \ \text{for all}\ 1 \le j \le r \}.
\]
If $g_i(0) > 0$ for some $i$, then $M(g,h)$ is empty. On the other hand, if $g_i(0) < 0$ for some $i$, then $g_i$ does not affect the feasible set-germ near $0$ and may be removed from the constraint list without changing $M(g,h)$.
\end{definition}

We next introduce 
\begin{math}
\mathcal{K} \left[ G \right]
\end{math}-equivalence. Following \cite{HamadaHayanoTeramoto2025_1}, let $\mathcal{K}$ denote Mather’s group \cite{Mather1968} and let $\mathcal{K}[G]$ be the subgroup of $\mathcal{K}$ consisting of those coordinate changes that preserve the inequality/equality structure of constraints. Concretely, a pair $(\Phi,\Psi)$ belongs to $\mathcal{K} \left[ G \right]$ if $\Phi\colon (\mathbb{R}^n,0)\to (\mathbb{R}^n,0)$ is a diffeomorphism-germ of the source, $\Psi\colon (\mathbb{R}^n,0)\to G$ is a smooth map-germ into a target group $G$ of block matrices of the form
    \[
    G \;=\; \left\{
    \begin{pmatrix}
      C & B \\
      O_{r,q} & A
    \end{pmatrix}
    \;\middle|\;
    C \in G_{gp}, \ B \in M_{q,r}(\mathbb{R}), \ A \in GL(r,\mathbb{R})
    \right\},
    \]
    where 
\begin{math}
G_{gp} = G_d \rtimes P_q
\end{math}, the semidirect product of 
\begin{math}
G_d
\end{math}
and the group of 
\begin{math}
q \times q
\end{math}
permutation matrices 
\begin{math}
P_q
\end{math}, 
\begin{math}
O_{r,q}
\end{math}
is the $r \times q$ zero matrix, 
\begin{math}
M_{q,r} \left( \mathbb{R} \right)
\end{math}
is the set of 
\begin{math}
q \times r
\end{math}
matrices, and 
\begin{math}
GL \left( r, \mathbb{R} \right)
\end{math}
is the set of 
\begin{math}
r \times r
\end{math}
regular matrices. For a constraint map-germ $(g,h)$, the action of $(\Phi,\Psi)\in \mathcal{K}[G]$ is given by
\[
(\Phi,\Psi)\cdot (g(x),h(x))
\;=\;
\Bigl(C(x)\,g(\Phi^{-1}(x)) + B(x)\,h(\Phi^{-1}(x)) \,,\, A(x)\,h(\Phi^{-1}(x))\Bigr),
\]
where $C(x), B(x), A(x)$ denote the respective block components of $\Psi(x)$. 
Two constraint map-germs $(g,h)$ and $(g',h')$ are \emph{$\mathcal{K}[G]$-equivalent} if they lie in the same $\mathcal{K}[G]$-orbit, i.e.,~there exists $(\Phi,\Psi)\in \mathcal{K}[G]$ such that 
\[
(g,h) \;=\; (\Phi,\Psi)\cdot (g',h').
\]
By construction, $\mathcal{K}[G]$-equivalence preserves the feasible set-germs: if $(g,h)$ is $\mathcal{K}[G]$-equivalent to $(g',h')$, then $M(g,h)$ is diffeomorphic to $M(g',h')$ via the map $\Phi$.

%\subsection{Reduction of constraint map-germs}
%\label{subsec:reduction}

The concept of \emph{reduction} deals with simplifying a constraint map-germ by removing inactive (strictly satisfied) inequalities and by restricting it to the submanifold defined by certain equality constraints.
%
%The concept of \emph{reduction} deals with simplifying a constraint map-germ by removing inactive (strictly satisfied) inequalities and redundant equalities, while restricting to the submanifold cut out by certain active constraints. 
Again, we summarize the definition from \cite{HamadaHayanoTeramoto2025_1}:

\begin{definition}[Reduction]\label{def:reduction}
Let $(g,h)\colon (\mathbb{R}^n,0)\to(\mathbb{R}^q \times \mathbb{R}^r,0)$ be a constraint map-germ with nonempty feasible set $M(g,h)$,
\[
(k) \;=\; (k_1,\dots,k_{q-s}) \;\subset\; \{1,\ldots,q\}
\]
be a subset of indices corresponding to $q-s$ inequality components $g_{k_i}(0)$ that are all $<0$ (i.e.,~inactive), and
\[
(i) \;=\; (i_1,\dots,i_{r-\ell}) \;\subset\; \{1,\ldots,r\}
\]
be a subset for which $d h_{i_1}(0),\ldots, d h_{i_r-\ell}(0)$ are linearly independent. 
Let $\iota_{(i)}\colon (\mathbb{R}^{\,n-r+\ell},0)\to (\mathbb{R}^n,0)$ be an immersion-germ to the submanifold $(h_{i_1},\ldots, h_{i_{r-\ell}})^{-1}(0)$ near $0$. Then the \emph{reduction of $(g,h)$} relative to $(k)$ and $(i)$ is given by the map-germ
\[
(g,h)_{\iota_{(i)},(k)}:= (g_{\iota_{(i)},(k)},h_{\iota_{(i)}}) := \left(g_1\circ\iota_{(i)},\overset{\hat{k}}{\ldots},g_q\circ \iota_{(i)},h_1\circ\iota_{(i)},\overset{\hat{i}}{\ldots},h_r\circ\iota_{(i)}\right), 
\]
The reduction $(g_{\iota_{(i)},(k)},h_{\iota_{(i)}})$ of $(g,h)$ is called a \emph{full reduction} if $g_{\iota_{(i)},(k)}(0)=0$ and the rank of $dh_{\iota_{(i)},0}$ is zero.

Specifically, if the resulting constraint satisfies that its all the inequality constraints are active and the rank of $dh_{\iota_{(i)},0}$ is zero, we call the reduction full reduction. 

\end{definition}

\noindent
Note that the feasible set-germ of $(g,h)$ is preserved by this operation (up to diffeomorphism).

\subsection{Relation between reduction and $\mathcal{K} \left[ G \right]$-equivalence} \label{sec:reduction_and_K[G]}
In this section, we clarify how the reduction of a constraint map‑germ interacts with the group‑action by $\mathcal{K} \left[ G \right]$. First, we show that if two constraint map‑germs have the same numbers of active inequality and equality constraints, and their respective reductions are $\mathcal{K} \left[ G \right]$-equivalent, then the original (unreduced) germs are also $\mathcal{K} \left[ G \right]$-equivalent (Lemma~\ref{lem:K[G]-equiv if reduction is so}). 
Second, we show that conversely when both germs are finitely $\mathcal{K} \left[ G \right]$-determined, $\mathcal{K} \left[ G \right]$-equivalence of the unreduced germs forces $\mathcal{K} \left[ G \right]$-equivalence of any pair of reductions that retain the same numbers of constraints (Lemma~\ref{lem:K[G]-equiv if original is so}). These results imply that one may freely work with fully reduced representatives when classifying constraint map‑germs under $\mathcal{K} \left[ G \right]$ without losing information. Any $\mathcal{K} \left[ G \right]$-orbit is uniquely determined by, and can be recovered from, the orbit of its reduction.

\begin{lemma} \label{lem:K[G]-equiv if reduction is so}
Let 
\begin{math}
\left( g, h \right)
\end{math}
and 
\begin{math}
\left( g', h' \right)
\end{math}
be two constraint map-germs with the equal number of inequality and equality constraints. 
Then, 
\begin{math}
\left( g, h \right)
\end{math}
and 
\begin{math}
\left( g', h' \right)
\end{math}
are
\begin{math}
\mathcal{K} \left[ G \right]
\end{math}-equivalent if their reductions 
\begin{math}
\left( g, h \right)_{\iota_{\left( i \right)}, \left( k \right)}
\end{math}
and 
\begin{math}
\left( g', h' \right)_{\iota_{\left( i' \right)}, \left( k' \right)}
\end{math}
are 
\begin{math}
\mathcal{K} \left[ G \right]
\end{math}-equivalent.
\end{lemma}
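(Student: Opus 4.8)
The plan is to show that each constraint map-germ is $\mathcal{K}[G]$-equivalent to a \emph{canonical extension} of its reduction, and that this extension operation respects $\mathcal{K}[G]$-equivalence; the lemma then follows by transitivity. Write $(g,h)\colon(\R^n,0)\to\R^q\times\R^r$ and $(g',h')\colon(\R^n,0)\to\R^q\times\R^r$ (the totals $q,r$ agree by hypothesis). Since $\mathcal{K}[G]$-equivalence preserves the number of inequality and equality components, the equivalence of the reductions forces the retained counts $s$ (inequalities) and $\ell$ (equalities) to coincide for the two reductions $\bar G:=(g,h)_{\iota_{(i)},(k)}$ and $\bar G':=(g',h')_{\iota_{(i')},(k')}$; hence the deleted counts $q-s$ and $r-\ell$ agree as well. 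First I would use the submersion theorem: because $dh_{i_1}(0),\dots,dh_{i_{r-\ell}}(0)$ are linearly independent, I choose source coordinates $(x',y)\in\R^{r-\ell}\times\R^{m}$, $m=n-r+\ell$, in which $h_{i_j}=x_j$. Then $(h_{i_1},\dots,h_{i_{r-\ell}})^{-1}(0)$ is the coordinate subspace $\{x'=0\}$, the immersion $\iota_{(i)}$ may be taken as $y\mapsto(0,y)$, and $\bar G(y)$ consists exactly of the retained components of $g(0,y)$ and $h(0,y)$.

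Next I would straighten $(g,h)$ against $\bar G$ using the three block components of $\mathcal{K}[G]$ in turn. By Hadamard's lemma the retained equalities and all inequalities split smoothly as $h_j(x',y)=\bar h_j(y)+\sum_k x_k\,\theta_{jk}(x',y)$ and $g_i(x',y)=\bar g_i(y)+\sum_k x_k\,\chi_{ik}(x',y)$. Applying the equality block $A=\begin{pmatrix}I&0\\-\Theta&I\end{pmatrix}\in\GL(r,\R)$ (with $\Theta=(\theta_{jk})$) converts the equality part to the split form $(x',\bar h(y))$; next the off-diagonal block $B=\begin{pmatrix}-X & 0\end{pmatrix}$, with $X=(\chi_{ik})$, subtracts the $x'$-linear tails and turns every inequality into $\bar g_i(y)$; finally a positive diagonal $C\in G_{gp}$ that is the identity on retained components and scales each deleted inactive inequality $\bar g_k(y)<0$ by the strictly positive germ $-1/\bar g_k(y)$, composed with a permutation ordering deleted components last, normalizes the deleted inequalities to the constant $-1$. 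The composite lies in $\mathcal{K}[G]$, so $(g,h)$ is $\mathcal{K}[G]$-equivalent to the canonical extension
\[
E(\bar G):=\bigl(\bar g(y),\underbrace{-1,\dots,-1}_{q-s},\;x_1,\dots,x_{r-\ell},\;\bar h(y)\bigr),
\]
which depends only on $\bar G$ and the integers $q-s,\,r-\ell$. The identical construction gives $(g',h')\sim_{\mathcal{K}[G]}E(\bar G')$.

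It then remains to check that $E$ respects $\mathcal{K}[G]$-equivalence. Given a reduced equivalence $\bar G=(\psi,\Xi)\cdot\bar G'$ on $(\R^m,0)$ with $\Xi=\begin{pmatrix}C_0&B_0\\0&A_0\end{pmatrix}$, I would lift it by taking the source diffeomorphism $\Phi=\id_{\R^{r-\ell}}\times\psi$ and the block data $A=\begin{pmatrix}I_{r-\ell}&0\\0&A_0\end{pmatrix}$, $C=\begin{pmatrix}C_0&0\\0&I_{q-s}\end{pmatrix}$, $B=\begin{pmatrix}0&B_0\\0&0\end{pmatrix}$. A direct substitution then sends $E(\bar G')$ to $E(\bar G)$: the $A$-block produces $(x',A_0\bar h'\circ\psi^{-1})=(x',\bar h)$, while the $C$- and $B$-blocks give $C_0\,\bar g'\circ\psi^{-1}+B_0\,\bar h'\circ\psi^{-1}=\bar g$ on the retained inequalities and fix each deleted entry $-1$. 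Moreover $C\in G_{gp}$, since block-embedding a positive-diagonal-times-permutation matrix and padding by the identity again yields a positive-diagonal-times-permutation matrix. Hence $E(\bar G)\sim_{\mathcal{K}[G]}E(\bar G')$, and chaining the three equivalences gives $(g,h)\sim_{\mathcal{K}[G]}(g',h')$.

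I expect the main obstacle to be the middle step, namely establishing $(g,h)\sim_{\mathcal{K}[G]}E(\bar G)$. This is where one must invoke Hadamard's lemma correctly in the smooth category to extract the $x'$-linear tails with genuinely smooth coefficients, and, crucially, verify that the diagonal scalings normalizing the inactive inequalities are by strictly positive germs so that the matrix $C$ lies in $G_{gp}$ and not merely in $\GL(q,\R)$; the requirement $\bar g_k(0)<0$ for deleted components is exactly what secures this. By contrast, the extension-respects-equivalence step is routine block bookkeeping.
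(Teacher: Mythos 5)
Your proof is correct, and it uses the same technical ingredients as the paper's argument: adapted coordinates in which $h_{i_j}=x_j$ and $\iota_{(i)}(y)=(0,y)$, Hadamard's lemma to strip the $x'$-linear tails, strict negativity of the deleted inequalities to obtain strictly positive scalings (hence membership in $G_{gp}$), and a block-matrix lift of the reduced equivalence. The difference is organizational: you factor the argument through the intermediate normal form $E(\bar G)$, proving separately that $(g,h)\sim_{\mathcal{K}[G]}E(\bar G)$ and that $E$ sends $\mathcal{K}[G]$-equivalent reductions to $\mathcal{K}[G]$-equivalent extensions, and then chain; the paper instead builds the equivalence $(g,h)\sim_{\mathcal{K}[G]}(g',h')$ in a single pass, extending the reduced diffeomorphism $\phi$ to
\[
\Psi(x)=\sum_{j=1}^{r-\ell}x_j\,d\bigl(h'_{i'_j}\bigr)_0+\iota_{(i')}\circ\phi(x_{r-\ell+1},\ldots,x_n),
\]
Hadamard-expanding $(g',h')\circ\Psi$, and assembling one block matrix whose inactive-inequality block is the diagonal of quotients $g'_{k'_j}\circ\Psi(x)/g_{k_j}(x)$ (positive since numerator and denominator are both negative near the origin). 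Your modular route buys symmetry and reusability: each of the two steps is routine to verify, and the canonical extension makes explicit that the $\mathcal{K}[G]$-class of $(g,h)$ is determined by the class of its reduction together with the deleted counts $q-s$ and $r-\ell$. The paper's one-shot construction avoids introducing the intermediate object and never needs to normalize the inactive constraints to a constant, at the cost of a bulkier single matrix verification. The two points you flag as delicate — smoothness of the Hadamard coefficients and positivity of the diagonal scalings — are exactly where the paper also spends its care, and you handle both correctly.
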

\begin{proof}
Suppose 
\begin{math}
\left( g, h \right)_{\iota_{\left( i \right)}, \left( k \right)}
\end{math}
and 
\begin{math}
\left( g', h' \right)_{\iota_{\left( i' \right)}, \left( k' \right)}
\end{math}
are 
\begin{math}
\mathcal{K} \left[ G \right]
\end{math}-equivalent.  In what follows, we write 
\begin{math}
h = \left( h_R, h_S \right)
\end{math}, 
\begin{math}
h' = \left( h'_R, h'_S \right)
\end{math}, 
\begin{math}
g = \left( g_R, g_S \right)
\end{math}, 
\begin{math}
g' = \left( g'_R, g'_S \right)
\end{math}, where 
\begin{math}
h_R = \left( h_{i_1}, \ldots, h_{i_{r-\ell}} \right)
\end{math}, 
\begin{math}
h'_R = \left( h'_{i'_1}, \ldots, h'_{i'_{r-\ell}} \right)
\end{math}, \begin{math}
h_S, h'_S
\end{math}
are remaining part of 
\begin{math}
h
\end{math}
and 
\begin{math}
h'
\end{math}, respectively, and we take $g_R,g_S,g_R',g_S'$ in the same way. 
%
%\begin{math}
%g_I
%\end{math}
%(resp.~\begin{math}
%g'_I
%\end{math}) and 
%\begin{math}
%g_A
%\end{math}
%(resp.~\begin{math}
%g'_A
%\end{math}) are inactive and active constraints of 
%\begin{math}
%g
%\end{math}
%(resp.~\begin{math}
%g'
%\end{math}) at the origin. 
By the definition of constraint map-germs, \begin{math}
h \left( 0 \right) = h' \left( 0 \right) = 0
\end{math}
%and 
%\begin{math}
%g_R \left( 0 \right) < 0, g'_R \left( 0 \right) < 0
%\end{math}
holds. 
Without loss of generality, by appropriate permutation of components, we can assume 
\begin{math}
\left( g, h \right)
\end{math}
and 
\begin{math}
\left( g', h' \right)
\end{math}
can be written as above. By choosing coordinates of 
\begin{math}
\mathbb{R}^n
\end{math}
appropriately, we can suppose $h_{i_j}(x)=x_j$ for $j=1,\ldots, r-\ell$ and \begin{math}
\iota_{\left( i \right)} \left( x_{r-\ell+1}, \ldots, x_n \right) = \left( 0, \ldots, 0, x_{r-\ell+1}, \ldots, x_n \right)
\end{math}.
%
%\begin{equation}
%\left( h_R \right)^{-1} \left( 0 \right) = \left\{ \left( 0, \ldots, 0, x_{r-\ell+1}, \ldots, x_n \right)\in \R^n \middle| x_{r-\ell+1}, \ldots, x_n \in \mathbb{R} \right\}.
%\end{equation}
%and 
%\begin{math}
%\iota_{\left( i \right)} \left( x_{r-\ell+1}, \ldots, x_n \right) = \left( 0, \ldots, 0, x_{r-\ell+1}, \ldots, x_n \right)
%\end{math}. In terms of the coordinates, we define a mapping 
%\begin{math}
%\Phi \colon \left( \mathbb{R}^n, 0 \right) \rightarrow \left( \mathbb{R}^n, 0 \right)
%\end{math}
%such that 
%\begin{equation}
%\Phi \left( x \right) = \left( h_{i_1} \left( x \right), \ldots, h_{i_{r-\ell}} \left( x \right), x_{r-\ell+1}, \ldots, x_n \right)
%\end{equation}
%holds. Then 
%\begin{math}
%\Phi
%\end{math}
%is a diffeomorphism germ. By changing the coordinate further by 
%\begin{math}
%\Phi
%\end{math}, we can assume further that 
%\begin{math}
%h_{i_j} \left( x \right) = x_j
%\end{math}.
%holds for all 
%\begin{math}
%j \in \left\{ 1, \ldots, r-\ell \right\}
%\end{math}.
By the assumption, there is a diffeomorphism-germ 
\begin{math}
\tilde{\phi} \colon \left( \mathbb{R}^{n-r+\ell}, 0 \right) \rightarrow \left( \mathbb{R}^{n-r+\ell}, 0 \right)
\end{math}
and map-germs 
\begin{math}
\tilde{C}_{22} \colon \left( \mathbb{R}^{n-r+\ell}, 0 \right) \rightarrow G_{gp}
\end{math}, 
\begin{math}
\tilde{B}_{22} \colon \left( \mathbb{R}^{n-r+\ell}, 0 \right) \rightarrow M_{q-s,\ell} \left( \mathbb{R} \right)
\end{math}, and 
\begin{math}
\tilde{A}_{22} \colon \left( \mathbb{R}^{n-r+\ell}, 0 \right) \rightarrow GL \left( \mathbb{R}, \ell \right)
\end{math}
such that 
\begin{equation}
\begin{pmatrix}
\tilde{C}_{22} & \tilde{B}_{22} \\
O & \tilde{A}_{22}
\end{pmatrix}
\begin{pmatrix}
g_S \circ \iota_{\left( i \right)} \left( x_{r-\ell+1}, \ldots, x_n \right) \\
h_S \circ \iota_{\left( i \right)} \left( x_{r-\ell+1}, \ldots, x_n \right)
\end{pmatrix}
=
\begin{pmatrix}
g'_S \circ \iota_{\left( i' \right)} \circ \phi \left( x_{r-\ell+1}, \ldots, x_n \right) \\
h'_S \circ \iota_{\left( i' \right)} \circ \phi \left( x_{r-\ell+1}, \ldots, x_n \right)
\end{pmatrix} \label{eq:K[G]-equivalence_reduction}
\end{equation}
holds. We define 
\begin{math}
\Psi \colon \left( \mathbb{R}^n, 0 \right) \rightarrow \left( \mathbb{R}^n, 0 \right)
\end{math}
such as 
\begin{equation}
\Psi \left( x \right) = x_1 d \left( h'_{i'_1} \right)_0 + \cdots + x_{r-\ell} d \left( h'_{i'_{r-\ell}} \right)_0 + \iota_{\left( i' \right)} \circ \phi \left( x_{r-\ell+1}, \ldots, x_n \right).
\end{equation}
Then, 
\begin{math}
\Psi
\end{math}
is a diffeomorphism-germ. 
Then, 
\begin{align}
g'_S \circ \Psi \left( x \right) &= g'_S \circ \Psi \left( 0, \ldots, 0, x_{r-\ell+1}, \ldots, x_n \right) + \sum_{j=1}^{r-\ell} x_j \tilde{g}_j \left( x \right) \\
&= g'_S \circ \iota_{\left( i' \right)} \circ \phi \left( x_{r-\ell+1}, \ldots, x_n \right) + \sum_{j=1}^{r-\ell} x_j \tilde{g}_j \left( x \right)
\end{align}
holds by Hadamard's theorem by choosing 
\begin{math}
\tilde{g}_j
\end{math}
appropriately. In the similar manner, 
\begin{equation}
h'_R \circ \Psi \left( x \right) = h'_R \circ \Psi \left( 0, \ldots, 0, x_{r-\ell+1}, \ldots, x_n \right) + \sum_{j=1}^{r-\ell} x_j \left( \tilde{h}_R' \right)_j \left( x \right) = \sum_{j=1}^{r-\ell} x_j \left( \tilde{h}_R' \right)_j \left( x \right),
\end{equation}
by noting that 
\begin{math}
h'_R \circ \iota_{\left( i' \right)} \circ \phi = 0
\end{math}
and again choosing 
\begin{math}
\left( \tilde{h}_R' \right)_j
\end{math}
appropriately. In addition, 
\begin{align}
h'_S \circ \Psi \left( x \right) &= h'_S \circ \Psi \left( 0, \ldots, 0, x_{r-\ell+1}, \ldots, x_n \right) + \sum_{j=1}^{r-\ell} x_j \left( \tilde{h}_S' \right)_j \left( x \right), \\
&= h'_S \circ \iota_{\left( i' \right)} \circ \phi \left( x_{r-\ell+1}, \ldots, x_n \right) + \sum_{j=1}^{r-\ell} x_j \left( \tilde{h}_S' \right)_j \left( x \right)
\end{align}
holds. 
By noting that $h_R(x) = (x_1,\ldots, x_{r-\ell})$, there exist
$\tilde{A}_{11}, \tilde{A}_{21}$ and $\tilde{B}_{21}$
such that
\begin{align*}
\begin{pmatrix}
g'_R \circ \Psi \left( x \right) \\
g'_S \circ \Psi \left( x \right) \\
h'_R \circ \Psi \left( x \right) \\
h'_S \circ \Psi \left( x \right)
\end{pmatrix}
=&
\begin{pmatrix}
g'_R \circ \Psi \left( x \right) \\
g'_S \circ \iota_{\left( i' \right)} \circ \phi \left( x_{r-\ell+1}, \ldots, x_n \right) \\
0 \\
h'_S \circ \iota_{\left( i' \right)} \circ \phi \left( x_{r-\ell+1}, \ldots, x_n \right)
\end{pmatrix}
+
\begin{pmatrix}
O & O & O & O \\
O & O & \tilde{B}_{21} & O \\
O & O & \tilde{A}_{11} & O \\
O & O & \tilde{A}_{21} & O
\end{pmatrix}
\begin{pmatrix}
0 \\
0 \\
h_R \left( x \right) \\
0
\end{pmatrix}\\
=&
\begin{pmatrix}
I & O & O & O \\
O & \tilde{C}_{22} & \tilde{B}_{21} & \tilde{B}_{22} \\
O & O & \tilde{A}_{11} & O \\
O & O & \tilde{A}_{21} & \tilde{A}_{22}
\end{pmatrix}
\begin{pmatrix}
g'_R \circ \Psi \left( x \right) \\
g_S \circ \iota_{\left( i \right)} \left( x_{r-\ell+1}, \ldots, x_n \right) \\
h_R \left( x \right) \\
h_S \circ \iota_{\left( i \right)} \left( x_{r-\ell+1}, \ldots, x_n \right)
\end{pmatrix}
\end{align*}
holds.
We can also choose
\begin{math}
\left( \tilde{g}_S \right)_j, \left( \tilde{h}_S \right)_j
\end{math}
so that the equalities
\begin{align}
g_S \circ \iota_{\left( i \right)} \left( x_{r-\ell+1}, \ldots, x_n \right) &= g_S \left( 0, \ldots, 0, x_{r-\ell+1}, \ldots, x_n \right) \\
&= g_S \left( x_1, \ldots, x_n \right) - \sum_{j=1}^{r-\ell} x_j \left( \tilde{g}_S \right)_j \left( x \right)
\end{align}
and 
\begin{align}
h_S \circ \iota_{\left( i \right)} \left( x_{r-\ell+1}, \ldots, x_n \right) &= h_S \left( 0, \ldots, 0, x_{r-\ell+1}, \ldots, x_n \right) \\
&= h_S \left( x_1, \ldots, x_n \right) - \sum_{j=1}^{r-\ell} x_j \left( \tilde{h}_S \right)_j \left( x \right)
\end{align}
hold.
Since all the components of $g_R(0)$ is less than $0$, we can define the diagonal matrix
\begin{math}
\tilde{C}_{11}
\end{math}
such that 
\begin{math}
\left( \tilde{C}_{11} \right)_{j,j} = g'_{k'_j} \circ \Psi \left( x \right) / g_{k_j} \left( x \right)
\end{math}
holds for each 
\begin{math}
j \in \left\{ 1, \ldots, s \right\}
\end{math}. 
We then obtain 
\begin{equation}
\begin{pmatrix}
g'_R \circ \Psi \left( x \right) \\
g'_S \circ \Psi \left( x \right) \\
h'_R \circ \Psi \left( x \right) \\
h'_S \circ \Psi \left( x \right)
\end{pmatrix}
=
\begin{pmatrix}
\tilde{C}_{11} & O & O & O \\
O & \tilde{C}_{22} & \tilde{B}_{21}' & \tilde{B}_{22} \\
O & O & \tilde{A}_{11} & O \\
O & O & \tilde{A}_{21}' & \tilde{A}_{22}
\end{pmatrix}
\begin{pmatrix}
g_R \left( x \right) \\
g_S \left( x \right) \\
h_R \left( x \right) \\
h_S \left( x \right)
\end{pmatrix},
\end{equation}
where 
\begin{math}
\tilde{B}_{21}'
\end{math}
and 
\begin{math}
\tilde{A}_{21}'
\end{math}
are modified accordingly. 
The image of
\begin{math}
\begin{pmatrix}
\tilde{C}_{11} & O \\
O & \tilde{C}_{22}
\end{pmatrix}
\end{math}
is in $G_{gp}$ by the construction.
Moreover, the matrix 
\begin{math}
\begin{pmatrix}
\tilde{A}_{11} & O \\
\tilde{A}_{21}' & \tilde{A}_{22}
\end{pmatrix}
\end{math}
is in 
\begin{math}
GL \left( \mathbb{R}, r \right)
\end{math} since the image of $\tilde{A}_{22}$ is in $\GL(\R,\ell)$, $d(h_R'\circ \Psi)_0 = \tilde{A}_{11}(0) \cdot d(h_R)_0$ and its rank is $r-\ell$.
This proves that the two constraint map-germs 
\begin{math}
\left( g, h \right)
\end{math}
and 
\begin{math}
\left( g', h' \right)
\end{math}
are 
\begin{math}
\mathcal{K} \left[ G \right]
\end{math}-equivalent. 
\end{proof}

The converse of this theorem holds if one of the constraint map-germs 
\begin{math}
\left( g, h \right)
\end{math}
and 
\begin{math}
\left( g', h' \right)
\end{math}
is finitely 
\begin{math}
\mathcal{K} \left[ G \right]
\end{math}-determined.
(In that case, of course both of the constraint map-germs are finitely \begin{math}
\mathcal{K} \left[ G \right]
\end{math}-determined.) To show this, let us introduce some necessary terminologies. Let 
\begin{math}
\mathcal{E}_n = \left\{ f \middle| f \colon \left( \mathbb{R}^n, 0 \right) \rightarrow \mathbb{R} \right\}
\end{math}
be the ring of function-germs. For a constraint map-germ 
\begin{math}
\left( g, h \right) \colon \left( \mathbb{R}^n, 0 \right) \rightarrow \mathbb{R}^{q+r}
\end{math}
whose feasible set contains the origin (this condition is equivalent to 
\begin{math}
g \left( 0 \right) \le 0
\end{math}), we define an 
\begin{math}
\mathbb{R}
\end{math}-algebra
\begin{equation}
Q \left( h \right) = \mathcal{E}_n / \langle h_1, \ldots, h_r \rangle_{\mathcal{E}_n}
\end{equation}
by following Mather \cite{Mather1969} where 
\begin{math}
\langle h_1, \ldots, h_r \rangle_{\mathcal{E}_n}
\end{math}
is the ideal in 
\begin{math}
\mathcal{E}_n
\end{math}
generated by 
\begin{math}
h_1, \ldots, h_r
\end{math}. In addition, we define 
\begin{math}
Q_k \left( h \right) = Q \left( h \right) / \left( \langle x_1, \ldots, x_n \rangle^{k+1}_{\mathcal{E}_n} \cdot Q \left( h \right) \right)
\end{math}
for 
\begin{math}
k \in \mathbb{N}
\end{math}. Note that 
\begin{equation}
\langle x_1, \ldots, x_n \rangle_{\mathcal{E}_n}^{k+1} \cdot Q \left( h \right) \cong \left( \langle x_1, \ldots, x_n \rangle^{k+1}_{\mathcal{E}_n} + \langle h_1, \ldots, h_r \rangle_{\mathcal{E}_n} \right) / \langle h_1, \ldots, h_r \rangle_{\mathcal{E}_n}
\end{equation}
holds. Then, the third isomorphism theorem implies that 
\begin{align}
Q_k \left( h \right) &\cong \mathcal{E}_n / \left( \langle x_1, \ldots, x_n \rangle^{k+1}_{\mathcal{E}_n} + \langle h_1, \ldots, h_r \rangle_{\mathcal{E}_n} \right). 
\end{align}
\begin{lemma} \label{lem:K[G] and local R-algebra}
Let 
\begin{math}
\left( g, h \right), \left( g', h' \right) \colon \left( \mathbb{R}^n, 0 \right) \rightarrow \mathbb{R}^{q+r}
\end{math}
be two constraint map-germs whose feasible set-germs are non-empty and let 
\begin{math}
k \in \mathbb{N}
\end{math}. Then, the corresponding $k$-jets
\begin{math}
j^k \left( g, h \right)
\end{math}
and 
\begin{math}
j^k \left( g', h' \right)
\end{math}
are in the same orbit under the action of 
\begin{math}
\mathcal{K} \left[ G \right]^k
\end{math}
if and only if there exist an 
\begin{math}
\mathbb{R}
\end{math}-algebra isomorphism $\phi: Q_k \left( h \right)
\to Q_k \left( h' \right)$, a permutation $\sigma:\{1,\ldots,q\}\to \{1,\ldots, q\}$ and $u_j\in \mathcal{E}_n$ with $u_j(0)>0$ such that $\phi \circ \pi_h^k(g_j)$ is equal to $\pi_{h'}^k(u_jg'_{\sigma(j)})$, where $\pi_h^k: \mathcal{E}_n \rightarrow Q_k \left( h \right)$ is the projection. 

\end{lemma}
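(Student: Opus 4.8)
The statement is a $\mathcal{K}[G]$-refinement of Mather's classical theorem identifying contact ($\mathcal{K}$-)equivalence with isomorphism of the local $\mathbb{R}$-algebras $Q(h)$; what is new is that the inequality block is recorded separately, through the permutation $\sigma$ and the positive units $u_j$, reflecting the shape of $G$: the factor $C\in G_{gp}=G_d\rtimes P_q$ scales the $g_j$ by positive diagonal entries and permutes them, while $B$ permits adding $\mathcal{E}_n$-multiples of the $h_m$ to the $g_j$. I would prove the two implications separately, working at the level of $k$-jets throughout (writing $\mathfrak{m}$ for the maximal ideal of $\mathcal{E}_n$, so that $\pi^k_h$ has kernel $\langle h\rangle+\mathfrak{m}^{k+1}$) so that all algebras stay finite-dimensional and no convergence issue intervenes.

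\textbf{Necessity.} Suppose $j^k(g,h)$ and $j^k(g',h')$ lie in one $\mathcal{K}[G]^k$-orbit, realized by $(\Phi,\Psi)$ with blocks $C,B,A$. Precomposing the defining identities with $\Phi$ gives, modulo $\mathfrak{m}^{k+1}$, the relations $h\circ\Phi=(A\circ\Phi)\,h'$ and $g_j\circ\Phi=u_j\,g'_{\sigma(j)}+\sum_m b_{jm}h'_m$; here the permutation $\sigma$ (which is locally constant, since a germ into $G_{gp}$ lands in one connected component) and the positive functions $u_j$ come from the single positive entry in each row of $C\circ\Phi$, and $b_{jm}=(B\circ\Phi)_{jm}$. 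Since $A\circ\Phi$ is pointwise invertible, $\Phi^*\langle h\rangle=\langle h'\rangle$ modulo $\mathfrak{m}^{k+1}$, so $\Phi^*$ descends to an $\mathbb{R}$-algebra isomorphism $\phi\colon Q_k(h)\to Q_k(h')$. Applying $\pi^k_{h'}$ kills the term $\sum_m b_{jm}h'_m$ and yields $\phi\circ\pi^k_h(g_j)=\pi^k_{h'}(u_j g'_{\sigma(j)})$, as required.

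\textbf{Sufficiency.} Conversely, given $\phi$, $\sigma$ and the $u_j$, the plan is to manufacture $(\Phi,\Psi)$. First realize $\phi$ geometrically: lift each $\phi(\bar x_i)\in Q_k(h')$ to a germ $\theta_i\in\mathfrak{m}$ and set $\Phi=(\theta_1,\dots,\theta_n)$, so that $\Phi^*$ induces $\phi$ on $Q_k$. Once $\Phi$ is a diffeomorphism one has $\Phi^*\langle h\rangle\equiv\langle h'\rangle$, whence $h\circ\Phi=A_0\,h'$ for a matrix $A_0$ that can be taken in $\GL(r)$ because $h\circ\Phi$ and $h'$ are two $r$-tuples generating the same ideal (a Nakayama/successive-approximation argument at the cotangent level); composing with $\Phi^{-1}$ produces the block $A$. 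The hypothesis $\phi\circ\pi^k_h(g_j)=\pi^k_{h'}(u_j g'_{\sigma(j)})$ says $g_j\circ\Phi-u_j g'_{\sigma(j)}\in\langle h'\rangle+\mathfrak{m}^{k+1}$, i.e.\ equals $\sum_m b_{jm}h'_m$ modulo $\mathfrak{m}^{k+1}$; the positive diagonal entries $u_j$ and the permutation $\sigma$ assemble into $C\in G_{gp}$ and the coefficients $b_{jm}$ into $B$, so that $\Psi=\begin{pmatrix}C&B\\O&A\end{pmatrix}$ lands in $G$ and $(\Phi,\Psi)\cdot(g',h')=(g,h)$ modulo $\mathfrak{m}^{k+1}$.

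The main obstacle is the very first step of sufficiency: showing that an abstract $\mathbb{R}$-algebra isomorphism $\phi\colon Q_k(h)\to Q_k(h')$ is induced by a genuine coordinate change. When $dh(0)\neq 0$ the cotangent space $\mathfrak{m}_{Q_k}/\mathfrak{m}_{Q_k}^2$ has dimension $n-\rank dh(0)<n$, so the images $\phi(\bar x_i)$ are linearly dependent there and a careless choice of lifts $\theta_i$ makes $d\Phi_0$ singular. The remedy is to exploit the freedom of modifying each lift by elements of $\langle h'\rangle$, which alters only the linear parts lying in the image of $\langle h'\rangle$ in $\mathfrak{m}/\mathfrak{m}^2$ and leaves $\phi$ untouched: since $\{\phi(\bar x_i)\}$ spans the cotangent space of $Q_k(h')$, one can choose these components so that the linear parts of the $\theta_i$ form a basis of $\mathfrak{m}/\mathfrak{m}^2\cong\mathbb{R}^n$, making $\Phi$ a diffeomorphism. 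This is precisely the $\mathcal{K}[G]$-version of Mather's realization lemma, and once it is in place the assembly of $A$, $B$ and $C$ above is routine.
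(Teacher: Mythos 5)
Your proposal is correct and takes essentially the same route as the paper: necessity by descending the source diffeomorphism to an isomorphism $Q_k(h)\to Q_k(h')$ and reading off $\sigma$ and the positive units $u_j$ from the $G_{gp}$-block, and sufficiency by realizing $\phi$ through a diffeomorphism $\Phi$, upgrading the resulting mod-$\mathfrak{m}^{k+1}$ equality of ideals to $h\circ\Phi\equiv A\,h'$ with $A$ invertible, and then assembling the blocks $C,B,A$. The only differences are cosmetic: the paper realizes $\phi$ after first normalizing $h,h'$ into split form (keeping the $r-\ell$ regular coordinates fixed and taking polynomial representatives in the remaining variables), whereas you correct arbitrary lifts by elements of $\langle h'\rangle$ to make $d\Phi_0$ invertible, and your ``Nakayama/successive-approximation'' step is precisely the generation lemma of Mather (\S 2 of the 1968 paper) that the paper invokes to pass from two generating $r$-tuples to an invertible matrix relating them.
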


\begin{proof}
%First note that 
%\begin{math}
%h \left( 0 \right) = h' \left( 0 \right) = 0
%\end{math}
%holds since their feasible set-germs are non-empty. 
%
\noindent
\textbf{Proof of ``if'' part:}
Since 
\begin{math}
Q_k \left( h \right) \cong Q_k \left( h' \right)
\end{math}
holds, 
\begin{math}
h
\end{math}
and 
\begin{math}
h'
\end{math}
have the same rank at the origin. We denote it by
\begin{math}
r - \ell
\end{math}
for 
\begin{math}
\ell \in \left\{ 0, \ldots, r \right\}
\end{math}. Without loss of generality, we can suppose 
\begin{equation}
h \left( x_1, \ldots, x_n \right) = \left( x_1, \ldots, x_{r-\ell}, h_{r-\ell+1} \left( x_{r-\ell+1}, \ldots, x_n \right), \ldots, h_r \left( x_{r-\ell+1}, \ldots, x_n \right) \right),
\end{equation}
and 
\begin{equation}
h' \left( x_1, \ldots, x_n \right) = \left( x_1, \ldots, x_{r-\ell}, h'_{r-\ell+1} \left( x_{r-\ell+1}, \ldots, x_n \right), \ldots, h'_r \left( x_{r-\ell+1}, \ldots, x_n \right) \right).
\end{equation}
%For 
%\begin{math}
%r-\ell+1 \le j \le n
%\end{math}, let 
%\begin{math}
%\overline{x_i}
%\end{math}
%denote the image of 
%\begin{math}
%x_j
%\end{math}
%in 
%\begin{math}
%Q_k \left( h \right)
%\end{math}
%under the projection 
%\begin{math}
%\mathcal{E}_n \rightarrow Q_k \left( h \right)
%\end{math}
%and 
%\begin{math}
%\overline{x_j}'
%\end{math}
%denote the image of 
%\begin{math}
%x_j
%\end{math}
%in 
%\begin{math}
%Q_k \left( h' \right)
%\end{math}
%under the projection. 
%\begin{math}
%\mathcal{E}_n \rightarrow Q_k \left( h' \right)
%\end{math}. 

Let 
\begin{math}
\phi \colon Q_k \left( h \right) \to Q_k \left( h' \right)
\end{math}, $\sigma:\{1,\ldots,q\}\to \{1,\ldots, q\}$ and $u_j\in \mathcal{E}_n$ satisfy the assumption.
For 
\begin{math}
r-\ell+1 \le j \le n
\end{math}, 
we take a polynomial $p_j(X)$ with variables $X_{r-\ell+1},\ldots, X_n$ so that 
\begin{math}
\phi \left( \pi_h^k(x_j) \right)
\end{math}
is equal to 
\begin{math}
p_j \left( \pi_{h'}^k(x_{r-\ell+1}),\ldots,  \pi_{h'}^k(x_{n})\right)
\end{math}.
%
%in 
%\begin{math}
%\overline{x_{r-\ell+1}}', \ldots, \overline{x_n}'
%\end{math}. 
Define 
\begin{math}
\psi \colon \left( \mathbb{R}^n, 0 \right) \rightarrow \left( \mathbb{R}^n, 0 \right)
\end{math}
by 
\begin{equation}
x_j \circ \psi = 
\begin{cases}
x_j & \left( 1 \le j \le r-\ell \right) \\
p_j \left( x_{r-\ell+1},\ldots, x_n \right) & \left( r-\ell+1 \le j \le n \right)
\end{cases}.
\end{equation}
Then, 
\begin{math}
\psi
\end{math}
is invertible, since
\begin{math}
\phi
\end{math}
is an isomorphism and thus the matrix 
\begin{math}
\begin{pmatrix}
\cfrac{\partial p_j}{\partial x_l}
\end{pmatrix}_{r-\ell+1 \le j,l \le n}
\end{math}
is invertible. Furthermore the following diagram commutes: 
\[
\begin{tikzcd}
\mathcal{E}_n \arrow[r, "\psi^*"] \arrow[d, "\pi_h^k"] & \mathcal{E}_n \arrow[d, "\pi_{h'}^k"] \\
Q_k \left( h \right) \arrow[r, "\phi"'] & Q_k \left( h' \right)
\end{tikzcd}
\]
Hence, replacing 
\begin{math}
h'
\end{math}
by 
\begin{math}
h' \circ \psi^{-1}
\end{math}
and 
\begin{math}
g'
\end{math}
by 
\begin{math}
g' \circ \psi^{-1}
\end{math}, we may suppose that
\begin{equation}
\langle h_1, \ldots, h_r \rangle_{\mathcal{E}_n} + \langle x_1, \ldots, x_n \rangle^{k+1}_{\mathcal{E}_n} = \langle h'_1, \ldots, h'_r \rangle_{\mathcal{E}_n} + \langle x_1, \ldots, x_n \rangle^{k+1}_{\mathcal{E}_n}, \label{eq:hhd_modk+1}
\end{equation}
and 
\begin{equation}
g_j = u_j g'_{\sigma \left( j \right)} \mod \langle h'_1, \ldots, h'_r \rangle_{\mathcal{E}_n} + \langle x_1, \ldots, x_n \rangle^{k+1}_{\mathcal{E}_n}
\end{equation}
for 
\begin{math}
j \in \left\{ 1, \ldots, q \right\}
\end{math}. Replacing 
\begin{math}
h'
\end{math}
by another map-germ having the same 
\begin{math}
k
\end{math}-jet, we may suppose that 
\begin{equation}
\langle h_1, \ldots, h_r \rangle_{\mathcal{E}_n} = \langle h'_1, \ldots, h'_r \rangle_{\mathcal{E}_n} \label{eq:hhd}
\end{equation}
by the following argument.  By Eq.~\eqref{eq:hhd_modk+1}, there exist map-germs 
\begin{math}
\tilde{A}, \tilde{B} \colon \left( \mathbb{R}^n, 0 \right) \rightarrow M_{r,r} \left( \mathbb{R} \right)
\end{math}
and 
\begin{math}
\tilde{h}, \tilde{h}' \in \langle x_1, \ldots, x_n \rangle_{\mathcal{E}_n}^{k+1} \mathcal{E}_n^r
\end{math}
such that 
\begin{align}
h &= \tilde{A} h' + \tilde{h} \\
h' &= \tilde{B} h + \tilde{h}'
\end{align}
hold. By using Lemma in \cite[\S.2]{Mather1968}, there exists a matrix 
\begin{math}
\tilde{C} \in M_{r,r} \left( \mathbb{R} \right)
\end{math}
such that 
\begin{equation}
\tilde{C} \left( I_r - \tilde{A} \left( 0 \right) \tilde{B} \left( 0 \right) \right) + \tilde{B} \left( 0 \right)
\end{equation}
is regular, where 
\begin{math}
I_r
\end{math}
is 
\begin{math}
r \times r
\end{math}
unit matrix. Put 
\begin{math}
D \left( x \right) = \tilde{C} \left( I_r - \tilde{A} \left( x \right) \tilde{B} \left( x \right) \right) + \tilde{B} \left( x \right)
\end{math}. Then, 
\begin{align}
D h &= \tilde{C} \left( I_r - \tilde{A} \tilde{B} \right) h + \tilde{B} h \\
&= \tilde{C} \left( h - \tilde{A} \tilde{B} h \right) + \tilde{B} h \\
&= \tilde{C} \left( h - \tilde{A} \left( h' - \tilde{h}' \right) \right) + h' - \tilde{h}' \\
&= \tilde{C} \left( \tilde{h} + \tilde{A} \tilde{h}' \right) + h' - \tilde{h}' \\
&= h' + \left( \tilde{C} \left( \tilde{h} + \tilde{A} \tilde{h}' \right) - \tilde{h}' \right).
\end{align}
Since 
$\tilde{C} \left( \tilde{h} + \tilde{A} \tilde{h}' \right) - \tilde{h}' \in \langle x_1, \ldots, x_n \rangle_{\mathcal{E}_n}^{k+1} \mathcal{E}_n^r$
holds, 
\begin{math}
h'
\end{math}
has the same $k$-jet as  
\begin{math}
h' + \left( \tilde{C} \left( \tilde{h} + \tilde{A} \tilde{h}' \right) - \tilde{h}' \right)
\end{math}. By replacing
\begin{math}
h'
\end{math}
by the latter germ, we obtain Eq.~\eqref{eq:hhd}. Therefore, there exist map-germs
\begin{math}
C \colon \left( \mathbb{R}^n, 0 \right) \rightarrow G_{gp}
\end{math}, 
\begin{math}
B \colon \left( \mathbb{R}^n, 0 \right) \rightarrow M_{q,r} \left( \mathbb{R} \right)
\end{math}
and 
\begin{math}
A \colon \left( \mathbb{R}^n, 0 \right) \rightarrow GL \left( r, \mathbb{R} \right)
\end{math}
such that 
\begin{equation}
\begin{pmatrix}
g \\
h
\end{pmatrix} = 
\begin{pmatrix}
C & B \\
O & A
\end{pmatrix}
\begin{pmatrix}
g' \\
h'
\end{pmatrix}
\mod \langle x_1, \ldots, x_n \rangle^{k+1}_{\mathcal{E}_n}
\end{equation}
holds.

\noindent
\textbf{Proof of ``only if'' part:}
By the assumption, there exist a diffeomorphism-germ 
\begin{math}
\phi \colon \left( \mathbb{R}^n, 0 \right) \rightarrow \left( \mathbb{R}^n, 0 \right)
\end{math}
and map-germs
\begin{math}
C \colon \left( \mathbb{R}^n, 0 \right) \rightarrow G_{gp}
\end{math}, 
\begin{math}
B \colon \left( \mathbb{R}^n, 0 \right) \rightarrow M_{q,r} \left( \mathbb{R} \right)
\end{math}
and 
\begin{math}
A \colon \left( \mathbb{R}^n, 0 \right) \rightarrow GL \left( r, \mathbb{R} \right)
\end{math}
such that 
\begin{equation}
\begin{pmatrix}
g \\
h
\end{pmatrix} \circ \psi = 
\begin{pmatrix}
C & B \\
O & A
\end{pmatrix}
\begin{pmatrix}
g' \\
h'
\end{pmatrix} \mod \langle x_1, \ldots, x_n \rangle^{k+1}_{\mathcal{E}_n}
\end{equation}
holds. Define a homomorphism of 
\begin{math}
\mathbb{R}
\end{math}-algebra
\begin{math}
\overline{\psi^*} \colon Q_k \left( h \right) \rightarrow Q_k \left( h' \right)
\end{math}
as 
\begin{math}
\overline{\psi^*} \left( \pi^k_h \left( f \right) \right) = \pi^k_{h'} \left( f \circ \psi \right)
\end{math}
for 
\begin{math}
f \in \mathcal{E}_n
\end{math}. 
\begin{math}
\overline{\psi^*}
\end{math}
is well-defined. It is because if 
\begin{math}
\pi^k_h \left( f_1 \right) = \pi^k_h \left( f_2 \right)
\end{math}
holds, there exists 
\begin{math}
c_j \in \mathcal{E}_n
\end{math}
for 
\begin{math}
j \in \left\{ 1, \ldots, r \right\}
\end{math}
such that
\begin{equation}
f_1 - f_2 = \sum_{j=1}^r c_j h_j \mod \langle x_1, \ldots, x_n \rangle^{k+1}_{\mathcal{E}_n}
\end{equation}
holds. By composing it with 
\begin{math}
\psi
\end{math}, we obtain 
\begin{equation}
f_1 \circ \psi - f_2 \circ \psi = \sum_{j=1}^r \left( c_j \circ \psi \right) \left( h_j \circ \psi \right) \mod \langle x_1, \ldots, x_n \rangle^{k+1}_{\mathcal{E}_n}
\end{equation}
since 
\begin{math}
\psi^* \langle x_1, \ldots, x_n \rangle^{k+1}_{\mathcal{E}_n} \subset \langle x_1, \ldots, x_n \rangle^{k+1}_{\mathcal{E}_n}
\end{math}
holds. Since 
\begin{math}
h_j \circ \psi = \sum_{l=1}^r A_{jl} h'_l
\end{math}
holds, we obtain 
\begin{math}
\pi^k_{h'} \left( f_1 \circ \psi \right) = \pi^k_{h'} \left( f_2 \circ \psi \right)
\end{math}. It is easy to check that 
\begin{math}
\overline{\psi^*}
\end{math}
is an isomorphism of 
\begin{math}
\mathbb{R}
\end{math}-algebra. Since the image of 
\begin{math}
C
\end{math}
is in 
\begin{math}
G_{gp}
\end{math}, there exists a permutation 
\begin{math}
\sigma \colon \left\{ 1, \ldots, q \right\} \rightarrow \left\{ 1, \ldots, q \right\}
\end{math}
such that 
\begin{equation}
g_j \circ \psi = u_j \cdot g'_{\sigma \left( j \right)} + \sum_{l=1}^r B_{jl} h'_l \mod \langle x_1, \ldots, x_n \rangle^{k+1}_{\mathcal{E}_n}
\end{equation}
holds for some 
\begin{math}
u_j \in \mathcal{E}_n
\end{math}
such that 
\begin{math}
u_j \left( 0 \right) > 0
\end{math}
for all 
\begin{math}
j \in \left\{ 1, \ldots, q \right\}
\end{math}. This implies that 
\begin{equation}
\overline{\psi^*} \left( \pi^k_h \left( g_j \right) \right) = \pi^k_{h'} \left( g_j \circ \psi \right) = \pi^k_{h'} \left( u_j \cdot g'_{\sigma \left( j \right)} \right).
\end{equation}
This proves the lemma. 
\end{proof}

\begin{lemma} \label{lem:K[G]-equiv if original is so}
Let 
\begin{math}
\left( g, h \right)
\end{math}
and 
\begin{math}
\left( g', h' \right)
\end{math}
be two constraint map-germs whose feasible set-germs are non-empty. If
\begin{math}
\left( g, h \right)
\end{math}
and 
\begin{math}
\left( g', h' \right)
\end{math}
are finitely 
\begin{math}
\mathcal{K} \left[ G \right]
\end{math}-determined and 
\begin{math}
\mathcal{K} \left[ G \right]
\end{math}-equivalent, their reductions \begin{math}
\left( g, h \right)_{\iota_{\left( i \right)}, \left( k \right)}
\end{math}
and 
\begin{math}
\left( g', h' \right)_{\iota_{\left( i' \right)}, \left( k' \right)}
\end{math}
having the equal number of inequality and equality constraints are 
\begin{math}
\mathcal{K} \left[ G \right]
\end{math}-equivalent.
\end{lemma}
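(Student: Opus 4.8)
The plan is to transfer the problem to the level of $k$-jets, where Lemma~\ref{lem:K[G] and local R-algebra} provides a purely algebraic criterion for $\mathcal{K}[G]^k$-equivalence, and then to show that the operation of reduction is essentially invisible to the truncated local algebras $Q_k$. Since $(g,h)$ and $(g',h')$ are finitely $\mathcal{K}[G]$-determined, I first fix an integer $k$ larger than the $\mathcal{K}[G]$-determinacy degrees of $(g,h)$ and $(g',h')$ (and, as discussed below, of their reductions), so that $\mathcal{K}[G]$-equivalence of any of these germs is equivalent to $\mathcal{K}[G]^k$-equivalence of their $k$-jets. Truncating the given $\mathcal{K}[G]$-equivalence and applying the ``only if'' part of Lemma~\ref{lem:K[G] and local R-algebra}, I obtain an $\mathbb{R}$-algebra isomorphism $\phi\colon Q_k(h)\to Q_k(h')$, a permutation $\sigma$ of $\{1,\ldots,q\}$, and units $u_j\in\mathcal{E}_n$ with $u_j(0)>0$ such that $\phi(\pi_h^k(g_j))=\pi_{h'}^k(u_j g'_{\sigma(j)})$. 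Evaluating at the origin yields $g_j(0)=u_j(0)\,g'_{\sigma(j)}(0)$, so $\sigma$ sends active indices to active indices and inactive indices to inactive indices.

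The heart of the argument is the observation that reduction does not alter the truncated local algebra. Choosing coordinates with $h_{i_j}(x)=x_j$ for $1\le j\le r-\ell$ and $\iota_{(i)}(x_{r-\ell+1},\ldots,x_n)=(0,\ldots,0,x_{r-\ell+1},\ldots,x_n)$, the restriction homomorphism $\rho\colon\mathcal{E}_n\to\mathcal{E}_{n-r+\ell}$, $f\mapsto f\circ\iota_{(i)}$, is surjective, carries $\langle x_1,\ldots,x_n\rangle^{k+1}_{\mathcal{E}_n}$ onto $\langle x_{r-\ell+1},\ldots,x_n\rangle^{k+1}_{\mathcal{E}_{n-r+\ell}}$, and has kernel $\langle x_1,\ldots,x_{r-\ell}\rangle_{\mathcal{E}_n}\subset\langle h_1,\ldots,h_r\rangle_{\mathcal{E}_n}$, while sending $\langle h_1,\ldots,h_r\rangle_{\mathcal{E}_n}$ onto $\langle\tilde h\rangle_{\mathcal{E}_{n-r+\ell}}$, where $\tilde h=h_S\circ\iota_{(i)}$ is the equality part of the reduction. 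By the third isomorphism theorem, $\rho$ therefore descends to an $\mathbb{R}$-algebra isomorphism $Q_k(h)\xrightarrow{\sim}Q_k(\tilde h)$ carrying $\pi_h^k(g_j)$ to $\pi_{\tilde h}^k(\tilde g_j)$, where $\tilde g_j=g_j\circ\iota_{(i)}$; the same construction, using $\iota_{(i')}$, gives $Q_k(h')\xrightarrow{\sim}Q_k(\tilde h')$. Composing these with $\phi$ produces an $\mathbb{R}$-algebra isomorphism $\bar\phi\colon Q_k(\tilde h)\to Q_k(\tilde h')$.

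It then remains to manufacture the inequality data for the reductions and to invoke Lemma~\ref{lem:K[G] and local R-algebra} in the reverse direction. Writing $S$ and $S'$ for the retained inequality index sets (each of size $s$), only inactive components are discarded, so $S$ contains all active indices of $(g,h)$ and $S'$ all active indices of $(g',h')$; since $\sigma$ preserves activeness bijectively, both germs have the same number $a$ of active inequalities and each reduction keeps $a$ active and $s-a$ inactive ones. I define $\bar\sigma\colon S\to S'$ to agree with $\sigma$ on the active indices and to be any bijection between the retained inactive indices. For active $j$, pushing $\phi(\pi_h^k(g_j))=\pi_{h'}^k(u_j g'_{\sigma(j)})$ through the two restriction isomorphisms gives $\bar\phi(\pi_{\tilde h}^k(\tilde g_j))=\pi_{\tilde h'}^k(\tilde u_j\,\tilde g'_{\bar\sigma(j)})$ with $\tilde u_j=u_j\circ\iota_{(i')}$ and $\tilde u_j(0)=u_j(0)>0$. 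For a retained inactive $j$, the class $\pi_{\tilde h'}^k(\tilde g'_{\bar\sigma(j)})$ is a unit in the local ring $Q_k(\tilde h')$ (its value at the origin is $\tilde g'_{\bar\sigma(j)}(0)\ne 0$), so I may take $\bar u_j$ to be any representative of $\bar\phi(\pi_{\tilde h}^k(\tilde g_j))\cdot\pi_{\tilde h'}^k(\tilde g'_{\bar\sigma(j)})^{-1}$; since $\bar\phi$ preserves the augmentation to $\mathbb{R}$, one gets $\bar u_j(0)=\tilde g_j(0)/\tilde g'_{\bar\sigma(j)}(0)>0$. With $\bar\phi$, $\bar\sigma$, and $\{\bar u_j\}$ in hand, the ``if'' part of Lemma~\ref{lem:K[G] and local R-algebra} shows that the $k$-jets of the two reductions lie in the same $\mathcal{K}[G]^k$-orbit, and by the choice of $k$ the reductions are $\mathcal{K}[G]$-equivalent.

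I expect the principal difficulties to be twofold. First, one must justify that a single $k$ can serve as a determinacy degree for the reductions as well as for $(g,h)$ and $(g',h')$; here the isomorphism $Q_k(\tilde h)\cong Q_k(h)$ established above is the natural tool, since it transfers the finiteness underlying determinacy from the germs to their reductions. Second, the bookkeeping for the inactive inequalities must be handled with care: the claim that the retained active and inactive counts match in the two reductions rests entirely on the sign relation $g_j(0)=u_j(0)\,g'_{\sigma(j)}(0)$ with $u_j(0)>0$, and one must check that the freely chosen matching of inactive indices is compatible with $\bar\phi$ via the unit $\bar u_j$, which is precisely where the local-ring structure of $Q_k(\tilde h')$ enters.
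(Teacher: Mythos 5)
Your overall route coincides with the paper's own proof: apply the ``only if'' part of Lemma~\ref{lem:K[G] and local R-algebra} to the original germs, conjugate the resulting isomorphism $\phi\colon Q_k(h)\to Q_k(h')$ by the restriction isomorphisms $Q_k(h)\cong Q_k(h_{\iota_{(i)}})$, and feed the outcome back into the ``if'' part to get $\mathcal{K}[G]^k$-equivalence of the $k$-jets of the reductions. The genuine gap is the step you defer to the end: to upgrade jet equivalence to germ equivalence you need the \emph{reductions} to be $k$-$\mathcal{K}[G]$-determined for your chosen $k$, and your proposed justification---that the isomorphism $Q_k(h_{\iota_{(i)}})\cong Q_k(h)$ ``transfers the finiteness underlying determinacy''---does not work. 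Finite $\mathcal{K}[G]$-determinacy is not a property of the truncated algebras alone: it is equivalent to finiteness of the $\mathcal{K}[G]$-codimension, i.e.\ a condition on the tangent space $T\mathcal{K}[G](g,h)$, which involves the partial derivatives of $(g,h)$ and is not captured by the ideals defining $Q_k$. Moreover, $Q_k(h_{\iota_{(i)}})$ and $Q_k(h)$ are attached to germs with different source dimensions and different numbers of constraints, and Lemma~\ref{lem:K[G] and local R-algebra} only compares germs with the same $(n,q,r)$, so the jet-orbit characterization cannot carry determinacy across the reduction. Any attempt to patch this with jets (e.g.\ suspending a germ having the same $k$-jet as the reduction and comparing the suspension with $(g,h)$) ends by having to descend an equivalence of unreduced germs to their reductions---which is exactly the statement being proved, a circularity. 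The paper closes this gap with external input: finite $\mathcal{K}[G]$-determinacy is equivalent to finite $\mathcal{K}[G]$-codimension (Proposition~2.1(2) of \cite{HamadaHayanoTeramoto2025_1}), and a reduction has $\mathcal{K}[G]$-codimension no larger than that of the original germ (Lemma~3.3 of \cite{HamadaHayanoTeramoto2025_1}); these two facts, not the $Q_k$-isomorphism, are what legitimize the choice of a single $k$.

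Everything else in your argument is sound and matches the paper's proof: in adapted coordinates $\Ker(\iota_{(i)}^\ast)=\langle x_1,\ldots,x_{r-\ell}\rangle_{\mathcal{E}_n}\subset\langle h_1,\ldots,h_r\rangle_{\mathcal{E}_n}$, so restriction does descend to an $\mathbb{R}$-algebra isomorphism $Q_k(h)\to Q_k(h_{\iota_{(i)}})$, and conjugating $\phi$ by these isomorphisms yields the required data for the reductions. In fact your bookkeeping for the inequality constraints is more careful than the paper's: the paper implicitly assumes that the permutation $\sigma$ restricts to a bijection between the retained index sets, which need not hold for retained \emph{inactive} indices, whereas your fix---pairing retained inactive indices arbitrarily and manufacturing the unit $\bar u_j$ from the invertibility of $\pi^k_{\tilde h'}(\tilde g'_{\bar\sigma(j)})$ in the local ring $Q_k(\tilde h')$, with $\bar u_j(0)=\tilde g_j(0)/\tilde g'_{\bar\sigma(j)}(0)>0$ since both values are negative---is correct and repairs that point. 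So the only missing ingredient is the finite determinacy of the reductions, and it genuinely requires the codimension results of \cite{HamadaHayanoTeramoto2025_1} rather than the algebra isomorphism you propose.
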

\begin{proof}
By Proposition~2.1 (2) in \cite{HamadaHayanoTeramoto2025_1}, 
\begin{math}
\left( g, h \right)
\end{math}
and 
\begin{math}
\left( g', h' \right)
\end{math}
have a finite 
\begin{math}
\mathcal{K} \left[ G \right]
\end{math}-codimension. By Lemma~3.3 in \cite{HamadaHayanoTeramoto2025_1}, their reductions have equal or smaller 
\begin{math}
\mathcal{K} \left[ G \right]
\end{math}-codimension. Again by Proposition~2.1 (2) in \cite{HamadaHayanoTeramoto2025_1}, their reductions are finitely 
\begin{math}
\mathcal{K} \left[ G \right]
\end{math}-determined. 
Thus, we can take \begin{math}
k\in \mathbb{N}
\end{math}
so that 
\begin{math}
\left( g, h \right)_{\iota_{\left( i \right)}, \left( k \right)}
\end{math}
and 
\begin{math}
\left( g', h' \right)_{\iota_{\left( i' \right)}, \left( k' \right)}
\end{math}
are both $k$-
\begin{math}
\mathcal{K} \left[ G \right]
\end{math}-determined.

%Let 
%\begin{math}
%k \in \mathbb{N}
%\end{math}
%be an order of 
%\begin{math}
%\mathcal{K} \left[ G \right]
%\end{math}-determinacy. 
By Lemma~\ref{lem:K[G] and local R-algebra}, there exist an isomorphism of 
\begin{math}
\mathbb{R}
\end{math}-algebra 
\begin{math}
\phi \colon Q_k \left( h \right) \cong Q_k \left( h' \right)
\end{math}, a permutation
\begin{math}
\sigma \colon \left\{ 1, \ldots, q \right\} \rightarrow \left\{ 1, \ldots, q \right\}
\end{math}
and some units 
\begin{math}
u_j \in \mathcal{E}_n
\end{math}
satisfying 
\begin{math}
u_j \left( 0 \right) > 0
\end{math}
such that
\begin{math}
\phi \left( \pi^k_h \left( g_j \right) \right) = \pi^k_{h'} \left( u_j g'_{\sigma \left( j \right)} \right)
\end{math}
holds for all 
\begin{math}
j \in \left\{ 1, \ldots, q \right\}
\end{math}.
Define 
\begin{math}
\overline{\iota_{\left( i \right)}^*} \colon Q_k \left( h \right) \rightarrow Q_k \left( \iota_{\left( i \right)}^* h \right)
\end{math}
as 
\begin{math}
\overline{\iota_{\left( i \right)}^*} \left( \pi^k_h \left( f \right) \right) = \pi^k_{\iota^*_{\left( i \right)} h} \left( f \circ \iota_{\left( i \right)} \right)
\end{math}, then it is easy to check that this is a well-defined isomorphism of 
\begin{math}
\mathbb{R}
\end{math}-algebra. 
Define 
\begin{math}
\tilde{\phi} := \overline{\iota_{\left( i' \right)}^*} \circ \phi \circ \overline{\iota_{\left( i \right)}^*}^{-1}:Q_k \left( \iota_{\left( i \right)}^* h \right)\to  Q_k \left( \iota_{\left( i' \right)}^* h' \right)
\end{math}.
%Consider the following diagram: 
%\[
%\begin{tikzcd}
%Q_k \left( h \right) \arrow[r, "\phi"] \arrow[d, "\overline{\iota_{\left( i \right)}^*}"'] & Q_k \left( h' \right) \arrow[d, "\overline{\iota_{\left( i' \right)}^*}"] \\
%Q_k \left( \iota_{\left( i \right)}^* h \right) & Q_k \left( \iota_{\left( i' \right)}^* h' \right)
%\end{tikzcd}
%\]
%and define
Then, this gives an isomorphism of 
\begin{math}
\mathbb{R}
\end{math}-algebra from 
\begin{math}
Q_k \left( h_{\iota_{\left( i \right)}} \right)
\end{math}
to 
\begin{math}
Q_k \left( h'_{\iota_{\left( i' \right)}} \right)
\end{math}
since
\begin{math}
Q_k \left( h_{\iota_{\left( i \right)}} \right) \cong Q_k \left( \iota_{\left( i \right)}^* h \right)
\end{math}
and 
\begin{math}
Q_k \left( h'_{\iota_{\left( i' \right)}} \right) \cong Q_k \left( \iota_{\left( i' \right)}^* h' \right)
\end{math}
hold. Take any 
\begin{math}
j \in \left\{ 1, \ldots, q \right\}
\end{math}. Then, 
\begin{multline}
\tilde{\phi} \left( \pi^k_{\iota^*_{\left( i \right)} h} \left( g_j \circ \iota_{\left( i \right)} \right) \right) = \overline{\iota_{\left( i' \right)}^*} \circ \phi \circ \overline{\iota_{\left( i \right)}^*}^{-1} \left( \pi^k_{\iota^*_{\left( i \right)} h} \left( g_j \circ \iota_{\left( i \right)} \right) \right) = \overline{\iota_{\left( i' \right)}^*} \circ \phi \left( \pi_h^k \left( g_j \right) \right) \\
= \overline{\iota_{\left( i' \right)}^*} \left( \pi^k_{h'} \left( u_j g'_{\sigma \left( j \right)} \right) \right) = \pi^k_{\iota^*_{\left( i' \right)} h'} \left( \left( u_j \circ \iota_{\left( i' \right)} \right) \left( g'_{\sigma \left( j \right)} \circ \iota_{\left( i' \right)} \right) \right)
\end{multline}
holds. Since 
\begin{math}
u_j \circ \iota_{\left( i' \right)} \left( 0 \right) = u_j \left( 0 \right) > 0
\end{math}
holds, 
\begin{math}
j^k \left( g, h \right)_{\iota_{\left( i \right)}, \left( k \right)}
\end{math}
and 
\begin{math}
j^k \left( g', h' \right)_{\iota_{\left( i' \right)}, \left( k' \right)}
\end{math}
are 
\begin{math}
\mathcal{K} \left[ G \right]^k
\end{math}-equivalent by Lemma~\ref{lem:K[G] and local R-algebra}. Since they are 
\begin{math}
k
\end{math}-\begin{math}
\mathcal{K} \left[ G \right]
\end{math}-determined, this proves the lemma. 
\end{proof}

\begin{lemma}\label{lem:K[G]-codim reduction}
 
The $\mathcal{K}[G]$-codimension of $(g,h)_{\iota_{(i)},(k)}$ is equal to that of $(g,h)$, and the same is true for the $\mathcal{K}[G]_e$-codimension if  $\mathcal{K}[G]$ ($\mathcal{K}[G]_e$)-codimension is finite. 

\end{lemma}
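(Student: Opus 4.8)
The plan is to show that reduction induces an isomorphism between the normal spaces whose dimensions define the two codimensions, by splitting a general reduction into two elementary moves: (I) deletion of a single inactive inequality $g_{k}$ with $g_{k}(0)<0$, and (II) restriction across a single equality constraint $h_{i}$ with $d(h_{i})_{0}\neq 0$. Since an arbitrary reduction $(g,h)_{\iota_{(i)},(k)}$ is a finite composition of such moves, and since deleting inactive inequalities commutes with restriction (the value at the origin is unchanged under $\iota_{(i)}$), it suffices to prove invariance of the $\mathcal{K}[G]$- and $\mathcal{K}[G]_e$-codimensions under each elementary move. Throughout I would use finiteness of the codimension together with finite $\mathcal{K}[G]$-determinacy (Proposition~2.1 of \cite{HamadaHayanoTeramoto2025_1}) to pass to a sufficiently high jet, so that all modules below are finite-dimensional and the quotient manipulations are legitimate.

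Next I would record the explicit description of the extended tangent space $T\mathcal{K}[G]_{e}\cdot(g,h)\subset\theta(g,h)=\mathcal{E}_n^{\,q+r}$ (the module of $(q+r)$-tuples of function-germs). Differentiating the $\mathcal{K}[G]$-action, it is generated over $\mathcal{E}_n$ by the source part $\{(dg\cdot\xi,\,dh\cdot\xi):\xi\in\theta_n\}$, where $\theta_n$ is the module of vector-field-germs on the source, together with the target (Lie-algebra) part coming from $\mathfrak{g}$: namely $\mathcal{E}_n\cdot g_j$ in the $j$-th inequality slot (from the diagonal Lie algebra of $G_{gp}$, whose finite factor $P_q$ contributes nothing), arbitrary $\mathcal{E}_n$-combinations of $h_1,\dots,h_r$ added into each inequality slot (from $B$), and arbitrary $\mathcal{E}_n$-combinations of $h_1,\dots,h_r$ in each equality slot (from $A$). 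The non-extended space $T\mathcal{K}[G]\cdot(g,h)$ is the analogous submodule obtained by imposing the basepoint conditions; the two codimensions are the dimensions of the corresponding quotients of $\theta(g,h)$ and of $\mathcal{M}_n\theta(g,h)$, where $\mathcal{M}_n\subset\mathcal{E}_n$ is the maximal ideal.

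For move (I), I would use the projection $\pi\colon\mathcal{E}_n^{\,q+r}\to\mathcal{E}_n^{\,q-1+r}$ forgetting the slot of the deleted inequality $g_{k}$. Because $g_{k}(0)<0$, $g_{k}$ is a unit, so $\mathcal{E}_n\cdot g_{k}=\mathcal{E}_n$; hence the generator $(0,\dots,0,\,D\,g_{k},\,0,\dots,0)$ with $D\in\mathcal{E}_n$ arbitrary shows that $\ker\pi$ lies entirely in $T\mathcal{K}[G]_{e}\cdot(g,h)$. A direct check shows that $\pi$ carries the generators of $T\mathcal{K}[G]_{e}\cdot(g,h)$ onto those of the corresponding tangent space of the germ with $g_{k}$ deleted; together with $\ker\pi\subset T$ this yields an isomorphism of normal spaces, and the same bookkeeping respects the $\mathcal{M}_n$ and basepoint conditions, so both codimensions are preserved.

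Move (II) is where I expect the real work. After straightening $h_{i}$ to the coordinate $x_1$ (as in the setup of Lemma~\ref{lem:K[G] and local R-algebra}), $\iota_{(i)}$ becomes the inclusion $\{x_1=0\}\hookrightarrow(\mathbb{R}^n,0)$ and the reduction deletes the equality slot $h_i=x_1$. The key is the Taylor splitting $\mathcal{E}_n=\iota_{(i)}^*\mathcal{E}_{n-1}\oplus x_1\mathcal{E}_n$: in the equality slot of $x_1$ the source field $\partial x_1$ contributes the free term $\xi_1$, while the $A$-action supplies arbitrary multiples of $x_1$, so that slot is completely absorbed; the difficulty is that $\xi_1\,\partial x_1$ simultaneously perturbs every other slot by $\xi_1\,\partial_{x_1}(g_j)$ and $\xi_1\,\partial_{x_1}(h_l)$, and one must verify that these coupled contributions are exactly matched by the $x_1\mathcal{E}_n$ part of the corresponding slots, so that restriction $\iota_{(i)}^*$ induces a well-defined isomorphism from $\theta(g,h)/T\mathcal{K}[G]_e\cdot(g,h)$ onto the normal space of $(g,h)_{\iota_{(i)}}$. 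In effect this is a suspension-invariance statement for the geometric subgroup $\mathcal{K}[G]$, and carrying the same bookkeeping through the basepoint conditions gives the $\mathcal{K}[G]$-statement as well. I note that the inequality $\codim(g,h)_{\iota_{(i)},(k)}\le\codim(g,h)$ is already Lemma~3.3 of \cite{HamadaHayanoTeramoto2025_1}, so in principle it would suffice to establish only the reverse inequality; but the normal-space isomorphism above yields the asserted equality directly.
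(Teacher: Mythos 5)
Your overall route---compute the extended tangent spaces and identify the normal space of $(g,h)$ with that of its reduction---is the same as the paper's, and your move (I) is sound: since $g_{k}(0)<0$, the generator $g_{k}e_{k}$ is a unit multiple of $e_{k}$, so the kernel of the forgetful projection lies in $T\mathcal{K}[G]_e(g,h)$. The gap is at move (II), exactly where you say the real work is, and the mechanism you propose there is not correct as stated. Straighten $h_{i}=x_1$ and let $e_{q+1}$ denote the basis vector of its slot. The kernel of ``restrict to $\{x_1=0\}$, then drop the $h_1$-slot'' is $x_1\mathcal{E}_n^{q+r}+\mathcal{E}_n e_{q+1}$, and $\mathcal{E}_n e_{q+1}$ is in general \emph{not} contained in $T\mathcal{K}[G]_e(g,h)$: the tangent space contains $x_1e_{q+1}$, $h_le_{q+1}$ and the coupled generator $v_1=\partial_{x_1}(g,h)=e_{q+1}+(\partial_{x_1}g,0,\partial_{x_1}h_2,\dots,\partial_{x_1}h_r)$, but not $e_{q+1}$ itself; e.g.\ for $(g,h)=(x_2^2+x_1,\,x_1)$ on $(\R^2,0)$ one checks directly that $e_2\notin T\mathcal{K}[G]_e(g,h)$. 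Hence plain restriction-and-drop does not descend to a map between the two normal spaces; the isomorphism must be built from the corrected assignment $w\mapsto w-w_{q+1}v_1$ (kill the $h_1$-component first) followed by restriction-and-drop. Moreover, once you set this up, the condition for an element $\xi_1v_1+\cdots$ of the tangent space to have vanishing $h_1$-component forces $\xi_1\in x_1\mathcal{E}_n+\langle h_2,\dots,h_r\rangle_{\mathcal{E}_n}$, so the coupled terms $\xi_1\partial_{x_1}g_j$ and $\xi_1\partial_{x_1}h_l$ are absorbed by the \emph{full} ideal $h^\ast\mathcal{M}_r\mathcal{E}_n^{q+r}$ (the $B$- and $A$-block contributions), not by the $x_1\mathcal{E}_n$ part alone; your matching claim is literally correct only when $r=1$, i.e.\ when no other equality constraint survives the move. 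The absorption does hold in general and the computation is short, but it is precisely the step you left as ``one must verify,'' so the proof is incomplete at its crux.

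For comparison, the paper makes this difficulty evaporate by normalizing first: since acting by $\mathcal{K}[G]$ changes neither the codimension of $(g,h)$ nor (by Lemma~\ref{lem:K[G]-equiv if original is so}) the $\mathcal{K}[G]$-class of its reduction, one may use Hadamard's lemma and the $B$/$A$-blocks to arrange that $g$ and the surviving components of $h$ do not depend on $x_1,\dots,x_{r-\ell}$ at all. Then $\partial_{x_j}(g,h)=e_{q+j}$ exactly for $j\le r-\ell$, the kernel of restrict-and-project is visibly contained in the tangent space, and the normal-space isomorphism follows at once, with all eliminated variables and slots handled simultaneously rather than by induction on elementary moves. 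The paper also deduces the $\mathcal{K}[G]$-statement from the $\mathcal{K}[G]_e$-one via the index relation ($\mathcal{K}[G]_e$-codimension $=$ $\mathcal{K}[G]$-codimension $-\,n+q+r$, Proposition~2.2 of \cite{HamadaHayanoTeramoto2025_1}) and the invariance of $-n+q+r$ under reduction, rather than re-running the bookkeeping with basepoint conditions, and it treats the submersion case separately. If you either insert this normalization step or carry out the ideal-absorption computation above, your elementary-moves argument closes.
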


\begin{proof}
If $(g,h)$ is a submersion, the $\mathcal{K}[G]$-codimensions and $\mathcal{K}[G]_e$-codimensions of $(g,h)$ and its reduction are all equal to $0$, in particular the statement holds. 
In what follows, we assume that $(g,h)$ is not a submersion. 
The $\mathcal{K}[G]_e$-codimension is the sum of the $\mathcal{K}[G]$-codimension and $-n+q+r$ by Proposition~2.2 \cite{HamadaHayanoTeramoto2025_1} and $-n+q+r$ is invariant under reduction. (Note that $n,q,r$ are respectively the number of variables, active inequality constraints, and active equality constraints.) 
It is thus enough to show the statement for the $\mathcal{K}[G]_e$-codimension. 

Lemma~\ref{lem:K[G]-equiv if original is so} implies that
\begin{math}
\mathcal{K} \left[ G \right]
\end{math}-action to 
\begin{math}
\left( g, h \right)
\end{math}
does not change 
\begin{math}
\mathcal{K} \left[ G \right]
\end{math}-class of its reduction. Therefore, we can assume $(k) = (s+1,\ldots, q)$, $(i)=(1,\ldots, r-\ell)$, and the following hold:
\begin{align*}
h \left( x \right) =& \left( x_1, \ldots, x_{r-\ell}, h_{r-\ell+1} \left( 0, \ldots, 0, x_{r-\ell+1}, \ldots, x_n \right), \ldots, h_r \left( 0, \ldots, 0, x_{r-\ell+1}, \ldots, x_n \right) \right),\\
g \left( x \right) =& \left( g_{1} \left( 0, \ldots, 0, x_{r-\ell+1}, \ldots, x_n \right), \ldots, g_q \left( 0, \ldots, 0, x_{r-\ell+1}, \ldots, x_n \right) \right), \\
\iota_{\left( i \right)} \left( y \right) =& \left( 0, y \right) \hspace{1em}(\mbox{where $y\in \R^{n-r+\ell}$}).
\end{align*}

By the definition, the tangent space $T\mathcal{K}[G]_e(g,h)$ is equal to 
\begin{equation}
t(g,h)(\mathcal{E}_n^{n}) + h^\ast \mathcal{M}_r \mathcal{E}_n^{q+r} + \left<g_1e_1,\ldots, g_qe_q\right>_{\mathcal{E}_n}.
\end{equation}
The image $t(g,h)(\mathcal{E}_n^{n})$ has the following generating set as an $\mathcal{E}_n$-module: 
\begin{align*}
&\left\{\left.\left(\sum_{i=1}^{q}\frac{\Pa g_i}{\Pa x_j}e_i +\sum_{i=1}^{r}\frac{\Pa h_i}{\Pa x_j}e_{i +q} \right)~\right|~j=1,\ldots, n\right\}\\
=&\left\{e_{q+1},\ldots, e_{q+r-\ell}\right\}\cup \left\{\left.\left(\sum_{i=1}^{q}\frac{\Pa g_i}{\Pa x_j}e_i +\sum_{i=r-\ell+1}^{r}\frac{\Pa h_i}{\Pa x_j}e_{i +q} \right)~\right|~j=r-\ell +1,\ldots, n\right\}.
\end{align*}
Furthermore, $\left<g_1e_1,\ldots, g_qe_q\right>_{\mathcal{E}_{n}}$ contains $\left<e_{s+1},\ldots, e_q\right>_{\mathcal{E}_{n}}$ since $g_{s+1},\ldots, g_q\in \mathcal{E}_n$ are units. 
Since $h^\ast \mathcal{M}_r$ contains $x_1,\ldots, x_{r-\ell}$, the tangent space $T\mathcal{K}[G]_e(g,h)$ contains 
\[
\left<x_1,\ldots, x_{r-\ell}\right>_{\mathcal{E}_n}\mathcal{E}_n^{q+r}+\left<e_{s+1},\ldots, e_{q+r-\ell}\right>_{\mathcal{E}_n},
\]
which is the kernel of the map $\iota_{(i)}^\ast \circ \rho:\mathcal{E}_n^{q+r} \to \mathcal{E}_{n-r+\ell}^{s+\ell}$, where $\rho:\mathcal{E}_n^{q+r}\to \mathcal{E}_{n}^{s+\ell}$ is the projection removing the $s+1,\ldots, q+r-\ell$-th components. 
We thus obtain:
\begin{align*}
&\mathcal{E}_n^{q+r} / T\mathcal{K}[G]_e(g,h)\\
\cong& \left.\left(\frac{\mathcal{E}_n^{q+r}}{\left<x_1,\ldots, x_{r-\ell}\right>_{\mathcal{E}_n}\mathcal{E}_n^{q+r}+ \left<e_{s+1},\ldots, e_{q+r-\ell}\right>_{\mathcal{E}_n}}\right)\right/\left(\frac{T\mathcal{K}[G]_e(g,h)}{\left<x_1,\ldots, x_{r-\ell}\right>_{\mathcal{E}_n}\mathcal{E}_n^{q+r}+\left<e_{s+1},\ldots, e_{q+r-\ell}\right>_{\mathcal{E}_n}}\right).
%&\mathcal{E}_{n-r+\ell}^{s+\ell} / \left( t((g,h)_{\iota_{(i)},(k)})(\mathcal{E}_{n-r+\ell}^{n-r+\ell}) + h_{\iota_{(i)}}^\ast \mathcal{M}_r \mathcal{E}_{n-r+\ell}^{s+\ell} + \left<g_1\circ \iota_{(i)} e_1,\ldots, g_s \circ \iota_{(i)} e_s\right>_{\mathcal{E}_{n-r+\ell}} \right). \\
\end{align*}
It is easy to see that the map $\iota_{(i)}^\ast \circ \rho$ sends $T\mathcal{K}[G]_e(g,h)$ to $T\mathcal{K}[G]_e(g,h)_{\iota_{(i)},(k)}$. 
Thus, $\mathcal{E}_n^{q+r} / T\mathcal{K}[G]_e(g,h)$ is isomorphic to $\mathcal{E}_{n-r+\ell}^{s+\ell} / T\mathcal{K}[G]_e(g,h)_{\iota_{(i)},(k)}$.  
\end{proof}

\subsection{Classification of generic constraint map-germs}
\label{subsec:thm5.1}

We now recall the main theorem in \cite{HamadaHayanoTeramoto2025_1}, which classifies constraint map-germs appearing in generic parameter families of constraints with up to four parameters. 
Compared with Tables 1, 2, and 3 in \cite{HamadaHayanoTeramoto2025_1}, the normal forms presented here have been slightly modified by eliminating signs that can be removed under the $\mathcal{K}[G]$-action. These modifications are not essential and preserve the completeness of the classification.
For the definitions of the $\mathcal{K}[G]$-codimension and $\mathcal{K}[G]$-determinacy, see \cite{HamadaHayanoTeramoto2025_1}.

%In \cite[Theorem~5.1]{HamadaHayanoTeramoto2025_1}, all the constraints are supposed to be fully-reduced. 
%Throughout, let $J^m(n,q+r)_0$ denote the space of $m$-jets of map-germs $(\mathbb{R}^n,0)\to(\mathbb{R}^{q+r},0)$, and define $W^m_{n,q,r}\; \subset\; J^m(n,q+r)_0$ to be the subset of $m$-jets whose equality constraint-germ $h$ has the Jacobi matrix of rank $0$ at $0$ (i.e.\ $m$-jets of fully-reduced constraint map-germs.). In \cite[Theorem~5.1]{HamadaHayanoTeramoto2025_1}, for a submanifold or a semi-algebraic subset $\Sigma\subset J^m(n,q+r)_0$, the value $\codim(\Sigma,J^m(n,q+r)_0)-n+q+r$ is called the \textit{extended codimension} of $\Sigma$ and denoted by $d_e(\Sigma)$ (See Definition~3.1 in \cite{HamadaHayanoTeramoto2025_1}). For the definition of determinacy, see \cite{HamadaHayanoTeramoto2025_1}.

\begin{theorem}[{\cite[Theorem~5.2]{HamadaHayanoTeramoto2025_1}}]\label{thm:5.1}

Suppose 
\begin{math}
n \gg q, r
\end{math}. 
Let $N$ be an $n$-manifold without boundary, $b\leq 4$, and $U\subset \R^b$ be an open subset. 
The set consisting of constraint mappings $(g,h)\in C^\infty(N\times U,\R^{q+r})$ with the following conditions is residual in $C^\infty(N\times U,\R^{q+r})$. 

\begin{enumerate}

\item 
For any $u\in U$ and ${\overline{x}}\in M(g_u,h_u)$, the corank of $(dh_u)_{\overline{x}}$ is at most $1$. 

\item 
For any $u\in U$ and ${\overline{x}}\in M(g_u,h_u)$ at which there is no active inequality constraint (i.e.,~there is no $k\in \{1,\ldots, q\}$ with $g_k({\overline{x}},u)=0$), a full reduction of the germ $(g,h):(N\times U,({\overline{x}},u))\to \R^{q+r}$ is $\mathcal{K}[G]$-equivalent to either the trivial family of the constant map-germ, or a versal unfolding of one of the germs in Table~\ref{table:generic constraint q=0} with the $\mathcal{K}[G]_e$-codimension at most $b$. 

\end{enumerate}

\noindent
In what follows, we will assume that $(g_u,h_u)$ has an active inequality constraint at $x\in M(g_u,h_u)$.

\begin{enumerate}

\addtocounter{enumi}{2}
\item 
For any $u\in U$ and ${\overline{x}}\in M(g_u,h_u)$ with $\corank((dh_u)_{\overline{x}})=0$, a full reduction of the germ $(g_u,h_u):(N,{\overline{x}})\to \R^{q+r}$ is $\mathcal{K}[G]$-equivalent to either a submersion-germ, or one of the germs in Table~\ref{table:generic constraint r=0} with stratum $\mathcal{K}[G]_e$-codimension at most $b$.
Furthermore, if a full reduction of $(g_u,h_u)$ is $\mathcal{K}[G]$-equivalent to the germ of neither type (6) nor type (10), a full reduction of $(g,h):(N\times U,({\overline{x}},u))\to \R^{q+r}$ is a versal unfolding of $(g_u,h_u)$. 

\item 
For any $({\overline{x}},u)\in N\times U$ with $\corank((dh_u)_{\overline{x}})=1$, a full reduction of the germ $(g_u,h_u):(N,{\overline{x}})\to \R^{q+r}$ is $\mathcal{K}[G]$-equivalent to one of the germs in Table~\ref{table:generic constraint q>0 r=1} with stratum $\mathcal{K}[G]_e$-codimension at most $b$ (in particular the number of active inequality constraints is at most $3$).
Furthermore, if a full reduction of $(g_u,h_u)$ is $\mathcal{K}[G]$-equivalent to the germ of neither type (4) nor type (8), a full reduction of $(g,h):(N\times U,({\overline{x}},u))\to \R^{q+r}$ is a versal unfolding of $(g_u,h_u)$. 

\end{enumerate}

\end{theorem}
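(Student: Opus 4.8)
The plan is to derive the statement from Thom's transversality theorem applied to a stratification of a jet space by $\mathcal{K}[G]$-orbits, following the Thom--Mather template for generic singularities \cite{Mather1968,Mather1969}. Write $m=\dim(N\times U)=n+b$. First I would fix a jet order $k$ large enough that every $\mathcal{K}[G]$-orbit of $\mathcal{K}[G]_e$-codimension at most $b$ is $k$-$\mathcal{K}[G]$-determined; this is possible because finite $\mathcal{K}[G]$-codimension forces finite $\mathcal{K}[G]$-determinacy. In the jet bundle $J^k(N\times U,\R^{q+r})$ the fibrewise action of the jet group $\mathcal{K}[G]^k$ decomposes each fibre into orbits, and by Mather's theory this yields a locally finite stratification in the codimension range of interest. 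The whole argument then reduces to controlling the codimensions of these strata and invoking transversality.

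I would first dispatch the corank bound (1). The locus in $J^1(N\times U,\R^{q+r})$ where $\corank(dh)\ge 2$ is the determinantal stratum of $r\times n$ differentials of rank $\le r-2$, whose fibrewise codimension is $2(n-r+2)$. Since $n\gg q,r$ and $b\le 4$, this exceeds $m$, so by transversality a residual set of mappings has $1$-jet extension disjoint from it; this gives (1) and lets me treat the regimes $\corank(dh)=0$ and $\corank(dh)=1$ separately. The algebraic heart, and what I expect to be the main obstacle, is the classification of the low-codimension orbits. Using finite determinacy together with the reduction machinery---Lemma~\ref{lem:K[G]-codim reduction}, which shows reduction preserves the $\mathcal{K}[G]_e$-codimension, and Lemma~\ref{lem:K[G]-equiv if original is so}, which shows a $\mathcal{K}[G]$-orbit is recovered from the orbit of its full reduction---I would reduce the enumeration to \emph{fully reduced} germs ($g(0)=0$, $dh_0=0$). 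For these, Lemma~\ref{lem:K[G] and local R-algebra} identifies the orbit with the local algebra $Q_k(h)$ together with the inequality components taken modulo the permutation-and-positive-unit freedom, and I would enumerate, in each of the three regimes displayed in Tables~\ref{table:generic constraint q=0}, \ref{table:generic constraint r=0}, and \ref{table:generic constraint q>0 r=1}, all orbits of $\mathcal{K}[G]_e$-codimension $\le 4$. This is a long case analysis analogous to Arnol'd's classification of simple singularities; the delicate points are \emph{completeness} (that no low-codimension orbit is omitted) and verifying finite determinacy of each normal form.

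With the finite list established, the genericity statement becomes a codimension count. By the relation $\mathcal{K}[G]_e\text{-codim}=\mathcal{K}[G]\text{-codim}-n+q+r$ recorded in Lemma~\ref{lem:K[G]-codim reduction}, an orbit of $\mathcal{K}[G]_e$-codimension $>b$ corresponds to a stratum whose occurrence in a $b$-parameter family has negative expected dimension. Thom's transversality theorem then produces a residual set of $(g,h)$ whose $k$-jet extension is transverse to every stratum, and transversality to such a stratum forces the jet extension to miss it entirely. Hence for generic $(g,h)$ every feasible germ, after full reduction (legitimate by Lemmas~\ref{lem:K[G]-equiv if reduction is so} and \ref{lem:K[G]-codim reduction}), is $\mathcal{K}[G]$-equivalent to a normal form in the appropriate table of $\mathcal{K}[G]_e$-codimension $\le b$.

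For the versality clauses I would argue that transversality of the jet extension to the orbit of a central germ $(g_u,h_u)$ is exactly infinitesimal $\mathcal{K}[G]$-versality of the $b$-parameter unfolding supplied by $U$: the partials $\partial/\partial u_1,\dots,\partial/\partial u_b$ must span the normal space to the orbit. By Mather's versality theorem, adapted to the geometric subgroup $\mathcal{K}[G]$, a germ of $\mathcal{K}[G]_e$-codimension $\le b$ is then versally unfolded, which yields the ``versal unfolding'' conclusions in (2)--(4). I expect the excepted types (the constant family, and types (6), (10), (4), (8)) to be precisely those strata carrying a modulus (positive modality), or those whose $\mathcal{K}[G]_e$-codimension saturates the bound $b$ with no room to spare, so that a generic $b$-parameter family meets them transversally without furnishing a versal unfolding; this is why they are singled out. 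Finally, since a finite intersection of residual sets is residual, intersecting the residual sets produced for (1)--(4) gives a single residual set on which all conclusions hold simultaneously.
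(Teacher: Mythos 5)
This statement is not proved in the present paper at all: Theorem~\ref{thm:5.1} is imported verbatim from the companion work \cite{HamadaHayanoTeramoto2025_1} (its Theorem~5.2), and Section~\ref{subsec:thm5.1} merely restates it with cosmetically adjusted normal forms. So there is no in-paper proof to compare against; what can be assessed is whether your sketch would constitute a proof of the cited result.

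Your scaffolding is the standard and surely the intended one---fiberwise stratification of the jet space by $\mathcal{K}[G]^k$-orbits, finite determinacy from finite $\mathcal{K}[G]$-codimension (this requires Damon's theory of geometric subgroups, since $\mathcal{K}[G]$ is not one of Mather's classical groups, but $\mathcal{K}[G]$ does satisfy Damon's axioms), Thom transversality to kill strata of codimension exceeding $n+b$, and the equivalence ``transversality to the orbit $\Leftrightarrow$ infinitesimal versality'' for the unfolding clauses. Your diagnosis of the excluded types is also correct: $(6)$, $(10)$ in Table~\ref{table:generic constraint r=0} and $(4)$, $(8)$ in Table~\ref{table:generic constraint q>0 r=1} are exactly the classes carrying a modulus ($\alpha$, $\alpha_{ij}$), so a generic $b$-parameter family is transverse to the \emph{stratum} (whose codimension is listed) but can fail, at isolated parameter values, to be transverse to the individual \emph{orbit} inside it, which is what versality requires. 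The genuine gap is that everything you have written is the routine outer shell, while the substance of the theorem is the part you defer in one sentence: the actual enumeration of all $\mathcal{K}[G]$-orbits of (stratum) $\mathcal{K}[G]_e$-codimension at most $4$ in each of the three regimes, with a proof of \emph{completeness} of the lists, plus the determinacy and codimension computations for every normal form in Tables~\ref{table:generic constraint q=0}--\ref{table:generic constraint q>0 r=1}. Without that case analysis the tables, and hence the theorem, are not established; transversality alone only tells you that \emph{whatever} the low-codimension orbits are, generic families meet only those. A secondary technical point you gloss over: conditions (3)--(4) classify the restricted germ $(g_u,h_u)$ on $N$ for fixed $u$ while simultaneously asserting versality of the family germ on $N\times U$, so the transversality argument must be run for the jet extension taken fiberwise relative to the projection $N\times U\to U$ (relative/multijet transversality), not for the plain jet extension on $N\times U$ as in your setup for item (1).
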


\vspace*{1em}

\begin{table}[htbp]
\renewcommand{\arraystretch}{1.2}
 \begin{center}
  \begin{tabular}{|c|l|c|c|c|} \hline
   type & jet & range &$\mathcal{K}$-determinacy & $\mathcal{K}[G]_e$-cod. \\ \hline
   $(1,k)$ & $x_1^k +\sum_{j=2}^{n}\epsilon_jx_j^2 $ & $2\leq k \leq 5$ &$k$& $k-1$ \\
\hline   $(2)$ & $x_1^3 +\epsilon_2 x_1 x_2^2 + x_3^2+ \sum_{j=4}^{n}\epsilon_j x_j^2$ & &$3$ & $4$ \\
 \hline
  \end{tabular}
  \caption{The $\mathcal{K}[G]$-equivalent classes of map-germs without inequality constraints appearing as a full reduction of a generic four-parameter family of constraint mappings, where $\epsilon_j \in \{1,-1\}$.}
  \label{table:generic constraint q=0}
 \end{center}
\end{table}

\begin{landscape}
\begin{table}[tp]
 \begin{center}
{\renewcommand{\arraystretch}{1.2}
  \begin{tabular}{|c|l|c|c|c|c|} \hline
   type& $\tilde{g} \left( x_{l+1}, \ldots, x_n \right)$ & $l$ & range & 
$\mathcal{K}[G]$-determinacy
& \begin{minipage}[c]{20mm}
\V{.2em}
(stratum)

$\mathcal{K}[G]_e$-cod.
\V{.2em}
\end{minipage}
\\ \hline
   $(1,k)$ & $\epsilon_q x_q^k + \sum_{j=q+1}^n \epsilon_jx_j^2$ & \multirow{2}{*}{$q-1$} & $2\leq k \leq 5$ &$k$ &$k-1$ \\ \cline{1-2}\cline{4-6}
   $(2)$ & $x_q^3 +\epsilon_{q+1} x_q x^2_{q+1} + \sum_{j=q+2}^n \epsilon_j x_j^2$ &&  &$3$& $4$ \\ \hline
   $(3,k)$ & $\epsilon_{q-1} x_{q-1}^k + \sum_{j=q}^n \epsilon_{j} x_j^2$ & &  $2\leq k \leq 4$ &$k$& $k$ \\ \cline{1-2}\cline{4-6}
   $(4,k)$ & $\epsilon_{q} x_q^k + x_{q-1} x_q + \sum_{j=q+1}^n \epsilon_{j} x_j^2$ & $q-2$ & $k=3,4$ &$k$& $k$ \\ \cline{1-2}\cline{4-6}
   $(5)$ & $\epsilon_{q-1} x_{q-1}^2 +x_q^3 + \sum_{j=q+1}^n \epsilon_{j} x_j^2$ & &  &$3$& $4$ \\ \hline
   $(6)$ & $\sum_{j=1}^{2} \delta_{j} x_{q-j}^2 + \alpha x_{q-2} x_{q-1} + \sum_{j=q}^n \epsilon_{j} x_j^2$ & & $\alpha\in \R,\delta_j=\pm 1, (\ast)$ &$2$& $3$ \\ \cline{1-2}\cline{4-6}
   $(7)$ & $\epsilon_{q-2} \left( x_{q-2}+ \epsilon_{q-1}' x_{q-1} \right)^2 +\epsilon_{q-1} x_{q-1}^3 + \sum_{j=q}^n \epsilon_{j} x_j^2$ &  & &$3$& $4$ \\ \cline{1-2}\cline{4-6}
   $(8)$ & $\epsilon_{q-2} x_{q-2}^3 +\epsilon_{q-1} x_{q-1}^2 +\epsilon_{q-1}' x_{q-2} x_{q-1} + \sum_{j=q}^n \epsilon_{j} x_j^2$ &$q-3$ &  &$3$& $4$ \\ \cline{1-2}\cline{4-6}
   $(9)$ & $\begin{matrix*}[l] &x_q^3  +\epsilon_{01} x_{q-1} x_q+\epsilon_{02} x_{q-2} x_q \\ &\epsilon_{12} x_{q-2} x_{q-1} + \sum_{j=q+1}^n \epsilon_{j} x_j^2\end{matrix*}$ & && $3$ & $4$ \\ \hline
   $(10)$ & $\begin{matrix*}[l] &\sum_{j=1}^3 \delta_j x_{q-4+j}^2 \\ &+ \sum_{1 \le i < j \le 3} \alpha_{ij} x_{q-4+i} x_{q-4+j} \\ &\epsilon_{0} x_{q-3} x_{q-2} x_{q-1} + \sum_{j=q}^n \epsilon_{j} x_j^2\end{matrix*}$ & $q-4$ & $
\alpha_{ij}\in \R,\delta_j=\pm 1, (\ast\ast)$ &$3$& $4$ \\
\hline
  \end{tabular}
}
  \captionsetup{singlelinecheck=off}
  \caption[.]{Normal forms 
$  	\left( x_1, \ldots, x_{q-1}, \sum_{j=1}^{l_1} x_j - \sum_{j=l_1+1}^l x_j + \tilde{g} \left( x_{l+1}, \ldots, x_n \right) \right)$
  of map-germs without equality constraints appearing as a full reduction of a generic four-parameter family of constraint mappings, where $\epsilon_j,\epsilon_j',\epsilon_{ij}\in \{1,-1\}$, $0\leq l_1 \leq \lceil \frac{l}{2} \rceil$, $(\ast)$ (for type $(6)$) is the condition $4 \delta_{1} \delta_{2} - \alpha^2 \neq 0$, and $(\ast\ast)$ (for type $(10)$) is the following condition: 
\begin{center}
$4 \delta_i \delta_j - \alpha_{ij}^2 \neq 0\hspace{.3em}(i,j\in\{1,2,3\},\hspace{.3em}i\neq j),\hspace{.3em} 4 \delta_1 \delta_2 \delta_3 + \alpha_{12} \alpha_{13} \alpha_{23} - \delta_3 \alpha_{12}^2 - \delta_{2} \alpha_{13}^2 - \delta_{3} \alpha_{23}^2 \neq 0.$
\end{center}
%Note that the values in the far right column are the extended codimensions of $B_{\mathrm{type}}$, which are not necessarily equal to the $\mathcal{K}[G]_e$-codimensions of the corresponding map-germs (especially for types (6) and (10)). 
}
  \label{table:generic constraint r=0}
 \end{center}
\end{table}
\end{landscape}

\begin{table}[tbp]
 \begin{center}
{\renewcommand{\arraystretch}{1.2}
  \begin{tabular}{|c|l|c|c|c|c|} \hline
   type & $h$ &$q$& range &\begin{minipage}[c]{10mm}
\centering
\V{.2em}
$\mathcal{K}[G]$-

det.
\V{.2em}
\end{minipage}& \begin{minipage}[c]{20mm}
\V{.2em}
(stratum)

$\mathcal{K}[G]_e$-cod.
\V{.2em}
\end{minipage}\\ \hline
   $(1,k)$ & $x_1^k + \sum_{j=2}^n \epsilon_{j} x_j^2$ & &$2\leq k \leq 4$ &$k$ & $k$ \\ \cline{1-2}\cline{4-6}
   $(2)$ & $x_2^3 + x_1^2+ \sum_{j=3}^n \epsilon_{j} x_j^2$ &$1$&  &$3$& $4$ \\ \cline{1-2}\cline{4-6}
   $(3,k)$ & $x_2^k +\epsilon_{1} x_1 x_2 + \sum_{j=3}^n \epsilon_{j} x_j^2$ && $3\leq k \leq 4$ &$k$& $k$ \\ \hline
   $(4)$ & $\delta_{1} x_1^2 + \delta_{2} x_2^2 + \alpha x_1 x_2 + \sum_{j=3}^n \epsilon_{j} x_j^2$ &\multirow{4}{*}{$2$}&$\alpha\in \R, \delta_j=\pm 1, (\ast)$ &$2$& $3$ \\ \cline{1-2}\cline{4-6}
   $(5)$ & $x_1^3 +\epsilon_{1} x_2^2 +\epsilon_{2} x_1 x_2 + \sum_{j=3}^n \epsilon_{j} x_j^2$ &&&$3$& $4$ \\ \cline{1-2}\cline{4-6}
   $(6)$ & $\left( x_1 +\epsilon_{1} x_2 \right)^2 +\epsilon_{2} x_2^3 + \sum_{j=3}^n \epsilon_{j} x_j^2$ && &$3$& $4$ \\ \cline{1-2}\cline{4-6}
   $(7)$ & $\begin{matrix}
x_3^3+ \epsilon_{1} x_2 x_3 +\epsilon_{2} x_3 x_1 +\epsilon_{3} x_1 x_2\\ + \sum_{j=4}^n \epsilon_{j} x_j^2
\end{matrix}$ & & &$3$& $4$ \\ \hline
   $(8)$ & $\begin{matrix}
\sum_{j=1}^3 \delta_j x_j^2 + \sum_{1 \le i < j \le 3} \alpha_{ij} x_i x_j \\ +\epsilon_{1} x_1 x_2 x_3 + \sum_{j=4}^n \epsilon_{j} x_j^2\end{matrix}$ &$3$& $\alpha_{ij}\in \R, \delta_j=\pm 1, (\ast\ast)$ &$3$& $4$ \\
\hline
  \end{tabular}
}
\caption{Normal forms $(g_1(x),\ldots, g_q(x),h(x))=\left(x_1,\ldots, x_q,h(x)\right)$ of map-germs with equality/inequality constraints appearing as a full reduction of a generic four-parameter family of constraint mappings, where $\epsilon_j\in \{1,-1\}$, $(\ast)$ and $(\ast\ast)$ are the same conditions as those in Table~\ref{table:generic constraint r=0}.
%Note that the extended codimensions in the table are not necessarily equal to the $\mathcal{K}[G]_e$-codimensions of the corresponding map-germs (especially for types (4) and (8)). 
}
  \label{table:generic constraint q>0 r=1}
 \end{center}
\end{table}

\section{Definitions and properties of constraint qualifications} \label{sec:CQ-definition_and_properties}
In this section, we provide precise definitions of the four classical CQs and prove that these are invariant under $\mathcal{K}[G]$-equivalence (Theorem~\ref{thm:invariance four CQs under K[G]-eq}) and reductions (Theorem~\ref{thm:invariance four CQs under reduction}). 
Although CQs were originally defined for constraint mappings, we will deal with map-germs $(g,h):(\R^n,0)\to \R^{q+r}$ since all the CQs discussed in this paper are local properties.
In what follows, we assume that the set-germ $M(g,h)$ is not empty (i.e.,~$g_j(x)\leq 0$ for any $j\in \{1,\ldots, q\}$).
Let 
\begin{math}
I= \left\{ j \in \left\{ 1, \ldots, q \right\} \middle| g_j \left( 0 \right) = 0 \right\}
\end{math}
be the set of indices of the active inequality constraints and 
\begin{math}
g_{I} = \left( g_j \right)_{j \in I}
\end{math}.

\begin{definition}[LICQ \cite{Hestenes1966}]\label{def:LICQ}
A constraint 
\begin{math}
\left( g, h \right)
\end{math}
satisfies \emph{linear independence constraint qualification (LICQ)} if the Jacobi matrix of 
\begin{math}
\left( g_{I}, h \right)
\end{math}
has corank 
\begin{math}
0
\end{math}.
\end{definition}
\begin{definition}[Mangasarian-Fromovitz, MFCQ \cite{Mangasarian1967}]\label{def:MFCQ}
A constraint 
\begin{math}
\left( g, h \right)
\end{math}
satisfies \emph{Mangasarian-Fromovitz constraint qualification (MFCQ)}
if the Jacobi matrix of 
\begin{math}
h
\end{math}
has corank $0$ and there exists a vector 
\begin{math}
d \in \mathbb{R}^n
\end{math}
such that 
\begin{math}
dg_{j,0} \left( d \right) < 0
\end{math}
holds for all 
\begin{math}
j \in I
\end{math}
and 
\begin{math}
dh_{j,0} \left( d \right) = 0
\end{math}
holds for all 
\begin{math}
j \in \left\{ 1, \ldots, r \right\}
\end{math}. We call such a vector 
\begin{math}
d
\end{math}
an \emph{MF-vector} of the constraint in what follows.
\end{definition}
\noindent
Let 
%\begin{math}
%M \left( g, h \right) = \left\{ x \in \left( \mathbb{R}^n, 0 \right) \middle| g \left( x \right) \le 0, h \left( x \right) = 0 \right\}
%\end{math}
%be the feasible set-germ at the origin defined by 
%\begin{math}
%\left( g, h \right)
%\end{math}, 
\begin{equation}
C^+ \left( g, h \right) = \left\{ d \in \mathbb{R}^n \middle| \exists \left\{ x_l \right\}_l \subset M \left( g, h \right), \lim_{l \rightarrow \infty} x_l = 0, \exists \left\{ t_l \right\}_l \subset \mathbb{R}_{>0}, d = \lim_{l \rightarrow \infty} t_l x_l \right\}
\end{equation}
be the tangent cone of the feasible set-germ $M(g,h)$, and 
\begin{equation}
L^+ \left( g, h \right) = \left\{ d \in\mathbb{R}^n \middle| \forall j \in I, dg_{j,0}\left( d \right) \le 0, dh_0 \left( d \right) = 0 \right\}
\end{equation}
be the linearized cone of $(g,h)$.
Note that when we consider a constraint map-germ without (in)equality constraints, we denote its tangent cone by $C^+(g)$ or $C^+(h)$, and the same for the linearized cone.
The reader can refer to \cite{Bazarra1974} for basic properties of tangent/linearized cones.

\begin{definition}[Abadie, ACQ \cite{Abadie1967}] \label{def:ACQ}
A constraint 
\begin{math}
\left( g, h \right)
\end{math}
satisfies \emph{Abadie constraint qualification (ACQ)} if 
\begin{math}
C^+ \left( g, h \right) = L^+ \left( g, h \right)
\end{math}.
\end{definition}
\noindent
For a subset 
\begin{math}
X \subset \mathbb{R}^n
\end{math}, 
let
\begin{math}
X^\circ = \left\{ v \in \mathbb{R}^n\middle| \forall d \in X, v \cdot d \le 0 \right\}
\end{math}, which is called the \emph{polar} of $X$.

\begin{definition}[Guignard, GCQ \cite{Guignard1969}] \label{def:GCQ}
A constraint 
\begin{math}
\left( g, h \right)
\end{math}
satisfies \emph{Guignard constraint qualification (GCQ)} if 
\begin{math}
C^+ \left( g, h \right)^\circ = L^+ \left( g, h \right)^\circ
\end{math}.
\end{definition}

\begin{theorem}\label{thm:invariance four CQs under K[G]-eq}
Above constraint qualifications \textnormal{LICQ, MFCQ, ACQ, GCQ} are invariant under the action of 
\begin{math}
\mathcal{K} \left[ G \right]
\end{math}.
\end{theorem}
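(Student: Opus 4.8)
The plan is to exploit the fact that every element $(\Phi,\Psi)\in\mathcal{K}[G]$ factors into two operations whose effects can be analyzed independently: a source diffeomorphism $\Phi$, sending $(g,h)$ to $(g\circ\Phi^{-1},h\circ\Phi^{-1})$, followed by a target (algebraic) change $(\tilde g,\tilde h)\mapsto (C\tilde g+B\tilde h,\,A\tilde h)$, where $C,B,A$ are the block components of $\Psi$ regarded as functions of the source variable $x$. Writing $(\tilde g,\tilde h)=(g\circ\Phi^{-1},h\circ\Phi^{-1})$, one has $(\Phi,\Psi)\cdot(g,h)=(C\tilde g+B\tilde h,\,A\tilde h)$, so it suffices to prove invariance of each of the four CQs under the source change and under the target change separately, and then combine.

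For the source change I would argue as follows. Since $\Phi(0)=0$, the active set $I$ is unchanged, and by the chain rule the Jacobians of $g_I$ and of $h$ are multiplied on the right by the invertible matrix $(d\Phi^{-1})_0$; this preserves both coranks, giving LICQ and the rank condition in MFCQ. An MF-vector $d$ is carried to the MF-vector $(d\Phi)_0(d)$, so MFCQ is preserved. Because the feasible set transforms as $M(g\circ\Phi^{-1},h\circ\Phi^{-1})=\Phi(M(g,h))$ and $\Phi$ is a diffeomorphism fixing $0$, a standard limit argument shows that the tangent cone transforms linearly, $C^+(g\circ\Phi^{-1},h\circ\Phi^{-1})=(d\Phi)_0\,C^+(g,h)$, and a direct computation shows the linearized cone transforms by the same map $(d\Phi)_0$. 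Hence $C^+=L^+$ (ACQ) is preserved, and since a linear isomorphism $L$ sends polars by $(L^{-1})^\top$, equality of polars $C^{+\circ}=L^{+\circ}$ (GCQ) is preserved as well.

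For the target change I would first observe that this operation fixes the feasible set-germ exactly (it is the identity on the source): invertibility of $A(0)$ leaves the zero set of $\tilde h$ unchanged, while $C(x)\in G_{gp}$ — a positive diagonal matrix times a permutation matrix whose permutation part $\sigma$ is locally constant, hence constant as a germ — preserves the sign structure of the active inequalities. Thus $C^+$ is literally unchanged, which immediately settles the tangent-cone side of ACQ and GCQ. The crucial computation is of the differentials at $0$: the product-rule terms involving derivatives of the coefficient matrices drop out, because $h(0)=0$ annihilates the $B$- and $A$-coefficient terms, and for each active index $j$ the permutation structure of $C$ forces the surviving coefficient to multiply $g_{\sigma(j)}$, with $g_{\sigma(j)}(0)=0$. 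This yields $dh'_0=A(0)\,dh_0$ and, for $j$ active, $dg'_{j,0}=c_j\,dg_{\sigma(j),0}+\sum_k B_{jk}(0)\,dh_{k,0}$ with $c_j>0$. From these formulas the corank of $dh_0$ and the span of the active gradients modulo $\langle dh_{k,0}\rangle_k$ are unchanged (LICQ); the same vector $d$ remains an MF-vector (MFCQ); and the linearized cone is literally unchanged (ACQ), so GCQ follows since both cones are unchanged.

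Combining the two steps gives invariance under the full action. I expect the main obstacle to be the target-change step — specifically, verifying rigorously that the derivative-of-coefficient terms vanish at the origin, which rests on the constancy of the permutation part of $C$ together with the vanishing of $h$ and of the active $g_i$ at $0$. Once those terms are discarded, the differential formulas above make every CQ transparently invariant, and the source-change step reduces to routine functoriality of cones and Jacobians under a diffeomorphism.
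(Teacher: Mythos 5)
Your proof is correct, but it is organized around a different decomposition than the paper's. You factor the group action as (source diffeomorphism) followed by (fiberwise linear target change with coefficients $C,B,A$), and prove invariance for each factor separately; the paper instead works with the combined identity $\bigl(g'\circ\phi,\,h'\circ\phi\bigr)=\bigl(Cg+Bh,\,Ah\bigr)$ throughout, inlining both effects into each of the four CQ arguments (differentiating the combined equation for MFCQ, pushing approximating sequences through $\phi$ for ACQ, and an adjoint computation $\bigl(d\phi_0(S)\bigr)^\circ=\bigl(\left(d\phi_0\right)^*\bigr)^{-1}(S^\circ)$ for GCQ). The genuinely new simplification in your route is the observation that the pure target change fixes the feasible set-germ \emph{exactly} (invertibility of $A(x)$ preserves $h^{-1}(0)$, and $C(x)$, being a positive-diagonal-times-constant-permutation germ, preserves the sign conditions), so $C^+$ and $L^+$ are literally unchanged and ACQ/GCQ for that factor are immediate; all analytic work (the Taylor/limit argument for tangent cones and the polar/adjoint argument) is then isolated in the diffeomorphism factor, where it is standard functoriality. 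What the paper's approach buys in exchange is that it never needs to justify the factorization or track the correspondence of active sets under the permutation $\sigma$ separately: by assuming at the outset that all inequality constraints are active ($g(0)=0$), the product-rule terms you worry about (derivatives of $C,B,A$ hitting $g(0)$ and $h(0)$) vanish for the same reason as in your argument, and each CQ is dispatched in one pass. Your identified ``main obstacle'' --- vanishing of the derivative-of-coefficient terms, resting on constancy of the permutation part and on $h(0)=0$, $g_{\sigma(j)}(0)=0$ for active $j$ --- is handled exactly by the observations you state, so there is no gap; this is the same mechanism the paper uses implicitly when it writes $dg'_0\circ d\phi_0=C(0)\,dg_0+B(0)\,dh_0$.
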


\begin{proof}

Let $(g,h)$ be a constraint map-germ, \begin{math}
\phi \colon \left( \mathbb{R}^n, 0 \right) \rightarrow \left( \mathbb{R}^n, 0 \right)
\end{math} be a diffeomorphism-germ, and
\begin{math}
C \colon \left( \mathbb{R}^n, 0 \right) \rightarrow G_{qp}
\end{math}, 
\begin{math}
B \colon \left( \mathbb{R}^n, 0 \right) \rightarrow M_{q,r} \left( \mathbb{R} \right)
\end{math}, and 
\begin{math}
A \colon \left( \mathbb{R}^n, 0 \right) \rightarrow GL \left( r, \mathbb{R} \right)
\end{math} be map-germs. 
We define $(g',h')$ as follows:
\begin{equation}
\begin{pmatrix}
g' \circ \phi \left( x \right) \\
h' \circ \phi \left( x \right)
\end{pmatrix}
= \left(
\begin{array}{c|c}
C \left( x \right) & B \left( x \right) \\
\midrule
O_{r,q} & A \left( x \right)
\end{array}
\right)
\begin{pmatrix}
g \left( x \right) \\
h \left( x \right)
\end{pmatrix}. \label{eq:kgeq}
\end{equation}

\noindent
In what follows, we assume that $(g,h)$ satisfies the CQs and show that so does $(g',h')$.
Since inactive inequality constraints at the origin are irrelevant for the CQs, we assume that all the inequality constraints are active, i.e., 
\begin{math}
g \left( 0 \right) = 0
\end{math}
in this proof. 

\noindent
{\bf LICQ:} Since 
\begin{math}
\mathcal{K} \left[ G \right]
\end{math}
is a subgroup of 
\begin{math}
\mathcal{K}
\end{math} and the action of $\mathcal{K}$ preserves the rank of the Jacobi matrix, it is obvious that $(g',h')$ also satisfies LICQ.

\noindent
{\bf MFCQ:}
By the assumption,  
\begin{math}
h
\end{math}
and 
\begin{math}
h'
\end{math}
are 
\begin{math}
\mathcal{K}
\end{math}-equivalent. Since 
\begin{math}
dh_0
\end{math}
has corank $0$, 
\begin{math}
dh'_0
\end{math}
has corank $0$ as well. Since the constraint 
\begin{math}
\left( g, h \right)
\end{math}
satisfies \textnormal{MFCQ} at the origin, there exists an \textnormal{MF}-vector 
\begin{math}
d \in \mathbb{R}^n
\end{math}
such that 
\begin{math}
dg_0 \left( d \right) < 0
\end{math}
and 
\begin{math}
dh_0 \left( d \right) = 0
\end{math}. 
By differentiating both hand sides of Eq.~\eqref{eq:kgeq} by 
\begin{math}
x
\end{math}
and taking the inner product with the vector 
\begin{math}
d
\end{math}, we obtain
\begin{align}
\left( dg'_0 \circ d\phi_0 \left( d \right), dh'_0 \circ d\phi_0 \left( d \right) \right) &= \left( C \left( 0 \right) dg_0 \left( d \right) + B \left( 0 \right) dh_0 \left( d \right), A \left( 0 \right) dh_0 \left( d \right) \right) \\
&= \left( C \left( 0 \right) dg_0 \left( d \right), 0 \right), 
\end{align}
where we used 
\begin{math}
dh_0 \left( d \right) = 0
\end{math}. In addition, 
\begin{math}
C \left( 0 \right) dg_0 \left( d \right) < 0
\end{math}
holds because 
\begin{math}
dg_0 \left( d \right) < 0
\end{math}
and 
\begin{math}
C \left( 0 \right)
\end{math}
is a generalized permutation matrix. This proves 
\begin{math}
dg'_0 \left( d\phi_0 \left( d \right) \right) < 0
\end{math}
and 
\begin{math}
dh'_0 \left( d\phi_0 \left( d \right) \right) = 0
\end{math}, which implies that 
\begin{math}
d\phi_0 \left( d \right)
\end{math}
is an \textnormal{MF}-vector of the constraint 
\begin{math}
\left( g', h' \right)
\end{math}.
This proves that \textnormal{MFCQ} is invariant under the action of 
\begin{math}
\mathcal{K} \left[ G \right]
\end{math}.

\noindent
{\bf ACQ:} 
Since 
\begin{math}
C^+ \left( g', h' \right) \subset L^+ \left( g', h' \right)
\end{math}
always holds, it is enough to show 
\begin{math}
C^+ \left( g', h' \right) \supset L^+ \left( g', h' \right)
\end{math}. 
If 
\begin{math}
d \in L^+ \left( g', h' \right)
\end{math}, then 
\begin{math}
\left( d\phi_0 \right)^{-1} \left( d \right) \in L^+ \left( g, h \right)
\end{math}
holds by using the similar argument in case of \textnormal{MFCQ}. Since 
\begin{math}
C^+ \left( g, h \right) = L^+ \left( g, h \right)
\end{math}
holds, 
\begin{math}
\left( d\phi_0 \right)^{-1} \left( d \right) \in C^+ \left( g, h \right)
\end{math}
holds as well. By definition, there exists sequences 
\begin{math}
\left\{ x_l \right\}_l \subset M \left( g, h \right)
\end{math}
and 
\begin{math}
\left\{ t_l \right\}_l \subset \mathbb{R}_{>0}
\end{math}
such that 
\begin{math}
\lim_{l \rightarrow \infty} x_l = 0
\end{math}
and 
\begin{math}
\left( d\phi_0 \right)^{-1} \left( d \right) = \lim_{l \rightarrow \infty} t_l x_l
\end{math}
holds. In that case, the sequence 
\begin{math}
\left\{ \phi \left( x_l \right) \right\}_l \subset M \left( g', h' \right)
\end{math}
satisfies 
\begin{math}
\lim_{l \rightarrow \infty} \phi \left( x_l \right) = 0
\end{math}
and 
\begin{align}
\lim_{l \rightarrow \infty} t_l \phi \left( x_l \right) &= \lim_{l \rightarrow \infty} t_l \left( \phi \left( x_l \right) - \phi \left( 0 \right) \right) \\
&= \lim_{l \rightarrow \infty} \left( t_l d\phi_0 \left( x_l \right) + t_l O \left( \left\| x_l \right\|^2 \right) \right)  \\
&= d \phi_0 \left( \lim_{l \rightarrow \infty} t_l x_l \right) = d.
\end{align}
This implies that 
\begin{math}
d \in C^+ \left( g', h' \right)
\end{math}.

\noindent
{\bf GCQ:}
The argument in case of \textnormal{ACQ} shows $d\phi_0 \left( C^+ \left( g, h \right) \right) = C^+ \left( g', h' \right)$ and 
$d\phi_0 \left( L^+ \left( g, h \right) \right) = L^+ \left( g', h' \right)$.
This implies that 
\begin{align}
C^+ \left( g', h' \right)^\circ &= d\phi_0 \left( C^+ \left( g, h \right) \right)^\circ \\
&= \left\{v\in \R^n~|~\forall d\in C^+(g,h),\hspace{.5em}  v\cdot d\phi_0(d)\leq 0\right\}\\
&= \left\{v\in \R^n~|~\forall d\in C^+(g,h),\hspace{.5em}  \left(d\phi_0\right)^\ast (v)\cdot d\leq 0\right\}\\
&=\left(\left( d\phi_0 \right)^*\right)^{-1} \left( C^+ \left( g, h \right)^\circ \right) \\
&= \left(\left( d\phi_0 \right)^*\right)^{-1} \left( L^+ \left( g, h \right)^\circ \right) \\
&= d\phi_0 \left( L^+ \left( g, h \right) \right)^\circ \\
&= L^+ \left( g', h' \right)^\circ,
\end{align}
where 
\begin{math}
\left( d\phi_0 \right)^*
\end{math}
is the adjoint of 
\begin{math}
d\phi_0
\end{math}. 
\end{proof}

We next discuss invariance of CQs under reductions.
Let 
\begin{math}
\left( g, h \right) \colon \left( \mathbb{R}^n, 0 \right) \rightarrow \mathbb{R}^q \times \mathbb{R}^r
\end{math}
be a constraint map-germ, and $\left( g, h \right)_{\iota_{\left( i \right)}, \left( k \right)}$ be a reduction of $(g,h)$ relative to $\left( k \right) = \left( k_1, \ldots, k_{q-s} \right)$ and \begin{math}
\left( i \right) = \left( i_1, \ldots, i_{r-\ell} \right)
\end{math}.
%
%In what follows, we show that such a constraint map-germ 
%\begin{math}
%\left( g, h \right)
%\end{math}
%satisfies LICQ, MFCQ, ACQ, and GCQ at the origin if and only if its reduction 
%\begin{math}
%\left( g, h \right)_{\iota_{\left( i \right)}, \left( k \right)}
%\end{math}
%satisfies LICQ, MFCQ, ACQ, and GCQ, respectively. 

\begin{theorem}\label{thm:invariance four CQs under reduction}
A constraint map-germ 
\begin{math}
\left( g, h \right)
\end{math}
satisfies LICQ, MFCQ, ACQ, and GCQ at the origin if and only if its reduction 
\begin{math}
\left( g, h \right)_{\iota_{\left( i \right)}, \left( k \right)}
\end{math}
satisfies LICQ, MFCQ, ACQ, and GCQ, respectively.
\end{theorem}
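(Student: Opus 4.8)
The plan is to reduce the claim to a clean linear‑algebraic comparison of the relevant cones and Jacobians across the two source dimensions. Since the inactive inequalities satisfy $g_{k_i}(0)<0$, they do not belong to the active set $I$, do not enter $L^+$, and leave $M(g,h)$ unchanged near the origin; hence their removal affects none of the four CQs, and I may concentrate entirely on the restriction to the submanifold cut out by $(h_{i_1},\ldots,h_{i_{r-\ell}})$. Invoking Theorem~\ref{thm:invariance four CQs under K[G]-eq}, together with the fact that any two immersions onto the same submanifold differ by a source diffeomorphism (an element of $\mathcal{K}[G]$ with trivial target part, so that the CQ‑validity of the reduction is independent of the choice of $\iota_{(i)}$), I may pass to the coordinates already used in Lemma~\ref{lem:K[G]-equiv if reduction is so}, in which $h_{i_j}(x)=x_j$ for $1\le j\le r-\ell$ and $\iota_{(i)}(y)=(0,y)$. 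In these coordinates $d\iota_{(i)}$ is the standard isometric inclusion of $\mathbb{R}^{n-r+\ell}$ onto the subspace $V=\{x_1=\cdots=x_{r-\ell}=0\}$.

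The core of the argument is three identifications between the original germ and its reduction. First, because the equality constraints force $M(g,h)\subset V$ while $\iota_{(i)}$ carries the feasible set of the reduction bijectively onto $M(g,h)$, the tangent cones satisfy $C^+(g,h)=d\iota_{(i)}\bigl(C^+((g,h)_{\iota_{(i)},(k)})\bigr)$; the verification is the sequence argument from the ACQ case of Theorem~\ref{thm:invariance four CQs under K[G]-eq}, simplified by the linearity of $\iota_{(i)}$. Second, since $dh_0$ contains the rows $dx_1,\ldots,dx_{r-\ell}$, the requirement $dh_0(d)=0$ in $L^+(g,h)$ pins $d$ into $V$, and the remaining conditions coincide with those defining the reduction's linearized cone, giving $L^+(g,h)=d\iota_{(i)}\bigl(L^+((g,h)_{\iota_{(i)},(k)})\bigr)$. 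Third, splitting each row of the Jacobian of $(g_I,h)$ into its components along $V^\ast$ and $(V^\perp)^\ast$, and using that the $dx_j$ rows span $(V^\perp)^\ast$, one obtains $\operatorname{rank}=(r-\ell)+\operatorname{rank}(\text{reduced Jacobian})$, whence the two Jacobians have equal corank; the same bookkeeping applied to $h$ alone shows that $dh_0$ and $d(h_{\iota_{(i)}})_0$ have equal corank.

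With these identifications in hand the first three equivalences follow quickly. LICQ, being corank $0$ of the Jacobian of $(g_I,h)$, transfers by the corank identity, and the corank‑$0$ half of MFCQ transfers by the same identity applied to $h$; the existence of an MF‑vector transfers because $dh_0(d)=0$ forces any candidate into $V$, where it corresponds under $d\iota_{(i)}$ to an MF‑vector of the reduction and conversely. ACQ is then immediate: $d\iota_{(i)}$ is a linear isomorphism onto $V$ and both cones lie in $V$, so $C^+(g,h)=L^+(g,h)$ holds if and only if the corresponding equality holds for the reduction.

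The main obstacle is GCQ, where the polars are taken in the two ambient spaces $\mathbb{R}^n$ and $\mathbb{R}^{n-r+\ell}$ and must be matched across the dimension gap. The key auxiliary fact is that for any cone $K\subseteq V$ the ambient polar decomposes as $K^\circ=K^{\circ_V}\oplus V^\perp$, where $K^{\circ_V}$ is the polar taken within $V$; this is seen by writing $v=v_V+v_\perp$ and noting $v\cdot d=v_V\cdot d$ for all $d\in V$. Applying this to both $C^+(g,h)$ and $L^+(g,h)$ shows that their ambient polars agree if and only if their polars within $V$ agree, and the latter, transported through the isometry $d\iota_{(i)}\colon\mathbb{R}^{n-r+\ell}\cong V$, is precisely the assertion that GCQ holds for the reduction. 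Keeping this polar bookkeeping straight---ambient polar versus polar within $V$ versus polar downstairs---is the only genuinely delicate point; once the isometry property of $d\iota_{(i)}$ in the chosen coordinates is used, the three polars align and the equivalence drops out.
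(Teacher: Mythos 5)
Your proposal is correct, and its overall skeleton matches the paper's: both arguments rest on the same three identifications (the tangent cone, the linearized cone, and the MF-vectors/rank data transport along $d(\iota_{(i)})_0$), and your treatments of LICQ, MFCQ, and ACQ coincide with the paper's (the paper's Lemma~\ref{lem:cone_reduction} is exactly your first two identifications, and its LICQ step is your row-decomposition rank identity, phrased as a kernel computation). Where you genuinely diverge is in the two preliminary normalizations and in GCQ. First, you use Theorem~\ref{thm:invariance four CQs under K[G]-eq} (plus the observation that changing the immersion $\iota_{(i)}$ changes the reduction only by right-composition with a diffeomorphism, hence by a $\mathcal{K}[G]$-action) to put everything in coordinates where $d(\iota_{(i)})_0$ is the standard isometric inclusion onto $V=\{x_1=\cdots=x_{r-\ell}=0\}$; the paper instead works with an arbitrary immersion throughout. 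Second, for GCQ the paper proves Lemma~\ref{lem:polar_reduction}, that $(d(\iota_{(i)})_0)^*$ carries ambient polars onto downstairs polars, which gives only the ``if'' direction directly, and then handles the converse by a separate argument adding $\Ker(d(\iota_{(i)})_0)^*$ to both polars and exploiting the inclusion chain $\Ker(d(\iota_{(i)})_0)^*\subset L^+(g,h)^\circ\subset C^+(g,h)^\circ$. Your decomposition $K^\circ=K^{\circ_V}\oplus V^\perp$ for cones $K\subseteq V$, combined with the isometry, gives both directions of the GCQ equivalence symmetrically in one stroke and avoids the kernel-sum argument entirely; the price is the initial coordinate normalization, whereas the paper's adjoint formalism stays coordinate-free and mirrors the polar computation already carried out in its $\mathcal{K}[G]$-invariance proof. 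Both routes are sound; yours is arguably the more transparent packaging of the GCQ step.
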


In order to prove the theorem, we first prepare several lemma. 
Let $MF(g,h)\subset \R^n$ be the set of MF-vectors of $(g,h)$, that is, 
\[
{MF}(g,h) = \left\{d\in \R^n~\middle|~dh_0(d)=0, dg_{j,0}(d)<0 \mbox{ for }\forall j \mbox{ with }g_j(0)=0\right\}.
\]

\begin{lemma} \label{lem:cone_reduction}
The following equalities hold:
\begin{align}
L^+ \left( g, h \right) =& d \left( \iota_{\left( i \right)} \right)_0 \left( L^+ \left( \left( g, h \right)_{\iota_{\left( i \right)}, \left( k \right)} \right) \right) \label{eq:linearized_cone_reduction}\\
MF \left( g, h \right) =& d \left( \iota_{\left( i \right)} \right)_0 \left( MF \left( \left( g, h \right)_{\iota_{\left( i \right)}, \left( k \right)} \right) \right) \label{eq:MFvector_reduction}\\
C^+ \left( g, h \right) =& d \left( \iota_{\left( i \right)} \right)_0 \left( C^+ \left( \left( g, h \right)_{\iota_{\left( i \right)}, \left( k \right)} \right) \right) \label{eq:cone_reduction}
\end{align}
hold. 
\end{lemma}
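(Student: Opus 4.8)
The plan is to treat the three equalities in two groups: the \emph{linear} identities \eqref{eq:linearized_cone_reduction} and \eqref{eq:MFvector_reduction}, which reduce to elementary linear algebra, and the \emph{nonlinear} identity \eqref{eq:cone_reduction} for the tangent cone, which requires a genuine limiting argument. All three rest on two geometric observations. First, the differential $d(\iota_{(i)})_0$ is a linear isomorphism from $\R^{n-r+\ell}$ onto the subspace $\bigcap_{j=1}^{r-\ell}\Ker\bigl(dh_{i_j,0}\bigr)$, because $\iota_{(i)}$ immerses onto the submanifold $(h_{i_1},\ldots,h_{i_{r-\ell}})^{-1}(0)$ and $dh_{i_1,0},\ldots,dh_{i_{r-\ell},0}$ are linearly independent. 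Second, as set-germs at the origin one has $M(g,h)=\iota_{(i)}\bigl(M((g,h)_{\iota_{(i)},(k)})\bigr)$: the inactive inequalities $g_{k_1},\ldots,g_{k_{q-s}}$ are strictly negative near $0$ and hence impose no constraint, while the equations $h_{i_1}=\cdots=h_{i_{r-\ell}}=0$ confine the feasible set to the image of $\iota_{(i)}$, on which the surviving constraints are precisely those of the reduction.

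For \eqref{eq:linearized_cone_reduction} I would argue by double inclusion using the chain rule $d(g_j\circ\iota_{(i)})_0=dg_{j,0}\circ d(\iota_{(i)})_0$ and the analogous identity for the $h_j$. If $e$ lies in $L^+\bigl((g,h)_{\iota_{(i)},(k)}\bigr)$ then $d:=d(\iota_{(i)})_0(e)$ automatically kills $dh_{i_j,0}$ (it lies in the image of $d(\iota_{(i)})_0$), kills the remaining $dh_{j,0}$, and satisfies $dg_{j,0}(d)\le 0$ on the active indices by the chain rule, so $d\in L^+(g,h)$. Conversely, any $d\in L^+(g,h)$ satisfies $dh_0(d)=0$, hence lies in $\bigcap_j\Ker(dh_{i_j,0})=\operatorname{Im} d(\iota_{(i)})_0$, and its unique preimage satisfies the reduced inequalities and equalities. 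The identity \eqref{eq:MFvector_reduction} for MF-vectors follows verbatim, replacing the non-strict inequalities $dg_{j,0}(d)\le 0$ by the strict ones $dg_{j,0}(d)<0$, which the isomorphism $d(\iota_{(i)})_0$ both preserves and reflects.

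The substantive step is \eqref{eq:cone_reduction}. Granting the set-germ identity above, the inclusion $d(\iota_{(i)})_0\bigl(C^+((g,h)_{\iota_{(i)},(k)})\bigr)\subset C^+(g,h)$ is the easy direction: given $y_l\to 0$ in the reduced feasible set and $t_l>0$ with $t_ly_l\to e$, the points $x_l:=\iota_{(i)}(y_l)$ lie in $M(g,h)$, tend to $0$, and by Taylor expansion $t_l x_l=d(\iota_{(i)})_0(t_l y_l)+t_l\,O(\|y_l\|^2)\to d(\iota_{(i)})_0(e)$, since $t_l\|y_l\|$ is bounded and $\|y_l\|\to 0$. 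For the reverse inclusion I would write each $x_l\in M(g,h)$ as $x_l=\iota_{(i)}(y_l)$ with $y_l\to 0$, using that $\iota_{(i)}$ is a diffeomorphism onto its image. The main obstacle is controlling the rescaled preimages $t_l y_l$: I expect to use the immersion estimate $\|\iota_{(i)}(y)\|\ge c\|y\|$ for small $\|y\|$ (valid since $d(\iota_{(i)})_0$ is injective) to bound $t_l\|y_l\|\le c^{-1}t_l\|x_l\|$, which converges; this lets me extract a convergent subsequence $t_{l_m}y_{l_m}\to e$, whence $e\in C^+((g,h)_{\iota_{(i)},(k)})$ and $d(\iota_{(i)})_0(e)=\lim t_{l_m}x_{l_m}=d$.

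One technical simplification I would exploit throughout is that, since all three cones are defined intrinsically and any two immersions onto the same submanifold differ by a source diffeomorphism (through which the reduced cones transform compatibly), the equalities are independent of the chosen $\iota_{(i)}$. I may therefore adapt target coordinates so that $h_{i_j}(x)=x_j$ for $1\le j\le r-\ell$ and $\iota_{(i)}(y)=(0,\ldots,0,y)$ is the linear inclusion; this makes $d(\iota_{(i)})_0=\iota_{(i)}$, annihilates the quadratic remainder in the Taylor expansion, and reduces the limiting argument for \eqref{eq:cone_reduction} to the transparent linear case. The only point then demanding care is the boundedness and subsequential convergence of $\{t_l y_l\}$, which is exactly where injectivity of $d(\iota_{(i)})_0$ is used.
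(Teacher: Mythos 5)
Your proposal is correct and follows essentially the same route as the paper's proof: double-inclusion/chain-rule arguments for \eqref{eq:linearized_cone_reduction} and \eqref{eq:MFvector_reduction}, and Taylor-expansion limiting arguments for \eqref{eq:cone_reduction}, all resting on the two facts $\Im\, d(\iota_{(i)})_0 = \Ker d(h_{i_1},\ldots,h_{i_{r-\ell}})_0$ and $M(g,h)\subset \Im\,\iota_{(i)}$. The only cosmetic difference is in the inclusion $C^+(g,h)\subset d(\iota_{(i)})_0\bigl(C^+((g,h)_{\iota_{(i)},(k)})\bigr)$, where you bound $t_l\|y_l\|$ via the immersion estimate and extract a convergent subsequence, while the paper applies the continuous inverse of $d(\iota_{(i)})_0$ on its image to the convergent sequence directly---both hinge on the injectivity of $d(\iota_{(i)})_0$.
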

\begin{proof}
\noindent
\textbf{Proof of Eq.~\eqref{eq:linearized_cone_reduction} ($\supset$):}
Take any 
\begin{math}
d \in L^+ \left( \left( g, h \right)_{\iota_{\left( i \right)}, \left( k \right)} \right)
\end{math}. Then, by definition 
\begin{math}
d \left( g_j \circ \iota_{\left( i \right)} \right)_0 \left( d \right) \le 0
\end{math}
for all 
\begin{math}
j \in \left\{ 1, \overset{\hat{k}}{\ldots}, q \right\}
\end{math}
with 
\begin{math}
g_j \left( 0 \right) = 0
\end{math},
and 
\begin{math}
d \left( h_j \circ \iota_{\left( i \right)} \right)_0 \left( d \right) = 0
\end{math}
for all 
\begin{math}
j \in \left\{ 1, \overset{\hat{i}}{\ldots}, r \right\}
\end{math}. The latter implies that 
\begin{math}
d \left( h_j \right)_0 \left( d \left( \iota_{\left( i \right)} \right)_0 \left( d \right) \right) = 0
\end{math}
holds for
\begin{math}
j
\end{math}
in the same set. Along with the fact that 
\begin{math}
\Im d \left( \iota_{\left( i \right)} \right)_0 = \Ker d \left( h_{i_1}, \ldots, h_{i_{r-\ell}} \right)_0
\end{math}, 
we obtain 
\begin{math}
d \left( h_j \right)_0 \left( d \left( \iota_{\left( i \right)} \right)_0 \left( d \right) \right) = 0
\end{math}
for all 
\begin{math}
j \in \left\{ 1, \ldots, r \right\}
\end{math}. This proves that 
\begin{math}
d \left( \iota_{\left( i \right)} \right)_0 \left( d \right) \in L^+ \left( g, h \right)
\end{math}
and thus 
\begin{math}
L^+ \left( g, h \right) \supset d \left( \iota_{\left( i \right)} \right)_0 \left( L^+ \left( \left( g, h \right)_{\iota_{\left( i \right)}, \left( k \right)} \right) \right)
\end{math}
holds since 
\begin{math}
d
\end{math}
was taken arbitrarily.

\noindent
\textbf{Proof of Eq.~\eqref{eq:linearized_cone_reduction} ($\subset$):}
Take any 
\begin{math}
d \in L^+ \left( g, h \right)
\end{math}. Then, 
\begin{equation}
d \in \Ker d \left( h_1, \ldots, h_r \right) \subset \Ker d \left( h_1, \overset{\hat{i}}{\ldots}, h_r \right) = \Im d \left( \iota_{\left( i \right)} \right)_0
\end{equation}
holds and thus there exists 
\begin{math}
d' \in \mathbb{R}^{n-r-\ell}
\end{math}
such that 
\begin{math}
d = d \left( \iota_{\left( i \right)} \right)_0 \left( d' \right)
\end{math}
holds. Then, 
\begin{math}
0 \ge d \left( g_j \right)_0 \left( d \right) = d \left( g_j \circ \iota_{\left( i \right)} \right)_0 \left( d' \right)
\end{math}
holds for all 
\begin{math}
j \in \left\{ 1, \overset{\hat{k}}{\ldots}, q \right\}
\end{math}
and 
\begin{math}
0 = d \left( h_j \right)_0 \left( d \right) = d \left( h_j \circ \iota_{\left( i \right)} \right)_0 \left( d' \right)
\end{math}
holds for all 
\begin{math}
j \in \left\{ 1, \overset{\hat{i}}{\ldots}, q \right\}
\end{math}. This implies that 
$d'$ is contained in $L^+ \left( \left( g, h \right)_{\iota_{\left( i \right)}, \left( k \right)} \right)$,
and thus 
$d$ is contained in $d \left( \iota_{\left( i \right)} \right)_0 \left( L^+ \left( \left( g, h \right)_{\iota_{\left( i \right)}, \left( k \right)} \right) \right)$.

\noindent
\textbf{Proof of Eq.~\eqref{eq:MFvector_reduction}} is omitted since it is quite similar to that of Eq.~\eqref{eq:linearized_cone_reduction}.

\noindent
\textbf{Proof of Eq.~\eqref{eq:cone_reduction} ($\supset$):} Take any 
\begin{math}
d \in C^+ \left( \left( g, h \right)_{\iota_{\left( i \right)}, \left( k \right)} \right)
\end{math}. By definition, there exist sequences 
\begin{math}
\left\{ x_j \right\}_j
\end{math}
in 
\begin{math}
M \left( \left( g, h \right)_{\iota_{\left( i \right)}, \left( k \right)} \right)
\end{math}
and 
\begin{math}
\left\{ t_j \right\}_j
\end{math}
in 
\begin{math}
\mathbb{R}_{> 0}
\end{math}
such that 
\begin{math}
x_j \rightarrow 0
\end{math}
and 
\begin{math}
t_j x_j \rightarrow d
\end{math}
as 
\begin{math}
j \rightarrow \infty
\end{math}. If 
\begin{math}
j
\end{math}
is sufficiently large, 
\begin{math}
\iota_{\left( i \right)} \left( x_j \right) \in M \left( g, h \right)
\end{math}
holds. In addition, 
\begin{math}
\iota_{\left( i \right)} \left( x_j \right) \rightarrow 0
\end{math}
and 
\begin{math}
t_j \cdot \iota_{\left( i \right)} \left( x_j \right) \rightarrow d \left( \iota_{\left( i \right)} \right)_0 \left( d \right)
\end{math}
holds. The latter holds since 
\begin{align}
t_j \cdot \iota_{\left( i \right)} \left( x_j \right) &= \iota_{\left( i \right)} \left( 0 \right) + t_j \cdot d \left( \iota_{\left( i \right)} \right)_0 \left( x_j \right) + t_j O \left( \left\| x_j \right\|^2 \right), \\
&= d \left( \iota_{\left( i \right)} \right)_0 \left( t_j x_j \right) + \left\| t_j x_j \right\| O \left( \left\| x_j \right\| \right), \\
&\rightarrow d \left( \iota_{\left( i \right)} \right)_0 \left( d \right) \quad \left( j \rightarrow \infty \right).
\end{align}
This proves
\begin{math}
C^+ \left( g, h \right) \supset d \left( \iota_{\left( i \right)} \right)_0 \left( C^+ \left( \left( g, h \right)_{\iota_{\left( i \right)}, \left( k \right)} \right) \right)
\end{math}. 

\noindent
\textbf{Proof of Eq.~\eqref{eq:cone_reduction} ($\subset$):} Take any 
\begin{math}
d \in C^+ \left( g, h \right)
\end{math}. By definition, there exist sequences 
\begin{math}
\left\{ x_j \right\}
\end{math}
in 
\begin{math}
M \left( g, h \right)
\end{math}
and 
\begin{math}
\left\{ t_j \right\}_j
\end{math}
in 
\begin{math}
\mathbb{R}_{> 0}
\end{math}
such that 
\begin{math}
x_j \rightarrow 0
\end{math}
and 
\begin{math}
t_j x_j \rightarrow d
\end{math}
as 
\begin{math}
j \rightarrow \infty
\end{math}. Since 
\begin{equation}
x_j \in M \left( g, h \right) \subset \left( h_{i_1}, \ldots, h_{i_{r-\ell}} \right)^{-1} \left( 0 \right) = \Im \iota_{\left( i \right)}
\end{equation}
holds for each 
\begin{math}
j
\end{math}, there exists a sequence 
\begin{math}
x'_j \in \left( \mathbb{R}^{n-r+\ell}, 0 \right)
\end{math}
such that 
\begin{math}
x_j = \iota_{\left( i \right)} \left( x'_j \right)
\end{math}
holds. Since 
\begin{math}
x_j \rightarrow 0
\end{math}
as 
\begin{math}
j \rightarrow \infty
\end{math}, 
\begin{math}
\iota_{\left( i \right)} \left( x'_j \right) \rightarrow 0
\end{math}
as
\begin{math}
j \rightarrow \infty
\end{math}
holds. Since
\begin{math}
\iota_{\left( i \right)}
\end{math}
is a homeomorphism to its image, 
\begin{math}
x'_j \rightarrow 0
\end{math}
as
\begin{math}
j \rightarrow \infty
\end{math}
follows. Since 
\begin{equation}
t_j x_j = t_j \cdot \iota_{\left( i \right)} \left( x'_j \right) = t_j d \left( \iota_{\left( i \right)} \right)_0 \left( x'_j \right) + t_j O \left( \left\| x_j \right\|^2 \right) =  d \left( \iota_{\left( i \right)} \right)_0 \left( t_j x'_j \right) + \left\| t_j x_j \right\| O \left( \left\| x_j \right\| \right)
\end{equation}
holds for each 
\begin{math}
j
\end{math}, we obtain 
\begin{math}
d = \lim_{j \rightarrow \infty} t_j x_j = \lim_{j \rightarrow \infty} d \left( \iota_{\left( i \right)} \right)_0 \left( t_j x'_j \right)
\end{math}. This implies that 
\begin{align}
\left( d \left( \iota_{\left( i \right)} \right)_0 \right)^{-1} \left( d \right) &= \left( d \left( \iota_{\left( i \right)} \right)_0 \right)^{-1} \left( \lim_{j \rightarrow \infty} d \left( \iota_{\left( i \right)} \right)_0 \left( t_j x'_j \right) \right) \\
&= \lim_{j \rightarrow \infty} \left( d \left( \iota_{\left( i \right)} \right)_0 \right)^{-1} \left( d \left( \iota_{\left( i \right)} \right)_0 \left( t_j x'_j \right) \right) \\
&= \lim_{j \rightarrow \infty} t_j x'_j
\end{align}
holds. By definition, 
\begin{math}
d' = \lim_{j \rightarrow \infty} t_j x'_j \in C^+ \left( \left( g, h \right)_{\iota_{\left( i \right)}, \left( k \right)} \right)
\end{math}
holds, this proves
\begin{math}
C^+ \left( g, h \right) \subset d \left( \iota_{\left( i \right)} \right)_0 \left( C^+ \left( \left( g, h \right)_{\iota_{\left( i \right)}, \left( k \right)} \right) \right)
\end{math}. 
\end{proof}

\begin{lemma} \label{lem:polar_reduction}
The following equalities hold: 
\begin{align*}
\left( d \left( \iota_{\left( i \right)} \right)_0 \right)^* \left( L^+ \left( g, h \right)^\circ \right) =& L^+ \left( \left( g, h \right)_{\iota_{\left( i \right)}, \left( k \right)} \right)^\circ,\\
\left( d \left( \iota_{\left( i \right)} \right)_0 \right)^* \left( C^+ \left( g, h \right)^\circ \right) =& C^+ \left( \left( g, h \right)_{\iota_{\left( i \right)}, \left( k \right)} \right)^\circ.
\end{align*}
\end{lemma}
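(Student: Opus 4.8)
The plan is to deduce both identities from Lemma~\ref{lem:cone_reduction} together with a single linear-algebra fact about the polar of an image under an injective linear map. Write $T := d\left(\iota_{(i)}\right)_0 \colon \R^{n-r+\ell} \to \R^n$ for the derivative of the immersion-germ $\iota_{(i)}$; since $\iota_{(i)}$ is an immersion, $T$ is injective, and hence its adjoint $T^* \colon \R^n \to \R^{n-r+\ell}$ is surjective. By Lemma~\ref{lem:cone_reduction} we already know that $L^+(g,h) = T\left(L^+\left((g,h)_{\iota_{(i)},(k)}\right)\right)$ and $C^+(g,h) = T\left(C^+\left((g,h)_{\iota_{(i)},(k)}\right)\right)$, so both claimed equalities are instances of the single assertion
\[
T^*\bigl( (T(K))^\circ \bigr) = K^\circ,
\]
taken with $K = L^+\left((g,h)_{\iota_{(i)},(k)}\right)$ and $K = C^+\left((g,h)_{\iota_{(i)},(k)}\right)$, respectively.

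The key computation is the behaviour of the polar under taking images. Using the defining adjoint relation $v\cdot T(d) = T^*(v)\cdot d$ for $v\in\R^n$ and $d\in \R^{n-r+\ell}$, one has for any subset $K\subset \R^{n-r+\ell}$ the chain
\[
(T(K))^\circ = \{v\in\R^n \mid \forall d\in K,\ v\cdot T(d)\le 0\} = \{v\in\R^n \mid T^*(v)\in K^\circ\} = (T^*)^{-1}(K^\circ).
\]
Applying $T^*$ to both sides, and using that $T^*$ is surjective so that $T^*\bigl((T^*)^{-1}(S)\bigr) = S \cap \Im T^* = S$ for every $S\subset \R^{n-r+\ell}$, gives $T^*\bigl((T(K))^\circ\bigr) = K^\circ$, which is exactly the displayed assertion. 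Substituting the two cones furnished by Lemma~\ref{lem:cone_reduction} then yields the two equalities of the lemma at once.

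I expect no serious obstacle here, as the argument is essentially formal once Lemma~\ref{lem:cone_reduction} is in hand. The only points that genuinely require care are the following: first, one must record that injectivity of $T$ forces surjectivity of $T^*$, since this is precisely what legitimizes the set-theoretic identity $T^*\bigl((T^*)^{-1}(S)\bigr) = S$; second, one should note that the polar-of-image computation is purely formal and never uses that $K$ is a cone, so the same derivation applies verbatim to both $L^+$ and $C^+$. These observations make clear that the proof reduces to the adjoint relation and the elementary image/preimage calculus for a surjective linear map.
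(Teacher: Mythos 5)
Your proposal is correct and follows essentially the same route as the paper: both invoke Lemma~\ref{lem:cone_reduction} to write the big cones as images under $d\left(\iota_{(i)}\right)_0$, both use the adjoint relation to establish $\left(T(K)\right)^\circ = \left(T^*\right)^{-1}\left(K^\circ\right)$ (the paper cites the identical computation from the GCQ part of Theorem~\ref{thm:invariance four CQs under K[G]-eq}), and both finish with surjectivity of the adjoint. Your version is slightly tidier in that it packages the two equalities as one abstract statement and explicitly records why injectivity of $d\left(\iota_{(i)}\right)_0$ yields surjectivity of its adjoint, a point the paper leaves implicit.
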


\begin{proof}
The first equality follows from the following equalities:
\begin{align*}
\left( d \left( \iota_{\left( i \right)} \right)_0 \right)^* \left( L^+ \left( g, h \right)^\circ \right) =& \left( d \left( \iota_{\left( i \right)} \right)_0 \right)^* \left(\left(d \left( \iota_{\left( i \right)} \right)_0 \left( L^+ \left( \left( g, h \right)_{\iota_{\left( i \right)}, \left( k \right)} \right) \right)\right)^\circ \right) & (\because \mbox{Eq.~}\eqref{eq:linearized_cone_reduction})\\
=& \left( d \left( \iota_{\left( i \right)} \right)_0 \right)^* \left(\left(\left(d \left( \iota_{\left( i \right)} \right)_0\right)^\ast\right)^{-1} \left( L^+ \left( \left( g, h \right)_{\iota_{\left( i \right)}, \left( k \right)} \right)^\circ \right) \right)\\
=& L^+ \left( \left( g, h \right)_{\iota_{\left( i \right)}, \left( k \right)} \right)^\circ,
\end{align*}
where the second equality can be shown in the same way as that in the proof of Theorem~\ref{thm:invariance four CQs under K[G]-eq} for GCQ, while the third one holds since $\left( d \left( \iota_{\left( i \right)} \right)_0 \right)^*$ is surjective.
%
%
%
%Since 
%\begin{math}
%d \left( \iota_{\left( i \right)} \right)_0 \colon \mathbb{R}^{n-r+\ell} \rightarrow \mathbb{R}^n
%\end{math}
%is invective and 
%\begin{math}
%L^+ \left( g, h \right) = d \left( \iota_{\left( i \right)} \right)_0 \left( L^+ \left( \left( g, h \right)_{\iota_{\left( i \right)}, \left( k \right)} \right) \right)
%\end{math}
%and 
%\begin{math}
%C^+ \left( g, h \right) = d \left( \iota_{\left( i \right)} \right)_0 \left( C^+ \left( \left( g, h \right)_{\iota_{\left( i \right)}, \left( k \right)} \right) \right)
%\end{math}
%holds by Lemma~\ref{lem:cone_reduction}, 
%
\end{proof}

\begin{proof}[Proof of Theorem~\ref{thm:invariance four CQs under reduction}]
\noindent
{\bf LICQ:}
We first observe that 
\begin{math}
\left( d \left( g_{I}, h \right)_{\iota_{\left( i \right)}, \left( k \right)} \right)_0
\end{math}
is the composition of the injection 
\begin{math}
\left( d \iota_{\left( i \right)} \right)_0
\end{math}
and the restriction 
\begin{math}
\left. d \left( g_{I}, h \right) \right|_{\Ker d \left( h_{i_1}, \ldots, ,h_{i_{r-\ell}} \right)_0}
\end{math}. Since 
\begin{math}
\Ker d \left( g_{I}, h \right)
\end{math}
is contained in
\begin{math}
\Ker d \left( h_{i_1}, \ldots, ,h_{i_{r-\ell}} \right)_0
\end{math}, we obtain 
\begin{align}
\rank \left( d \left( g_{I}, h \right)_{\iota_{\left( i \right)}, \left( k \right)} \right)_0 &= \left( n - r + \ell \right) - \dim \Ker \left. d \left( g_{I}, h \right)_0 \right|_{\Ker d \left( h_{i_1}, \ldots, ,h_{i_{r-\ell}} \right)_0} \\
&= \left( n - r + \ell \right) - \dim \Ker d \left( g_{I}, h \right)_0\\
&= \left( n - r + \ell \right) - \left( n - \rank d \left( g_{I}, h \right)_0 \right) \\
&= \rank d \left( g_{I}, h \right)_0 - r + \ell.
\end{align}
Since the germ 
\begin{math}
\left( g, h \right)
\end{math}
(resp.~\begin{math}
\left( g_{I}, h \right)_{\iota_{\left( i \right)}, \left( k \right)}
\end{math})
satisfies LICQ if and only if 
\begin{math}
\rank d \left( g_{I}, h \right)_0 = s + r
\end{math}
(resp.~\begin{math}
\rank \left( d \left( g_{I}, h \right)_{\iota_{\left( i \right)}, \left( k \right)} \right)_0 = s + \ell
\end{math}), this proves the claim. 

\noindent
{\bf MFCQ:}
The Jacobi matrix of 
\begin{math}
h
\end{math}
has corank 
\begin{math}
0
\end{math}
if and only if that of its reduction 
\begin{math}
h_{\iota_{\left( i \right)}}
\end{math}
has corank 
\begin{math}
0
\end{math} (Lemma~3.1 in \cite{HamadaHayanoTeramoto2025_1}). 
By Eq.~\eqref{eq:MFvector_reduction} in Lemma~\ref{lem:cone_reduction}, $(g,h)$ has an MF-vector if and only if its reduction has. 

\noindent
{\bf ACQ:}
The claim immediately follows from Lemma~\ref{lem:cone_reduction}.

\noindent
{\bf GCQ:}
Lemma~\ref{lem:polar_reduction} implies that if 
\begin{math}
\left( g, h \right)
\end{math}
satisfies GCQ, its reduction 
\begin{math}
\left( g, h \right)_{\iota_{\left( i \right)}, \left( k \right)}
\end{math}
satisfies GCQ. Therefore, in what follows, we show ``only if'' part. 

Suppose the reduction
\begin{math}
\left( g, h \right)_{\iota_{\left( i \right)}, \left( k \right)}
\end{math}
satisfies GCQ. 
By Lemma~\ref{lem:polar_reduction}, the following equality holds:
\[
L^+ \left( g, h \right)^\circ + \Ker \left( d \left( \iota_{\left( i \right)} \right)_0 \right)^*=C^+ \left( g, h \right)^\circ + \Ker \left( d \left( \iota_{\left( i \right)} \right)_0 \right)^*.
\]
Since 
\begin{math}
\Ker \left( d \left( \iota_{\left( i \right)} \right)_0 \right)^* = \left( \Im d \left( \iota_{\left( i \right)} \right)_0 \right)^\perp
\end{math}
and 
\begin{math}
\Im d \left( \iota_{\left( i \right)} \right)_0 = \Ker d \left( h_{i_1}, \ldots, h_{i_{r-\ell}} \right)_0 \supset L^+ \left( g, h \right) \supset C^+ \left( g, h \right)
\end{math}, 
\[
\Ker \left( d \left( \iota_{\left( i \right)} \right)_0 \right)^* \subset \left( \Im d \left( \iota_{\left( i \right)} \right)_0 \right)^\circ \subset L^+ \left( g, h \right)^\circ\subset C^+ \left( g, h \right)^\circ
\]
hold. 
We thus obtain:
\[
L^+ \left( g, h \right)^\circ=L^+ \left( g, h \right)^\circ + \Ker \left( d \left( \iota_{\left( i \right)} \right)_0 \right)^*=C^+ \left( g, h \right)^\circ + \Ker \left( d \left( \iota_{\left( i \right)} \right)_0 \right)^*=C^+ \left( g, h \right)^\circ.
\]
This completes the proof of ``only if'' part.
\end{proof}

\section{Verification of constraint qualifications for $\mathcal{K}[G]$-classes of generic constraints
%each class in Table~\ref{table:generic constraint q=0}, \ref{table:generic constraint r=0} and \ref{table:generic constraint q>0 r=1}
} \label{sec:verification_of_CQ_for_each_class}

Recall that the purpose of this paper is to determine, for a \emph{generic} constraint map(-germ), when each of the four classical CQs (LICQ, MFCQ, ACQ, GCQ) hold. 
By the results of the previous section (Theorems~\ref{thm:invariance four CQs under K[G]-eq} and \ref{thm:invariance four CQs under reduction}), these four CQs are invariant under $\mathcal{K}[G]$-equivalence and reduction.
Combining this with the classification in \cite{HamadaHayanoTeramoto2025_1}---which lists the \emph{full reductions} of generic constraint map-germs up to $\mathcal{K}[G]$-equivalence---our task reduces to checking CQ-validity for the normal forms displayed in Tables~\ref{table:generic constraint q=0}--\ref{table:generic constraint q>0 r=1}.
We now carry out this verification.
%Let us investigate whether or not the constraint qualifications \textnormal{LICQ}, \textnormal{MFCQ}, \textnormal{ACQ}, and \textnormal{GCQ} are satisfied for each class of Tables~\ref{table:generic constraint q=0}, \ref{table:generic constraint r=0}, and \ref{table:generic constraint q>0 r=1}. 
Note that the implication in Eq.~\eqref{eq:strict_hierarchy} is known for these constraint qualifications. 

\subsection{LICQ and MFCQ}

First, among the normal forms, only the regular class (i.e.,~$g(x)=(x_1,\ldots, x_q)$ and $h(x)=(x_{q+1},\ldots, x_{q+r})$) satisfies LICQ.
In what follows, we discuss which classes satisfy MFCQ.
%it is easy to see that 
%\begin{math}
%\mathcal{K} \left[ G \right]
%\end{math}-class represented by 
%\begin{math}
%\left( x_1, \ldots, x_n \right) \mapsto \left( x_1, \ldots, x_q; x_{q+1}, \ldots, x_{q+r} \right)
%\end{math}
%(\begin{math}
%n \ge q+r
%\end{math}) is only the class satisfying {\textnormal{LICQ}.
By definition, MFCQ is violated if the Jacobi matrix of an equality constraint has positive corank. 
In particular no classes in Tables~\ref{table:generic constraint q=0} and \ref{table:generic constraint q>0 r=1} satisfy MFCQ.
Therefore, we consider germs in Table~\ref{table:generic constraint r=0}.
\textnormal{MFCQ}-validity depends only on the $1$-jet, and each germ in Table~\ref{table:generic constraint r=0} has the following $1$-jet for some $l_1\in \left\{ 0, 1, \ldots, \lceil \frac{l}{2} \rceil \right\}$ and $l \in \left\{ 0, \dots, q-1 \right\}$:
\begin{equation}
j^1g(0) =\left( x_1, \ldots, x_{q-1}, \sum_{j=1}^{l_1} x_j - \sum_{j=l_1+1}^l x_j \right) \label{eq:q0_1jet}.
\end{equation}
If 
\begin{math}
0 < l_1
\end{math}
holds, the constraint map-germ $g$ satisfies \textnormal{MFCQ}. This can be shown as follows: Suppose that is the case. Set 
\begin{equation}
d = \left( \underbrace{-l, -1, \ldots, -1, 0}_q, \underbrace{0, \ldots, 0}_{n-q} \right) \in \mathbb{R}^n.
\end{equation}
Then, 
\begin{math}
d g_{i,0} \left( d \right) = -1 < 0
\end{math}
for 
\begin{math}
i \in \left\{ 2, \ldots, q-1 \right\}
\end{math}
and 
\begin{math}
d  g_{q,0} \left( d \right) \le \left( l-1 \right) - l = -1 < 0
\end{math}
hold. This proves the claim. 
If \begin{math}
l_1 = 0
\end{math} and $d\in \R^n$ satisfies $dg_{1,0}(d),\ldots,  dg_{l,0}(d) <0$, then $d_1,\ldots, d_l <0$ and thus $dg_{q,0}(d) = -\sum_{i=1}^{l}d_i >0$. 
Thus, $g$ does not satisfy \textnormal{MFCQ}. 
In summary, we obtain the following theorem. 

\begin{theorem} \label{thm:MFCQ}

No germ in Tables~\ref{table:generic constraint q=0} and \ref{table:generic constraint q>0 r=1} satisfies MFCQ. 
A germ in Table~\ref{table:generic constraint r=0} satisfies MFCQ if and only if $l_1>0$, where $l_1$ is the parameter in the normal form in the caption of the table (or Eq.~\eqref{eq:q0_1jet}).

\end{theorem}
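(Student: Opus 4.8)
The plan is to use that MFCQ is a purely first-order condition: its corank requirement concerns only $j^1 h(0)$, while the search for an MF-vector $d$ involves only the differentials $dg_{j,0}$ and $dh_{j,0}$. I would therefore reduce the verification to the $1$-jet of each germ and then separate the three tables according to whether equality constraints are present.

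For Tables~\ref{table:generic constraint q=0} and \ref{table:generic constraint q>0 r=1}, each normal form carries at least one equality constraint (so $r \geq 1$) and is, by hypothesis, a full reduction. By the definition of full reduction the rank of $dh_0$ is zero; since $r \geq 1$, this forces $\corank(dh_0) > 0$, and as MFCQ requires $\corank(dh_0) = 0$, it fails for every germ in these two tables, settling both at once with no case-by-case analysis.

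For Table~\ref{table:generic constraint r=0} we have $r = 0$, so the corank condition is vacuous and MFCQ reduces to the existence of $d \in \R^n$ with $dg_{j,0}(d) < 0$ for all $j \in \{1, \ldots, q\}$. Because each higher-order term $\tilde{g}(x_{l+1}, \ldots, x_n)$ vanishes to order $\geq 2$, every germ in this table shares the common $1$-jet \eqref{eq:q0_1jet}, and the conditions become $d_i < 0$ for $1 \leq i \leq q-1$ together with $\sum_{j=1}^{l_1} d_j - \sum_{j=l_1+1}^l d_j < 0$. For the direction $l_1 > 0$ I would exhibit the explicit vector $d = (-l, -1, \ldots, -1, 0, \ldots, 0)$, with $d_1 = -l$, $d_2 = \cdots = d_{q-1} = -1$, and the remaining coordinates zero: the first $q-1$ differentials are then strictly negative, and since $l_1 \geq 1$ places $x_1$ in the positive-sign block of $g_q$, the heavy weight $-l$ dominates, giving $dg_{q,0}(d) \leq -l + (l-1) = -1 < 0$. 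For the direction $l_1 = 0$, the linear part of $g_q$ is $-\sum_{j=1}^l x_j$, so any $d$ with $d_1, \ldots, d_l < 0$ forces $dg_{q,0}(d) = -\sum_{j=1}^l d_j > 0$, whence no MF-vector can exist.

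I expect no serious obstacle, as the entire argument is first-order. The only point needing care is that the single explicit vector must work uniformly across all types of Table~\ref{table:generic constraint r=0}, which differ in their values of $l$ and in their higher-order parts $\tilde{g}$; this is exactly what the reduction to the $1$-jet secures, because $\tilde{g}$ never enters any $dg_{j,0}$, and one checks that $l \leq q-1$ in every row, so that the prescribed coordinates $d_1, \ldots, d_l$ all lie within the negative block $d_1, \ldots, d_{q-1}$.
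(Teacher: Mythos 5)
Your proposal is correct and takes essentially the same approach as the paper's proof: Tables~\ref{table:generic constraint q=0} and \ref{table:generic constraint q>0 r=1} are ruled out because $dh_0$ has positive corank (the paper reads this off the normal forms, you derive it from the definition of full reduction, which is equivalent), and for Table~\ref{table:generic constraint r=0} you reduce to the $1$-jet \eqref{eq:q0_1jet} and use the identical MF-vector $d=(-l,-1,\ldots,-1,0,\ldots,0)$ with the same estimate $dg_{q,0}(d)\le -l+(l-1)=-1$ when $l_1>0$, together with the same sign contradiction when $l_1=0$.
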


\begin{remark}

For a general constraint map-germ $(g,h)$ with $\corank dh_0=0$, suppose that the $1$-jet of a full reduction of $(g,h)$ is $\mathcal{K}[G]^1$-equivalent to that in Eq.~\eqref{eq:q0_1jet} for some $l,l_1$.
Then, $(g,h)$ satisfies MFCQ if and only if $l_1>0$.

\end{remark}

\subsection{\textnormal{ACQ} and \textnormal{GCQ}}

Next, we compute tangent cones of feasible-set germs in Table~\ref{table:generic constraint q=0}, Table~\ref{table:generic constraint r=0} (violating \textnormal{MFCQ}), and Table~\ref{table:generic constraint q>0 r=1} and confirms if ACQ and GCQ hold for each class. 
Note that we assume
\begin{math}
n
\end{math}
is sufficiently large so that each normal form has a quadratic part (cf.~Theorem~\ref{thm:5.1}). 

In order to determine tangent cones, we need the following lemmas.

\begin{lemma}\label{lem:tangent cone for r=1}

Let $g = (x_1,\ldots, x_q)$ and $h(x) = Q(x) + R(x)$, where $Q(x)$ is a quadratic polynomial and $R(x)$ is a polynomial consisting of terms with degree larger than $2$. 

\begin{enumerate}

\item 
$C^+(g,h)$ is contained in $\overline{C}(q,Q) := \{d\in \R^n ~|~ d_1,\ldots, d_q\leq 0, Q(d) = 0\}$.

\item 
Let $R_r$ be the degree $r$ homogeneous part of $R$. 
The element $d\in \overline{C}(q,Q)$ is contained in $C^+(g,h)$ if either $R_r(d)=0$ for any $r \geq 3$, or there exists $v\in \R^n$ satisfying the following conditions:

\begin{itemize}

\item 
$v_j\leq 0$ for any $j\in \{1,\ldots, q\}$ with $d_j=0$, 

\item 
if the inner product $v \cdot \nabla Q(d)$ is not $0$, its sign is opposite to that of $R_{r_0}(d)$, where $r_0 = \min \{r\geq 0~|~R_r(d)\neq 0\}$,

\item 
if $v \cdot \nabla Q(d)=0$, the product ${}^t v \mathrm{Hess}(Q) v$ is not $0$ and its sign is opposite to that of $R_{r_0}(d)$, where $\mathrm{Hess}(Q) = \left(\frac{\Pa^2 Q}{\Pa x_i\Pa x_j}\right)_{i,j}$ is the Hessian matrix of $Q$.

\end{itemize}

\end{enumerate}

\end{lemma}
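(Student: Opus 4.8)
The plan is to treat the two assertions separately: the containment in Part~1 by a direct homogeneity estimate, and the sufficiency in Part~2 by explicitly constructing feasible curves tending to $0$ in the direction $d$. For Part~1, I would take $d\in C^+(g,h)$ together with witnessing sequences $\{x_l\}\subset M(g,h)$, $x_l\to 0$, and $t_l>0$ with $t_lx_l\to d$. The inequality part $d_1,\dots,d_q\leq 0$ is immediate, since $(x_l)_j\leq 0$ for $j\leq q$ forces $t_l(x_l)_j\leq 0$ and hence $d_j\leq 0$ in the limit. For the quadratic part I would use homogeneity of $Q$: from $h(x_l)=0$ we get $Q(x_l)=-R(x_l)=O(\norm{x_l}^3)$, so
\[
Q(t_lx_l)=t_l^2Q(x_l)=(t_l\norm{x_l})^2\,O(\norm{x_l})\longrightarrow 0,
\]
because $t_l\norm{x_l}\to\norm{d}$ is bounded while $\norm{x_l}\to 0$. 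Continuity of $Q$ then gives $Q(d)=0$.

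For Part~2 the idea is to find, for each small parameter $s>0$, a feasible point of the form $x(s)=sd+\tau(s)v$ with $\tau(s)=o(s)$; then $s^{-1}x(s)\to d$ exhibits $d\in C^+(g,h)$. In the first alternative, where $R_r(d)=0$ for all $r\geq 3$, no correction is needed: the ray $x=sd$ already satisfies $g_j(sd)=sd_j\leq 0$ and $h(sd)=s^2Q(d)+\sum_{r\geq3}s^rR_r(d)=0$, so the whole ray lies in $M(g,h)$ and $d\in C^+(g,h)$ trivially. In the second alternative I would use the direction $v$, choosing the scale of $\tau$ so that the leading $Q$-contribution of $\tau v$ cancels the leading term $s^{r_0}R_{r_0}(d)$ of $R$ along $d$, where $r_0=\min\{r:R_r(d)\neq 0\}\geq 3$.

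Concretely, expanding $Q(sd+\tau v)=s\tau\,(\grad Q(d)\cdot v)+\tau^2Q(v)$ (using $Q(d)=0$ and homogeneity) splits into two regimes. If $\grad Q(d)\cdot v\neq 0$ I would set $\tau=s^{\,r_0-1}(c+u)$ and normalise $G(s,u)=s^{-r_0}h(sd+\tau v)$; one computes $G(0,u)=(c+u)(\grad Q(d)\cdot v)+R_{r_0}(d)$, so the choice $c=-R_{r_0}(d)/(\grad Q(d)\cdot v)$ gives $G(0,0)=0$ and $\Pa_uG(0,0)=\grad Q(d)\cdot v\neq 0$, whence the implicit function theorem yields $u(s)$ with $u(0)=0$ and $h(x(s))=0$. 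The sign hypothesis on $\grad Q(d)\cdot v$ forces $c>0$, so at indices with $d_j=0$ the coordinate $x_j(s)=s^{\,r_0-1}(c+u)v_j\leq 0$ thanks to $v_j\leq 0$, while for $d_j<0$ the linear term $sd_j$ dominates; hence all inequality constraints hold for small $s>0$. If instead $\grad Q(d)\cdot v=0$, the leading $Q$-term is $\tau^2Q(v)$, and I would set $\tau=s^{\,r_0/2}(a+w)$ with $a=\sqrt{-R_{r_0}(d)/Q(v)}>0$, which is real precisely because $Q(v)$ and $R_{r_0}(d)$ have opposite signs; the same normalisation gives $G(0,w)=(a+w)^2Q(v)+R_{r_0}(d)$, so $G(0,0)=0$ and $\Pa_wG(0,0)=2aQ(v)\neq 0$, and again the implicit function theorem produces $w(s)$, while taking the positive root $a>0$ keeps $x_j(s)\leq 0$ at the indices with $d_j=0$.

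The main obstacle I anticipate is purely technical: in the regime $\grad Q(d)\cdot v=0$ the scale $s^{\,r_0/2}$ is a fractional power, so $G(s,w)$ need not be smooth in $s$ at $s=0$ and the implicit function theorem does not apply verbatim. I would circumvent this by reparametrising $s=\sigma^2$, turning $s^{\,r_0/2}=\sigma^{\,r_0}$ into an integer (hence polynomial) power, so that $G$ becomes smooth in $(\sigma,w)$; since $h$ is a polynomial, every intermediate function is then genuinely smooth and the theorem applies. The remaining care is bookkeeping of orders: one must check that all discarded terms (the $\tau^2Q(v)$ cross-contribution in the first regime, and the mixed $R$-terms involving both $sd$ and $\tau v$ in both regimes) are of strictly higher order than $s^{r_0}$, which holds because $r_0\geq 3$ makes both $r_0-1$ and $r_0/2$ exceed $1$. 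This is exactly what guarantees $\tau(s)=o(s)$ and therefore $s^{-1}x(s)\to d$, completing the verification that $d\in C^+(g,h)$.
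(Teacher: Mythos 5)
Your Part 1 and the first alternative of Part 2 are correct and essentially the same as the paper's argument, and your Regime A (where $\nabla Q(d)\cdot v\neq 0$, with scale $\tau\sim s^{r_0-1}$) is a valid alternative route: there every mixed term $s^{r-k}\tau^k$ with $k\geq 1$ has order $r+k(r_0-2)\geq r_0+1$, so your normalized map $G$ really is polynomial and the implicit function theorem applies. The genuine gap is in Regime B ($\nabla Q(d)\cdot v=0$, scale $\tau\sim s^{r_0/2}$). Your bookkeeping claim --- that all mixed $R$-terms have order strictly greater than $s^{r_0}$ because $r_0/2>1$ --- is false. The homogeneous parts $R_r$ with $3\leq r<r_0$ vanish at $d$ (by minimality of $r_0$), but their directional derivatives need not: $v\cdot \nabla R_r(d)$ can be nonzero (take $R_3=x_1^2x_2$ and $d=e_1$, so $R_3(d)=0$ but $\nabla R_3(d)=e_2$). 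The corresponding mixed term $s^{r-1}\tau\,\bigl(v\cdot\nabla R_r(d)\bigr)$ has order $r-1+r_0/2$; for $r=3$ this equals $2+r_0/2$, which is strictly less than $r_0$ whenever $r_0\geq 5$, and equal to $r_0$ when $r_0=4$. Consequently, for $r_0\geq 5$ your function $G(s,w)=s^{-r_0}h(sd+\tau v)$ is unbounded as $s\to 0$ (so the reparametrization $s=\sigma^2$, which only cures smoothness, does not help), and for $r_0=4$ one gets $G(0,0)=a\,\bigl(v\cdot\nabla R_3(d)\bigr)\neq 0$ in general; either way the IFT setup collapses. (For $r_0=4$ it can be repaired by re-choosing $a$ as the positive root of $Q(v)a^2+\bigl(v\cdot\nabla R_3(d)\bigr)a+R_4(d)=0$, which exists because $Q(v)R_4(d)<0$; but for $r_0\geq 5$ the dominant balance itself is wrong.)

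This is precisely the difficulty the paper's proof is built to avoid: instead of solving $h=0$ exactly, which forces you to match your ansatz against every term of $R$, the paper perturbs by the fixed scale $m^{-5/4}$ and only tracks signs. With that scale the two candidate leading terms $\bigl(v\cdot\nabla Q(d)\bigr)m^{-9/4}$ and $\tfrac12\bigl({}^tv\,\mathrm{Hess}(Q)\,v\bigr)m^{-5/2}$ simultaneously dominate $h(m^{-1}d)=O(m^{-3})$ and all mixed terms (the worst being $\bigl(v\cdot\nabla R_3(d)\bigr)m^{-13/4}$), uniformly in $r_0$; the sign hypotheses then make $h$ change sign along the segment from $m^{-1}d$ to $m^{-1}d+m^{-5/4}v$, and the intermediate value theorem yields the feasible points $x^{(m)}$, with feasibility of the inequality constraints checked exactly as you do. To close your gap you must either restrict to homogeneous $R$ (enough for the tables in this paper, but not for the lemma as stated) or replace the exact-cancellation ansatz in Regime B by a sign/IVT argument of this kind.
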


\noindent
We denote by $\underline{C}(q,Q,R)$ the set of $d\in \overline{C}(q,Q)$ satisfying the condition in the second statement.

\begin{proof}
Take $d\in C^+(g,h)$. 
There exist
\begin{math}
\left\{ x^{\left( m \right)} \right\}_{m \in \mathbb{N}} \subset M \left(g, h \right)
\end{math}
and
\begin{math}
\left\{ t_m \right\}_{m \in \mathbb{N}} \subset \mathbb{R}_{>0}
\end{math}
such that 
\begin{math}
\lim_{m \rightarrow \infty} x^{\left( m \right)} = 0
\end{math}
and 
\begin{math}
d = \lim_{m \rightarrow \infty} t_m x^{\left( m \right)}
\end{math}
hold. 
Since the $j$-th component $x_j^{(m)}$ of $x^{(m)}$ is less than or equal to $0$ for $j=1,\ldots, q$, so is the limit $d_j = \lim_{m\to\infty} t_m x_j^{(m)}$. 
Let $e\geq 3$ be the lowest order of the term of $R$.
Since 
$x^{\left( m \right)}$ is contained in $M \left( g,h \right)$,
$h \left( x^{\left( m \right)} \right) = Q \left( x^{(m)} \right)+R(x^{(m)})$ is equal to $0$. 
We thus obtain:
\[
Q(t_m x^{(m)}) + t_m^2 R(x^{(m)}) =0. 
\]
By taking the limit 
\begin{math}
m \rightarrow \infty
\end{math}, we obtain 
\begin{math}
Q \left( d \right) = 0
\end{math}
since 
\begin{equation}
\left| t_m^2 R\left(x^{(m)}\right)\right| = O \left( \left| t_m x^{\left( m \right)} \right|^2 \right) \cdot O \left( \left| x^{\left( m \right)} \right|^{e-2} \right)
\end{equation}
holds and 
\begin{math}
\lim_{m \rightarrow \infty} \left| t_m x^{\left( m \right)} \right|^2 = \left| d \right|^2
\end{math}
whereas
\begin{math}
\lim_{m \rightarrow \infty} \left| x^{\left( m \right)} \right|^{e-2} = 0
\end{math}
holds.
Thus, $d$ is contained in $\overline{C}(q,Q)$. 

Take $d\in \overline{C}(q,Q)$. 
Suppose that $R_r(d)=0$ for any $r\geq 3$. 
The following holds for any $m\gg 0$:
\[
h(m^{-1}d) = Q(m^{-1}d) + \sum_{r\geq 3}R_r(m^{-1}d) = m^{-2}Q(d) + \sum_{r\geq 3}m^{-r}R_r(d) =0. 
\]
Thus, $d$ is contained in $C^+(g,h)$ since $\lim_{m\to\infty} m^{-1}d=0$ and $d = \lim_{m\to\infty} m \cdot m^{-1}d$. 
In what follows, we assume that $R_r(d)\neq 0$ for some $r\geq 3$. 
We take a vector $v\in \R^n$ satisfying the conditions in the second statement for $d$. 
By the Taylor's theorem, the following equalities hold:
\begin{align*}
&h\left(m^{-1}d + m^{-5/4}v\right) \\
=&h \left(m^{-1}d\right) + \left( v\cdot \nabla h\left(m^{-1}d\right)\right)m^{-5/4} \\
&+ \frac{1}{2}\left({}^tv \mathrm{Hess}(h)(m^{-1}d)v\right)m^{-5/2}+O(m^{-15/4}) & (m\to \infty)\\
=&h \left(m^{-1}d\right) + \left( v\cdot \nabla Q\left(d\right)\right)m^{-9/4} \\
&+ \frac{1}{2}\left({}^tv\mathrm{Hess}(Q)v\right)m^{-5/2}+O(m^{-3}) & (m\to \infty).
\end{align*}
If $v\cdot \nabla Q(d)$ is not $0$, its sign is opposite to that of $R_{r_0}(d)$, and thus the sign of $h\left(m^{-1}d + m^{-5/4}v\right)$ is opposite to that of $h(m^{-1}d) = m^{-r_0}R_{r_0}(d) + O(m^{-r_0-1})$ for $m\gg 0$. 
By the intermediate value theorem, there exists $\theta_m\in (0,1)$ with $h\left(m^{-1}d + m^{-5/4}\theta_mv\right)=0$. 
Let $x^{(m)} = m^{-1}d + m^{-5/4}\theta_mv$. 
Since $v_j \leq 0$ for $j\in \{1,\ldots, q\}$ with $d_j=0$, $x^{(m)}$ is contained in $M(g,h)$ for $m\gg 0$. 
It is easy to check that $\lim_{m\to \infty} x^{(m)} = 0$ and $\lim_{m\to \infty} m x^{(m)} = d$.
Hence, $d$ is contained in $C^+(g,h)$. 
If $v\cdot \nabla Q (d)=0$, ${}^t v\mathrm{Hess}(Q)v$ is not zero and its sign is opposite to that of $R_{r_0}(d)$. 
We can thus deduce that $d$ is contained in $C^+(g,h)$ in the same way. 
\end{proof}

In what follows, for $x = (x_1,\ldots, x_n)\in \R^n$, we denote $x' = (x_{l+1},\ldots, x_n)\in \R^{n-l+1}$. 

\begin{lemma}\label{lem:tangent cone for r=0}

Let $g(x)=(x_1,\ldots, x_{q-1},\widetilde{g}(x)=-\sum_{j=1}^{l}x_j+Q(x')+R(x'))$, where $Q(x')$ is a quadratic polynomial and $R(x')$ is a polynomial consisting of terms with degree larger than $2$. 

\begin{enumerate}

\item 
$C^+(g)$ is contained in $\overline{D}(l,Q)$, where 
\[
\overline{D}(l,Q)= \{d\in \R^n~|~d_1=\cdots =d_l=0, d_{l+1}\leq 0 ,\ldots, d_{q-1}\leq 0, Q(d')\leq 0\}.
\]

\item 
The element $d\in \overline{D}(l,Q)$ is contained in $C^+(g)$ if one of the following conditions holds:
\begin{itemize}

\item 
$Q(d')<0$,

\item 
$R_r(d')=0$ for any $r\geq 3$, where $R_r$ is the degree $r$ homogeneous part of $R$,

\item 
$R_{r_0}(d') <0$, where $r_0 = \min \{r\geq 0~|~R_r(d')\neq 0\}$,

\item 
there exists $v=(0,\ldots,0,v_{l+1},\ldots, v_n)\in \R^{n}$ satisfying the following conditions:

\begin{enumerate}[(a)]

\item 
$v_j\leq 0$ for any $j\in \{l+1,\ldots,q-1\}$ with $d_j=0$, 

\item 
if the inner product $v'\cdot \nabla Q(d')$ is not $0$, it is less than $0$, 

\item 
if $v'\cdot \nabla Q(d')=0$, the product ${}^tv'\mathrm{Hess}(Q)v'$ is less than $0$. 

\end{enumerate}

\end{itemize}  

\end{enumerate}

\end{lemma}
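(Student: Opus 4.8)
The plan is to mirror the two-part argument of Lemma~\ref{lem:tangent cone for r=1}, adapting it to the facts that here $\widetilde{g}$ enters as an \emph{inequality} constraint and that it carries the additional linear term $-\sum_{j=1}^{l}x_j$. The outer bound $C^+(g)\subset \overline{D}(l,Q)$ will again be read off from suitable rescalings of the defining inequalities, and membership of a given $d\in\overline{D}(l,Q)$ in $C^+(g)$ will be established by constructing an explicit feasible sequence tending to the origin along $d$; the inequality (rather than equality) nature of $\widetilde{g}$ will make this second step strictly easier than in the preceding lemma.

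For the first statement take $d\in C^+(g)$ with $\{x^{(m)}\}\subset M(g)$, $\{t_m\}\subset\R_{>0}$, $x^{(m)}\to 0$ and $t_m x^{(m)}\to d$; we may assume $d\neq 0$, so $t_m\to\infty$. From $x^{(m)}_j\leq 0$ for $j=1,\ldots,q-1$ we immediately obtain $d_j\leq 0$ for those indices. To pin down $d_1=\cdots=d_l=0$, I would use that $x^{(m)}_1,\ldots,x^{(m)}_l\leq 0$, so that feasibility of $\widetilde{g}$ gives
\[
Q\big((x^{(m)})'\big)+R\big((x^{(m)})'\big)\;\leq\;\sum_{j=1}^{l}x^{(m)}_j\;\leq\;0.
\]
Multiplying by $t_m$ and letting $m\to\infty$, the left-hand side tends to $0$ (since $t_m Q((x^{(m)})')\to 0$ and $t_m R((x^{(m)})')\to 0$, exactly as in Lemma~\ref{lem:tangent cone for r=1}), while the middle term tends to $\sum_{j=1}^{l}d_j$; hence $0\leq \sum_{j=1}^{l}d_j$, and together with $d_j\leq 0$ this forces $d_1=\cdots=d_l=0$. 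Multiplying the same chain instead by $t_m^2$ and using $Q(t_m(x^{(m)})')\to Q(d')$ together with $t_m^2 R((x^{(m)})')\to 0$ yields $Q(d')\leq 0$, so $d\in\overline{D}(l,Q)$.

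For the second statement fix $d\in\overline{D}(l,Q)$. Under any of the first three conditions I would take the straight-line sequence $x^{(m)}=m^{-1}d$. Since $d_1=\cdots=d_l=0$ the linear term of $\widetilde{g}$ vanishes identically, so
\[
\widetilde{g}(m^{-1}d)=m^{-2}Q(d')+\sum_{r\geq 3}m^{-r}R_r(d'),
\]
while $(m^{-1}d)_j=m^{-1}d_j\leq 0$ secures the remaining inequality constraints. If $Q(d')<0$ the quadratic term dominates and is negative; if all $R_r(d')=0$ the expression equals $m^{-2}Q(d')\leq 0$; and if $R_{r_0}(d')<0$ (necessarily $r_0\geq 3$) the lowest-order surviving term is negative---so in each case $\widetilde{g}(m^{-1}d)\leq 0$ for large $m$, giving $d\in C^+(g)$. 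Under the fourth condition I would use the perturbed sequence $x^{(m)}=m^{-1}d+m^{-5/4}v$. Condition (a), together with $d_j<0$ for the remaining indices, guarantees $(x^{(m)})_j\leq 0$ for $j=l+1,\ldots,q-1$, while $v_j=0$ for $j\leq l$ keeps the linear term identically zero, so $\widetilde{g}(x^{(m)})=(Q+R)(m^{-1}d'+m^{-5/4}v')$. A Taylor expansion identical in bookkeeping to that of Lemma~\ref{lem:tangent cone for r=1} produces the perturbation terms $m^{-9/4}\,v'\cdot\nabla Q(d')$ and $\tfrac12 m^{-5/2}\,{}^tv'\,\mathrm{Hess}(Q)\,v'$; since $9/4<5/2<3\leq r_0$ and $m^{-2}Q(d')\leq 0$, condition (b) makes the first term the dominant one and negative, while when it vanishes condition (c) makes the second term dominant and negative. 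Hence $\widetilde{g}(x^{(m)})<0$ for large $m$, and as $m\,x^{(m)}\to d$ we conclude $d\in C^+(g)$.

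The only essentially new ingredient relative to Lemma~\ref{lem:tangent cone for r=1} is the linear term, and I expect the main care to lie in the two-speed rescaling of the first part: one multiplies the defining inequality by $t_m$ to force $d_1=\cdots=d_l=0$, and separately by $t_m^2$ to recover $Q(d')\leq 0$, verifying in each case that the higher-degree remainder is annihilated in the limit. Once $d_1=\cdots=d_l=0$ is known, the linear term disappears along every sequence we build (since we also impose $v_j=0$ for $j\leq l$), reducing the analysis to the quadratic-plus-higher-order situation already treated; and because $\widetilde{g}$ is an inequality we only need $\widetilde{g}\leq 0$, which lets us drop the intermediate-value argument that was required for the equality constraint in Lemma~\ref{lem:tangent cone for r=1}.
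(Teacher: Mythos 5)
Your proof is correct and takes essentially the same approach as the paper: the same rescaled feasibility inequality for the outer bound $C^+(g)\subset \overline{D}(l,Q)$, and the same explicit sequences $m^{-1}d$ and $m^{-1}d+m^{-5/4}v$ with the $m^{-9/4}$ and $m^{-5/2}$ bookkeeping for the inner inclusion. The only cosmetic differences are that you establish $d_1=\cdots=d_l=0$ directly via the $t_m$-rescaling where the paper simply invokes $C^+(g)\subset L^+(g)$, and in the fourth case you carry the non-positive term $m^{-2}Q(d')$ along instead of reducing to $Q(d')=0$ as the paper does.
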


\begin{proof}
Let $d\in C^+(g)$. 
Since $C^+(g)$ is contained in $L^+(g)$, the (in)equalities $d_1=\cdots =d_l=0$ and $d_{l+1},\ldots, d_{q-1}\leq 0$ hold. 
Take \begin{math}
\left\{ x^{\left( m \right)} \right\}_{m \in \mathbb{N}} \subset M \left( g \right)
\end{math}
and 
\begin{math}
\left\{ t_m \right\}_{m \in \mathbb{N}} \subset \mathbb{R}_{> 0}
\end{math}
so that 
\begin{math}
\lim_{m \rightarrow \infty} x^{\left( m \right)} = 0
\end{math}
and 
\begin{math}
d = \lim_{m \rightarrow \infty} t_m x^{\left( m \right)}
\end{math}
hold. 
Since 
\begin{math}
x^{\left( m \right)} \in M \left( g \right)
\end{math}, 
the following inequality holds:
\[
0\geq \widetilde{g}\left(x^{(m)}\right)\geq Q\left( x^{\left( m \right)}_{l+1}, \ldots, x^{\left( m \right)}_n \right) +R\left( x^{\left( m \right)}_{l+1}, \ldots, x^{\left( m \right)}_n \right).
\]
By multiplying 
\begin{math}
t_m^2
\end{math}
to this inequality and taking the limit 
\begin{math}
m \rightarrow \infty
\end{math}, we obtain 
\begin{math}
Q \left(d' \right) \le 0
\end{math}. 
Thus, $d$ is contained in $\overline{D}(l,Q)$.

Let $d\in \overline{D}(l,Q)$. 
If $Q(d')<0$, $m^{-1}d$ is contained in $M(g)$ for $m\gg 0$ since 
\[
\widetilde{g}(m^{-1}d) = Q(m^{-1}d') + R(m^{-1}d') = m^{-2}Q(d')+O(m^{-3})<0~(m\to \infty). 
\]
Since $\lim_{m\to \infty}m^{-1}d =0$ and $\lim_{m\to\infty}m\cdot m^{-1}d=d$, $d$ is contained in $C^+(g)$. 
We can also deduce that $d\in C^+(g)$ if either $R_{r_0}(d')<0$ or $R_r(d')=0$ for any $r\geq 3$ in the same manner. 
In what follows, we assume that $Q(d')=0$ and there exists $v\in \R^{n-l}$ satisfying the conditions in Lemma~\ref{lem:tangent cone for r=0}. 
We will show that $m^{-1}d+m^{-5/4}v$ is contained in $M(g)$ for $m\gg 0$. 
For $j\leq l$, the $j$-th component of $m^{-1}d+m^{-5/4}v$ (which is $g_j(m^{-1}d+m^{-5/4}v)$) is equal to $0$ since $d_j=v_j=0$.
For $l+1\leq j\leq q-1$, the $j$-th component of $m^{-1}d+m^{-5/4}v$ (which is $g_j(m^{-1}d+m^{-5/4}v)$) is less than $0$ for $m\gg0$ if $d_j<0$. 
If $d_j=0$, $v_j$ is less than or equal to $0$ by the assumption on $v$, and thus the $j$-th component of $m^{-1}d+m^{-5/4}v$ is also less than or equal to $0$ for $m\gg 0$.
We can obtain the following equality in the same way as in the proof of Lemma~\ref{lem:tangent cone for r=1}:
\[
\widetilde{g}\left(m^{-1}d + m^{-5/4}v\right) =(v'\cdot \nabla Q(d'))m^{-9/4} + \frac{1}{2}\left({}^tv'\mathrm{Hess}(Q)v'\right)m^{-5/2} +O(m^{-3}) ~(m\to \infty). 
\]
If $v'\cdot \nabla Q(d')$ is not $0$, it is less than $0$ by the assumption, and thus $\widetilde{g}\left(m^{-1}d + m^{-5/4}v\right)$ is also less than $0$ for $m\gg0$ . 
If $v'\cdot \nabla Q(d')=0$, ${}^tv'\mathrm{Hess}(Q)v'$ is less than $0$ by the assumption, and thus $\widetilde{g}\left(m^{-1}d + m^{-5/4}v\right)$ is also less than $0$ for $m\gg0$ . 
We can eventually deduce that $m^{-1}d+m^{-5/4}v$ is contained in $M(g)$ for $m\gg 0$. 
Hence $d$ is contained in $C^+(g)$ since $\lim_{m\to \infty}m^{-1}d+m^{-5/4}v=0$ and $\lim_{m\to\infty}m\left(m^{-1}d+m^{-5/4}v\right)=d$. 
\end{proof}

\begin{corollary}\label{cor:tangent cone for r=0}

Let $g,\tilde{g},Q,R$ be the same as those in Lemma~\ref{lem:tangent cone for r=0}. 
Suppose that $Q(x')$ is equal to $P(x') + \sum_{j=l+s+1}^{n}\epsilon_jx_j^2$ for some quadratic polynomial $P$ with variables $x_{l+1},\ldots, x_{l+s}$ and $\epsilon_j\in \{1,-1\}$, and $R(x')$ is a homogeneous polynomial with variables $x_{l+1},\ldots, x_{l+s}$. 
Then, $\overline{D}(l,Q)\setminus \{(0,\ldots, 0,d_{l+1},\ldots, d_{l+s},0,\ldots,0)~|~R(d')>0\}$ is contained in $C^+(g)$, and $C^+(g)=\overline{D}(l,Q)$ if $\epsilon_j=-1$ for some $j\geq l+s+1$. 

\end{corollary}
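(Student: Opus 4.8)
The plan is to deduce the corollary entirely from Lemma~\ref{lem:tangent cone for r=0}, since that lemma already gives $C^+(g)\subseteq \overline{D}(l,Q)$ for free; the whole task is to verify the reverse inclusion on the two indicated subsets by checking, for each relevant $d\in\overline{D}(l,Q)$, one of the four sufficient conditions in the second part of the lemma. First I would fix $d\in\overline{D}(l,Q)$ and split on the sign of $Q(d')$. If $Q(d')<0$ the first sufficient condition applies immediately, so $d\in C^+(g)$; thus only the boundary case $Q(d')=0$ requires work.

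For $Q(d')=0$ I would exploit that $R$ is homogeneous, say of degree $e$, so $R=R_e$ and $r_0=e$ whenever $R(d')\neq 0$. Hence $R(d')=0$ triggers the second condition and $R(d')<0$ triggers the third, placing $d$ in $C^+(g)$ in both cases. The only remaining possibility is $R(d')>0$, and here I would use the hypothesis that $d$ lies outside the excluded set: since $d$ already has $d_1=\cdots=d_l=0$, being outside $\{(0,\dots,0,d_{l+1},\dots,d_{l+s},0,\dots,0)\mid R(d')>0\}$ forces $d_{j_0}\neq 0$ for some $j_0\geq l+s+1$. For such a coordinate $Q$ contains the pure square $\epsilon_{j_0}x_{j_0}^2$, so $(\nabla Q(d'))_{j_0}=2\epsilon_{j_0}d_{j_0}\neq 0$; taking $v=c\,e_{j_0}$ with the sign of $c$ chosen so that $v'\cdot\nabla Q(d')=2\epsilon_{j_0}d_{j_0}c<0$ satisfies condition (b), and condition (a) is automatic because the only coordinate of $v$ in $\{l+1,\dots,q-1\}$ that could be nonzero is $v_{j_0}$, which is unconstrained since $d_{j_0}\neq 0$. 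This yields $d\in C^+(g)$ and proves the first inclusion.

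For the equality statement, assuming some $\epsilon_{j_1}=-1$ with $j_1\geq l+s+1$, I would show the excluded set is also contained in $C^+(g)$, so that together with $C^+(g)\subseteq\overline{D}(l,Q)$ one gets equality. Given such $d$ (so $d_{j_1}=0$), the first-order term vanishes, $(\nabla Q(d'))_{j_1}=2\epsilon_{j_1}d_{j_1}=0$, so I would pass to the second-order condition (c): with $v=c\,e_{j_1}$ one has $v'\cdot\nabla Q(d')=0$ and ${}^tv'\,\mathrm{Hess}(Q)\,v'=2\epsilon_{j_1}c^2=-2c^2<0$, exactly the sign required. Condition (a) is again arranged by taking $c<0$ (relevant only if $j_1\leq q-1$). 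I expect the main bookkeeping obstacle to be precisely this verification of condition (a) in tandem with (b) or (c): one must track whether the chosen index $j_0$ (resp.\ $j_1$) lies in the sign-constrained range $\{l+1,\dots,q-1\}$ or in the free range $\{q,\dots,n\}$, and confirm in each situation that the sign of $c$ can be chosen to meet (a) without destroying the strict inequality coming from (b) or (c).
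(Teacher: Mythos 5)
Your proof is correct and follows essentially the same route as the paper's: both deduce the corollary from Lemma~\ref{lem:tangent cone for r=0} by verifying its sufficient conditions, using a perturbation $v$ along $e_{j_0}$ (with sign chosen so that $v'\cdot\nabla Q(d')<0$) when some $d_{j_0}\neq 0$ with $j_0\geq l+s+1$, and the second-order condition (c) along $e_{j_1}$ with $\epsilon_{j_1}=-1$ for points of the excluded set. Your additional case split on the sign of $Q(d')$ and your sign bookkeeping for condition (a) (taking $c<0$ when $j_1\leq q-1$) are harmless refinements of the paper's argument, which tacitly relies on $l+s+1>q-1$ as holds in all its applications.
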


\begin{proof}
The tangent cone $C^+(g)$ is contained in $\overline{D}(l,Q)$ by Lemma~\ref{lem:tangent cone for r=0}. 
Let $d\in \overline{D}(l,Q)$. 
%By Lemma~\ref{lem:tangent cone for r=0}, $d$ is contained in $C^+(g)$ if $Q(d)<0$. 
Since $R(x')=R_{r_0}(x')$ by the assumption on $R$, $d$ is contained in $C^+(g)$ if $R(d')\leq 0$ by Lemma~\ref{lem:tangent cone for r=0}. 
If $d_j\neq 0$ for some $j\geq l+s+1$, the vector $v= -\epsilon_jd_je_j$ satisfies the conditions in Lemma~\ref{lem:tangent cone for r=0}. 
Indeed, $v_1=\cdots =v_{q-1}=0$ (in particular $v$ satisfies the condition (a) in Lemma~\ref{lem:tangent cone for r=0}), $v'\cdot \nabla Q(d') =-2 \epsilon_j^2 d_j^2 <0$.
We thus obtain $\overline{D}(l,Q)\setminus \{(0,\ldots, 0,d_{l+1},\ldots, d_{l+s},0,\ldots,0)~|~R(d')>0\}\subset C^+(g)$. 
If $d_{l+s+1}=\cdots = d_n=0$ and $\epsilon_j=-1$ for some $j\geq l+s+1$, the vector $v=e_j$ satisfies the conditions in Lemma~\ref{lem:tangent cone for r=0}.
Indeed, $v_1=\cdots =v_{q-1}=0$, $v'\cdot \nabla Q(d')=v'\cdot \nabla P(d')=0$ since $P(x')$ is a polynomial with variables $x_{l+1},\ldots, x_{l+s}$, and ${}^tv'\mathrm{Hess}(Q)v' = 2\epsilon_j=-2 <0$. 
Hence $C^+(g)$ is equal to $\overline{D}(l,Q)$ if $\epsilon_j=-1$ for some $j\geq l+s+1$.
\end{proof}

\subsubsection{ACQ and GCQ in Table~\ref{table:generic constraint q=0}}

In this case, the linearized cones of normal forms in Table~\ref{table:generic constraint q=0} are 
\begin{math}
\mathbb{R}^n
\end{math}
since the gradients of the normal forms are zero. 

\begin{proposition}\label{prop:ACQ for r=0}

ACQ does not hold for any germ in Table~\ref{table:generic constraint q=0}.

\end{proposition}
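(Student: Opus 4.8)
The plan is to read off the failure of ACQ directly from the mismatch between the two cones. As just noted, every $h$ in Table~\ref{table:generic constraint q=0} has $dh_0=0$, so $L^+(h)=\R^n$; since $C^+(h)\subset L^+(h)$ holds in general, ACQ can only fail, and it will fail precisely when $C^+(h)\subsetneq \R^n$. Thus my whole task reduces to producing at least one direction $d\in\R^n$ that is \emph{not} in the tangent cone.

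To do this I would invoke Lemma~\ref{lem:tangent cone for r=1}(1), specialized to $q=0$ (there are no inequality constraints for this table, so the germ is the single equality constraint $h$). Writing $h=Q+R$ with $Q$ its quadratic part and $R$ the terms of degree $\geq 3$, the lemma gives the containment $C^+(h)\subset\{d\in\R^n\mid Q(d)=0\}$. The point is that this confines the entire tangent cone to the zero locus of a \emph{single fixed} quadratic form, whereas the linearized cone is all of $\R^n$.

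It then suffices to check that $Q\not\equiv 0$ for each class, and in fact to exhibit one coordinate vector on which $Q$ does not vanish. This is exactly where the standing assumption---that $n$ is large enough for each normal form to carry a quadratic part---is used: for type $(1,2)$ take $e_1$, for types $(1,3)$--$(1,5)$ take $e_2$ (so that $Q(e_2)=\epsilon_2\neq0$), and for type $(2)$ take $e_3$. In every entry some $e_j$ satisfies $Q(e_j)\neq0$, hence $e_j\notin C^+(h)$ while $e_j\in L^+(h)=\R^n$, and therefore $C^+(h)\subsetneq L^+(h)$, which is the desired failure of ACQ.

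I do not anticipate a genuine obstacle here: the argument is essentially a one-line application of the inclusion already established in Lemma~\ref{lem:tangent cone for r=1}(1), and the only thing requiring care is the routine, class-by-class verification that the quadratic part is nonzero (equivalently, that in the large-$n$ regime none of the listed forms degenerates to a purely cubic-or-higher germ). Notably, I need only the \emph{upper} containment of $C^+$ from that lemma, not the harder sufficient conditions for membership in its second part.
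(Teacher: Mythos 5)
Your proposal is correct and is essentially identical to the paper's proof: both apply Lemma~\ref{lem:tangent cone for r=1}(1) to get $C^+(h)\subset \overline{C}(0,Q)$, observe $L^+(h)=\mathbb{R}^n$ since the gradients vanish, and exhibit a coordinate vector on which $Q$ is nonzero (the paper uniformly uses $e_n$, you use $e_1$, $e_2$, or $e_3$ per class, which is an immaterial difference).
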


\begin{proof}
Let $h$ be any germ in Table~\ref{table:generic constraint q=0}.
By Lemma~\ref{lem:tangent cone for r=1}, the tangent cone $C^+(h)$ is contained in $\overline{C}(0,Q) = \{d\in \R^n~|~Q(d)=0\}$, where $Q$ is the quadratic part of $h$. 
One can easily check that $Q(e_n)\neq 0$, in particular $C^+(h)\subsetneq \R^n=L^+(h)$ for any $h$ in Table~\ref{table:generic constraint q=0}. 
\end{proof}

%In what follows, we identify the tangent cones of the constraint map-germs in Table~\ref{table:generic constraint q=0}. 
%For the normal form of type (1) in Table~\ref{table:generic constraint q=0}, 
%\begin{math}
%C^+ \left( h \right)
%\end{math}
%can be computed as follows:
\subsubsection*{The germ of type $(1,k)$ in Table~\ref{table:generic constraint q=0}}
Let $k\geq 2$, $h = x_1^k+\sum_{j=2}^{n}\epsilon_jx_j^2$, where $\epsilon_j\in \{1,-1\}$. For 
\begin{math}
k \ge 3
\end{math}, let 
\begin{math}
Q = \sum_{j=2}^{n}\epsilon_jx_j^2
\end{math}
and 
\begin{math}
R = x_1^k
\end{math}.
\begin{proposition} \label{prop:table1_(1,k)_k2_cone}
If
\begin{math}
k = 2
\end{math}, the tangent cone 
\begin{math}
C^+ \left( h \right)
\end{math}
is equal to 
\begin{math}
 \overline{C}(0,Q)
\end{math}.
\end{proposition}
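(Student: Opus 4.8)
The plan is to exploit the fact that when $k=2$ the germ $h$ is itself a homogeneous quadratic form, so its feasible set coincides with the quadric cone $\{Q=0\}$ and the tangent cone can be read off directly. Here $Q$ denotes the full quadratic part $x_1^2+\sum_{j=2}^{n}\epsilon_j x_j^2$ of $h$ (for $k=2$ there is no higher-order remainder, so $R=0$ and $h=Q$), in accordance with the convention used in Proposition~\ref{prop:ACQ for r=0}.

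First I would establish the inclusion $C^+(h)\subseteq \overline{C}(0,Q)$. This is immediate from the first part of Lemma~\ref{lem:tangent cone for r=1} applied with $q=0$: every element of the tangent cone annihilates the quadratic part $Q$. (Equivalently, this is precisely the inclusion already recorded in the proof of Proposition~\ref{prop:ACQ for r=0}.)

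For the reverse inclusion I would take $d$ with $Q(d)=0$ and produce an explicit approximating sequence. Because $h=Q$ is homogeneous of degree $2$, the points $x_l=d/l$ satisfy $h(x_l)=l^{-2}Q(d)=0$ and hence lie in $M(h)$; they tend to $0$, and with $t_l=l$ we have $t_l x_l=d$, so $d\in C^+(h)$. Alternatively, this is exactly the second part of Lemma~\ref{lem:tangent cone for r=1} in the degenerate case $R=0$, where the hypothesis ``$R_r(d)=0$ for all $r\geq 3$'' holds vacuously. Combining the two inclusions yields $C^+(h)=\overline{C}(0,Q)$.

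There is no genuine obstacle here: the statement is essentially the fact that the tangent cone of a quadric cone at its apex is the cone itself, and the argument reduces to homogeneity of $h$. The only point requiring care is the interpretation of the symbol $Q$ in the $k=2$ case, namely that it stands for the entire quadratic form $x_1^2+\sum_{j=2}^{n}\epsilon_j x_j^2$ rather than the form $\sum_{j=2}^{n}\epsilon_j x_j^2$ appearing in the $k\geq 3$ setup; with that reading the equality is exact. The substantive cases are $k\geq 3$, where $R=x_1^k\neq 0$ and the finer criteria of Lemma~\ref{lem:tangent cone for r=1} genuinely come into play.
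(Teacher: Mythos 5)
Your proposal is correct and matches the paper's own proof, which likewise observes that for $k=2$ one may take $R=0$ (so $Q$ is the full quadratic form $x_1^2+\sum_{j=2}^{n}\epsilon_jx_j^2$) and then invokes Lemma~\ref{lem:tangent cone for r=1}; your explicit sequence $x_l=d/l$ is exactly the argument used inside that lemma's proof for the case where all higher-order parts vanish. No gaps.
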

\begin{proof}
We can put 
\begin{math}
R \left( x \right) = 0
\end{math}
in this case, and thus the proposition holds by Lemma~\ref{lem:tangent cone for r=1}.
\end{proof}

\begin{proposition} \label{prop:table1_(1,k)_k3g_cone}
Assume $k\geq 3$. 
The tangent cone 
\begin{math}
C^+ \left( h \right)
\end{math}
is equal to 
\begin{math}
\left\{ 0 \right\}
\end{math}
if 
\begin{math}
k
\end{math}
is even and all the 
\begin{math}
\epsilon_j
\end{math}s are 
\begin{math}
1
\end{math}. If 
\begin{math}
k
\end{math}
is odd and all the 
\begin{math}
\epsilon_j
\end{math}s have the same sign
\begin{math}
\delta
\end{math}, 
\begin{math}
C^+ \left( h \right)
\end{math}
is equal to 
\begin{math}
\overline{C} \left( 0, Q \right) \setminus \left\{ d \in \mathbb{R}^n \middle| \delta d_1> 0, d_2 = \cdots = d_n = 0 \right\}
\end{math}. 
\begin{math}
C^+ \left( h \right)
\end{math}
is equal to 
\begin{math}
\overline{C} \left( 0, Q \right)
\end{math}
in all the other cases.
\end{proposition}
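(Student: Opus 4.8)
The plan is to invoke Lemma~\ref{lem:tangent cone for r=1} in the case $q=0$, with quadratic part $Q=\sum_{j=2}^n\epsilon_j x_j^2$ and higher-order remainder $R=x_1^k$. Its first part immediately yields the containment $C^+(h)\subseteq \overline{C}(0,Q)=\{d\in\R^n~|~Q(d)=0\}$, so the entire task is to decide, for each $d\in\overline{C}(0,Q)$, whether $d\in C^+(h)$. I would first clear away the points with $d_1=0$: since $R=x_1^k$, one has $R_r(d)=0$ for every $r\geq 3$, so the first alternative in the second part of Lemma~\ref{lem:tangent cone for r=1} applies and such $d$ lie in $C^+(h)$ unconditionally. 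Only points with $d_1\neq 0$ remain, and for these $r_0=k$ and $R_{r_0}(d)=d_1^k$; its sign is positive when $k$ is even and equals the sign of $d_1$ when $k$ is odd, which is exactly the source of the parity dependence in the statement.

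For the sufficiency (the inclusion $\supseteq$) I would reduce membership to producing a test vector $v$ as in Lemma~\ref{lem:tangent cone for r=1}, using $\nabla Q(d)=(0,2\epsilon_2 d_2,\ldots,2\epsilon_n d_n)$ and $\mathrm{Hess}(Q)=\diag(0,2\epsilon_2,\ldots,2\epsilon_n)$. If $\nabla Q(d)\neq 0$, i.e.\ some $d_j\neq 0$ with $j\geq 2$, then $v=\pm\nabla Q(d)$ makes $v\cdot\nabla Q(d)$ nonzero of either prescribed sign, so such off-axis $d$ always lie in $C^+(h)$. The delicate points are therefore those on the $x_1$-axis, $d=(d_1,0,\ldots,0)$ with $d_1\neq 0$, where $\nabla Q(d)=0$ and the decision rests on the second-order form ${}^tv\,\mathrm{Hess}(Q)\,v=2\sum_{j=2}^n\epsilon_j v_j^2$, which must be made nonzero with sign opposite to $d_1^k$. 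The reachable values of this form are the nonnegative reals if all $\epsilon_j=1$, the nonpositive reals if all $\epsilon_j=-1$, and all of $\R$ if the signs are mixed; matching these against the sign opposite to $d_1^k$ produces precisely the announced trichotomy of which axis points are captured.

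The crux, and the step I expect to be the main obstacle, is the complementary inclusion $\subseteq$ for the two degenerate cases, since Lemma~\ref{lem:tangent cone for r=1} supplies only sufficient conditions and cannot by itself certify non-membership. Here I would argue directly from feasibility. When $k$ is even and all $\epsilon_j=1$, the germ satisfies $h(x)=x_1^k+\sum_{j\geq 2}x_j^2\geq 0$ with equality only at the origin, so $M(h)=\{0\}$ and hence $C^+(h)=\{0\}$. When $k$ is odd with common sign $\delta$, feasibility $x_1^k=-\delta\sum_{j\geq 2}x_j^2$ forces $x_1^k$ to have a fixed sign on $M(h)$, hence (as $k$ is odd) pins the sign of $x_1$ and therefore of any limiting direction $d_1$; this excludes exactly the open ray $\{\delta d_1>0,\ d_2=\cdots=d_n=0\}$, matching the sufficiency computation above. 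In all remaining cases $Q$ is indefinite, or negative definite with $k$ even, so every axis point is already captured by the Hessian test and $C^+(h)=\overline{C}(0,Q)$, with no separate non-membership argument needed beyond the first part of Lemma~\ref{lem:tangent cone for r=1}.
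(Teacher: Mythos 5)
Your proposal is correct and follows essentially the same route as the paper's proof: containment in $\overline{C}(0,Q)$ and sufficiency via Lemma~\ref{lem:tangent cone for r=1} (gradient test off the $x_1$-axis, Hessian test on it), together with the direct feasibility arguments ($M(h)=\{0\}$ when $k$ is even with all $\epsilon_j=1$, and the sign constraint $\delta x_1\leq 0$ on $M(h)$ when $k$ is odd with common sign $\delta$) for the two cases the lemma cannot certify. Your repackaging of the axis-point analysis as ``reachable values of ${}^tv\,\mathrm{Hess}(Q)\,v$'' is just a cleaner organization of the paper's case-by-case choice of $v=e_2$ or $e_3$.
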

\begin{proof}
In the first case, it is easy to see that 
\begin{math}
M \left( g, h \right)
\end{math}
is equal to 
\begin{math}
\left\{ 0 \right\}
\end{math}, and thus its tangent cone is also 
\begin{math}
\left\{ 0 \right\}
\end{math}.

In all the cases, 
\begin{math}
C^+ \left( h \right)
\end{math}
is contained in 
\begin{math}
\overline{C} \left( 0, Q \right)
\end{math}
by Lemma~\ref{lem:tangent cone for r=1}. The element 
\begin{math}
d \in \overline{C} \left( 0, Q \right)
\end{math}
is contained in 
\begin{math}
C^+ \left( h \right)
\end{math}
if 
\begin{math}
d_1 = 0
\end{math}
holds since 
\begin{math}
R \left( d \right) = d^k_1 = 0
\end{math}
in this case. Therefore, we consider the case 
\begin{math}
d_1 \neq 0
\end{math}. If 
\begin{math}
\nabla Q \left( d \right) \neq 0
\end{math}, 
we can choose 
\begin{math}
v = -\mathrm{sign} \left( R \left( d \right) \right) \cdot \nabla Q \left( d \right)
\end{math}
so that the sign of 
\begin{math}
v \cdot \nabla Q \left( d \right)
\end{math}
is opposite to that of 
\begin{math}
R \left( d \right)
\end{math}
and thus 
\begin{math}
d
\end{math}
is contained in 
\begin{math}
C^+ \left( h \right)
\end{math}
by Lemma~\ref{lem:tangent cone for r=1}. Therefore, we obtain 
\begin{equation}
\overline{C} \left( 0, Q \right) \setminus \left\{ d \in \mathbb{R}^n \middle| d_1\neq 0, d_2 = \cdots = d_n = 0 \right\} \subset C^+ \left( h \right).
\end{equation}

Suppose
\begin{math}
k
\end{math}
is odd and all the 
\begin{math}
\epsilon_j
\end{math}s have the same sign 
\begin{math}
\delta
\end{math}. Take any 
\begin{math}
d \in \mathbb{R}^n
\end{math}
such that 
\begin{math}
d_1 \neq 0, d_2 = \cdots = d_n = 0
\end{math}. Then,  
\begin{math}
\nabla Q \left( d \right) = 0
\end{math}
holds. In what follows, we will show that
\begin{math}
d
\end{math}
is contained in 
\begin{math}
C^+ \left( h \right)
\end{math}
if and only if
\begin{math}
\delta d_1 < 0
\end{math}
holds. First, suppose 
\begin{math}
\delta d_1 < 0
\end{math}. Then,  
the sign of \begin{math}
\;^t e_2 \mathrm{Hess} \left( Q \right) e_2 = 2 \delta
\end{math}
is opposite to that of 
\begin{math}
R \left( d \right)=d_1^k
\end{math}. 
Therefore, 
\begin{math}
d
\end{math}
is contained in 
\begin{math}
C^+ \left( h \right)
\end{math}
by Lemma~\ref{lem:tangent cone for r=1}. Conversely, for any 
\begin{math}
d \in C^+ \left( h \right)
\end{math}, there exist sequences 
\begin{math}
\left\{ t_m \right\} \subset \mathbb{R}_{> 0}
\end{math}
and 
\begin{math}
\left\{ x^{\left( m \right)} \right\} \subset M \left( h \right)
\end{math}
such that 
\begin{math}
d = \lim_{m \rightarrow \infty} t_m x^{\left( m \right)}
\end{math}
holds. Since 
\begin{math}
\delta t_m^k\left( x^{\left( m \right)}_1 \right)^k = - t_m^k\sum_{j=2}^n \left( x^{\left( m \right)} \right)^2 \le 0
\end{math}
holds, by taking 
\begin{math}
m \rightarrow \infty
\end{math}
in the both sides of the inequality implies that 
\begin{math}
\delta d_1^k \le 0
\end{math}. 
Therefore, we obtain
\begin{math}
C^+ \left( h \right) = \overline{C} \left( 0, Q \right) \setminus \left\{ d \in \mathbb{R}^n \middle| \delta d_1> 0, d_2 = \cdots = d_n = 0 \right\}
\end{math}
in case if 
\begin{math}
k
\end{math}
is odd and all the signs of 
\begin{math}
\epsilon_j
\end{math}
is 
\begin{math}
\delta
\end{math}.

Suppose 
\begin{math}
k
\end{math}
is even and all the 
\begin{math}
\epsilon_j
\end{math}s are 
\begin{math}
-1
\end{math}. Take any 
\begin{math}
d \in \mathbb{R}^n
\end{math}
such that 
\begin{math}
d_1 \neq 0, d_2 = \cdots = d_n = 0
\end{math}. Then,  
\begin{math}
\nabla Q \left( d \right) = 0
\end{math}
holds. In that case,
\begin{math}
d
\end{math}
is contained in 
\begin{math}
C^+ \left( h \right)
\end{math}
since the sign of 
\begin{math}
\;^t e_2 \mathrm{Hess} \left( Q \right) e_2 = - 2 
\end{math}
is opposite to that of 
\begin{math}
R \left( d \right) = d_1^k
\end{math}. 
Therefore, 
\begin{math}
C^+ \left( h \right) = \overline{C} \left( 0, Q \right)
\end{math}
holds in this case. 

Suppose 
\begin{math}
\left\{ \epsilon_2, \ldots, \epsilon_n \right\} = \left\{ 1, -1 \right\}
\end{math}. In this case, we can suppose 
\begin{math}
\epsilon_2 = 1
\end{math}
and 
\begin{math}
\epsilon_3 = -1
\end{math}
without loss of generality. Take any 
\begin{math}
d \in \mathbb{R}^n
\end{math}
such that 
\begin{math}
d_1 \neq 0, d_2 = \cdots = d_n = 0
\end{math}. Then,  
\begin{math}
\nabla Q \left( d \right) = 0
\end{math}
holds. In that case,
\begin{math}
d
\end{math}
is contained in 
\begin{math}
C^+ \left( h \right)
\end{math}
by Lemma~\ref{lem:tangent cone for r=1} since 
\begin{math}
\;^t e_2 \mathrm{Hess} \left( Q \right) e_2 = 2
\end{math}
and 
\begin{math}
\;^t e_3 \mathrm{Hess} \left( Q \right) e_3 = -2
\end{math}
hold and thus
\begin{math}
v
\end{math}
can be chosen to 
\begin{math}
e_2
\end{math}
or 
\begin{math}
e_3
\end{math}
so that 
\begin{math}
\;^t v \mathrm{Hess} \left( Q \right) v
\end{math}
has the opposite sign to 
\begin{math}
R \left( d \right)
\end{math}. Therefore, 
\begin{math}
C^+ \left( h \right) = \overline{C} \left( 0, Q \right)
\end{math}
holds in this case. 
\end{proof}

\begin{proposition}

GCQ holds for $h$ if and only if
\begin{math}
k = 2
\end{math}
and one of 
\begin{math}
\epsilon_j
\end{math}s is
\begin{math}
-1
\end{math}
or 
\begin{math}
k \ge 3
\end{math}
and 
\begin{math}
\left\{ \epsilon_2, \ldots, \epsilon_n \right\} = \left\{ 1, -1 \right\}
\end{math}.

\end{proposition}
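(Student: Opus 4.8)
The plan is to reduce \textnormal{GCQ} to a statement about the tangent cone alone, and then read off $C^+(h)$ from the two preceding propositions. Since every normal form in Table~\ref{table:generic constraint q=0} has vanishing $1$-jet, we have $\nabla h(0)=0$ and hence $L^+(h)=\R^n$, so that $L^+(h)^\circ=\{0\}$. Consequently \textnormal{GCQ} holds for $h$ if and only if $C^+(h)^\circ=\{0\}$. I will exploit the following elementary dichotomy for a set $X\subset \R^n$: if $X$ is symmetric, i.e.,~$X=-X$, then $v\cdot d\leq 0$ for all $d\in X$ forces $v\cdot d=0$ on $X$, so $X^\circ=(\operatorname{span}X)^\perp$; in particular $X^\circ=\{0\}$ if and only if $X$ spans $\R^n$. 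For the non-symmetric cones I will instead exhibit an explicit nonzero polar vector.

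Next I treat the cases in which the claim asserts \textnormal{GCQ}. If $k=2$ and some $\epsilon_j=-1$, then by Proposition~\ref{prop:table1_(1,k)_k2_cone} the cone $C^+(h)$ equals $\overline{C}(0,Q)=\{d\mid Q(d)=0\}$, the null cone of the nondegenerate \emph{indefinite} quadratic form $Q=x_1^2+\sum_{j=2}^n\epsilon_jx_j^2$. This null cone is symmetric, and it spans $\R^n$: choosing indices $i,j$ with $\epsilon_i,\epsilon_j$ of opposite sign, the null vectors $e_i\pm e_j$ recover $e_i$ and $e_j$ in the span, and every further coordinate is obtained by pairing it with an opposite-sign one. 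Hence $C^+(h)^\circ=\{0\}$. If instead $k\geq 3$ and $\{\epsilon_2,\ldots,\epsilon_n\}=\{1,-1\}$, then Proposition~\ref{prop:table1_(1,k)_k3g_cone} places us in its ``all other cases'' branch, so $C^+(h)=\overline{C}(0,Q)$ with $Q=\sum_{j=2}^n\epsilon_jx_j^2$ and the coordinate $d_1$ unconstrained. This cone is again symmetric; it contains the whole $d_1$-axis together with the null cone of the indefinite form $\sum_{j\geq 2}\epsilon_jx_j^2$, which spans $e_1^\perp$ by the same pairing argument. Thus $C^+(h)$ spans $\R^n$ and $C^+(h)^\circ=\{0\}$.

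Finally I rule out \textnormal{GCQ} in every remaining case by producing a nonzero element of $C^+(h)^\circ$. If $k=2$ and all $\epsilon_j=1$, then $Q$ is positive definite, so $C^+(h)=\{0\}$, whose polar is all of $\R^n$. For $k\geq 3$ with all $\epsilon_j$ of a common sign $\delta$, Proposition~\ref{prop:table1_(1,k)_k3g_cone} yields three subcases, and in each $C^+(h)$ lies in a proper closed half-space: when $k$ is even and $\delta=1$ we have $C^+(h)=\{0\}$; when $k$ is even and $\delta=-1$ we have $C^+(h)=\R e_1$, which is annihilated by $e_2$; and when $k$ is odd we have $C^+(h)=\overline{C}(0,Q)\setminus\{d\mid \delta d_1>0,\ d_2=\cdots=d_n=0\}$, i.e.,~the ray $\{d_1e_1\mid \delta d_1\leq 0\}$, on which $\delta e_1$ has nonpositive inner product. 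In all three subcases the exhibited vector lies in $C^+(h)^\circ\setminus\{0\}$, so \textnormal{GCQ} fails. The only genuinely delicate step is the spanning verification for the indefinite null cones (and keeping track that $d_1$ is free in the $k\geq 3$ case); everything else is bookkeeping over the branches of the two cone computations.
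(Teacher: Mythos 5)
Your proof is correct and follows essentially the same route as the paper: both reduce GCQ to the triviality of $C^+(h)^\circ$ (using $L^+(h)^\circ=\{0\}$), invoke the two tangent-cone propositions, kill the polar in the positive cases via the null vectors $e_i\pm e_j$ with indices of opposite sign, and exhibit a nonzero polar vector in the negative cases. Your symmetric-set lemma ($X=-X$ implies $X^\circ=(\operatorname{span}X)^\perp$) and your three-way case split in the ``only if'' direction are mild repackagings of, respectively, the paper's direct coordinate argument (forcing $w_j=0$ from $\pm w_i\pm w_j\le 0$) and its uniform use of the inclusion $C^+(h)\subset\overline{C}(0,Q)=\R e_1$ in the same-sign cases.
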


\begin{proof}
It is easy to check that 
\begin{math}
L^+ \left( h \right)^\circ
\end{math}
is equal to
\begin{math}
\left\{0 \right\}
\end{math}. 

\noindent
\textbf{Proof of ``if'' part:} First, suppose 
\begin{math}
k = 2
\end{math}
and
\begin{math}
\epsilon_2 = -1
\end{math}. 
In this case, $C^+(h)$ is equal to $\overline{C}(0,Q)$ by Proposition~\ref{prop:table1_(1,k)_k2_cone}.
Take any 
\begin{math}
w \in C^+ \left( h \right)^\circ = \overline{C} \left( 0, Q \right)^\circ
\end{math}. 
For 
\begin{math}
j \in \left\{ 1, \ldots, n \right\}
\end{math}, either 
\begin{math}
\pm e_1 \pm e_j
\end{math}
or 
\begin{math}
\pm e_2 \pm e_j
\end{math}
is contained in 
\begin{math}
\overline{C} \left( 0, Q \right)
\end{math}. Therefore, 
\begin{math}
w \cdot \left( \pm e_1 \pm e_j \right) = \pm w_1 \pm w_j \le 0
\end{math}
or 
\begin{math}
w \cdot \left( \pm e_2 \pm e_j \right) = \pm w_2 \pm w_j \le 0
\end{math}
hold for 
\begin{math}
j \in \left\{ 1, \ldots, n \right\}
\end{math}. This implies that 
\begin{math}
w_j = 0
\end{math}
for all 
\begin{math}
j \in \left\{ 1, \ldots, n \right\}
\end{math}. This proves
\begin{math}
w \in L^+ \left( h \right)^\circ
\end{math}
and thus 
\begin{math}
C^+ \left( h \right)^\circ \subset L^+ \left( h \right)^\circ
\end{math}. Since 
\begin{math}
C^+ \left( h \right)^\circ \supset L^+ \left( h \right)^\circ
\end{math}
always holds, this proves that GCQ holds in this case. 

Second, suppose 
\begin{math}
k \ge 3
\end{math}
and 
\begin{math}
\left\{ \epsilon_2, \ldots, \epsilon_n \right\} = \left\{ 1, -1 \right\}
\end{math}. 
In this case, $C^+(h)$ is equal to $\overline{C}(0,Q)$ by Proposition~\ref{prop:table1_(1,k)_k3g_cone}.
Without loss of generality, we can assume 
\begin{math}
\epsilon_2 = 1
\end{math}
and 
\begin{math}
\epsilon_3 = -1
\end{math}. 
Take any 
\begin{math}
w \in C^+ \left( h \right)^\circ = \overline{C} \left( 0, Q \right)^\circ
\end{math}. 
Since 
\begin{math}
\pm e_1 \in \overline{C} \left( 0, Q \right)
\end{math}
holds, 
\begin{math}
w_1 = 0
\end{math}
holds. For 
\begin{math}
j \in \left\{ 2, \ldots, n \right\}
\end{math}, either 
\begin{math}
\pm e_2 \pm e_j
\end{math}
or 
\begin{math}
\pm e_3 \pm e_j
\end{math}
is contained in 
\begin{math}
\overline{C} \left( 0, Q \right)
\end{math}. Therefore, 
\begin{math}
w \cdot \left( \pm e_2 \pm e_j \right) = \pm w_2 \pm w_j \le 0
\end{math}
or 
\begin{math}
w \cdot \left( \pm e_3 \pm e_j \right) = \pm w_3 \pm w_j \le 0
\end{math}
hold for 
\begin{math}
j \in \left\{ 2, \ldots, n \right\}
\end{math}. This implies that 
\begin{math}
w_j = 0
\end{math}
for all 
\begin{math}
j \in \left\{ 2, \ldots, n \right\}
\end{math}. This proves
\begin{math}
w \in L^+ \left( h \right)^\circ
\end{math}
and thus 
\begin{math}
C^+ \left( h \right)^\circ \subset L^+ \left( h \right)^\circ
\end{math}. Since 
\begin{math}
C^+ \left( h \right)^\circ \supset L^+ \left( h \right)^\circ
\end{math}
always holds, this proves that GCQ holds in this case. 

\noindent
\textbf{Proof of ``only if'' part:} 
We prove this by proving the contraposition. 
First, suppose 
\begin{math}
k = 2
\end{math}
and all of 
\begin{math}
\epsilon_j
\end{math}s are 
\begin{math}
1
\end{math}. In this case, 
\begin{math}
M \left( h \right) = \left\{ 0 \right\}
\end{math}
and thus 
\begin{math}
C^+ \left( h \right) = \left\{ 0 \right\}
\end{math}
holds. Therefore, 
\begin{math}
C^+ \left( h \right)^\circ = \mathbb{R}^n \neq L^+ \left( h \right)^\circ
\end{math}
and thus GCQ is violated. 

Second, suppose 
\begin{math}
k \ge 3
\end{math}
and all of 
\begin{math}
\epsilon_j
\end{math}s have the same sign 
\begin{math}
\delta
\end{math}. In this case, 
\begin{math}
\overline{C} \left( 0, Q \right) = \left\{ d \in \mathbb{R}^n \middle| d_2 = \cdots = d_n = 0 \right\}
\end{math}
holds and thus 
\begin{math}
\{0\}=L^+ \left( h \right)^\circ\subsetneq \overline{C} \left( 0, Q \right)^\circ = \left\{ w \in \mathbb{R}^n \middle| w_1 = 0 \right\}
\end{math}. 
By Lemma~\ref{lem:tangent cone for r=1}, $C^+ \left( h \right)$ is contained in $\overline{C} \left( 0, Q \right)$, and thus $\overline{C}(0,Q)^\circ \subset C^+(h)^\circ$. 
Therefore, GCQ is violated. 
\end{proof}

\subsubsection*{The germ of type $(2)$ in Table~\ref{table:generic constraint q=0}}
Let $h = x_1^3+\epsilon_2 x_1 x_2^2 + x_3^2 + \sum_{j=4}^{n}\epsilon_jx_j^2$, where $\epsilon_j\in \{1,-1\}$, 
\begin{math}
Q = x_3^2 + \sum_{j=4}^{n}\epsilon_jx_j^2
\end{math}
and 
\begin{math}
R = x_1^3+\epsilon_2 x_1 x_2^2
\end{math}.
\begin{proposition} \label{prop:table1_(2)_cone}
The tangent cone 
\begin{math}
C^+ \left( h \right)
\end{math}
is equal to 
\begin{equation}
\overline{C} \left( 0, Q \right) \setminus \left\{ d\in \R^n \middle| d_1^3 + \epsilon_2 d_1 d_2^2 > 0, d_3 = \cdots = d_n = 0 \right\}
\end{equation}
if all of 
\begin{math}
\epsilon_4, \ldots, \epsilon_n
\end{math}
are
\begin{math}
1
\end{math}, and 
\begin{math}
C^+ \left( h \right) = \overline{C} \left( 0, Q \right)
\end{math}
otherwise.
\end{proposition}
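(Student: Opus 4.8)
The plan is to apply Lemma~\ref{lem:tangent cone for r=1} in the case $q=0$, so that $\overline{C}(0,Q) = \{d\in\R^n \mid Q(d)=0\}$ and the sign conditions ``$d_1,\ldots,d_q\le 0$'' are vacuous. The inclusion $C^+(h)\subset \overline{C}(0,Q)$ is then immediate from part (1) of that lemma. For the reverse inclusion I would observe that $R=x_1^3+\epsilon_2 x_1 x_2^2$ is homogeneous of degree $3$, so $R=R_3$ and $r_0=3$ whenever $R(d)\ne 0$, and then run through the cases distinguished by whether $R(d)$ vanishes and whether $\nabla Q(d)$ vanishes, using part (2) of the lemma to produce the test vector $v$.

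First I would dispose of the easy cases. If $R(d)=0$ then every $R_r(d)$ vanishes, so $d\in C^+(h)$ directly. If $R(d)\ne 0$ but $\nabla Q(d)\ne 0$ --- equivalently some $d_j\ne 0$ with $j\ge 3$ --- then taking $v=-\mathrm{sign}(R(d))\,\nabla Q(d)$ makes $v\cdot\nabla Q(d)$ nonzero with sign opposite to $R(d)$, so again $d\in C^+(h)$. This leaves exactly the locus $\{R(d)\ne 0,\ d_3=\cdots=d_n=0\}$, which is precisely where the exceptional behaviour must occur.

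On this locus $\nabla Q(d)=0$, so the decision rests on the Hessian criterion, with $\mathrm{Hess}(Q)=\mathrm{diag}(0,0,2,2\epsilon_4,\ldots,2\epsilon_n)$ and ${}^t v\,\mathrm{Hess}(Q)\,v = 2v_3^2+2\sum_{j\ge 4}\epsilon_j v_j^2$. If some $\epsilon_j=-1$ for $j\ge 4$, this form takes both signs (use $v=e_j$ for a negative value and $v=e_3$ for a positive one), so for either sign of $R(d)$ a suitable $v$ exists and every such $d$ lies in $C^+(h)$; combined with the easy cases this yields $C^+(h)=\overline{C}(0,Q)$. If instead $\epsilon_4=\cdots=\epsilon_n=1$, the form is $\ge 0$, so the criterion can be met only when $R(d)<0$ (take $v=e_3$), and the candidates with $R(d)>0$ remain undecided by the lemma.

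The main obstacle is therefore to show that these remaining candidates --- $d$ with $d_3=\cdots=d_n=0$ and $R(d)>0$ --- are genuinely excluded, since the lemma only gives a sufficient condition for membership. Here I would argue directly: if such a $d$ were in $C^+(h)$, choose $x^{(m)}\to 0$ and $t_m>0$ with $t_m x^{(m)}\to d$ and $h(x^{(m)})=0$. When $\epsilon_4=\cdots=\epsilon_n=1$ the quadratic part satisfies $Q(x^{(m)})\ge 0$, so $R(x^{(m)})=-Q(x^{(m)})\le 0$; homogeneity gives $R(t_m x^{(m)})=t_m^3 R(x^{(m)})\le 0$, and passing to the limit yields $R(d)\le 0$, contradicting $R(d)>0$. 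This confirms that the exceptional set $\{d_1^3+\epsilon_2 d_1 d_2^2>0,\ d_3=\cdots=d_n=0\}$ is removed, completing the identification of $C^+(h)$.
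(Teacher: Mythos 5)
Your proposal is correct and follows essentially the same route as the paper's own proof: both use Lemma~\ref{lem:tangent cone for r=1} with the same test vectors ($v=-\mathrm{sign}(R(d))\,\nabla Q(d)$ off the locus $d_3=\cdots=d_n=0$, then $e_3$ or $e_j$ with $\epsilon_j=-1$ via the Hessian criterion on that locus), and both exclude the points with $R(d)>0$ when $\epsilon_4=\cdots=\epsilon_n=1$ by the same sequence argument: $h(x^{(m)})=0$ forces $R(x^{(m)})=-Q(x^{(m)})\le 0$, and multiplying by $t_m^3$ and passing to the limit gives $R(d)\le 0$. The only differences are presentational (you organize the case split around the sign behaviour of the Hessian form rather than around the two hypotheses of the proposition), and your statement of the limiting inequality is in fact slightly cleaner than the paper's.
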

\begin{proof}
In all the cases, the element 
\begin{math}
d \in \overline{C} \left( 0, Q \right)
\end{math}
is contained in 
\begin{math}
C^+ \left( h \right)
\end{math}
if 
\begin{math}
d_1^3 + \epsilon_2 d_1 d_2^2 = 0
\end{math}
holds since 
\begin{math}
R \left( d \right) = 0
\end{math}
in this case. Therefore, we consider the case 
\begin{math}
d_1^3 + \epsilon_2 d_1 d_2^2 \neq 0
\end{math}. If 
\begin{math}
\nabla Q \left( d \right) \neq 0
\end{math}, 
we can choose 
\begin{math}
v = -\mathrm{sign} \left( R \left( d \right) \right) \cdot \nabla Q \left( d \right)
\end{math}
so that the sign of 
\begin{math}
v \cdot \nabla Q \left( d \right)
\end{math}
is opposite to that of 
\begin{math}
R \left( d \right)
\end{math}
and thus 
\begin{math}
d
\end{math}
is contained in 
\begin{math}
C^+ \left( h \right)
\end{math}
by Lemma~\ref{lem:tangent cone for r=1}. Therefore, we obtain 
\begin{equation}
\overline{C} \left( 0, Q \right) \setminus \left\{ d \in \mathbb{R}^n \middle| d_1^3 + \epsilon_2 d_1 d_2^2 \neq 0, d_3 = \cdots = d_n = 0 \right\} \subset C^+ \left( h \right).
\end{equation}

Suppose all of 
\begin{math}
\epsilon_4, \ldots, \epsilon_n
\end{math}
are
\begin{math}
1
\end{math}. Take any 
\begin{math}
d \in \mathbb{R}^n
\end{math}
such that 
\begin{math}
d_1^3 + \epsilon_2 d_1 d_2^2 \neq 0, d_3 = \cdots = d_n = 0
\end{math}. Then,  
\begin{math}
\nabla Q \left( d \right) = 0
\end{math}
holds. In that case,
\begin{math}
d
\end{math}
is contained in 
\begin{math}
C^+ \left( h \right)
\end{math}
if and only if
\begin{math}
d_1^3 + \epsilon_2 d_1 d_2^2 < 0
\end{math}
holds. First, suppose 
\begin{math}
d_1^3 + \epsilon_2 d_1 d_2^2 < 0
\end{math}
holds. Then, the sign of 
\begin{math}
\;^t e_3 \mathrm{Hess} \left( Q \right) e_3 = 2
\end{math}
is opposite to that of 
\begin{math}
R \left( d \right)
\end{math}. 
Therefore, 
\begin{math}
d
\end{math}
is contained in 
\begin{math}
C^+ \left( h \right) 
\end{math}
by Lemma~\ref{lem:tangent cone for r=1}. Conversely, for any 
\begin{math}
d \in C^+ \left( h \right)
\end{math}, there exist sequences 
\begin{math}
\left\{ t_m \right\} \subset \mathbb{R}_{> 0}
\end{math}
and 
\begin{math}
\left\{ x^{\left( m \right)} \right\} \subset M \left( h \right)
\end{math}
such that 
\begin{math}
d = \lim_{m \rightarrow \infty} t_m x^{\left( m \right)}
\end{math}
holds. Since 
\begin{equation}
t_m^3\left( x^{\left( m \right)}_1 \right)^3 + \epsilon_2 t_m\left( x^{\left( m \right)}_1 \right)\cdot t_m^2 \left( x^{\left( m \right)}_2 \right)^2 = - t_m^3\sum_{j=2}^n \left( x^{\left( m \right)} \right)^2 \le 0
\end{equation}
holds, by taking 
\begin{math}
m \rightarrow \infty
\end{math}
in the both sides of the inequality implies that 
\begin{math}
d_1^3 + \epsilon_2 d_1 d_2^2 \le 0
\end{math}. This proves 
\begin{equation}
C^+ \left( h \right) = \overline{C} \left( 0, Q \right) \setminus \left\{ d \in \mathbb{R}^n \middle| d_1^3 + \epsilon_2 d_1 d_2^2 > 0, d_3 = \cdots = d_n = 0 \right\}
\end{equation}
in case all of 
\begin{math}
\epsilon_4, \ldots, \epsilon_n
\end{math}
are 
\begin{math}
1
\end{math}. 

Suppose one of 
\begin{math}
\epsilon_4, \ldots, \epsilon_n
\end{math}
is 
\begin{math}
-1
\end{math}. Without loss of generality, we can assume 
\begin{math}
\epsilon_4 = -1
\end{math}. Take any 
\begin{math}
d \in \mathbb{R}^n
\end{math}
such that 
\begin{math}
d_1^3 + \epsilon_2 d_1 d_2^2 \neq 0, d_3 = \cdots = d_n = 0
\end{math}. Then,  
\begin{math}
\nabla Q \left( d \right) = 0
\end{math}
holds. In that case,
\begin{math}
d
\end{math}
is contained in 
\begin{math}
C^+ \left( h \right)
\end{math}
since 
\begin{math}
\;^t e_3 \mathrm{Hess} \left( Q \right) e_3 = 2
\end{math}
and
\begin{math}
\;^t e_4 \mathrm{Hess} \left( Q \right) e_4 = -2
\end{math}
hold and 
\begin{math}
v
\end{math}
can be chosen to 
\begin{math}
e_3
\end{math}
or 
\begin{math}
e_4
\end{math}
so that 
\begin{math}
\;^t v \mathrm{Hess} \left( Q \right) v
\end{math}
has the opposite sign to that of 
\begin{math}
R \left( d \right)
\end{math}. Therefore, 
\begin{math}
C^+ \left( h \right) = \overline{C} \left( 0, Q \right)
\end{math}
holds in this case. 
\end{proof}

\begin{proposition}

GCQ holds for $h$ if and only if one of 
\begin{math}
\epsilon_j
\end{math}s is 
\begin{math}
-1
\end{math}.

\end{proposition}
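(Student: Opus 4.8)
The plan is to reduce the Guignard condition to a statement purely about the polar of the tangent cone, and then read that polar off from the description of $C^+(h)$ already obtained in Proposition~\ref{prop:table1_(2)_cone}. Since $h$ has vanishing linear part, $dh_0=0$, so $L^+(h)=\R^n$ and hence $L^+(h)^\circ=\{0\}$; GCQ therefore holds if and only if $C^+(h)^\circ=\{0\}$. The sign pattern of $\epsilon_4,\ldots,\epsilon_n$ decides which of the two descriptions of $C^+(h)$ in Proposition~\ref{prop:table1_(2)_cone} applies, so I would split the argument accordingly, the relevant dichotomy being whether all of $\epsilon_4,\ldots,\epsilon_n$ equal $1$ (equivalently, whether $Q$ is indefinite).

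For the contrapositive of the ``only if'' direction, suppose every $\epsilon_j$ with $j\geq 4$ equals $1$. Then $Q=x_3^2+\sum_{j=4}^n x_j^2$ is positive definite in $x_3,\ldots,x_n$, so by Proposition~\ref{prop:table1_(2)_cone} the cone $C^+(h)$ is contained in the proper linear subspace $\{d\in\R^n\mid d_3=\cdots=d_n=0\}$. Consequently $C^+(h)^\circ$ contains the nonzero subspace $\mathrm{span}(e_3,\ldots,e_n)$, so $C^+(h)^\circ\neq\{0\}=L^+(h)^\circ$ and GCQ fails. I would note that this conclusion does not depend on $\epsilon_2$, so the operative hypothesis is that some $\epsilon_j$ with $j\geq 4$ equals $-1$.

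For the ``if'' direction, assume some $\epsilon_{j_0}=-1$ with $j_0\geq 4$; then $C^+(h)=\overline{C}(0,Q)$ by Proposition~\ref{prop:table1_(2)_cone}, and the task becomes showing that the polar of the null cone of the nondegenerate indefinite form $Q$ is trivial. Take $w\in\overline{C}(0,Q)^\circ$. Because $Q$ does not involve $x_1,x_2$, the vectors $\pm e_1,\pm e_2$ lie in $\overline{C}(0,Q)$, which forces $w_1=w_2=0$. For the remaining coordinates I would exploit that whenever $\epsilon_a\epsilon_b=-1$ all four vectors $\pm e_a\pm e_b$ are null, so that the four inequalities $w\cdot(\pm e_a\pm e_b)\leq 0$ yield $w_a=w_b=0$. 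Pairing the positive direction $e_3$ and the negative direction $e_{j_0}$ appropriately, each coordinate $j\geq 3$ can be eliminated: those $j$ with $\epsilon_j=+1$ are paired with $e_{j_0}$, and those with $\epsilon_j=-1$ are paired with $e_3$. This gives $w=0$, hence $C^+(h)^\circ=\{0\}=L^+(h)^\circ$ and GCQ holds.

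The main obstacle is the bookkeeping in the last step: ensuring that every coordinate direction $e_j$ with $j\geq 3$ participates in at least one admissible null pair $\pm e_a\pm e_b$ (with $\epsilon_a\epsilon_b=-1$) so that $w_j$ is forced to vanish. This is exactly the mechanism already used in the ``if'' part of the type $(1,k)$ proposition, and the presence of both a fixed positive direction (from $x_3^2$) and at least one negative direction (from $\epsilon_{j_0}=-1$) is precisely what makes the pairing possible for every index.
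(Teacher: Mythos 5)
Your proof is correct and takes essentially the same route as the paper's: the same reduction of GCQ to $C^{+}(h)^{\circ}=\{0\}$ (since $L^{+}(h)=\mathbb{R}^{n}$), the same ``null pair'' mechanism of using the vectors $\pm e_{a}\pm e_{b}$ with $\epsilon_{a}\epsilon_{b}=-1$ (together with $\pm e_{1},\pm e_{2}$) to kill every coordinate of $w\in\overline{C}(0,Q)^{\circ}$ in the ``if'' direction, and the same witness $e_{3}\in\overline{C}(0,Q)^{\circ}\subset C^{+}(h)^{\circ}$ in the ``only if'' direction. Your remark that the operative condition involves only $\epsilon_{4},\ldots,\epsilon_{n}$ (and is insensitive to $\epsilon_{2}$) is also consistent with how the paper actually argues and with its summary in Table~\ref{tab:eq-only}.
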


\begin{proof} It is easy to check that 
\begin{math}
L^+ \left( h \right)^\circ
\end{math}
is equal to
\begin{math}
\left\{ 0 \right\}
\end{math}.

\noindent
\textbf{Proof of ``if'' part:} Without loss of generality, we can assume 
\begin{math}
\epsilon_4 = -1
\end{math}. Take any 
\begin{math}
w \in \overline{C} \left( 0, Q \right)^\circ
\end{math}. Since 
\begin{math}
\pm e_1, \pm e_2
\end{math}
is contained in 
\begin{math}
\overline{C} \left( 0, Q \right)
\end{math}, 
\begin{math}
w_1 = w_2 = 0
\end{math}
holds. For 
\begin{math}
j \in \left\{ 3, \ldots, n \right\}
\end{math}, either 
\begin{math}
\pm e_3 \pm e_j
\end{math}
or 
\begin{math}
\pm e_4 \pm e_j
\end{math}
is contained in 
\begin{math}
\overline{C} \left( 0, Q \right)
\end{math}
and thus 
\begin{math}
w_3 = \cdots = w_n = 0
\end{math}
holds as well. This proves 
\begin{math}
\overline{C} \left( 0, Q \right)^\circ =\{0\}=L^+ \left( h \right)^\circ
\end{math}. 
By Proposition~\ref{prop:table1_(2)_cone}, $C^+(h)$ is equal to $\overline{C}(0,Q)$, and thus GCQ holds for $h$.

\noindent
\textbf{Proof of ``only if'' part:} We prove this by proving the contraposition. Suppose all of 
\begin{math}
\epsilon_4, \ldots, \epsilon_n
\end{math}
are
\begin{math}
1
\end{math}. Then, 
\begin{math}
\overline{C} \left( 0, Q \right) = \left\{ d \in \mathbb{R}^n \middle| d_3 = \cdots = d_n = 0 \right\}
\end{math}. Since 
\begin{math}
C^+ \left( h \right) \subset \overline{C} \left( 0, Q \right)
\end{math}
holds, 
\begin{math}
\overline{C} \left( 0, Q \right)^\circ \subset C^+ \left( h \right)^\circ
\end{math}
holds. Since 
\begin{math}
e_3
\end{math}
is contained in 
\begin{math}
\overline{C} \left( 0, Q \right)^\circ
\end{math}
but not contained in 
\begin{math}
L^+ \left( h \right)^\circ
\end{math}, GCQ is violated. 
\end{proof}

\begin{example}
Suppose 
\begin{math}
h \left( x \right) = x_1^3 + x_2^2
\end{math}. In this case, the constraint 
\begin{math}
h \left( x \right) = 0
\end{math}
does not satisfy \textnormal{GCQ} at the origin. The theorem by Gould and Tolle \cite{01bf7f42-b4ce-3a05-b3a1-87856b488f37} implies that there exists an objective function 
\begin{math}
f
\end{math}
such that the KKT condition does not hold at a local minimum of 
\begin{math}
f
\end{math}
subject to 
\begin{math}
h = 0
\end{math}. In fact, if we set
\begin{math}
f \left( x \right) = - x_1
\end{math}, then, 
\begin{math}
x = \left( 0, 0 \right)
\end{math}
is the minimum of the above optimization problem but the KKT condition, i.e., there is no constant 
\begin{math}
u \in \mathbb{R}
\end{math}
such that 
\begin{math}
df_0 = u \cdot dh_0
\end{math}
holds. 
\end{example}

{
\subsubsection{ACQ and GCQ in Table~\ref{table:generic constraint r=0}}

First of all, ACQ and GCQ hold for a germ in Table~\ref{table:generic constraint r=0} with $l_1>0$ since MFCQ holds for such a germ.
For this reason, we will discuss germs with $l_1=0$ below.
The linearized cone $L^+(g)$ is equal to $\{d\in \R^n~|~d_1=\cdots =d_{l}=0, d_{l+1},\ldots, d_{q-1}\leq 0\}$ for a germ $g$ in Table~\ref{table:generic constraint r=0}. 
In what follows, for $x = (x_1,\ldots, x_n)\in \R^n$, we denote $x' = (x_{l+1},\ldots, x_n)\in \R^{n-l+1}$. 
Note that the integer $l$ may vary depending on the context.
Nevertheless, we adopt a unified notation $x'$ for simplicity, with the understanding that $l$ is determined by the context in each case.

\subsubsection*{The germ of type $(1,k)$ in Table~\ref{table:generic constraint r=0}}

Let $k\geq 2$, $g = \left(x_1,\ldots, x_{q-1},-\sum_{j=1}^{q-1}x_j+\epsilon_qx_q^k+\sum_{j=q+1}^{n}\epsilon_jx_j^2\right)$, where $\epsilon_j\in \{1,-1\}$. 

\begin{proposition}\label{prop:tangent cone type (1,k) k=2 r=0}

If $k=2$, the tangent cone $C^+(g)$ is equal to $\overline{D}(q-1,Q)$. 

\end{proposition}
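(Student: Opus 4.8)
The plan is to reduce the statement directly to Lemma~\ref{lem:tangent cone for r=0} by recognizing that the case $k=2$ is exactly the degenerate situation in which the higher-order remainder vanishes identically. Writing the final component as $\tilde{g}(x) = -\sum_{j=1}^{q-1} x_j + \epsilon_q x_q^2 + \sum_{j=q+1}^{n} \epsilon_j x_j^2$, we are in the setting of that lemma with $l = q-1$, quadratic part $Q(x') = \epsilon_q x_q^2 + \sum_{j=q+1}^{n}\epsilon_j x_j^2$ (where $x' = (x_q, \ldots, x_n)$), and remainder $R \equiv 0$, since every positive-degree term of $\tilde{g}$ is already quadratic when $k=2$. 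Thus the first step is just to identify these data and confirm the normal form matches the hypotheses of Lemma~\ref{lem:tangent cone for r=0}.

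The second step is the inclusion $C^+(g) \subset \overline{D}(q-1, Q)$, which I would take verbatim from part (1) of Lemma~\ref{lem:tangent cone for r=0}, requiring no further argument.

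The third step is the reverse inclusion. I would take any $d \in \overline{D}(q-1, Q)$ and invoke part (2) of Lemma~\ref{lem:tangent cone for r=0}: since $R \equiv 0$, its degree-$r$ homogeneous parts satisfy $R_r(d') = 0$ for every $r \geq 3$, which is precisely the second of the listed sufficient conditions, so $d \in C^+(g)$. As $d$ is arbitrary, $\overline{D}(q-1, Q) \subset C^+(g)$, and combining the two inclusions yields $C^+(g) = \overline{D}(q-1, Q)$.

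I do not expect any genuine obstacle here: the entire content is carried by Lemma~\ref{lem:tangent cone for r=0}, and the hypothesis $k=2$ serves only to eliminate the cubic-and-higher terms so that the lemma's second sufficient condition applies at every point of $\overline{D}(q-1, Q)$. This mirrors the proof of Proposition~\ref{prop:table1_(1,k)_k2_cone} for Table~\ref{table:generic constraint q=0}, where the same collapse $R \equiv 0$ was used; the only care needed is the bookkeeping of indices (confirming $l = q-1$, so that the intermediate inequalities $d_{l+1} \leq 0, \ldots, d_{q-1} \leq 0$ defining $\overline{D}$ become vacuous).
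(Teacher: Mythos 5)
Your proposal is correct and is essentially identical to the paper's own proof, which simply observes that for $k=2$ one can take $R(x')=0$ and then invokes Lemma~\ref{lem:tangent cone for r=0}. Your additional bookkeeping (identifying $l=q-1$, absorbing $\epsilon_q x_q^2$ into $Q$, and noting that the vanishing of all $R_r$ triggers the second sufficient condition of the lemma) just makes explicit what the paper leaves implicit.
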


\begin{proof}
We can put $R(x')=0$ in this case, and thus the proposition holds by Lemma~\ref{lem:tangent cone for r=0}. 
\end{proof}

\begin{proposition}\label{prop:tangent cone type (1,k) k geq 3 r=0}

Suppose that $k$ is larger than or equal to $3$.
The tangent cone $C^+(g)$ is equal to $\overline{D}(q-1,Q)\setminus \{(0,\ldots,0,d_q,0,\ldots, 0)\in \R^n~|~\epsilon_qd_q^k>0\}$ if $\epsilon_{q+1}=\cdots = \epsilon_n=1$, and $C^+(g)=\overline{D}(q-1,Q)$ otherwise. 

\end{proposition}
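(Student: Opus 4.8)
The plan is to reduce everything to Corollary~\ref{cor:tangent cone for r=0}, which is tailored to exactly this situation. First I would put the germ in the normal form of Lemma~\ref{lem:tangent cone for r=0}: with $l = q-1$ and $x' = (x_q, x_{q+1}, \ldots, x_n)$, the last component reads $\tilde{g} = -\sum_{j=1}^{q-1} x_j + Q(x') + R(x')$ with quadratic part $Q(x') = \sum_{j=q+1}^n \epsilon_j x_j^2$ and higher-order part $R(x') = \epsilon_q x_q^k$ (here the hypothesis $k \geq 3$ is precisely what makes $\epsilon_q x_q^k$ land in $R$ rather than in $Q$). The key observation is that $R$ is homogeneous and involves the single variable $x_q = x_{l+1}$, so in the language of Corollary~\ref{cor:tangent cone for r=0} we have $s = 1$, the quadratic polynomial $P$ in the variables $x_{l+1}, \ldots, x_{l+s}$ is identically zero, and $\sum_{j=l+s+1}^n \epsilon_j x_j^2 = \sum_{j=q+1}^n \epsilon_j x_j^2 = Q$. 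Thus the hypotheses of the corollary are satisfied.

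Applying Corollary~\ref{cor:tangent cone for r=0} then does two things at once. It yields the inclusion $\overline{D}(q-1, Q) \setminus \{(0, \ldots, 0, d_q, 0, \ldots, 0) \mid \epsilon_q d_q^k > 0\} \subset C^+(g)$ in every case, and it gives the full equality $C^+(g) = \overline{D}(q-1, Q)$ as soon as $\epsilon_j = -1$ for some $j \geq l+s+1 = q+1$. This settles the ``otherwise'' branch of the statement immediately.

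It remains to treat the branch $\epsilon_{q+1} = \cdots = \epsilon_n = 1$ and to promote the inclusion above to an equality there. In this branch $Q(d') = \sum_{j=q+1}^n d_j^2$, so the defining inequality $Q(d') \leq 0$ forces $d_{q+1} = \cdots = d_n = 0$; hence $\overline{D}(q-1, Q)$ is exactly the $x_q$-axis and the removed set is precisely $\{d_q e_q \mid \epsilon_q d_q^k > 0\}$. For the reverse inclusion I would take $d \in C^+(g)$ with witnessing sequences $x^{(m)} \in M(g)$ and $t_m > 0$, and exploit feasibility: from $x_j^{(m)} \leq 0$ for $1 \leq j \leq q-1$ together with $\tilde{g}(x^{(m)}) \leq 0$ one obtains $\epsilon_q (x_q^{(m)})^k \leq \sum_{j=1}^{q-1} x_j^{(m)} - \sum_{j=q+1}^n (x_j^{(m)})^2 \leq 0$. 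Multiplying by $t_m^k > 0$ and letting $m \to \infty$ yields $\epsilon_q d_q^k \leq 0$. Combined with $C^+(g) \subset \overline{D}(q-1, Q)$ from Lemma~\ref{lem:tangent cone for r=0}, this shows $C^+(g) \subset \overline{D}(q-1, Q) \setminus \{d_q e_q \mid \epsilon_q d_q^k > 0\}$, and with the corollary's inclusion we get equality.

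The argument is essentially routine once the reduction to Corollary~\ref{cor:tangent cone for r=0} is in place; the only point requiring a little care is the reverse inclusion in the all-positive branch, where one must correctly sign-track the three groups of terms ($-\sum x_j$, $\epsilon_q x_q^k$, $\sum x_j^2$) and match the homogeneity degree $k$ against the scaling factor $t_m^k$ so that the inequality survives passage to the limit. This step mirrors the corresponding computation in Proposition~\ref{prop:table1_(1,k)_k3g_cone} for the $q = 0$ table.
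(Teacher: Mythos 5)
Your proposal is correct and follows essentially the same route as the paper's proof: identify $Q=\sum_{j=q+1}^{n}\epsilon_jx_j^2$ and $R=\epsilon_qx_q^k$, invoke Corollary~\ref{cor:tangent cone for r=0} with $P=0$ and $s=1$ to get the inclusion and the ``otherwise'' case, and then use the feasibility inequality $\epsilon_q(x_q^{(m)})^k\leq \sum_{j=1}^{q-1}x_j^{(m)}-\sum_{j=q+1}^{n}(x_j^{(m)})^2\leq 0$ scaled by $t_m^k$ to exclude the set $\{\epsilon_qd_q^k>0\}$ when $\epsilon_{q+1}=\cdots=\epsilon_n=1$. The only differences are presentational (you spell out the limit argument and note explicitly that $\overline{D}(q-1,Q)$ collapses to the $x_q$-axis in the all-positive branch, which the paper leaves implicit).
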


\begin{proof}
We can put $Q =\sum_{j=q+1}^{n} \epsilon_j x_j^2$ and $R=\epsilon_qx_q^k$ in this case. 
By Corollary~\ref{cor:tangent cone for r=0} (with $P(x')=0$ and $s=1$), $\overline{D}(q-1,Q)\setminus \{(0,\ldots,0,d_q,0,\ldots,0)~|~\epsilon_qd_q^k> 0\}$ is contained in $C^+(g)$, and $C^+(g)=\overline{D}(q-1,Q)$ if $\epsilon_j=-1$ for some $j\geq q+1$. 
%The tangent cone $C^+(g)$ is contained in $\overline{D}(q-1,Q)$ by Lemma~\ref{lem:tangent cone for r=0}. 
%Let $d\in \overline{D}(q-1,Q)$. 
%%By Lemma~\ref{lem:tangent cone for r=0}, $d$ is contained in $C^+(g)$ if $Q(d)<0$. 
%Since $R_k(x') = R(x') = \epsilon_qx_q^k$, $d$ is contained in $C^+(g,h)$ if $\epsilon_qd_q^k\leq 0$ by Lemma~\ref{lem:tangent cone for r=0}. 
%If $d_j\neq 0$ for some $j\geq q+1$, the vector $v= -\epsilon_jd_je_j$ satisfies the conditions in Lemma~\ref{lem:tangent cone for r=0}. 
%Indeed, $v_1=\cdots =v_{q-1}=0$, $v'\cdot \nabla Q(d') =-2 \epsilon_j^2 d_j^2 <0$. 
%If $d_{q+1}=\cdots = d_n=0$ and $\epsilon_j=-1$ for some $j\geq q+1$, the vector $v=e_j$ satisfies the conditions in Lemma~\ref{lem:tangent cone for r=0}.
%Indeed, $v_1=\cdots =v_{q-1}=0$, $v'\cdot \nabla Q(d')=0$, and ${}^tv'\mathrm{Hess}(Q)v' = 2\epsilon_j=-2 <0$. 
%
%So far, we have shown that $\overline{D}(q-1,Q)\setminus \{(0,\ldots,0,d_q,0,\ldots,0)~|~\epsilon_qd_q^k> 0\}$ is contained in $C^+(g)$, and $C^+(g)=\overline{D}(q-1,Q)$ if $\epsilon_j=-1$ for some $j\geq q+1$. 
If $\epsilon_{q+1}=\cdots =\epsilon_n = +1$, any $x\in M(g)$ satisfies the following inequality:
\[
0\geq -\sum_{j=1}^{q-1}x_j+\epsilon_qx_q^k+\sum_{j=q+1}^{n}x_j^2\Leftrightarrow \epsilon_qx_q^k\leq \sum_{j=1}^{q-1}x_j-\sum_{j=q+1}^{n}x_j^2\leq 0. 
\]
Thus, any $d\in C^+(g)$ also satisfies the inequality $\epsilon_qd_q^k\leq 0$ when $\epsilon_{q+1}=\cdots =\epsilon_n=1$. 
\end{proof}

\begin{proposition}\label{prop:ACQ type (1,k) r=0}

ACQ holds for $g$ if and only if one of the following conditions holds:

\begin{enumerate}

\item 
$k=2$ and $\epsilon_q=\cdots =\epsilon_n=-1$, 

\item 
$k\geq 3$ and $\epsilon_{q+1}=\cdots =\epsilon_n=-1$.

\end{enumerate}

\end{proposition}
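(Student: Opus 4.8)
The plan is to combine the explicit tangent-cone computations of Propositions~\ref{prop:tangent cone type (1,k) k=2 r=0} and \ref{prop:tangent cone type (1,k) k geq 3 r=0} with the shape of the linearized cone. Since $l = q-1$ for type $(1,k)$, the linearized cone is $L^+(g) = \{d \in \R^n : d_1 = \cdots = d_{q-1} = 0\}$, and the set $\overline{D}(q-1, Q)$ appearing in those propositions is exactly $L^+(g) \cap \{Q(d') \leq 0\}$. In particular the chain $C^+(g) \subseteq \overline{D}(q-1, Q) \subseteq L^+(g)$ holds in all cases, so ACQ---which asserts $C^+(g) = L^+(g)$---can hold only when both inclusions are equalities.

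First I would reduce the whole statement to a single clean criterion: \emph{ACQ holds if and only if $\overline{D}(q-1, Q) = L^+(g)$}. The "only if" direction is immediate from the chain of inclusions above. For the "if" direction I would observe that whenever $\overline{D}(q-1, Q) = L^+(g)$, the two tangent-cone propositions already yield $C^+(g) = \overline{D}(q-1, Q)$: for $k = 2$ this is Proposition~\ref{prop:tangent cone type (1,k) k=2 r=0} unconditionally, while for $k \geq 3$ the equality $\overline{D}(q-1, Q) = L^+(g)$ forces $\epsilon_{q+1} = \cdots = \epsilon_n = -1$ (established below), which (as $n \gg q$) places us in the "otherwise" branch of Proposition~\ref{prop:tangent cone type (1,k) k geq 3 r=0}, where $C^+(g) = \overline{D}(q-1, Q)$.

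It then remains to decide, for each $k$, when $\overline{D}(q-1, Q) = L^+(g)$, i.e.~when $Q(d') \leq 0$ holds for every $d$ with $d_1 = \cdots = d_{q-1} = 0$; equivalently, when the quadratic form $Q$ is negative semidefinite in the variables $x_q, \ldots, x_n$. For $k = 2$ we have $Q(x') = \sum_{j=q}^n \epsilon_j x_j^2$, which is negative semidefinite precisely when $\epsilon_q = \cdots = \epsilon_n = -1$, giving condition (1). For $k \geq 3$ the higher-order term $\epsilon_q x_q^{\,k}$ is absorbed into $R$, so $Q(x') = \sum_{j=q+1}^n \epsilon_j x_j^2$ does not involve $x_q$ at all; as a form on the $x_q, \ldots, x_n$ space it has a null direction along $x_q$ and is negative semidefinite exactly when $\epsilon_{q+1} = \cdots = \epsilon_n = -1$, giving condition (2) (with no constraint on $\epsilon_q$, as expected). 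Testing negative semidefiniteness is elementary: if some $\epsilon_j = +1$ then $e_j \in L^+(g) \setminus \overline{D}(q-1, Q)$, and otherwise $Q \leq 0$ is clear.

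I do not anticipate a serious obstacle, since the tangent cones are already in hand; the only point requiring care is the bookkeeping distinction between $k = 2$ and $k \geq 3$---namely that the diagonal term $\epsilon_q x_q^{\,k}$ contributes to the quadratic form $Q$ when $k = 2$ but is demoted to $R$ when $k \geq 3$. Keeping precise track of which coordinates enter $Q$ is exactly what produces the two different conditions in the statement.
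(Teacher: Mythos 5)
Your proposal is correct and takes essentially the same route as the paper's proof: both rest on the chain $C^+(g)\subseteq \overline{D}(q-1,Q)\subseteq L^+(g)$, invoke Propositions~\ref{prop:tangent cone type (1,k) k=2 r=0} and \ref{prop:tangent cone type (1,k) k geq 3 r=0} to conclude $C^+(g)=\overline{D}(q-1,Q)$ under the stated sign conditions, and exhibit $e_j$ as a vector in $L^+(g)\setminus \overline{D}(q-1,Q)$ whenever some relevant $\epsilon_j=+1$. Your intermediate criterion (ACQ $\Leftrightarrow$ $\overline{D}(q-1,Q)=L^+(g)$, i.e.\ negative semidefiniteness of $Q$ on the relevant subspace) is merely a clean repackaging of the paper's direct case analysis, not a different argument.
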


\begin{proof}
Suppose that the condition in the proposition does not hold, say $\epsilon_j=1$ for $j\geq q$ (resp.~$j\geq q+1$) if $k=2$ (resp.~$k\geq 3$). 
It is easy to check that $e_j$ is contained in $L^+(g)$ but not in $\overline{D}(q-1,Q)$, in particular $C^+(g)\subset \overline{D}(q-1,Q)\subsetneq L^+(g)$.  
Suppose conversely that the condition in the proposition hold. 
By Propositions \ref{prop:tangent cone type (1,k) k=2 r=0} and \ref{prop:tangent cone type (1,k) k geq 3 r=0}, the tangent cone $C^+(g)$ is equal to $\overline{D}(q-1,Q)$, which is equal to $\{d\in \R^n~|~d_1=\cdots =d_{q-1}=0\}=L^+(g)$ by the assumption. 
\end{proof}

\begin{proposition}\label{prop:GCQ type (1,k) r=0}

GCQ holds for $g$ if and only if one of the following conditions holds:

\begin{enumerate}
\item 
$k=2$ and one of $\epsilon_q,\ldots, \epsilon_n$ is $-1$, 

\item 
$k\geq 3$ and one of $\epsilon_{q+1},\ldots, \epsilon_n$ is $-1$, 

%\item
%\begin{math}
%k
%\end{math}
%is odd and one of
%\begin{math}
%\epsilon_{q+1}, \ldots, \epsilon_n
%\end{math} is $-1$.
\end{enumerate}

\end{proposition}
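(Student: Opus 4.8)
The plan is to reduce the equality $C^+(g)^\circ = L^+(g)^\circ$ to a single inclusion and then settle it by testing against explicit sign-vectors lying in the tangent cone. First I would record that, since we are in the case $l_1 = 0$ with $l = q-1$, the linearized cone $L^+(g) = \{d \in \R^n \mid d_1 = \cdots = d_{q-1} = 0\}$ is in fact a \emph{linear subspace}, so its polar is the orthogonal complement $L^+(g)^\circ = \langle e_1, \ldots, e_{q-1}\rangle$. Because $C^+(g) \subseteq L^+(g)$ always holds, taking polars gives $L^+(g)^\circ \subseteq C^+(g)^\circ$ for free, so GCQ is equivalent to the reverse inclusion $C^+(g)^\circ \subseteq \langle e_1, \ldots, e_{q-1}\rangle$; that is, to showing that every $w \in C^+(g)^\circ$ satisfies $w_q = \cdots = w_n = 0$.

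For the \textbf{if} direction I would feed in the tangent-cone descriptions from Propositions~\ref{prop:tangent cone type (1,k) k=2 r=0} and \ref{prop:tangent cone type (1,k) k geq 3 r=0}. Under the stated hypothesis there is an index $a$ with $\epsilon_a = -1$, lying in $\{q, \ldots, n\}$ when $k=2$ and in $\{q+1, \ldots, n\}$ when $k \geq 3$. In either case $\pm e_a \in C^+(g)$ (the quadratic part $Q$ evaluates to $-1 \leq 0$ on them), and for every index $j$ in the range over which $Q$ is summed both $e_a + e_j$ and $e_a - e_j$ lie in $C^+(g)$, since $Q(e_a \pm e_j) = -1 + \epsilon_j \leq 0$; when $k \geq 3$ the variable $x_q$ does not appear in $Q$, so additionally $\pm e_q \in C^+(g)$. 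Testing $w \in C^+(g)^\circ$ against $\pm e_a$ forces $w_a = 0$, and then testing against $\pm e_a \pm e_j$ (and against $\pm e_q$ when $k \geq 3$), in the manner already used for Table~\ref{table:generic constraint q=0}, forces $w_j = 0$ for the remaining indices. This yields $w_q = \cdots = w_n = 0$, hence $C^+(g)^\circ \subseteq L^+(g)^\circ$ and GCQ.

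For the \textbf{only if} direction I would argue by contraposition. If the sign condition fails, then all the $\epsilon_j$ in the relevant range equal $+1$, so the corresponding quadratic form is positive definite and the constraint $Q(d') \leq 0$ forces those coordinates to vanish. By the two tangent-cone propositions this collapses $C^+(g)$ to $\{0\}$ when $k = 2$, and to a subset of the line $\R e_q$ when $k \geq 3$ (because $\overline{D}(q-1,Q)$ reduces to $\R e_q$). Choosing $w = e_q$ in the first case and $w = e_{q+1}$ in the second (the latter available since $n \gg q$), one checks directly that $w \in C^+(g)^\circ$ while $w \notin \langle e_1, \ldots, e_{q-1}\rangle = L^+(g)^\circ$, so GCQ fails. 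I expect the main obstacle to be the bookkeeping in the ``if'' part: one must verify that the chosen sign-vectors genuinely satisfy $Q \leq 0$ and that they are organized so as to pin down every coordinate $w_q, \ldots, w_n$, which is precisely where the signature of $Q$ and the absence of the $x_q$-term for $k \geq 3$ enter.
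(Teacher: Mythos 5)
Your proposal is correct and follows essentially the same route as the paper: reduce GCQ to the inclusion $C^+(g)^\circ \subset L^+(g)^\circ$, use Propositions~\ref{prop:tangent cone type (1,k) k=2 r=0} and \ref{prop:tangent cone type (1,k) k geq 3 r=0} to identify $C^+(g)$ with $\overline{D}(q-1,Q)$ under the sign hypothesis, and pin down $w_q=\cdots=w_n=0$ by testing against the vectors $\pm e_a$, $\pm e_a \pm e_j$ (and $\pm e_q$ when $k\geq 3$). Your contrapositive for the ``only if'' part likewise matches the paper's, which notes $C^+(g)^\circ=\R^n$ when $k=2$ and exhibits $e_{q+1}\in C^+(g)^\circ\setminus L^+(g)^\circ$ when $k\geq 3$.
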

\begin{proof}
It is easy to check that $L^+ \left(g\right)^\circ$ is equal to $\left\{ w \in \mathbb{R}^n \middle| w_q=\cdots = w_n=0\right\}$.

\noindent
\textbf{Proof of ``if'' part:} By Propositions~\ref{prop:tangent cone type (1,k) k=2 r=0} and \ref{prop:tangent cone type (1,k) k geq 3 r=0}, the conditions implies that 
\begin{math}
C^+ \left( g \right)
\end{math}
is equal to 
\begin{math}
\overline{D} \left( q-1, Q \right)
\end{math}. 
Since 
\begin{math}
C^+ \left( g\right) \subset L^+ \left( g\right)
\end{math}
always holds, it is enough to show 
\begin{math}
\overline{D} \left( q-1, Q \right)^\circ \subset L^+ \left( g\right)^\circ
\end{math}.

Suppose that the condition 1. in Proposition~\ref{prop:GCQ type (1,k) r=0} holds.
Without loss of generality, we can assume $\epsilon_q = -1$. 
Take any 
\begin{math}
w \in \overline{D} \left( q-1, Q \right)^\circ
\end{math}. 
Since the vector $\pm e_q$ is contained in $\overline{D}(q-1,Q)$, $w\cdot (\pm e_q) = \pm w_q$ is less than or equal to $0$, implying $w_q =0$. 
Since the vector $\pm e_q \pm e_j$ is contained in $\overline{D}(q-1,Q)$ for any $j\in \{q+1,\ldots,n\}$, $w\cdot(\pm e_q \pm e_j)=\pm w_j$ is less than or equal to $0$, implying $w_j=0$. 
We thus obtain $w\in L^+(g)^\circ$.

Suppose that the condition 2. in Proposition~\ref{prop:GCQ type (1,k) r=0} holds.
Without loss of generality, we can assume $\epsilon_{q+1}=-1$.
Take any 
\begin{math}
w \in \overline{D} \left( q-1, Q \right)^\circ
\end{math}. 
Since $\pm e_q$ is contained in $\overline{D}(q-1,Q)$, $w\cdot (\pm e_q)=\pm w_q$ is less than or equal to $0$, implying $w_q=0$. 
Since the vector $\pm e_{q+1}$ is contained in $\overline{D}(q-1,Q)$, $w\cdot (\pm e_{q+1}) = \pm w_{q+1}$ is less than or equal to $0$, implying $w_{q+1}=0$. 
Since the vector $\pm e_{q+1} \pm e_j$ is contained in $\overline{D}(q-1,Q)$ for any $j\in \{q+2,\ldots,n\}$, $w\cdot(\pm e_{q+1} \pm e_j)=\pm w_j$ is less than or equal to $0$, implying $w_j=0$. 
We thus obtain $w\in L^+(g)^\circ$.

\noindent
\textbf{Proof of ``only if'' part:}
We show the contraposition of the statement.
%, that is, if either
%\begin{enumerate}
%\item $k=2$ and
%\begin{math}
%\epsilon_q = \cdots = \epsilon_n = 1
%\end{math}, or 
%
%\item $k\geq 3$ and  
%\begin{math}
%\epsilon_{q+1} = \cdots = \epsilon_n = 1
%\end{math}
%\end{enumerate}
%holds, GCQ is violated. 
%
If $k=2$ and
\begin{math}
\epsilon_q = \cdots = \epsilon_n = 1
\end{math}, Proposition~\ref{prop:tangent cone type (1,k) k=2 r=0} implies that 
\begin{math}
C^+ \left( g\right)
\end{math}
is equal to 
\[
\overline{D}(q-1,Q) = \left\{d\in \R^n~\middle|~ d_1=\cdots =d_{q-1}=0, \sum_{j=q}^{n}d_j^2 = 0\right\}=\left\{0\right\}. 
\]
In particular, $C^+(g)^\circ = \R^n \neq L^+(g)^\circ$.
If $k\geq 3$ and  
\begin{math}
\epsilon_{q+1} = \cdots = \epsilon_n = 1
\end{math}, Proposition~\ref{prop:tangent cone type (1,k) k geq 3 r=0} implies that $C^+(g)$ is equal to 
\begin{align*}
&\overline{D}(q-1,Q)\setminus \{(0,\ldots,0,d_q,0,\ldots, 0)\in \R^n ~|~ \epsilon_q d_q^k >0\}\\
=& \{(0,\ldots,0,d_q,0,\ldots, 0)\in \R^n ~|~ \epsilon_q d_q^k \leq 0\}.
\end{align*}
In particular, \begin{math}
C^+ \left( g\right)^\circ
\end{math} contains $e_{q+1}$, which is not contained in $L^+(g)^\circ$.  
\end{proof}

\subsubsection*{The germ of type $(2)$ in Table~\ref{table:generic constraint r=0}}

Let $g = \left(x_1,\ldots, x_{q-1},-\sum_{j=1}^{q-1}x_j+x_q^3+\epsilon_{q+1}x_qx_{q+1}^2+\sum_{j=q+2}^{n}\epsilon_jx_j^2\right)$, where $\epsilon_j\in \{1,-1\}$, $Q =\sum_{j=q+2}^{n} \epsilon_j x_j^2$, and $R=x_q^3+\epsilon_{q+1}x_qx_{q+1}^2$. 

\begin{proposition}\label{prop:tangent cone type (2) r=0}

The tangent cone $C^+(g)$ is equal to $\overline{D}(q-1,Q)\setminus \{(0,\ldots,0,d_q,d_{q+1},0,\ldots, 0)\in \R^n~|~d_q^3+\epsilon_{q+1}d_qd_{q+1}^2>0\}$ if $\epsilon_{q+2}=\cdots = \epsilon_n=1$, and $C^+(g)=\overline{D}(q-1,Q)$ otherwise. 

\end{proposition}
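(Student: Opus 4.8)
The plan is to follow the same two-step template used for type $(1,k)$ with $k\geq 3$ (Proposition~\ref{prop:tangent cone type (1,k) k geq 3 r=0}) and for type $(2)$ of Table~\ref{table:generic constraint q=0} (Proposition~\ref{prop:table1_(2)_cone}): first extract the inclusion ``$\supseteq$'' together with the generic sign pattern directly from Corollary~\ref{cor:tangent cone for r=0}, and then treat the degenerate pattern $\epsilon_{q+2}=\cdots=\epsilon_n=1$ by a separate limiting argument. The first observation is that the decomposition $\widetilde{g}=-\sum_{j=1}^{q-1}x_j+Q(x')+R(x')$ matches the hypotheses of the corollary with $l=q-1$, $s=2$, and $P\equiv 0$, so that $Q(x')=\sum_{j=q+2}^n\epsilon_jx_j^2=\sum_{j=l+s+1}^n\epsilon_jx_j^2$, while $R(x')=x_q^3+\epsilon_{q+1}x_qx_{q+1}^2$ is a homogeneous cubic in the two variables $x_{l+1}=x_q$ and $x_{l+2}=x_{q+1}$.

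Applying Corollary~\ref{cor:tangent cone for r=0} immediately yields the inclusion $\overline{D}(q-1,Q)\setminus\{(0,\ldots,0,d_q,d_{q+1},0,\ldots,0)\mid R(d')>0\}\subseteq C^+(g)$ (noting $R(d')=d_q^3+\epsilon_{q+1}d_qd_{q+1}^2$), and, whenever some $\epsilon_j=-1$ with $j\geq q+2$, the full equality $C^+(g)=\overline{D}(q-1,Q)$. This disposes of the ``otherwise'' case and of one inclusion in the remaining case, so only the reverse inclusion under $\epsilon_{q+2}=\cdots=\epsilon_n=1$ remains. Here I would first record that the constraint $Q(d')=\sum_{j\geq q+2}d_j^2\leq 0$ forces $d_{q+2}=\cdots=d_n=0$, so $\overline{D}(q-1,Q)$ collapses to the $(x_q,x_{q+1})$-plane $\{(0,\ldots,0,d_q,d_{q+1},0,\ldots,0)\}$ and the claimed set is precisely $\{(0,\ldots,0,d_q,d_{q+1},0,\ldots,0)\mid R(d')\leq 0\}$.

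For that reverse inclusion, take $d\in C^+(g)$ realized by $x^{(m)}\in M(g)$ and $t_m>0$ with $x^{(m)}\to 0$ and $t_mx^{(m)}\to d$. Since $x_j^{(m)}\leq 0$ for $j\leq q-1$ and $Q(x^{(m)})=\sum_{j\geq q+2}(x_j^{(m)})^2\geq 0$, the feasibility inequality $\widetilde{g}(x^{(m)})\leq 0$ rearranges to $R(x^{(m)})\leq \sum_{j=1}^{q-1}x_j^{(m)}-Q(x^{(m)})\leq 0$. Multiplying by $t_m^3>0$ and using that $R$ is homogeneous of degree $3$ gives $R(t_mx^{(m)})=t_m^3R(x^{(m)})\leq 0$, and passing to the limit yields $R(d)\leq 0$. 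Together with $C^+(g)\subseteq \overline{D}(q-1,Q)$ this gives $C^+(g)\subseteq\{R(d')\leq 0\}\cap \overline{D}(q-1,Q)$, matching the lower bound from the corollary and closing the case. The only point demanding care is this last homogeneity bookkeeping: it is essential that $R$ be \emph{exactly} homogeneous of degree $3$, so that the $t_m^3$ scaling converts the sign condition on $R(x^{(m)})$ into the limiting sign condition $R(d)\leq 0$; everything else is a routine transcription of the earlier propositions.
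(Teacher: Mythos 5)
Your proposal is correct and follows essentially the same route as the paper: the inclusion ``$\supseteq$'' and the ``otherwise'' case come from Corollary~\ref{cor:tangent cone for r=0} with $P\equiv 0$, $s=2$, and the remaining inclusion under $\epsilon_{q+2}=\cdots=\epsilon_n=1$ comes from the feasibility inequality $R(x^{(m)})\leq \sum_{j=1}^{q-1}x_j^{(m)}-\sum_{j\geq q+2}(x_j^{(m)})^2\leq 0$ passed to the limit. The paper leaves the degree-$3$ homogeneity scaling $R(t_mx^{(m)})=t_m^3R(x^{(m)})$ implicit in its final ``Thus, any $d\in C^+(g)$ also satisfies\ldots'' step, which you simply spell out explicitly.
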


\begin{proof}
By Corollary~\ref{cor:tangent cone for r=0} (with $P(x')=0$ and $s=2$), $\overline{D}(q-1,Q)\setminus \{(0,\ldots,0,d_q,d_{q+1},0,\ldots,0)~|~d_q^3+\epsilon_{q+1}d_qd_{q+1}^2> 0\}$ is contained in $C^+(g)$, and $C^+(g)=\overline{D}(q-1,Q)$ if $\epsilon_j=-1$ for some $j\geq q+2$. 
%
%The tangent cone $C^+(g)$ is contained in $\overline{D}(q-1,Q)$ by Lemma~\ref{lem:tangent cone for r=0}. 
%Let $d\in \overline{D}(q-1,Q)$. 
%%By Lemma~\ref{lem:tangent cone for r=0}, $d$ is contained in $C^+(g)$ if $Q(d)<0$. 
%Since $R_3(x') = R(x') = x_q^3+\epsilon_{q+1}x_qx_{q+1}^2$, $d$ is contained in $C^+(g,h)$ if $d_q^3+\epsilon_{q+1}d_qd_{q+1}^2\leq 0$ by Lemma~\ref{lem:tangent cone for r=0}. 
%If $d_j\neq 0$ for some $j\geq q+2$, the vector $v= -\epsilon_jd_je_j$ satisfies the conditions in Lemma~\ref{lem:tangent cone for r=0}. 
%Indeed, $v_1=\cdots =v_{q-1}=0$, $v'\cdot \nabla Q(d') =-2 \epsilon_j^2 d_j^2 <0$. 
%If $\epsilon_j=-1$ for some $j\geq q+2$, the vector $v=e_j$ satisfies the conditions in Lemma~\ref{lem:tangent cone for r=0}.
%Indeed, $v_1=\cdots =v_{q-1}=0$, $v'\cdot \nabla Q(d')=0$, and ${}^tv'\mathrm{Hess}(Q)v' = 2\epsilon_j=-2 <0$. 
%
%So far, we have shown that $\overline{D}(q-1,Q)\setminus \{(0,\ldots,0,d_q,d_{q+1},0,\ldots,0)~|~d_q^3+\epsilon_{q+1}d_qd_{q+1}^2> 0\}$ is contained in $C^+(g)$, and $C^+(g)=\overline{D}(q-1,Q)$ if $\epsilon_j=-1$ for some $j\geq q+2$. 
If $\epsilon_{q+2}=\cdots =\epsilon_n = +1$, any $x\in M(g)$ satisfies the following inequality:
\[
0\geq -\sum_{j=1}^{q-1}x_j+x_q^3+\epsilon_{q+1}x_qx_{q+1}^2+\sum_{j=q+2}^{n}x_j^2\Leftrightarrow x_q^3+\epsilon_{q+1}x_qx_{q+1}^2\leq \sum_{j=1}^{q-1}x_j-\sum_{j=q+2}^{n}x_j^2\leq 0. 
\]
Thus, any $d\in C^+(g)$ also satisfies the inequality $d_q^3+\epsilon_{q+1}d_qd_{q+1}^2\leq 0$ when $\epsilon_{q+2}=\cdots =\epsilon_n=1$. 
\end{proof}

\begin{proposition}\label{prop:ACQ type (2) r=0}

ACQ holds for $g$ if and only if $\epsilon_{q+2}=\cdots =\epsilon_n=-1$, 

\end{proposition}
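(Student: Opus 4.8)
The plan is to compare the tangent cone $C^+(g)$, already determined in Proposition~\ref{prop:tangent cone type (2) r=0}, with the linearized cone $L^+(g)$, using that ACQ is precisely the equality $C^+(g)=L^+(g)$ and that the inclusion $C^+(g)\subset L^+(g)$ is automatic. First I would record the two cones explicitly. Since every inequality constraint is active and the linear part of $g_q$ is $-\sum_{j=1}^{q-1}x_j$, the defining conditions $d_1,\ldots,d_{q-1}\leq 0$ together with $-\sum_{j=1}^{q-1}d_j\leq 0$ force $d_1=\cdots=d_{q-1}=0$, so that $L^+(g)=\{d\in\R^n\mid d_1=\cdots=d_{q-1}=0\}$. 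On the other hand $\overline{D}(q-1,Q)=\{d\in\R^n\mid d_1=\cdots=d_{q-1}=0,\ \sum_{j=q+2}^n\epsilon_jd_j^2\leq 0\}$, so the two cones differ exactly by the quadratic constraint $Q(d')\leq 0$.

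For the ``if'' direction, suppose $\epsilon_{q+2}=\cdots=\epsilon_n=-1$. Then $Q(d')=-\sum_{j=q+2}^n d_j^2\leq 0$ for every $d$, so the quadratic constraint is vacuous and $\overline{D}(q-1,Q)=L^+(g)$. Since we are not in the case $\epsilon_{q+2}=\cdots=\epsilon_n=1$, Proposition~\ref{prop:tangent cone type (2) r=0} yields $C^+(g)=\overline{D}(q-1,Q)=L^+(g)$, and hence ACQ holds.

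For the ``only if'' direction I would argue by contraposition: assume $\epsilon_{j_0}=1$ for some $j_0\geq q+2$. The basis vector $e_{j_0}$ lies in $L^+(g)$ but satisfies $Q(e_{j_0})=\epsilon_{j_0}=1>0$, whence $e_{j_0}\notin\overline{D}(q-1,Q)$. By Proposition~\ref{prop:tangent cone type (2) r=0} we always have $C^+(g)\subset\overline{D}(q-1,Q)$ (this inclusion holds in both branches of that proposition), so $e_{j_0}\notin C^+(g)$ and therefore $C^+(g)\subsetneq L^+(g)$, i.e.\ ACQ fails.

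The argument is essentially routine once Proposition~\ref{prop:tangent cone type (2) r=0} is in hand; the only point requiring care is the bookkeeping of indices, namely that $Q$ involves only the variables $x_{q+2},\ldots,x_n$ (the cubic part $R=x_q^3+\epsilon_{q+1}x_qx_{q+1}^2$ contributes nothing to $L^+(g)$ nor to the quadratic form $Q$), so that the discrepancy between $C^+(g)$ and $L^+(g)$ is controlled entirely by the signs $\epsilon_{q+2},\ldots,\epsilon_n$. One should also keep in mind the standing assumption that $n$ is large enough for a genuine quadratic part to be present, so that at least one index $j\geq q+2$ exists; under $n\gg q,r$ this is guaranteed.
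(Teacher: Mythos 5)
Your proof is correct and follows essentially the same route as the paper: both directions rest on Proposition~\ref{prop:tangent cone type (2) r=0}, with the ``if'' part reducing to the observation that $\overline{D}(q-1,Q)=\{d\in\R^n \mid d_1=\cdots=d_{q-1}=0\}=L^+(g)$ when all $\epsilon_j=-1$ for $j\geq q+2$, and the ``only if'' part proved by contraposition using the witness $e_j\in L^+(g)\setminus\overline{D}(q-1,Q)$. Your additional remarks on the computation of $L^+(g)$ and on the standing assumption $n\gg q,r$ (ensuring an index $j\geq q+2$ exists) are accurate but only make explicit what the paper leaves implicit.
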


\begin{proof}
Suppose that the condition in the proposition does not hold, say $\epsilon_j=1$ for $j\geq q+2$. 
It is easy to check that $e_j$ is contained in $L^+(g)$ but not in $\overline{D}(q-1,Q)$, in particular $C^+(g)\subset \overline{D}(q-1,Q)\subsetneq L^+(g)$.  
Suppose conversely that the condition in the proposition hold. 
By Proposition \ref{prop:tangent cone type (2) r=0}, the tangent cone $C^+(g)$ is equal to $\overline{D}(q-1,Q)$, which is equal to $\{d\in \R^n~|~d_1=\cdots =d_{q-1}=0\}=L^+(g)$ by the assumption. 
\end{proof}

\begin{proposition}

GCQ holds for $g$ if and only if one of 
\begin{math}
\epsilon_{q+2}, \ldots, \epsilon_n
\end{math}
is $-1$.

\end{proposition}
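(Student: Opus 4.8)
The plan is to reuse, almost verbatim, the strategy from the germ of type $(1,k)$ in this table, now fed by the tangent cone of Proposition~\ref{prop:tangent cone type (2) r=0}. First I would record the two structural facts that drive everything. Since $l=q-1$ for this germ, the linearized cone is the subspace $L^+(g)=\{d\in\R^n\mid d_1=\cdots=d_{q-1}=0\}$, whence $L^+(g)^\circ=\{w\in\R^n\mid w_q=\cdots=w_n=0\}$. Moreover $Q(d')=\sum_{j=q+2}^n\epsilon_jd_j^2$ involves neither $d_q$ nor $d_{q+1}$, so $\pm e_q,\pm e_{q+1}\in\overline{D}(q-1,Q)$; I will use this repeatedly.

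For the ``if'' part, assume some $\epsilon_j=-1$ with $j\geq q+2$, say $\epsilon_{q+2}=-1$ after a permutation. Proposition~\ref{prop:tangent cone type (2) r=0} then gives $C^+(g)=\overline{D}(q-1,Q)$, and since $C^+(g)\subset L^+(g)$ always holds it suffices to prove $\overline{D}(q-1,Q)^\circ\subset L^+(g)^\circ$. I would take $w\in\overline{D}(q-1,Q)^\circ$ and pin down its coordinates by testing against cone elements: $\pm e_q$ and $\pm e_{q+1}$ force $w_q=w_{q+1}=0$; the vector $\pm e_{q+2}$ lies in $\overline{D}(q-1,Q)$ because $Q(\pm e_{q+2})=\epsilon_{q+2}=-1\leq 0$, forcing $w_{q+2}=0$; and for each $j\geq q+3$ the vector $\pm e_{q+2}\pm e_j$ lies in $\overline{D}(q-1,Q)$ since $Q(\pm e_{q+2}\pm e_j)=-1+\epsilon_j\leq 0$, which forces $w_j=0$. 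Thus $w\in L^+(g)^\circ$, and GCQ follows.

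For the ``only if'' part I would argue by contraposition. If $\epsilon_{q+2}=\cdots=\epsilon_n=1$, then $Q(d')\leq 0$ forces $d_{q+2}=\cdots=d_n=0$, so $\overline{D}(q-1,Q)$ collapses to the plane spanned by $e_q,e_{q+1}$, and by Proposition~\ref{prop:tangent cone type (2) r=0} the tangent cone $C^+(g)$ is contained in this plane. Consequently $e_{q+2}$ is orthogonal to every element of $C^+(g)$, so $e_{q+2}\in C^+(g)^\circ$, while $e_{q+2}\notin L^+(g)^\circ$; hence $C^+(g)^\circ\neq L^+(g)^\circ$ and GCQ fails. Here $n\geq q+2$ by the standing assumption that $n$ is large enough for a quadratic part to appear.

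I do not anticipate a genuine obstacle, since the skeleton copies the type $(1,k)$ computation. The one point requiring care is the ``if'' part: one must eliminate each $w_j$ using the mixed test vectors $\pm e_{q+2}\pm e_j$ built from the coordinate with $\epsilon=-1$, rather than the diagonal vectors $\pm e_j$ alone, which need not lie in the cone when $\epsilon_j=1$. The presence of the cubic term $R=x_q^3+\epsilon_{q+1}x_qx_{q+1}^2$ plays no role beyond having already been absorbed into Proposition~\ref{prop:tangent cone type (2) r=0}, so it does not complicate the polar computation.
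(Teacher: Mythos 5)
Your proposal is correct and follows essentially the same route as the paper: the ``if'' part uses the identical test vectors ($\pm e_q,\pm e_{q+1}$, then $\pm e_{q+2}$ and $\pm e_{q+2}\pm e_j$ built from the negative sign) to show $\overline{D}(q-1,Q)^\circ\subset L^+(g)^\circ$, and the ``only if'' part uses the collapse of $\overline{D}(q-1,Q)$ to the $(e_q,e_{q+1})$-plane. The only cosmetic difference is that the paper records the full polar $\overline{D}(q-1,Q)^\circ=\{w\mid w_q=w_{q+1}=0\}$ and cites the strict inclusion $C^+(g)^\circ\supset\overline{D}(q-1,Q)^\circ\supsetneq L^+(g)^\circ$, whereas you exhibit the single witness $e_{q+2}$; these are the same argument.
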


\begin{proof}
It is easy to check that $L^+ \left(g\right)^\circ$ is equal to $\left\{ w \in \mathbb{R}^n \middle| w_q=\cdots = w_n=0\right\}$.

\noindent
\textbf{Proof of ``if'' part:} Without loss of generality, we can assume 
\begin{math}
\epsilon_{q+2} = -1
\end{math}. By Propositions~\ref{prop:tangent cone type (2) r=0}, the condition implies that 
\begin{math}
C^+ \left( g \right)
\end{math}
is equal to 
\begin{math}
\overline{D} \left( q-1, Q \right)
\end{math}. 
Since 
\begin{math}
C^+ \left( g\right) \subset L^+ \left( g\right)
\end{math}
always holds, it is enough to show 
\begin{math}
\overline{D} \left( q-1, Q \right)^\circ \subset L^+ \left( g\right)^\circ
\end{math}. Take any 
\begin{math}
w \in \overline{D} \left( q-1, Q \right)^\circ
\end{math}. 
Since the vector $\pm e_q, \pm e_{q+1}$ is contained in $\overline{D}(q-1,Q)$, $w_q =0$ and $w_{q+1} = 0$ hold. Since the vector 
\begin{math}
\pm e_{q+2}
\end{math}
is contained in 
\begin{math}
\overline{D} \left( q-1, Q \right)
\end{math}, 
\begin{math}
w_{q+2} = 0
\end{math}
holds. 
Since the vector $\pm e_{q+2} \pm e_j$ is contained in $\overline{D}(q-1,Q)$ for any $j\in \{q+3,\ldots,n\}$, $w\cdot(\pm e_{q+2} \pm e_j)=\pm w_j$ is less than or equal to $0$, implying $w_j=0$. 
We thus obtain $w\in L^+(g)^\circ$. 

\noindent
\textbf{Proof of ``only if'' part:}
We show the contraposition of the statement, that is, if all of 
\begin{math}
\epsilon_{q+2}, \ldots, \epsilon_n
\end{math}
are 
\begin{math}
1
\end{math}, GCQ is violated. Under the assumption,
\[
\overline{D}(q-1,Q) = \left\{d\in \R^n~\middle|~ d_1=\cdots =d_{q-1}=0, \sum_{j=q+2}^{n}d_j^2=0\right\}
\]
and thus 
\begin{math}
\overline{D}(q-1,Q)^\circ = \left\{ w \in \mathbb{R}^n \middle| w_q = w_{q+1} = 0 \right\}
\end{math}
holds. Therefore, $C^+(g)^\circ \supset \overline{D}(q-1,Q)^\circ \supsetneq L^+(g)^\circ$ holds and thus GCQ is violated. 
\end{proof}

\subsubsection*{The germ of type $(3,k)$ in Table~\ref{table:generic constraint r=0}}

Let $k\geq 2$, $g = \left(x_1,\ldots, x_{q-1},-\sum_{j=1}^{q-2}x_j+\epsilon_{q-1}x_{q-1}^k+\sum_{j=q}^{n}\epsilon_jx_j^2\right)$, where $\epsilon_j\in \{1,-1\}$. 

\begin{proposition}\label{prop:tangent cone type (3,k) k=2 r=0}

If $k=2$, the tangent cone $C^+(g)$ is equal to $\overline{D}(q-2,Q)$. 

\end{proposition}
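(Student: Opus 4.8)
The plan is to reduce the statement to a direct application of Lemma~\ref{lem:tangent cone for r=0}, exactly mirroring the companion Proposition~\ref{prop:tangent cone type (1,k) k=2 r=0}. First I would observe that for $k=2$ the germ $g$ already has the precise shape demanded by that lemma. Writing the last component as $\tilde{g}(x) = -\sum_{j=1}^{q-2}x_j + \epsilon_{q-1}x_{q-1}^2 + \sum_{j=q}^n \epsilon_j x_j^2$, the linear block runs over the indices $1,\ldots,q-2$, so the relevant integer is $l = q-2$ and $x' = (x_{q-1},\ldots,x_n)$. The quadratic part is $Q(x') = \epsilon_{q-1}x_{q-1}^2 + \sum_{j=q}^n \epsilon_j x_j^2$, and crucially the higher-order remainder is $R(x') \equiv 0$, since taking $k=2$ leaves no terms of degree exceeding two.

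With this identification in place, the inclusion $C^+(g) \subset \overline{D}(q-2,Q)$ is immediate from part~(1) of Lemma~\ref{lem:tangent cone for r=0}. For the reverse inclusion I would invoke part~(2) of the same lemma: because $R \equiv 0$, the sufficient condition ``$R_r(d')=0$ for every $r \geq 3$'' holds trivially for every $d \in \overline{D}(q-2,Q)$, so each such $d$ lies in $C^+(g)$. Combining the two inclusions yields $C^+(g) = \overline{D}(q-2,Q)$, which is the assertion.

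I do not anticipate any genuine obstacle, since the argument is a verbatim analogue of the $k=2$ case already handled for type $(1,k)$. The only point that genuinely requires care is the bookkeeping of index ranges: one must confirm that for type $(3,k)$ the linear sum runs over $j=1,\ldots,q-2$ (hence $l=q-2$, one index fewer than in type $(1,k)$, where $l=q-1$), so that the cone appearing in the conclusion is $\overline{D}(q-2,Q)$ rather than $\overline{D}(q-1,Q)$. Once the correct value $l=q-2$ and the vanishing of $R$ are checked against the hypotheses of Lemma~\ref{lem:tangent cone for r=0}, everything else follows formally.
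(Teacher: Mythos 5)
Your proof is correct and is essentially the paper's own argument: the paper likewise notes that for $k=2$ one may take $R(x')=0$ and then concludes directly from Lemma~\ref{lem:tangent cone for r=0}, with the containment from part~(1) and the reverse inclusion from the trivially satisfied condition ``$R_r(d')=0$ for all $r\geq 3$'' in part~(2). Your extra care with the index bookkeeping ($l=q-2$ here versus $l=q-1$ for type $(1,k)$) matches the paper's setup exactly.
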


\begin{proof}
We can put $R(x')=0$ in this case, and thus the proposition holds by Lemma~\ref{lem:tangent cone for r=0}. 
\end{proof}

\begin{proposition}\label{prop:tangent cone type (3,k) k geq 3 r=0}

Suppose that $k$ is larger than or equal to $3$.
The tangent cone $C^+(g)$ is equal to $\overline{D}(q-2,Q)\setminus \{(0,\ldots,0,d_{q-1},0,\ldots, 0)\in \R^n~|~\epsilon_{q-1}d_{q-1}^k>0\}$ if $\epsilon_{q}=\cdots = \epsilon_n=1$, and $C^+(g)=\overline{D}(q-2,Q)$ otherwise. 

\end{proposition}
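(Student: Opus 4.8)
The plan is to derive this exactly as in the proof of Proposition~\ref{prop:tangent cone type (1,k) k geq 3 r=0}, via Corollary~\ref{cor:tangent cone for r=0}, with the single modification that the linear part now eliminates $q-2$ coordinates rather than $q-1$. First I would set $Q(x')=\sum_{j=q}^n\epsilon_jx_j^2$ and $R(x')=\epsilon_{q-1}x_{q-1}^k$, so that $\widetilde{g}=-\sum_{j=1}^{q-2}x_j+Q(x')+R(x')$ with $x'=(x_{q-1},\ldots,x_n)$. These fit the hypotheses of Corollary~\ref{cor:tangent cone for r=0} upon taking $l=q-2$, $P=0$, and $s=1$: the single non-squared variable is $x_{l+1}=x_{q-1}$, the squared block $\sum_{j=l+s+1}^n\epsilon_jx_j^2=\sum_{j=q}^n\epsilon_jx_j^2$ is precisely $Q$, and $R=\epsilon_{q-1}x_{q-1}^k$ is homogeneous in $x_{q-1}$, so all hypotheses are met.

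With these identifications, Corollary~\ref{cor:tangent cone for r=0} yields two statements simultaneously: the inclusion
\[
\overline{D}(q-2,Q)\setminus\{(0,\ldots,0,d_{q-1},0,\ldots,0)\mid \epsilon_{q-1}d_{q-1}^k>0\}\subset C^+(g),
\]
and the equality $C^+(g)=\overline{D}(q-2,Q)$ whenever $\epsilon_j=-1$ for some $j\geq q$ (that is, for some $j\geq l+s+1$). This settles the ``otherwise'' case at once, and it also supplies the ``$\supset$'' half of the remaining case.

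It then remains to prove the reverse containment when $\epsilon_q=\cdots=\epsilon_n=1$, which I would do by the same sign argument used for type $(1,k)$. For any $x\in M(g)$, the first $q-1$ inequality components force $x_1,\ldots,x_{q-1}\leq 0$, while the last component, after rearrangement and using $\epsilon_j=1$ for $j\geq q$, gives
\[
\epsilon_{q-1}x_{q-1}^k\leq \sum_{j=1}^{q-2}x_j-\sum_{j=q}^n x_j^2\leq 0.
\]
Passing to the limit along any defining sequence for $d\in C^+(g)$ yields $\epsilon_{q-1}d_{q-1}^k\leq 0$, so $C^+(g)$ is disjoint from the excised set; together with $C^+(g)\subset\overline{D}(q-2,Q)$ from Lemma~\ref{lem:tangent cone for r=0}(1) and the inclusion above, this establishes the asserted equality. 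The only point demanding care is the index bookkeeping: here $\overline{D}(q-2,Q)$ imposes $d_1=\cdots=d_{q-2}=0$ and the single inequality $d_{q-1}\leq 0$, so one must confirm that $x_1,\ldots,x_{q-2}$ are the coordinates absorbed by the linear part and that $x_{q-1}$ is the lone non-squared direction. Beyond this relabelling the reasoning is identical to the type-$(1,k)$ computation and introduces no genuinely new obstacle.
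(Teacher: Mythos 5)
Your proposal is correct and follows essentially the same route as the paper's own proof: the paper likewise sets $Q=\sum_{j=q}^{n}\epsilon_jx_j^2$, $R=\epsilon_{q-1}x_{q-1}^k$, invokes Corollary~\ref{cor:tangent cone for r=0} with $P(x')=0$ and $s=1$ to get both the inclusion and the ``otherwise'' case, and then uses the identical rearrangement $\epsilon_{q-1}x_{q-1}^k\leq \sum_{j=1}^{q-2}x_j-\sum_{j=q}^{n}x_j^2\leq 0$ on $M(g)$, passed to the limit, to exclude the set $\{\epsilon_{q-1}d_{q-1}^k>0\}$ when $\epsilon_q=\cdots=\epsilon_n=1$. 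No gaps; the index bookkeeping you flag is exactly the only difference from the type-$(1,k)$ case.
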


\begin{proof}
We can put $Q =\sum_{j=q}^{n} \epsilon_j x_j^2$ and $R=\epsilon_{q-1}x_{q-1}^k$ in this case. 
By Corollary~\ref{cor:tangent cone for r=0} (with $P(x')=0$ and $s=1$), $\overline{D}(q-2,Q)\setminus \{(0,\ldots,0,d_{q-1},0,\ldots,0)~|~\epsilon_{q-1}d_{q-1}^k> 0\}$ is contained in $C^+(g)$, and $C^+(g)=\overline{D}(q-2,Q)$ if $\epsilon_j=-1$ for some $j\geq q$. 
If $\epsilon_{q}=\cdots =\epsilon_n = +1$, any $x\in M(g)$ satisfies the following inequality:
\[
0\geq -\sum_{j=1}^{q-2}x_j+\epsilon_{q-1}x_{q-1}^k+\sum_{j=q}^{n}x_j^2\Leftrightarrow \epsilon_{q-1}x_{q-1}^k\leq \sum_{j=1}^{q-2}x_j-\sum_{j=q}^{n}x_j^2\leq 0. 
\]
Thus, any $d\in C^+(g)$ also satisfies the inequality $\epsilon_{q-1}d_{q-1}^k\leq 0$ when $\epsilon_{q}=\cdots =\epsilon_n=1$. 
\end{proof}

\begin{proposition}\label{prop:ACQ type (3) r=0}

ACQ holds for $g$ if and only if one of the following conditions holds:

\begin{enumerate}

\item 
$k=2$ and $\epsilon_{q-1}=\cdots =\epsilon_n=-1$, 

\item 
$k\geq 3$ and $\epsilon_{q}=\cdots =\epsilon_n=-1$.

\end{enumerate}

\end{proposition}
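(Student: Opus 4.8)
The plan is to read off the conclusion from the tangent-cone descriptions already obtained in Propositions~\ref{prop:tangent cone type (3,k) k=2 r=0} and \ref{prop:tangent cone type (3,k) k geq 3 r=0}, following the template of Proposition~\ref{prop:ACQ type (1,k) r=0}. First I would record the linearized cone: since $dg_{q,0}=(-1,\ldots,-1,0,\ldots,0)$ with $-1$ in the first $q-2$ slots, combining $d_1,\ldots,d_{q-1}\leq 0$ with $-\sum_{j=1}^{q-2}d_j\leq 0$ forces $d_1=\cdots=d_{q-2}=0$, so that $L^+(g)=\{d\in\R^n\mid d_1=\cdots=d_{q-2}=0,\ d_{q-1}\leq 0\}$. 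Because $\overline{D}(q-2,Q)$ carries exactly these constraints together with the extra inequality $Q(d')\leq 0$, one always has the chain $C^+(g)\subseteq\overline{D}(q-2,Q)\subseteq L^+(g)$, and ACQ is precisely the assertion that both inclusions are equalities.

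For necessity I would argue by contraposition, exhibiting a witness in $L^+(g)\setminus C^+(g)$. When $k\geq 3$ one has $Q=\sum_{j=q}^n\epsilon_j x_j^2$; the condition fails iff some $\epsilon_j=1$ with $j\geq q$, and then $e_j$ lies in $L^+(g)$ (its $(q-1)$-th coordinate is $0$) while $Q(e_j')=\epsilon_j=1>0$, so $e_j\notin\overline{D}(q-2,Q)\supseteq C^+(g)$ and the inclusion is strict. When $k=2$ the relevant quadratic is $Q=\epsilon_{q-1}x_{q-1}^2+\sum_{j=q}^n\epsilon_j x_j^2$, so the same $e_j$ disposes of any $\epsilon_j=1$ with $j\geq q$. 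The one point needing care---and the only place the $k=2$ case differs---is the coefficient $\epsilon_{q-1}$: since $x_{q-1}$ enters $Q$ quadratically but enters $L^+(g)$ through the one-sided constraint $d_{q-1}\leq 0$, I would test with $-e_{q-1}$ rather than $e_{q-1}$; this vector lies in $L^+(g)$, while $Q\bigl((-e_{q-1})'\bigr)=\epsilon_{q-1}$, so $\epsilon_{q-1}=1$ again produces a point of $L^+(g)$ outside $\overline{D}(q-2,Q)$.

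For sufficiency I assume the stated sign condition. For $k=2$ with $\epsilon_{q-1}=\cdots=\epsilon_n=-1$, Proposition~\ref{prop:tangent cone type (3,k) k=2 r=0} gives $C^+(g)=\overline{D}(q-2,Q)$, and since $Q(d')=-\sum_{j=q-1}^{n}d_j^2\leq 0$ holds identically, the defining inequality $Q(d')\leq 0$ is vacuous and $\overline{D}(q-2,Q)=L^+(g)$. For $k\geq 3$ with $\epsilon_q=\cdots=\epsilon_n=-1$, we are in the ``otherwise'' branch of Proposition~\ref{prop:tangent cone type (3,k) k geq 3 r=0} (not all $\epsilon_j$ with $j\geq q$ equal $+1$), so again $C^+(g)=\overline{D}(q-2,Q)$; as $Q(d')=-\sum_{j=q}^n d_j^2\leq 0$ is vacuous, $\overline{D}(q-2,Q)=L^+(g)$, and $\epsilon_{q-1}$ plays no role (it controls only the higher-order term $R=\epsilon_{q-1}x_{q-1}^k$). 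In both cases $C^+(g)=L^+(g)$, i.e.\ ACQ holds.

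Since all tangent cones are already computed, no substantial obstacle remains; the only genuinely delicate step is the bookkeeping around $x_{q-1}$ in the $k=2$ case, where the quadratic dependence on $d_{q-1}$ must be reconciled with the one-sided linearized constraint $d_{q-1}\leq 0$, forcing the witness to be chosen as $-e_{q-1}$.
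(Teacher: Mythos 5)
Your proposal is correct and follows essentially the same route as the paper's proof: necessity by exhibiting a coordinate vector lying in $L^+(g)$ but outside $\overline{D}(q-2,Q)$ (the paper uniformly tests with $-e_j$ for $j\geq q-1$ resp.\ $j\geq q$, which covers your $-e_{q-1}$ case and your $e_j$ cases at once), and sufficiency by invoking Propositions~\ref{prop:tangent cone type (3,k) k=2 r=0} and \ref{prop:tangent cone type (3,k) k geq 3 r=0} to get $C^+(g)=\overline{D}(q-2,Q)$ and observing that the condition $Q(d')\leq 0$ becomes vacuous, so $\overline{D}(q-2,Q)=L^+(g)$. Your additional bookkeeping around the sign of $d_{q-1}$ in the $k=2$ case is a correct, slightly more explicit rendering of the same argument.
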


\begin{proof}
Suppose that the condition in the proposition does not hold, say $\epsilon_j=1$ for $j\geq q-1$ (resp.~$j\geq q$) if $k=2$ (resp.~$k\geq 3$). 
It is easy to check that $-e_j$ is contained in $L^+(g)$ but not in $\overline{D}(q-2,Q)$, in particular $C^+(g)\subset \overline{D}(q-2,Q)\subsetneq L^+(g)$.  
Suppose conversely that the condition in the proposition hold. 
By Propositions \ref{prop:tangent cone type (3,k) k=2 r=0} and \ref{prop:tangent cone type (3,k) k geq 3 r=0}, the tangent cone $C^+(g)$ is equal to $\overline{D}(q-2,Q)$, which is equal to $\{d\in \R^n~|~d_1=\cdots =d_{q-2}=0,d_{q-1}\leq 0\}=L^+(g)$ by the assumption. 
\end{proof}

\begin{proposition}

GCQ holds for $g$ if and only if one of 
\begin{math}
\epsilon_q, \ldots, \epsilon_n
\end{math}
is $-1$.

\end{proposition}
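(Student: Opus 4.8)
The plan is to compute both polars explicitly and compare them, leaning on the tangent-cone descriptions already established. First I would record that, by Proposition~\ref{prop:ACQ type (3) r=0}, the linearized cone is $L^+(g)=\{d\in\R^n\mid d_1=\cdots=d_{q-2}=0,\ d_{q-1}\leq 0\}$, so its polar is $L^+(g)^\circ=\{w\in\R^n\mid w_{q-1}\geq 0,\ w_q=\cdots=w_n=0\}$ (the components $w_1,\ldots,w_{q-2}$ being free). Since $C^+(g)\subset L^+(g)$ always holds, we have $L^+(g)^\circ\subset C^+(g)^\circ$, so GCQ is equivalent to the reverse inclusion. By Lemma~\ref{lem:tangent cone for r=0} the cone $C^+(g)$ is always contained in $\overline{D}(q-2,Q)$, and by Propositions~\ref{prop:tangent cone type (3,k) k=2 r=0} and \ref{prop:tangent cone type (3,k) k geq 3 r=0} it coincides with $\overline{D}(q-2,Q)$ whenever some $\epsilon_j$ with $q\leq j\leq n$ equals $-1$ (and unconditionally when $k=2$). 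Thus each direction reduces to an inclusion involving $\overline{D}(q-2,Q)^\circ$.

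For the \emph{if} direction I would assume, without loss of generality, $\epsilon_q=-1$, so that $C^+(g)=\overline{D}(q-2,Q)$ and it suffices to prove $\overline{D}(q-2,Q)^\circ\subset L^+(g)^\circ$. Taking $w\in\overline{D}(q-2,Q)^\circ$, I would test it against explicit elements of $\overline{D}(q-2,Q)$: the vectors $\pm e_q$ (admissible since their $(q-1)$-component vanishes and $Q(\pm e_q)=\epsilon_q=-1\leq 0$) force $w_q=0$; the vectors $\pm e_q\pm e_j$ for $q+1\leq j\leq n$ (admissible since $Q=\epsilon_q+\epsilon_j=-1+\epsilon_j\leq 0$) then force $w_j=0$; and $-e_{q-1}+e_q$ (admissible since its $(q-1)$-component is $-1\leq 0$ and $Q=\epsilon_{q-1}-1\leq 0$ for $k=2$, resp.~$Q=\epsilon_q=-1$ for $k\geq 3$) gives $-w_{q-1}+w_q=-w_{q-1}\leq 0$, i.e.~$w_{q-1}\geq 0$. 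Hence $w\in L^+(g)^\circ$.

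For the \emph{only if} direction I would argue by contraposition, assuming $\epsilon_q=\cdots=\epsilon_n=1$ and exhibiting a single vector in $C^+(g)^\circ\setminus L^+(g)^\circ$. Since $C^+(g)\subset\overline{D}(q-2,Q)$ yields $\overline{D}(q-2,Q)^\circ\subset C^+(g)^\circ$, it is enough to show that $w=e_{q-1}+e_q$ lies in $\overline{D}(q-2,Q)^\circ$; it plainly fails to lie in $L^+(g)^\circ$ because $w_q=1\neq 0$. Membership in the polar amounts to $d_{q-1}+d_q\leq 0$ for every $d\in\overline{D}(q-2,Q)$. When $k\geq 3$, or when $k=2$ and $\epsilon_{q-1}=1$, the constraint $Q(d')\leq 0$ forces $d_q=\cdots=d_n=0$, so $d_{q-1}+d_q=d_{q-1}\leq 0$ immediately.

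The remaining case $k=2,\ \epsilon_{q-1}=-1$ is where I expect the only genuine obstacle: here $\overline{D}(q-2,Q)$ is the backward second-order (Lorentz) cone $\{d\mid d_{q-1}\leq 0,\ \sum_{j\geq q}d_j^2\leq d_{q-1}^2\}$ rather than a polyhedral cone, so membership in its polar cannot be detected by finitely many coordinate test vectors. Instead I would estimate directly: from $d_q^2\leq\sum_{j\geq q}d_j^2\leq d_{q-1}^2$ together with $d_{q-1}\leq 0$ one obtains $d_q\leq|d_q|\leq-d_{q-1}$, hence $d_{q-1}+d_q\leq 0$, as required. (In the degenerate subcase $k=2,\ \epsilon_{q-1}=1$ the cone collapses to $\{0\}$, so $C^+(g)^\circ=\R^n$ and GCQ fails even more visibly; the same $w$ still witnesses this.) Assembling the two directions yields the stated equivalence.
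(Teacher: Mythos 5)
Your proposal is correct and follows essentially the same route as the paper's proof: the ``if'' direction tests the polar of $\overline{D}(q-2,Q)$ against the coordinate vectors $\pm e_q$, $\pm e_q\pm e_j$, $-e_{q-1}\pm e_q$, and the ``only if'' direction uses the witness $w=e_{q-1}+e_q$ together with the estimate $|d_q|\le|d_{q-1}|$ coming from $\epsilon_{q-1}d_{q-1}^2+\sum_{j\ge q}d_j^2\le 0$. The only cosmetic difference is that you use this single witness uniformly in all subcases, while the paper handles $k\ge 3$ by computing $\overline{D}(q-2,Q)^\circ=\{w\mid w_{q-1}\ge 0\}$ directly; both arguments are sound.
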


\begin{proof}
It is easy to check that $L^+ \left(g\right)^\circ$ is equal to $\left\{ w \in \mathbb{R}^n \middle| w_{q-1} \ge 0, w_q =\cdots = w_n=0\right\}$.

\noindent
\textbf{Proof of ``if'' part:} Without loss of generality, we can assume 
\begin{math}
\epsilon_q = -1
\end{math}. 
By Propositions~\ref{prop:tangent cone type (3,k) k=2 r=0} and \ref{prop:tangent cone type (3,k) k geq 3 r=0}, $C^+(g)$ is equal to $\overline{D}(q-2,Q)$. 
Take any 
\begin{math}
w \in \overline{D}(q-2,Q)^\circ
\end{math}. Since 
\begin{math}
\pm e_q
\end{math}
is contained in 
\begin{math}
\overline{D} \left( q-2, Q \right)
\end{math}, 
\begin{math}
w_q = 0
\end{math}
holds.  Since 
\begin{math}
- e_{q-1} \pm e_q
\end{math}
is contained in 
\begin{math}
\overline{D} \left( q-2, Q \right)
\end{math}, 
\begin{math}
w_{q-1} \ge 0
\end{math}
holds. Since 
\begin{math}
\pm e_q \pm e_j
\end{math}
is contained in 
\begin{math}
\overline{D} \left( q-2, Q \right)
\end{math}
for
\begin{math}
j \in \left\{ q+1, \ldots, n \right\}
\end{math}, 
\begin{math}
w_j = 0
\end{math}
holds. This proves 
\begin{math}
w \in L^+ \left( g \right)^\circ
\end{math}.

\noindent
\textbf{Proof of ``only if'' part:}
We show the contraposition of the statement, that is, if all of 
\begin{math}
\epsilon_{q+2}, \ldots, \epsilon_n
\end{math}
are 
\begin{math}
1
\end{math}, GCQ is violated. Under the assumption and $k \ge 3$,
\[
\overline{D}(q-2,Q) = \left\{d\in \R^n~\middle|~ d_1=\cdots =d_{q-2}=0, d_{q-1} \le 0, \sum_{j=q}^{n}d_j^2=0\right\}
\]
and thus 
\begin{math}
\overline{D}(q-2,Q)^\circ = \left\{ w \in \mathbb{R}^n \middle| w_{q-1} \ge 0 \right\}
\end{math}
holds. Therefore, $C^+(g)^\circ \supset \overline{D}(q-2,Q)^\circ \supsetneq L^+(g)^\circ$ holds and thus GCQ is violated. 

Under the assumption and $k = 2$,
\[
\overline{D}(q-2,Q) = \left\{d\in \R^n~\middle|~ d_1=\cdots =d_{q-2}=0, d_{q-1} \le 0, \epsilon_{q-1} d_{q-1}^2 + \sum_{j=q}^{n}d_j^2 \le 0\right\}
\]
holds. 
Take any 
\begin{math}
d \in \overline{D}(q-2,Q)
\end{math}. Then, 
\begin{math}
d_q^2 \le \sum_{j=q}^{n}d_j^2 \le \left| d_{q-1}^2 \right|
\end{math}
and thus 
\begin{math}
\left| d_q \right| \le \left| d_{q-1} \right|
\end{math}
and
\begin{math}
(e_{q-1} + e_q)\cdot d = d_{q-1} + d_q \le 0
\end{math}
hold. 
Thus, \begin{math}
w = e_{q-1} + e_q
\end{math}
is contained in 
\begin{math}
\overline{D}(q-2,Q)^\circ
\end{math}. 
Since 
\begin{math}
w
\end{math}
is not contained in 
\begin{math}
L^+ \left( g \right)^\circ
\end{math}, GCQ is violated. 
\end{proof}

\subsubsection*{The germ of type $(4,k)$ in Table~\ref{table:generic constraint r=0}}

Let $k\geq 3$, $g = \left(x_1,\ldots, x_{q-1},-\sum_{j=1}^{q-2}x_j+\epsilon_q x_q^k+x_{q-1}x_q+\sum_{j=q+1}^{n}\epsilon_jx_j^2\right)$, where $\epsilon_j\in \{1,-1\}$, $Q =x_{q-1}x_q+\sum_{j=q+1}^{n} \epsilon_j x_j^2$, and $R=\epsilon_q x_q^k$. 

\begin{proposition}\label{prop:tangent cone type (4,k) r=0}

The tangent cone $C^+(g)$ is equal to $\overline{D}(q-2,Q)\setminus \{(0,\ldots,0,d_{q},0,\ldots, 0)\in \R^n~|~d_q<0,\epsilon_qd_q^k>0\}$ if $\epsilon_{q+1}=\cdots = \epsilon_n=1$, and $C^+(g)=\overline{D}(q-2,Q)$ otherwise. 

\end{proposition}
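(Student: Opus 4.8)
The plan is to extract the bulk of the cone from Corollary~\ref{cor:tangent cone for r=0} and then decide, by hand, the finitely-parametrized family of boundary directions that the corollary leaves undetermined. Here the data of Lemma~\ref{lem:tangent cone for r=0} specialize to $l=q-2$, with $Q(x')=P(x')+\sum_{j=q+1}^n\epsilon_jx_j^2$ for $P(x')=x_{q-1}x_q$, and $R(x')=\epsilon_q x_q^k$, which is homogeneous of degree $k$ in the variables $x_{q-1},x_q$. Thus the corollary applies with $s=2$, yielding at once the inclusion $\overline{D}(q-2,Q)\setminus\{(0,\ldots,0,d_{q-1},d_q,0,\ldots,0)\mid \epsilon_q d_q^k>0\}\subset C^+(g)$ together with the equality $C^+(g)=\overline{D}(q-2,Q)$ whenever $\epsilon_j=-1$ for some $j\geq q+1$. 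The second assertion disposes immediately of the ``otherwise'' case of the proposition.

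It then remains to treat the subcase $\epsilon_{q+1}=\cdots=\epsilon_n=1$, where I need only decide membership for the special directions $d=(0,\ldots,0,d_{q-1},d_q,0,\ldots,0)$ with $\epsilon_q d_q^k>0$ that also lie in $\overline{D}(q-2,Q)$, i.e.\ satisfy $d_{q-1}\leq 0$ and $Q(d')=d_{q-1}d_q\leq 0$. I would split on the sign of $d_q$. If $d_q>0$, then $d_{q-1}\leq 0$; when $d_{q-1}<0$ one has $Q(d')=d_{q-1}d_q<0$, so $d\in C^+(g)$ by the first bullet in the second part of Lemma~\ref{lem:tangent cone for r=0}, and when $d_{q-1}=0$ I would invoke its last bullet with $v=-e_{q-1}$, for which $v'\cdot\nabla Q(d')=-d_q<0$ while $v_{q-1}=-1\leq 0$ meets condition (a). Hence every such direction with $d_q>0$ belongs to $C^+(g)$. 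If instead $d_q<0$, the relation $d_{q-1}d_q\leq 0$ together with $d_{q-1}\leq 0$ forces $d_{q-1}=0$, so $d=(0,\ldots,0,d_q,0,\ldots,0)$; these are precisely the directions the proposition claims to remove.

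The crux is therefore to show that such a direction $d=(0,\ldots,0,d_q,0,\ldots,0)$ with $d_q<0$ and $\epsilon_q d_q^k>0$ is \emph{not} in $C^+(g)$, since Lemma~\ref{lem:tangent cone for r=0} supplies only sufficient conditions for membership and cannot on its own exclude a direction. I would argue directly from the defining sequences: if $d\in C^+(g)$ there are $x^{(m)}\in M(g)$ and $t_m>0$ with $x^{(m)}\to 0$ and $t_m x^{(m)}\to d$. For large $m$ the relation $t_m x_q^{(m)}\to d_q<0$ gives $x_q^{(m)}<0$, so that feasibility $x_{q-1}^{(m)}\leq 0$ makes the cross term $x_{q-1}^{(m)}x_q^{(m)}\geq 0$, while a short sign check on the parity of $k$ against the sign of $\epsilon_q$ forced by $\epsilon_q d_q^k>0$ gives $\epsilon_q(x_q^{(m)})^k>0$. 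Since the remaining summands $-\sum_{j=1}^{q-2}x_j^{(m)}$ and $\sum_{j=q+1}^n (x_j^{(m)})^2$ are nonnegative as well, one gets $\widetilde{g}(x^{(m)})\geq \epsilon_q(x_q^{(m)})^k>0$, contradicting $x^{(m)}\in M(g)$. This forces $d\notin C^+(g)$, and combined with the inclusions above it pins down $C^+(g)$ exactly as stated. The only point demanding care is the bookkeeping of the two parities of $k$ in the final sign check, but both collapse to the single inequality $\epsilon_q(x_q^{(m)})^k>0$.
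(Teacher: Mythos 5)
Your proof is correct and takes essentially the same approach as the paper: both use Corollary~\ref{cor:tangent cone for r=0} (with $P(x')=x_{q-1}x_q$, $s=2$) for the bulk inclusion and the ``otherwise'' case, both establish membership of the exceptional directions with $d_q>0$ by perturbing in the $-e_{q-1}$ direction (your appeal to the bullets of Lemma~\ref{lem:tangent cone for r=0} is just a packaged form of the paper's explicit sequences $m^{-1}d$ and $m^{-1}d_qe_q-m^{-3/2}d_qe_{q-1}$), and both exclude the directions $(0,\ldots,0,d_q,0,\ldots,0)$ with $d_q<0$, $\epsilon_qd_q^k>0$ by the same sign/feasibility contradiction. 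The only cosmetic difference is that you argue with the signs of the components $x_j^{(m)}$ directly, while the paper rescales by $t_m$ first; both are valid.
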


\begin{proof}
By Corollary~\ref{cor:tangent cone for r=0} (with $P(x')=x_{q-1}x_q$ and $s=2$), the set 
\[
\overline{D}(q-2,Q)\setminus \{(0,\ldots,0,d_{q-1},d_{q},0,\ldots, 0)\in \R^n~|~\epsilon_qd_q^k>0\}
\]
is contained in $C^+(g)$, and $C^+(g)=\overline{D}(q-2,Q)$ if $\epsilon_j=-1$ for some $j\geq q+1$. 

In what follows, we assume $\epsilon_{q+1}=\cdots =\epsilon_n = +1$. 
Let $d=(0,\ldots,0,d_{q-1},d_q,0,\ldots, 0)\in \overline{D}(q-2,Q)$ with $\epsilon_qd_q^k>0$. 
Since $d\in \overline{D}(q-2,Q)$, $d_{q-1}$ and $Q(d') = d_{q-1}d_q$ are less than or equal to $0$. 
If $d_{q-1}<0$, $d_{q-1}d_q$ is also less than $0$ since $d_q$ is not $0$.
The vector $m^{-1}d$ is contained in $M(g)$ for $m\gg 0$ since $g_j(m^{-1}d) = m^{-1}d_j \leq 0$ for $j=1,\ldots, q-1$ and 
\[
\widetilde{g}(m^{-1}d) = \epsilon_qm^{-k}d_q^k + m^{-2}d_{q-1}d_q <0.
\]
Hence $d =\lim_{m\to \infty} m\cdot m^{-1}d$ is contained in $C^+(g)$. 
If $d_{q-1}=0$ and $d_q>0$, the vector $x^{(m)}:=m^{-1}d_qe_q -m^{-3/2}d_qe_{q-1}$ is contained in $M(g)$ for $m\gg 0$ since 
\[
g_j\left(x^{(m)}\right) = \begin{cases}
0 & (j\leq q-2)\\
-m^{-3/2}d_q<0 & (j=q-1),
\end{cases}
\]
and $\widetilde{g}\left(x^{(m)}\right) = m^{-k}\epsilon_qd_q^k-m^{-5/2}d_q^2 <0$ for $m\gg 0$. 
Thus, $d = \lim_{m\to \infty} m\cdot x^{(m)}$ is contained in $C^+(g)$. 
We eventually obtain:
\[
\overline{D}(q-2,Q)\setminus \{(0,\ldots,0,d_{q},0,\ldots, 0)\in \R^n~|~d_q<0,\epsilon_qd_q^k>0\}\subset C^+(g). 
\]

Suppose that $d = (0,\ldots, 0,d_{q-1},d_q,0,\ldots,0)$ with $d_q<0$ and $\epsilon_qd_q^k>0$ is contained in $C^+(g)$. 
Take $x^{(m)}\in M(g)$ and $t_m>0$ so that $\lim_{m\to\infty}x^{(m)}=0$ and $\lim_{m\to \infty}t_m x^{(m)}=d$. 
The following inequality holds:
\begin{align*}
&0 \geq -\sum_{j=1}^{q-2}x_j^{(m)}+\epsilon_q\left(x_q^{(m)}\right)^k+x_{q-1}^{(m)}x_{q}^{(m)}+\sum_{j=q+1}^{n}(x_j^{(m)})^2\\
\Leftrightarrow&\epsilon_q\left(x_q^{(m)}\right)^k+x_{q-1}^{(m)}x_{q}^{(m)}\leq \sum_{j=1}^{q-2}x_j^{(m)}-\sum_{j=q+1}^{n}(x_j^{(m)})^2\leq 0.
\end{align*}
Since $\lim_{m\to \infty}t_m x^{(m)}=d$, $d_q<0$ and $\epsilon_qd_q^k>0$, $\epsilon_q\left(t_mx_q^{(m)}\right)^k$ is larger than $0$ and $\left(t_mx_{q-1}^{(m)}\right)\left(t_mx_{q}^{(m)}\right)\geq 0$ for $m\gg 0$. 
Thus, $\epsilon_q\left(x_q^{(m)}\right)^k+x_{q-1}^{(m)}x_{q}^{(m)}$ is larger than $0$, contradicting the inequality above. 
Hence we obtain:
\[
\overline{D}(q-2,Q)\setminus \{(0,\ldots,0,d_{q},0,\ldots, 0)\in \R^n~|~d_q<0,\epsilon_qd_q^k>0\}=C^+(g). \qedhere
\]
\end{proof}

\begin{proposition}\label{prop:ACQ type (4) r=0}

ACQ does not hold for $g$.

\end{proposition}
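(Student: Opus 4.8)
The plan is to show that ACQ fails by exhibiting a single explicit vector lying in the linearized cone but not in the tangent cone. Since $C^+(g)\subseteq L^+(g)$ always holds, it suffices to produce one $d\in L^+(g)\setminus C^+(g)$, and I will choose a witness that works uniformly for every sign pattern $\epsilon_j\in\{1,-1\}$.

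First I would pin down the linearized cone. The active inequality constraints are $g_j=x_j$ for $1\le j\le q-1$, with gradient $e_j$, together with $\widetilde{g}$, whose linear part is $-\sum_{j=1}^{q-2}x_j$. The conditions $d_j\le 0$ for $1\le j\le q-1$ and $-\sum_{j=1}^{q-2}d_j\le 0$ force $d_1=\cdots=d_{q-2}=0$ and $d_{q-1}\le 0$, with $d_q,\ldots,d_n$ unconstrained, so that
\[
L^+(g)=\{d\in\R^n\mid d_1=\cdots=d_{q-2}=0,\ d_{q-1}\le 0\}.
\]
On the other hand, by Lemma~\ref{lem:tangent cone for r=0} (or equivalently by Proposition~\ref{prop:tangent cone type (4,k) r=0}) we have in every case $C^+(g)\subseteq \overline{D}(q-2,Q)$, so every $d\in C^+(g)$ must satisfy $Q(d')\le 0$, where $Q(d')=d_{q-1}d_q+\sum_{j=q+1}^n\epsilon_jd_j^2$.

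The key observation is that $L^+(g)$ imposes no sign condition coming from $Q$, whereas $C^+(g)$ does. I would therefore take $d=-e_{q-1}-e_q$, i.e.\ $d_{q-1}=d_q=-1$ and all other coordinates zero. Then $d\in L^+(g)$ because $d_1=\cdots=d_{q-2}=0$ and $d_{q-1}=-1\le 0$, but $Q(d')=(-1)(-1)+0=1>0$, so $d\notin\overline{D}(q-2,Q)\supseteq C^+(g)$. Hence $d\in L^+(g)\setminus C^+(g)$ and ACQ fails.

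There is essentially no obstacle here; the only point requiring attention is that the witness must work regardless of the signs $\epsilon_{q+1},\ldots,\epsilon_n$. This is precisely why I support the witness on coordinates $q-1$ and $q$ only: setting $d_q=-1$ and $d_{q+1}=\cdots=d_n=0$ makes the sign-dependent sum $\sum_{j=q+1}^n\epsilon_jd_j^2$ vanish, leaving the positive cross term $d_{q-1}d_q=1$ to violate the constraint $Q(d')\le 0$ in all cases simultaneously.
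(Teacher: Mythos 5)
Your proof is correct and follows essentially the same route as the paper: identify $L^+(g)=\{d\in\R^n\mid d_1=\cdots=d_{q-2}=0,\ d_{q-1}\le 0\}$, use the containment $C^+(g)\subseteq\overline{D}(q-2,Q)$ from Lemma~\ref{lem:tangent cone for r=0}, and exhibit one explicit vector in $L^+(g)\setminus\overline{D}(q-2,Q)$.

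The only difference is the choice of witness, and it is in your favor. The paper takes $d=-e_{q-1}-\epsilon_q e_q$, for which $Q(d')=(-1)(-\epsilon_q)=\epsilon_q$; this is positive only when $\epsilon_q=1$. When $\epsilon_q=-1$ the paper's vector satisfies $Q(d')=-1<0$, so it lies in $\overline{D}(q-2,Q)$ and in fact in $C^+(g)$ itself (by the first bullet of the second statement of Lemma~\ref{lem:tangent cone for r=0}, any $d\in\overline{D}(q-2,Q)$ with $Q(d')<0$ belongs to the tangent cone), hence it does not witness the failure of ACQ in that case. Your choice $d=-e_{q-1}-e_q$ gives $Q(d')=1>0$ independently of all the signs $\epsilon_q,\epsilon_{q+1},\ldots,\epsilon_n$, so it works uniformly and repairs this sign slip; your explanation of why the witness must be supported only on coordinates $q-1$ and $q$ is exactly the right point. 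The proposition is of course true in all cases, as your argument shows.
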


\begin{proof}
The vector $-e_{q-1}-\epsilon_q e_q$ is contained in $L^+(g)$ but not in $\overline{D}(q-2,Q)$, in particular $C^+(g)\subset \overline{D}(q-1,Q)\subsetneq L^+(g)$.  
\end{proof}

\begin{proposition}

GCQ holds for $g$ if and only if either of the following holds:
\begin{enumerate}
\item one of 
\begin{math}
\epsilon_{q+1}, \ldots, \epsilon_n
\end{math}
is 
\begin{math}
-1
\end{math}.
\item 
\begin{math}
\epsilon_{q+1} = \cdots = \epsilon_n = 1
\end{math}
and \begin{math}
\epsilon_q = (-1)^{k+1}
\end{math}.
\end{enumerate}

\end{proposition}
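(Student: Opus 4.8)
The plan is to reduce everything to a comparison of two explicitly described cones, using the tangent cone already computed in Proposition~\ref{prop:tangent cone type (4,k) r=0}. First I would record that the linearized cone is $L^+(g)=\{d\in\R^n\mid d_1=\cdots=d_{q-2}=0,\ d_{q-1}\le 0\}$, so that its polar is $L^+(g)^\circ=\{w\in\R^n\mid w_{q-1}\ge 0,\ w_q=\cdots=w_n=0\}$. Since $C^+(g)\subset L^+(g)$ always gives $L^+(g)^\circ\subset C^+(g)^\circ$, GCQ amounts to the reverse inclusion $C^+(g)^\circ\subset L^+(g)^\circ$. The next step is to analyze the set removed in Proposition~\ref{prop:tangent cone type (4,k) r=0}: the condition $d_q<0,\ \epsilon_qd_q^k>0$ is vacuous exactly when $\epsilon_q=(-1)^{k+1}$ and otherwise describes the open negative $d_q$-axis. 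Thus under condition 1 or condition 2 one has $C^+(g)=\overline{D}(q-2,Q)$, while in the remaining case the negative $d_q$-axis is deleted.

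For the \emph{if} part I would test an arbitrary $w\in \overline{D}(q-2,Q)^\circ$ against explicit members of $\overline{D}(q-2,Q)$. The vectors $-e_{q-1}$ and $\pm e_q$ (all of which satisfy $Q=0$ and $d_{q-1}\le 0$) immediately force $w_{q-1}\ge 0$ and $w_q=0$. It remains to show $w_j=0$ for $j\ge q+1$. Under condition 1, fixing some $\epsilon_{j_0}=-1$ (say $j_0=q+1$), the vectors $\pm e_{q+1}$ lie in $\overline{D}(q-2,Q)$, and for $j\ge q+2$ either $\pm e_j$ lies in it (if $\epsilon_j=-1$) or $e_{q+1}\pm e_j$ does (if $\epsilon_j=1$, since then $Q=-1+1=0$), which kills $w_j$ in every case. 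Under condition 2 all $\epsilon_j=+1$, so the coordinate directions are unavailable; here I would use the two-parameter family $-a\,e_{q-1}+b\,e_q\pm e_j$ with $a,b>0$ and $ab\ge 1$, for which $Q=-ab+1\le 0$, and then let $a\to 0^+$ (with $b=1/a$) in the resulting inequalities $\pm w_j\le a\,w_{q-1}$ to conclude $w_j=0$. This yields $w\in L^+(g)^\circ$, so GCQ holds.

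For the \emph{only if} part I would produce a polar certificate in the remaining case $\epsilon_{q+1}=\cdots=\epsilon_n=1$ and $\epsilon_q=(-1)^k$. The key observation is that any $d\in\overline{D}(q-2,Q)$ with $d_q<0$ must lie on the $d_q$-axis: since $d_{q-1}\le 0$ and $d_q<0$ give $d_{q-1}d_q\ge 0$, and the remaining terms $\sum_{j\ge q+1}d_j^2$ are nonnegative, the inequality $Q(d')\le 0$ forces $d_{q-1}=0$ and $d_j=0$ for $j\ge q+1$. Consequently $C^+(g)=\{d\in\overline{D}(q-2,Q)\mid d_q\ge 0\}$, and then $-e_q\cdot d=-d_q\le 0$ on $C^+(g)$ shows $-e_q\in C^+(g)^\circ$, whereas $-e_q\notin L^+(g)^\circ$; hence GCQ fails.

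The step I expect to be the main obstacle is the \emph{if} part under condition 2: because the quadratic form is positive definite in the variables $x_{q+1},\ldots,x_n$, no coordinate axis $\pm e_j$ belongs to $\overline{D}(q-2,Q)$, and one must instead manufacture admissible directions by borrowing from the hyperbolic pair $(x_{q-1},x_q)$ and then pass to a limit. Getting the scaling $ab\ge 1$ together with the limit $a\to0^+$ right---so that the positive-definite directions contribute nothing to the polar---is the delicate point, closely mirrored by the dual observation in the \emph{only if} part that $\overline{D}(q-2,Q)$ meets $\{d_q<0\}$ only along the axis.
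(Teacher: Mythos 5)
Your proposal is correct and follows essentially the same route as the paper: it reduces GCQ to the polar comparison via the tangent-cone computation of Proposition~\ref{prop:tangent cone type (4,k) r=0}, proves $\overline{D}(q-2,Q)^\circ \subset L^+(g)^\circ$ with the same hyperbolic test family (your $-a\,e_{q-1}+b\,e_q\pm e_j$, $ab\ge 1$, $a\to 0^+$ is the paper's $-\tfrac{1}{s}e_{q-1}+\epsilon_j s\,e_q\pm e_j$, $s\to\infty$), and exhibits $-e_q$ as the separating polar vector in the failure case $\epsilon_q=(-1)^k$, $\epsilon_{q+1}=\cdots=\epsilon_n=1$. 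The only differences are cosmetic: you case-split the ``if'' part where the paper argues uniformly, and your observation that $\overline{D}(q-2,Q)\cap\{d_q<0\}$ is exactly the negative $d_q$-axis streamlines the ``only if'' part slightly.
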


\begin{proof}
It is easy to check that $L^+ \left(g\right)^\circ$ is equal to 
\begin{math}
\left\{ w \in \mathbb{R}^n \middle| w_{q-1} \ge 0, w_q =\cdots = w_n=0\right\}
\end{math}.
In what follows, we will show that 
\begin{math}
\overline{D} \left( q-2, Q \right)^\circ \subset L^+ \left( g \right)^\circ
\end{math}
holds. Take any 
\begin{math}
w \in \overline{D} \left( q-2, Q \right)^\circ
\end{math}. 
Since 
\begin{math}
-e_{q-1}
\end{math}
is contained in 
\begin{math}
\overline{D} \left( q-2, Q \right)
\end{math}, 
\begin{math}
w_{q-1} \ge 0
\end{math}
holds. Since 
\begin{math}
\pm e_q
\end{math}
is contained in 
\begin{math}
\overline{D} \left( q-2, Q \right)
\end{math}, 
\begin{math}
w_q = 0
\end{math}
holds. For 
\begin{math}
j \in \left\{ q+1, \ldots, n \right\}
\end{math}
and 
\begin{math}
s > 0
\end{math}, 
\begin{math}
\displaystyle - \frac{1}{s} e_{q-1} + \epsilon_j s e_q \pm e_j
\end{math}
is contained in 
\begin{math}
\overline{D} \left( q-2, Q \right)
\end{math}, 
\begin{math}
\displaystyle - \frac{1}{s} w_{q-1} \pm w_j \le 0
\end{math}
holds. This 
\begin{math}
s
\end{math}
can be chosen arbitrarily large and thus 
\begin{math}
w_j = 0
\end{math}
holds for 
\begin{math}
j \in \left\{ q+1, \ldots, n \right\}
\end{math}. This proves 
\begin{math}
w \in L^+ \left( g \right)^\circ
\end{math}. 

If one of
\begin{math}
\epsilon_{q+1}, \ldots, \epsilon_n
\end{math}
is 
\begin{math}
-1
\end{math},
\begin{math}
C^+ \left( g \right) = \overline{D} \left( q-2, Q \right)
\end{math}
holds and GCQ hold in that case. 

If 
\begin{math}
\epsilon_{q+1} = \cdots =\epsilon_n = 1
\end{math}
holds,
\begin{equation}
C^+ \left( g \right) = \overline{D}(q-2,Q)\setminus \{(0,\ldots,0,d_{q},0,\ldots, 0)\in \R^n~|~d_q<0,\epsilon_qd_q^k>0\}
\end{equation}
holds. If 
\begin{math}
\epsilon_q = (-1)^{k+1}
\end{math}, 
\begin{math}
C^+ \left( g \right) = \overline{D} \left( q-2, Q \right)
\end{math}
holds and GCQ holds in that case. If 
\begin{math}
\epsilon_q = (-1)^{k}
\end{math}, 
\begin{equation}
C^+ \left( g \right) = \overline{D}(q-2,Q)\setminus \{(0,\ldots,0,d_{q},0,\ldots, 0)\in \R^n~|~d_q<0\}
\end{equation}
holds. 
In what follows, we will show that
\begin{math}
w = -  e_q
\end{math}
is contained in 
\begin{math}
C^+ \left( g \right)^\circ
\end{math}. Take any 
\begin{math}
d \in C^+ \left( g \right)
\end{math}. 
If one of 
\begin{math}
d_{q+1}, \ldots, d_n
\end{math}
is non-zero, 
\begin{equation}
d_{q-1} d_q \le - \sum_{j=q+1}^n d_j^2 < 0
\end{equation}
holds. This implies that 
\begin{math}
d_{q-1} < 0
\end{math}
and thus 
\begin{math}
d_q > 0
\end{math}
holds. Therefore, 
\begin{math}
w \cdot d = - d_q < 0
\end{math}
holds. If 
\begin{math}
d_{q+1} = \cdots = d_n = 0
\end{math}
holds, 
\begin{math}
 d_q \ge 0
\end{math}
and thus 
\begin{math}
w \cdot d = - d_q \le 0
\end{math}. This proves 
\begin{math}
w \in C^+ \left( g \right)^\circ
\end{math}. Since 
\begin{math}
w
\end{math}
is not contained in 
\begin{math}
L^+ \left( g \right)^\circ
\end{math}, GCQ is violated in this case. 
\end{proof}

\subsubsection*{The germ of type $(5)$ in Table~\ref{table:generic constraint r=0}}

Let $g = \left(x_1,\ldots, x_{q-1},-\sum_{j=1}^{q-2}x_j+\epsilon_{q-1} x_{q-1}^2+x_q^3+\sum_{j=q+1}^{n}\epsilon_jx_j^2\right)$, where $\epsilon_j\in \{1,-1\}$, $Q =\epsilon_{q-1}x_{q-1}^2+\sum_{j=q+1}^{n} \epsilon_j x_j^2$, and $R=x_q^3$. 

\begin{proposition}\label{prop:tangent cone type (5) r=0}

The tangent cone $C^+(g)$ is equal to $\overline{D}(q-2,Q)\setminus \{(0,\ldots,0,d_{q},0,\ldots, 0)\in \R^n~|~d_q^3>0\}$ if $\epsilon_{q-1}=\epsilon_{q+2}=\cdots = \epsilon_n=1$, and $C^+(g)=\overline{D}(q-2,Q)$ otherwise. 

\end{proposition}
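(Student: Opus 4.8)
The plan is to read the statement off Corollary~\ref{cor:tangent cone for r=0} and then settle by hand the one family of directions that the corollary leaves undetermined. First I would cast the germ into the form required by the corollary: set $l=q-2$ and $s=2$, so that the ``$P$-variables'' are $x_{q-1},x_q$, take $P(x')=\epsilon_{q-1}x_{q-1}^2$ and $R(x')=x_q^3$, which is indeed homogeneous of degree $3$ in $x_{q-1},x_q$. The corollary then gives two facts for free: the inclusion
\[
\overline{D}(q-2,Q)\setminus\{(0,\ldots,0,d_{q-1},d_q,0,\ldots,0)\mid d_q^3>0\}\subset C^+(g),
\]
and the full equality $C^+(g)=\overline{D}(q-2,Q)$ as soon as $\epsilon_j=-1$ for some $j\geq q+1$. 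This already disposes of part of the ``otherwise'' case and shows that the only directions whose membership in $C^+(g)$ is still in doubt are those of the form $(0,\ldots,0,d_{q-1},d_q,0,\ldots,0)$ with $d_q>0$ that lie in $\overline{D}(q-2,Q)$.

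It remains to treat $\epsilon_{q+1}=\cdots=\epsilon_n=1$ and decide these boundary directions, where the behavior splits according to $\epsilon_{q-1}$. When $\epsilon_{q-1}=-1$ I would show every such direction still lies in $C^+(g)$, so that $C^+(g)=\overline{D}(q-2,Q)$, by producing an admissible perturbation $v$ as in the last bullet of Lemma~\ref{lem:tangent cone for r=0}. For a candidate $d=(0,\ldots,0,d_{q-1},d_q,0,\ldots,0)$ with $d_q>0$ one has $Q(d')=\epsilon_{q-1}d_{q-1}^2$ and $\nabla Q(d')=(2\epsilon_{q-1}d_{q-1},0,\ldots)$; if $d_{q-1}\neq 0$ I take $v'=-\nabla Q(d')$, giving $v'\cdot\nabla Q(d')<0$, and if $d_{q-1}=0$ I take $v=-e_{q-1}$, for which $v'\cdot\nabla Q(d')=0$ while ${}^tv'\mathrm{Hess}(Q)v'=2\epsilon_{q-1}=-2<0$. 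In both cases the cubic contribution $R(d')=d_q^3$ enters only at order $m^{-3}$, dominated by the $m^{-9/4}$ or $m^{-5/2}$ term produced by $v$, so the perturbed point is feasible and $d\in C^+(g)$.

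The decisive case is $\epsilon_{q-1}=1$ together with $\epsilon_{q+1}=\cdots=\epsilon_n=1$, i.e.\ all quadratic signs positive. Here the defining constraint $Q(d')\leq 0$ forces $d_{q-1}=0$ and $d_{q+1}=\cdots=d_n=0$, so $\overline{D}(q-2,Q)$ collapses to the $x_q$-axis and the sole doubtful direction is the positive half-axis $d_qe_q$, $d_q>0$. To exclude it I would extract the sign obstruction directly from feasibility: using $x_1,\ldots,x_{q-1}\leq 0$, every $x\in M(g)$ satisfies
\[
x_q^3\leq \sum_{j=1}^{q-2}x_j-x_{q-1}^2-\sum_{j=q+1}^n x_j^2\leq 0,
\]
hence $x_q\leq 0$; passing to a limit $t_mx^{(m)}\to d$ gives $d_q\leq 0$ for every $d\in C^+(g)$. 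Combined with the corollary's inclusion, which already places the non-positive half-axis in $C^+(g)$, this yields $C^+(g)=\overline{D}(q-2,Q)\setminus\{d_qe_q\mid d_q>0\}$, as claimed.

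The main obstacle is exactly this cubic half-axis: because $x_q$ carries no quadratic term, both $\nabla Q$ and the corresponding Hessian direction degenerate there, so the corollary cannot decide it and one must argue separately in each direction. The delicate point is that the \emph{same} direction is admissible or not depending solely on the sign $\epsilon_{q-1}$ (and on whether all remaining quadratic signs are positive): an indefinite $Q$ lets a transverse perturbation $v$ re-enter the feasible set and absorb the positive cubic term, whereas a positive-definite $Q$ traps the sign of $x_q^3$ and forbids it.
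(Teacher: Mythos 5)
Your proposal is correct and takes essentially the same route as the paper's own proof: you invoke Corollary~\ref{cor:tangent cone for r=0} with $P(x')=\epsilon_{q-1}x_{q-1}^2$ and $s=2$, then resolve the remaining directions $(0,\ldots,0,d_{q-1},d_q,0,\ldots,0)$, $d_q>0$, exactly as the paper does---via the perturbation criteria of Lemma~\ref{lem:tangent cone for r=0} (a negative multiple of $e_{q-1}$, respectively $-e_{q-1}$) when $\epsilon_{q-1}=-1$, and via the feasibility inequality $x_q^3\le \sum_{j=1}^{q-2}x_j-x_{q-1}^2-\sum_{j=q+1}^{n}x_j^2\le 0$ when $\epsilon_{q-1}=1$. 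Note that, like the paper's proof, you read the hypothesis as $\epsilon_{q-1}=\epsilon_{q+1}=\cdots=\epsilon_n=1$; the index $q+2$ in the proposition as printed is evidently a typo, so this is the correct reading.
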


\begin{proof}
By Corollary~\ref{cor:tangent cone for r=0} (with $P(x')=\epsilon_{q-1}x_{q-1}^2$ and $s=2$), the set
\[
\overline{D}(q-2,Q)\setminus \{(0,\ldots,0,d_{q-1},d_{q},0,\ldots, 0)\in \R^n~|~d_q^3>0\}
\]
is contained in $C^+(g)$, and $C^+(g)=\overline{D}(q-2,Q)$ if $\epsilon_j=-1$ for some $j\geq q+1$. 

In what follows, we assume $\epsilon_{q+1}=\cdots =\epsilon_n = +1$. 
Let $d=(0,\ldots,0,d_{q-1},d_q,0,\ldots, 0)\in \overline{D}(q-2,Q)$ with $d_q^3>0\Leftrightarrow d_q>0$. 
Since $d\in \overline{D}(q-2,Q)$, $d_{q-1}\leq 0$ and $Q(d')= \epsilon_{q-1}d_{q-1}^2\leq 0$. 
If $\epsilon_{q-1}=-1$ and $d_{q-1} <0$, the vector $v= d_{q-1}e_{q-1}$ satisfies the conditions in Lemma~\ref{lem:tangent cone for r=0}. 
Indeed, $v_1=\cdots =v_{q-2}=0$, $v_{q-1}=d_{q-1}<0$ (in particular $v$ satisfies the condition (a) in Lemma~\ref{lem:tangent cone for r=0}), and $v'\cdot \nabla Q(d') =-2 d_{q-1}^2 <0$.
If $\epsilon_{q-1}=-1$ and $d_{q-1}=0$, the vector $v=-e_{q-1}$ satisfies the conditions in Lemma~\ref{lem:tangent cone for r=0}.
Indeed, $v_1=\cdots =v_{q-2}=0$, $v_{q-1}=-1<0$, $v'\cdot \nabla Q(d')=0$, and ${}^tv'\mathrm{Hess}(Q)v' =-2 <0$. 
We thus obtain $C^+(g)=\overline{D}(q-2,Q)$ if $\epsilon_{q-1}=-1$. 

If $\epsilon_{q-1}=1$, any $x\in M(g)$ satisfies the following inequality: 
\[
0\geq -\sum_{j=1}^{q-2}x_j+x_{q-1}^2+x_q^3+\sum_{j=q+1}^{n}x_j^2\Leftrightarrow x_q^3\leq \sum_{j=1}^{q-1}x_j-x_{q-1}^2-\sum_{j=q+1}^{n}x_j^2\leq 0. 
\]
Thus, any $d\in C^+(g)$ also satisfies the inequality $d_q^3\leq 0$. 
\end{proof}

\begin{proposition}\label{prop:ACQ type (5) r=0}

ACQ holds for $g$ if and only if $\epsilon_{q-1}=\epsilon_{q+1}=\cdots =\epsilon_n =-1$.

\end{proposition}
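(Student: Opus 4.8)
The plan is to leverage Proposition~\ref{prop:tangent cone type (5) r=0}, which already identifies $C^+(g)$ in both regimes, together with the chain of inclusions $C^+(g)\subset \overline{D}(q-2,Q)\subset L^+(g)$. The first inclusion is Lemma~\ref{lem:tangent cone for r=0}(1). The second holds because $\overline{D}(q-2,Q)$ is obtained from $L^+(g)=\{d\mid d_1=\cdots=d_{q-2}=0,\ d_{q-1}\leq 0\}$ by imposing the single extra inequality $Q(d')\leq 0$, where $Q(d')=\epsilon_{q-1}d_{q-1}^2+\sum_{j=q+1}^n\epsilon_j d_j^2$. Since ACQ is exactly the statement $C^+(g)=L^+(g)$ and these objects are squeezed between the two cones, ACQ is equivalent to the simultaneous equalities $C^+(g)=\overline{D}(q-2,Q)$ and $\overline{D}(q-2,Q)=L^+(g)$.

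First I would analyze when $\overline{D}(q-2,Q)=L^+(g)$, i.e.~when the constraint $Q(d')\leq 0$ is automatic on $L^+(g)$. Here the coordinate $d_{q-1}$ ranges over $(-\infty,0]$ and $d_{q+1},\ldots,d_n$ range freely, while $d_q$ does not appear in $Q$; hence $Q(d')\leq 0$ holds for all such $d$ precisely when $\epsilon_{q-1}=\epsilon_{q+1}=\cdots=\epsilon_n=-1$. If instead $\epsilon_{q-1}=+1$, the vector $-e_{q-1}$ lies in $L^+(g)$ but has $Q=+1>0$, so it lies outside $\overline{D}(q-2,Q)$; similarly, if $\epsilon_j=+1$ for some $j\geq q+1$, the vector $e_j$ lies in $L^+(g)\setminus\overline{D}(q-2,Q)$.

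For the ``if'' direction I would observe that under $\epsilon_{q-1}=\epsilon_{q+1}=\cdots=\epsilon_n=-1$ we are in the ``otherwise'' branch of Proposition~\ref{prop:tangent cone type (5) r=0}, so $C^+(g)=\overline{D}(q-2,Q)$; combined with the equality $\overline{D}(q-2,Q)=L^+(g)$ established above, this gives $C^+(g)=L^+(g)$, i.e.~ACQ holds. For the ``only if'' direction I would argue by contraposition: if the sign condition fails, then by the second paragraph $\overline{D}(q-2,Q)\subsetneq L^+(g)$, whence $C^+(g)\subset\overline{D}(q-2,Q)\subsetneq L^+(g)$ and ACQ fails.

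I do not anticipate a serious obstacle, as Proposition~\ref{prop:tangent cone type (5) r=0} carries the analytic weight. The only point meriting care is that the failure of ACQ follows from the strict inclusion $\overline{D}(q-2,Q)\subsetneq L^+(g)$ alone—this is immediate from the squeeze $C^+(g)\subset\overline{D}(q-2,Q)$ and does not require pinning down $C^+(g)$ exactly in the degenerate branch. I would record the explicit separating vectors $-e_{q-1}$ and $e_j$ so that the argument is self-contained.
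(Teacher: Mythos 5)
Your proof is correct and takes essentially the same route as the paper's: both deduce the ``if'' direction from Proposition~\ref{prop:tangent cone type (5) r=0} (which, under the sign condition, gives $C^+(g)=\overline{D}(q-2,Q)=L^+(g)$) and prove the ``only if'' direction by contraposition, exhibiting a coordinate vector in $L^+(g)\setminus\overline{D}(q-2,Q)$ so that $C^+(g)\subset \overline{D}(q-2,Q)\subsetneq L^+(g)$. The only cosmetic differences are your explicit framing via the squeeze $C^+(g)\subset\overline{D}(q-2,Q)\subset L^+(g)$ and your choice of $e_j$ instead of the paper's $-e_j$ for $j\geq q+1$, which is immaterial since $Q$ is even in each variable.
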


\begin{proof}
Suppose that the condition in the proposition does not hold, say $\epsilon_j=1$ for $j=q-1$ or $j\geq q+1$. 
It is easy to check that $-e_j$ is contained in $L^+(g)$ but not in $\overline{D}(q-2,Q)$, in particular $C^+(g)\subset \overline{D}(q-2,Q)\subsetneq L^+(g)$.  
Suppose conversely that the condition in the proposition hold. 
By Proposition \ref{prop:tangent cone type (5) r=0}, the tangent cone $C^+(g)$ is equal to $\overline{D}(q-2,Q)$, which is equal to $\{d\in \R^n~|~d_1=\cdots =d_{q-2}=0,d_{q-1}\leq 0\}=L^+(g)$ by the assumption. 
\end{proof}

\begin{proposition}

GCQ holds for $g$ if and only if one of 
\begin{math}
\epsilon_{q+1}, \ldots, \epsilon_n
\end{math}
is 
\begin{math}
-1
\end{math}.

\end{proposition}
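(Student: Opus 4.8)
The plan is to compare the polar cones $C^+(g)^\circ$ and $L^+(g)^\circ$, feeding the description of $C^+(g)$ from Proposition~\ref{prop:tangent cone type (5) r=0} into the computation. First I would record the linearized data: the differential of the last component $\tilde g$ at the origin is $-\sum_{j=1}^{q-2} dx_j$, so the active gradients $e_1,\ldots, e_{q-1}$ together with this differential force $L^+(g) = \{d \mid d_1 = \cdots = d_{q-2} = 0,\ d_{q-1} \leq 0\}$, whence a direct check gives $L^+(g)^\circ = \{w \mid w_{q-1} \geq 0,\ w_q = \cdots = w_n = 0\}$, with $w_1,\ldots, w_{q-2}$ unconstrained. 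Since $C^+(g)\subset L^+(g)$ always holds, the reverse polar inclusion $L^+(g)^\circ\subset C^+(g)^\circ$ is automatic, so throughout it suffices to decide whether $C^+(g)^\circ$ is contained in $L^+(g)^\circ$ or strictly larger.

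For the ``if'' direction, assume $\epsilon_{j_0} = -1$ for some $j_0 \geq q+1$. Then Proposition~\ref{prop:tangent cone type (5) r=0} gives $C^+(g) = \overline{D}(q-2,Q)$ with $Q = \epsilon_{q-1}x_{q-1}^2 + \sum_{j=q+1}^n \epsilon_j x_j^2$, and I would show $\overline{D}(q-2,Q)^\circ \subset L^+(g)^\circ$ by testing an arbitrary $w \in \overline{D}(q-2,Q)^\circ$ against explicit cone elements. The vectors $\pm e_q$ force $w_q = 0$; the vectors $\pm e_{j_0}$ and then $\pm e_{j_0} \pm e_j$ (all in $\overline{D}(q-2,Q)$ since their $Q$-values are $\leq 0$) force $w_{q+1} = \cdots = w_n = 0$; and finally either $-e_{q-1}$ (when $\epsilon_{q-1} = -1$) or $-e_{q-1} + e_{j_0}$ (when $\epsilon_{q-1} = 1$) lies in $\overline{D}(q-2,Q)$ and forces $w_{q-1} \geq 0$. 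This yields $w \in L^+(g)^\circ$, hence GCQ.

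For the ``only if'' direction I would prove the contrapositive, assuming $\epsilon_{q+1} = \cdots = \epsilon_n = 1$ and exhibiting a vector in $C^+(g)^\circ \setminus L^+(g)^\circ$ in each case. If also $\epsilon_{q-1} = 1$, then $\overline{D}(q-2,Q)$ collapses to the $x_q$-axis and Proposition~\ref{prop:tangent cone type (5) r=0} gives $C^+(g) = \{(0,\ldots,0,d_q,0,\ldots,0) \mid d_q \leq 0\}$; here $e_q$ pairs nonpositively with every element of $C^+(g)$, so $e_q \in C^+(g)^\circ \setminus L^+(g)^\circ$. If instead $\epsilon_{q-1} = -1$, then $C^+(g) = \overline{D}(q-2,Q) = \{d \mid d_1 = \cdots = d_{q-2} = 0,\ d_{q-1} \leq 0,\ \sum_{j=q+1}^n d_j^2 \leq d_{q-1}^2\}$, and I would test $w = e_{q-1} + e_{q+1}$: every $d\in C^+(g)$ satisfies $|d_{q+1}| \leq -d_{q-1}$, so $w \cdot d = d_{q-1} + d_{q+1} \leq 0$, giving $w \in C^+(g)^\circ$, whereas $w_{q+1} = 1$ keeps $w$ out of $L^+(g)^\circ$.

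The main obstacle is the last sub-case. When $\epsilon_{q-1} = -1$ the tangent cone degenerates neither to a ray nor to a coordinate subspace, as it did in the positive-definite situations handled for the other types, but is a full second-order (ice-cream) cone whose polar strictly contains $L^+(g)^\circ$. The delicate point to make precise is that GCQ can fail even when $C^+(g)$ equals the \emph{entire} set $\overline{D}(q-2,Q)$ --- a phenomenon produced solely by the negative coefficient of $x_{q-1}^2$ in $Q$ --- and the explicit witness $e_{q-1} + e_{q+1}$ is what exposes this without computing the full dual cone.
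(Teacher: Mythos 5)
Your proof is correct. The ``if'' half follows essentially the same route as the paper: test an arbitrary $w\in\overline{D}(q-2,Q)^\circ$ against $\pm e_q$, against $\pm e_{j_0}$ and $\pm e_{j_0}\pm e_j$ for an index $j_0\geq q+1$ with $\epsilon_{j_0}=-1$, and against a vector of the form $-e_{q-1}+\cdots$ to force $w_{q-1}\geq 0$ (the paper uses $-e_{q-1}\pm e_{q+1}$, whose $Q$-value is $\epsilon_{q-1}-1\leq 0$ for either sign of $\epsilon_{q-1}$, so your case split there is avoidable but harmless).

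The ``only if'' half is where you genuinely diverge, and your version is the sound one. The paper reuses the type~$(3,k)$ argument verbatim: it writes
\[
\overline{D}(q-2,Q)=\left\{d\in\R^n~\middle|~d_1=\cdots=d_{q-2}=0,\ d_{q-1}\le 0,\ \epsilon_{q-1}d_{q-1}^2+\sum_{j=q}^{n}d_j^2\le 0\right\},
\]
deduces $|d_q|\le|d_{q-1}|$, and exhibits the single witness $w=e_{q-1}+e_q$. But for type $(5)$ the quadratic part is $Q=\epsilon_{q-1}x_{q-1}^2+\sum_{j=q+1}^{n}\epsilon_jx_j^2$ --- the $x_q$-direction carries the cubic term $x_q^3$, not a square --- so the sum above should start at $j=q+1$ and $d_q$ is unconstrained in $\overline{D}(q-2,Q)$. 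Consequently, when $\epsilon_{q-1}=-1$ one has $C^+(g)=\overline{D}(q-2,Q)\ni e_q$ (this is easily verified directly: take $x^{(m)}=-m^{-6/5}e_{q-1}+m^{-1}e_q\in M(g)$ and $t_m=m$), hence $(e_{q-1}+e_q)\cdot e_q=1>0$ and the paper's witness does \emph{not} lie in $C^+(g)^\circ$; the paper's only-if argument breaks in this sub-case, although the statement itself remains true. Your case split --- witness $e_q$ when $\epsilon_{q-1}=1$, where $C^+(g)$ collapses to the ray $\{d_q\le 0\}$, and witness $e_{q-1}+e_{q+1}$ when $\epsilon_{q-1}=-1$, where the Lorentz-type inequality $|d_{q+1}|\le -d_{q-1}$ holds on all of $C^+(g)$ --- is exactly what is needed, and in effect repairs the paper's proof.
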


\begin{proof}
It is easy to check that $L^+ \left(g\right)^\circ$ is equal to $\left\{ w \in \mathbb{R}^n \middle| w_{q-1} \ge 0, w_q =\cdots = w_n=0\right\}$.

\noindent
\textbf{Proof of ``if'' part:}
Without loss of generality, we can assume 
\begin{math}
\epsilon_{q+1} = -1
\end{math}. 
By Proposition~\ref{prop:tangent cone type (5) r=0}, $C^+(g)$ is equal to $\overline{D}(q-2,Q)$.
Take any 
\begin{math}
w \in C^+ \left( g \right)^\circ
\end{math}. Since 
\begin{math}
\pm e_q
\end{math}
is contained in 
\begin{math}
\overline{D} \left( q-2, Q \right)
\end{math}, 
\begin{math}
w_q = 0
\end{math}
holds. Since 
\begin{math}
\pm e_{q+1}
\end{math}
is contained in 
\begin{math}
\overline{D} \left( q-2, Q \right)
\end{math}, 
\begin{math}
w_{q+1} = 0
\end{math}
holds. Since
\begin{math}
\pm e_{q+1} \pm e_j
\end{math}
is contained in 
\begin{math}
\overline{D} \left( q-2, Q \right)
\end{math}
for
\begin{math}
j \in \left\{ q+2, \ldots, n \right\}
\end{math}, 
\begin{math}
w_j = 0
\end{math}
holds. 
Since 
\begin{math}
- e_{q-1} \pm e_{q+1}
\end{math}
is contained in 
\begin{math}
\overline{D} \left( q-2, Q \right)
\end{math}, 
\begin{math}
w_{q-1} \ge 0
\end{math}
holds. This proves 
\begin{math}
w \in L^+ \left( g \right)^\circ
\end{math}. 

\noindent
\textbf{Proof of ``only if'' part:}
We show the contraposition of the statement, that is, if all of 
\begin{math}
\epsilon_{q+1}, \ldots, \epsilon_n
\end{math}
are 
\begin{math}
1
\end{math}, GCQ is violated. Under the assumption,
\[
\overline{D}(q-2,Q) = \left\{d\in \R^n~\middle|~ d_1=\cdots =d_{q-2}=0, d_{q-1} \le 0, \epsilon_{q-1} d_{q-1}^2 + \sum_{j=q}^{n}d_j^2\le0\right\}
\]
holds. 
Take any 
\begin{math}
d \in \overline{D}(q-2,Q)
\end{math}. Then, 
\begin{math}
d_q^2 \le \sum_{j=q}^{n}d_j^2 \le \left| d_{q-1}^2 \right|
\end{math}, in particular
\begin{math}
\left| d_q \right| \le \left| d_{q-1} \right|
\end{math},
and thus
\begin{math}
(e_{q-1}+e_q) \cdot d = d_{q-1} + d_q \le 0
\end{math}
hold. 
Thus, 
\begin{math}
w = e_{q-1} + e_q
\end{math}
is contained in 
\begin{math}
\overline{D}(q-2,Q)^\circ
\end{math}. 
Since 
\begin{math}
w
\end{math}
is not contained in 
\begin{math}
L^+ \left( g \right)^\circ
\end{math}, GCQ is violated.
\end{proof}

\subsubsection*{The germ of type $(6)$ in Table~\ref{table:generic constraint r=0}}

Let $g = \left(x_1,\ldots, x_{q-1},-\sum_{j=1}^{q-3}x_j+\sum_{j=1}^{2}\delta_jx_{q-j}^2+\alpha x_{q-2}x_{q-1}+\sum_{j=q}^{n}\epsilon_jx_j^2\right)$, where $\epsilon_j,\delta_j\in \{1,-1\}$ and $\alpha \in \R$ satisfying $4\delta_1\delta_2-\alpha^2\neq 0$. 

\begin{proposition}\label{prop:tangent cone type (6) r=0}

The tangent cone $C^+(g)$ is equal to $\overline{D}(q-3,Q)$. 

\end{proposition}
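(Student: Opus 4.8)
The plan is to apply Lemma~\ref{lem:tangent cone for r=0} with $l = q-3$, exactly as in the $k=2$ cases treated above. The crucial structural observation is that for type $(6)$ the final component decomposes as
\[
\widetilde{g}(x) = -\sum_{j=1}^{q-3}x_j + Q(x'), \qquad Q(x') = \delta_1 x_{q-1}^2 + \delta_2 x_{q-2}^2 + \alpha x_{q-2}x_{q-1} + \sum_{j=q}^n \epsilon_j x_j^2,
\]
and contains no term of degree $\geq 3$. Hence, in the notation of Lemma~\ref{lem:tangent cone for r=0}, the higher-order remainder $R$ is identically $0$.

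The inclusion $C^+(g) \subseteq \overline{D}(q-3,Q)$ is then immediate from part (1) of Lemma~\ref{lem:tangent cone for r=0}. For the reverse inclusion I would invoke part (2) of the same lemma. Since $R \equiv 0$, every $d$ trivially satisfies the condition ``$R_r(d')=0$ for all $r \geq 3$'', so each $d \in \overline{D}(q-3,Q)$ lies in $C^+(g)$. Concretely, this is the observation that the ray $m^{-1}d$ already lies in $M(g)$ for all $m$: the first $q-1$ coordinates give $g_j(m^{-1}d) = m^{-1}d_j \leq 0$ (using $d_1 = \cdots = d_{q-3}=0$ together with $d_{q-2}, d_{q-1} \leq 0$), while $\widetilde{g}(m^{-1}d) = m^{-2}Q(d') \leq 0$ because $Q(d') \leq 0$. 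Taking $t_m = m$ then shows $d = \lim_{m\to\infty} t_m (m^{-1}d) \in C^+(g)$.

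I expect no genuine obstacle here: the argument is identical in spirit to Propositions~\ref{prop:tangent cone type (1,k) k=2 r=0} and \ref{prop:tangent cone type (3,k) k=2 r=0}, and reduces to a single citation of Lemma~\ref{lem:tangent cone for r=0}. The one point worth flagging is that the nondegeneracy hypothesis $4\delta_1\delta_2 - \alpha^2 \neq 0$, which is essential for the genericity classification of Theorem~\ref{thm:5.1}, plays no role whatsoever in this tangent-cone computation; the identity $C^+(g) = \overline{D}(q-3,Q)$ holds regardless of the signature of the quadratic form $Q$, precisely because the absence of a cubic remainder makes the feasible set radially starlike along every admissible direction.
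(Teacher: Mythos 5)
Your proposal is correct and takes essentially the same route as the paper: the paper's proof is the one-line observation that $R(x')=0$ for type $(6)$, so both inclusions follow immediately from Lemma~\ref{lem:tangent cone for r=0}. Your explicit ray argument and the remark that the nondegeneracy condition $4\delta_1\delta_2-\alpha^2\neq 0$ is irrelevant here are accurate elaborations of that same idea.
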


\begin{proof}
We can put $R(x')=0$ in this case, and thus the proposition holds by Lemma~\ref{lem:tangent cone for r=0}. 
\end{proof}

\begin{proposition}\label{prop:ACQ type (6) r=0}

ACQ holds for $g$ if and only if $\delta_1=\delta_2=-1$, $\alpha <2$, and $\epsilon_{q}=\cdots =\epsilon_n =-1$.

\end{proposition}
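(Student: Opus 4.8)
The plan is to turn the equality $C^+(g)=L^+(g)$ into an inequality about the quadratic form $Q$ on a quadrant, and then to read off the signs explicitly. By Proposition~\ref{prop:tangent cone type (6) r=0} the tangent cone is $C^+(g)=\overline{D}(q-3,Q)$, and since $C^+(g)\subset L^+(g)$ always holds while $\overline{D}(q-3,Q)$ differs from $L^+(g)=\{d\in\R^n\mid d_1=\cdots=d_{q-3}=0,\ d_{q-2}\le 0,\ d_{q-1}\le 0\}$ only by the additional requirement $Q(d')\le 0$, ACQ holds if and only if $L^+(g)\subset\overline{D}(q-3,Q)$. Thus the whole statement reduces to deciding when $Q(d')\le 0$ holds for every $d$ with $d_1=\cdots=d_{q-3}=0$, $d_{q-2}\le 0$, $d_{q-1}\le 0$, and $d_q,\dots,d_n$ unconstrained.

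Next I would decouple $Q$. Writing $P(a,b)=\delta_2 a^2+\delta_1 b^2+\alpha ab$, one has $Q(d')=P(d_{q-2},d_{q-1})+\sum_{j=q}^n\epsilon_j d_j^2$. Because $d_q,\dots,d_n$ range over all of $\R$, the value $Q(d')$ is unbounded above as soon as some $\epsilon_{j_0}=1$ (witnessed by $d=e_{j_0}\in L^+(g)$, for which $Q(d')=1>0$); hence $\epsilon_q=\cdots=\epsilon_n=-1$ is necessary, and under it $\sum_{j=q}^n\epsilon_j d_j^2\le 0$, so the condition collapses to $P(a,b)\le 0$ for all $a,b\le 0$. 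Since $P$ is even in $(a,b)$, this is equivalent to $P(a,b)\le 0$ for all $a,b\ge 0$.

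I would then analyze $P$ on the positive quadrant. Testing the coordinate directions gives $P(1,0)=\delta_2$ and $P(0,1)=\delta_1$, which forces $\delta_1=\delta_2=-1$; each failure is witnessed by $\pm e_{q-2}$ or $\pm e_{q-1}\in L^+(g)$. With $\delta_1=\delta_2=-1$ the requirement becomes $\alpha ab\le a^2+b^2$ for all $a,b\ge 0$. By the inequality $a^2+b^2\ge 2ab$ the case $\alpha\le 2$ succeeds, while for $\alpha>2$ the vector $-e_{q-2}-e_{q-1}\in L^+(g)$ gives $P=\alpha-2>0$, so ACQ fails; combining these with the two sign conditions yields exactly $\delta_1=\delta_2=-1$, $\alpha<2$, $\epsilon_q=\cdots=\epsilon_n=-1$.

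The only delicate point, and where I would be most careful, is the boundary value $\alpha=2$ and its interaction with the standing nondegeneracy condition $(\ast)$. At $\alpha=2$ one has $P(a,b)=-(a-b)^2\le 0$, so the naive analysis would give $\alpha\le 2$; however $(\ast)$ reads $4\delta_1\delta_2-\alpha^2=4-\alpha^2\neq 0$ here, which removes $\alpha=2$ from the parameter domain and thereby justifies the strict cutoff $\alpha<2$. I would also remark that the other excluded value $\alpha=-2$ already lies in the region $\alpha<2$ and is simply absent from the domain, so the stated condition is consistent; everything else is a routine verification by the explicit witness vectors listed above.
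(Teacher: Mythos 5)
Your proof is correct and takes essentially the same route as the paper's: both use Proposition~\ref{prop:tangent cone type (6) r=0} together with the inclusion $C^+(g)\subset L^+(g)$ to reduce ACQ to the condition that $Q\le 0$ on $L^+(g)$, establish necessity with the same witness vectors, and prove sufficiency by the same quadratic estimate (your AM--GM step $a^2+b^2\ge 2ab$ is the paper's completing of the square $Q(d)=-(d_{q-1}-d_{q-2})^2+(\alpha-2)d_{q-1}d_{q-2}-\sum_{j\ge q}d_j^2$). Two minor remarks: your explicit appeal to $(\ast)$ to exclude $\alpha=2$ makes precise a point the paper leaves implicit, and your witness $-e_{q-2}-e_{q-1}$ for the case $\alpha>2$ is the correct one (the paper's ``$-e_{q-1}-e_q$'' is evidently a typo, since that vector satisfies $Q\le 0$), though the single-coordinate witnesses should be written $-e_{q-2}$, $-e_{q-1}$ rather than $\pm e_{q-2}$, $\pm e_{q-1}$, as only the negative ones lie in $L^+(g)$.
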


\begin{proof}
If $\delta_j=1$ for $j=1,2$, $-e_{q-j}$ is contained in $L^+(g)$ but not in $\overline{D}(q-3,Q)$.
If $\delta_1=\delta_2=-1$ and $\alpha>2$, $-e_{q-1}-e_q$ is contained in $L^+(g)$ but not in $\overline{D}(q-3,Q)$.
If $\epsilon_j=1$ for $j\geq q$, $-e_j$ is contained in $L^+(g)$ but not in $\overline{D}(q-3,Q)$. 
In each case, $C^+(g)=\overline{D}(q-3,Q)$ is a proper subset of $L^+(g)$.

Suppose that the condition in the proposition hold. 
%By Proposition \ref{prop:tangent cone type (6) r=0}, the tangent cone $C^+(g)$ is equal to $\overline{D}(q-3,Q)$. 
For $d\in L^+(g) = \{d\in \R^n~|~d_1=\cdots =d_{q-3}=0,d_{q-2},d_{q-1}\leq 0\}$, the value $Q(d)$ is estimated as follows:
\[
Q(d) = -(d_{q-1}-d_{q-2})^2 +(\alpha-2)d_{q-1}d_{q-2} -\sum_{j=q}^{n}d_j^2 \leq 0. 
\]
Thus, $d$ is contained in $\overline{D}(q-3,Q)=C^+(g)$. 
\end{proof}

\begin{proposition}\label{prop:GCQ type (6) r=0}

GCQ holds for $g$ if and only if one of 
\begin{math}
\epsilon_q, \ldots, \epsilon_n
\end{math}
is 
\begin{math}
-1
\end{math}.

\end{proposition}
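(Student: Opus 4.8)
The plan is to reduce the statement to a comparison of the polars $C^+(g)^\circ$ and $L^+(g)^\circ$, using that $C^+(g)=\overline{D}(q-3,Q)$ by Proposition~\ref{prop:tangent cone type (6) r=0}. First I would record the two relevant cones explicitly. The linearized cone here ($l=q-3$) is $L^+(g)=\{d\mid d_1=\cdots=d_{q-3}=0,\ d_{q-2}\le 0,\ d_{q-1}\le 0\}$, and a direct computation of its polar gives
\[
L^+(g)^\circ=\{w\mid w_{q-2}\ge 0,\ w_{q-1}\ge 0,\ w_q=\cdots=w_n=0\}.
\]
Since $C^+(g)\subset L^+(g)$ always holds, the inclusion $L^+(g)^\circ\subset C^+(g)^\circ$ is automatic, so GCQ is equivalent to the reverse inclusion $C^+(g)^\circ\subset L^+(g)^\circ$, and the proof splits into the two implications of the proposition.

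For the \textbf{if} part I would assume, after relabelling the indices $q,\ldots,n$ (a symmetry of $\sum_{j\ge q}\epsilon_jx_j^2$), that $\epsilon_q=-1$, take an arbitrary $w\in C^+(g)^\circ=\overline{D}(q-3,Q)^\circ$, and pin down its coordinates by feeding well-chosen probe vectors of $\overline{D}(q-3,Q)$ into $w\cdot d\le 0$. Concretely, $\pm e_q\in\overline{D}(q-3,Q)$ (since $Q(\pm e_q)=\epsilon_q=-1\le 0$) forces $w_q=0$; next $\pm e_q\pm e_j\in\overline{D}(q-3,Q)$ for $j>q$ (their $Q$-value is $\epsilon_q+\epsilon_j\le 0$) forces $w_j=0$; finally $-e_{q-2}+e_q$ and $-e_{q-1}+e_q$ lie in $\overline{D}(q-3,Q)$ (with $Q$-values $\delta_2-1$ and $\delta_1-1$, both $\le 0$), and together with $w_q=0$ they give $w_{q-2}\ge 0$ and $w_{q-1}\ge 0$. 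Hence $w\in L^+(g)^\circ$, so GCQ holds.

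For the \textbf{only if} part I would argue by contraposition, assuming $\epsilon_q=\cdots=\epsilon_n=1$ and exhibiting a single witness $w\in C^+(g)^\circ$ with $w\notin L^+(g)^\circ$. Writing $F(u,v)=\delta_2u^2+\delta_1v^2+\alpha uv$ for the quadratic form in $u=d_{q-2},\,v=d_{q-1}$, the defining inequality $Q(d')\le 0$ becomes $\sum_{j\ge q}d_j^2\le -F(d_{q-2},d_{q-1})$, so every $d\in\overline{D}(q-3,Q)$ satisfies $|d_q|\le\sqrt{-F(d_{q-2},d_{q-1})}$. The key ingredient is the elementary uniform estimate $-F(u,v)\le(1+|\alpha|/2)(|u|+|v|)^2$, giving $|d_q|\le C(|d_{q-2}|+|d_{q-1}|)=-C(d_{q-2}+d_{q-1})$ with $C=\sqrt{1+|\alpha|/2}$ (using $d_{q-2},d_{q-1}\le 0$). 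Choosing $M>C$ and $w=M(e_{q-2}+e_{q-1})+e_q$, one obtains
\[
w\cdot d=M(d_{q-2}+d_{q-1})+d_q\le(M-C)(d_{q-2}+d_{q-1})\le 0,
\]
so $w\in C^+(g)^\circ$; since $w_q=1\ne 0$, $w\notin L^+(g)^\circ$ and GCQ fails.

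I expect the \textbf{only if} direction to be the main obstacle. Unlike the ACQ analysis of Proposition~\ref{prop:ACQ type (6) r=0}, where the signs $\delta_1,\delta_2$ and the magnitude of $\alpha$ are decisive, here the single witness $w$ must be shown to work simultaneously for \emph{every} admissible triple $(\delta_1,\delta_2,\alpha)$ (including the degenerate configurations allowed once one drops $4\delta_1\delta_2-\alpha^2\ne 0$). The crux is therefore the uniform, coercivity-type bound of $\sqrt{-F}$ by a multiple of $|u|+|v|$, which is exactly what lets the large-$M$ choice of $w$ absorb the $e_q$-contribution regardless of the parameters; I would also note that the extreme case $\overline{D}(q-3,Q)=\{0\}$ (when $F>0$ off the origin on the quadrant) is covered by the same witness, since then $C^+(g)^\circ=\R^n$ and the failure of GCQ is immediate.
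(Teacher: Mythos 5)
Your proof is correct and follows essentially the same route as the paper's: the ``if'' direction uses the same probe vectors ($\pm e_q$, $\pm e_q\pm e_j$, and $-e_{q-2}+e_q$, $-e_{q-1}+e_q$) to pin down $w\in C^+(g)^\circ=\overline{D}(q-3,Q)^\circ$, and the ``only if'' direction exhibits the same witness $M(e_{q-2}+e_{q-1})+e_q\in C^+(g)^\circ\setminus L^+(g)^\circ$. The only divergence is how the coercivity constant is obtained: the paper orthogonally diagonalizes the $2\times 2$ form and takes $M=\sqrt{\max\{|\lambda_1|,|\lambda_2|\}}$, whereas you use the elementary bound $-F(u,v)\le (1+|\alpha|/2)(|u|+|v|)^2$; your version avoids the spectral decomposition and, as you note, is uniform in $(\delta_1,\delta_2,\alpha)$ without invoking the non-degeneracy condition $4\delta_1\delta_2-\alpha^2\neq 0$, but this is a cosmetic variation within the same argument rather than a different proof.
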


\begin{proof}
It is easy to check that $L^+ \left(g\right)^\circ$ is equal to 
\begin{equation}
\left\{ w \in \mathbb{R}^n \middle| w_{q-2} \ge 0, w_{q-1} \ge 0, w_q =\cdots = w_n=0\right\}.
\end{equation}

\noindent
\textbf{Proof of ``if'' part:} Without loss of generality, we can assume 
\begin{math}
\epsilon_q = -1
\end{math}. Take any 
\begin{math}
w \in C^+ \left( g \right)^\circ
\end{math}. Since 
\begin{math}
\pm e_q \in C^+ \left( g \right)
\end{math}, 
\begin{math}
w_q = 0
\end{math}
holds. Since 
\begin{math}
\pm e_q \pm e_j \in C^+ \left( g \right)
\end{math}
holds for 
\begin{math}
j \in \left\{ q+1, \ldots, n \right\}
\end{math}, 
\begin{math}
w_j = 0
\end{math}
holds. Since 
\begin{math}
- e_j \pm e_q \in C^+ \left( g \right)
\end{math}
holds for 
\begin{math}
j \in \left\{ q-2, q-1 \right\}
\end{math}, 
\begin{math}
w_j \ge 0
\end{math}
holds for 
\begin{math}
j \in \left\{ q-2, q-1 \right\}
\end{math}. This proves 
\begin{math}
w \in L^+ \left( g \right)^\circ
\end{math}. 

\noindent
\textbf{Proof of ``only if'' part:} 
We show the contraposition of the statement, that is, if all of 
\begin{math}
\epsilon_q, \ldots, \epsilon_n
\end{math}
are 
\begin{math}
1
\end{math}, GCQ is violated. 
We take $P\in O(2)$ so that the following equality holds: 
\[
\delta_1x_1^2 + \delta_2x_2^2 +\alpha x_1x_2 = (x_1,x_2){}^tP\begin{pmatrix}
\lambda_1&0\\
0&\lambda_2
\end{pmatrix}P\begin{pmatrix}
x_1\\x_2
\end{pmatrix},
\]
where $\lambda_1,\lambda_2\neq 0$ is the eigenvalues of $\begin{pmatrix}
\delta_1&\alpha/2\\
\alpha/2&\delta_2
\end{pmatrix}$.
The following then holds for $d\in C^+(g)$: 
{\allowdisplaybreaks
\begin{align*}
& \delta_1d_{q-1}^2 + \delta_2d_{q-2}^2 +\alpha d_{q-1}d_{q-2} +\sum_{j=q}^{n} d_j^2 \leq 0\\
\Rightarrow &  \sum_{j=q}^{n} d_j^2 \leq \left|(d_{q-1},d_{q-2}){}^tP\begin{pmatrix}
\lambda_1&0\\
0&\lambda_2
\end{pmatrix}P\begin{pmatrix}
d_{q-1}\\d_{q-2}
\end{pmatrix}\right|\\
\Rightarrow&  \sum_{j=q}^{n} d_j^2 \leq \max\{|\lambda_1|,|\lambda_2|\} \left\| \left(d_{q-1},d_{q-2}\right)\right\|^2.
\end{align*}
}%
Let $R= \sqrt{\max\{|\lambda_1|,|\lambda_2|\}}$. 
For any $d\in C^+(g)$, the inner product $(R(e_{q-1}+e_{q-2})+e_q)\cdot d$ is estimated as follows:
{\allowdisplaybreaks
\begin{align*}
& (R(e_{q-1}+e_{q-2})+e_q)\cdot d\\
=& R(d_{q-1}+d_{q-2}) + d_q \\
\leq & -R(|d_{q-1}|+|d_{q-2}|) +|d_q| & (\because d_{q-1},d_{q-2}\leq 0)\\
\leq & -R\left\|\left(d_{q-1},d_{q-2}\right)\right\| + |d_q| & \left(\because \left\|\left(d_{q-1},d_{q-2}\right)\right\| = \sqrt{d_{q-1}^2+d_{q-2}^2} \leq |d_{q-1}|+|d_{q-2}|\right)\\
\leq & -R \sqrt{\dfrac{\sum_{j=q}^{n} d_j^2}{\max\{|\lambda_1|,|\lambda_2|\}}}+|d_q| \\
\leq & \left(-\frac{R}{\sqrt{\max\{|\lambda_1|,|\lambda_2|\}}}+1\right)|d_q|= 0.
\end{align*}
}%
Thus, $R(e_{q-1}+e_{q-2})+e_q$ is contained in $C^+(g)^\circ$. 
However, it is not in $L^+(g)^\circ$, and thus GCQ is violated. 
\end{proof}

\subsubsection*{The germ of type $(7)$ in Table~\ref{table:generic constraint r=0}}

Let $g = \left(x_1,\ldots, x_{q-1},-\sum_{j=1}^{q-3}x_j+\epsilon_{q-2}(x_{q-2}+\epsilon_{q-1}'x_{q-1})^2 +\epsilon_{q-1}x_{q-1}^3+\sum_{j=q}^{n}\epsilon_jx_j^2\right)$, where $\epsilon_j,\epsilon_{q-1}'\in \{1,-1\}$, $Q =\epsilon_{q-2}(x_{q-2}+\epsilon_{q-1}'x_{q-1})^2+\sum_{j=q}^{n} \epsilon_j x_j^2$, and $R=\epsilon_{q-1}x_{q-1}^3$. 

\begin{proposition}\label{prop:tangent cone type (7) r=0}

The tangent cone $C^+(g)$ is equal to $\overline{D}(q-3,Q)\setminus \{(0,\ldots,0,d_{q-1},0,\ldots, 0)\in \R^n~|~d_{q-1}< 0\}$ if $\epsilon_{q-1}=-1$ and $\epsilon_{q-2}=\epsilon_{q}=\cdots = \epsilon_n=1$, and $C^+(g)=\overline{D}(q-3,Q)$ otherwise. 

\end{proposition}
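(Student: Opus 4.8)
The plan is to read off $C^+(g)$ from Corollary~\ref{cor:tangent cone for r=0}, applied with $l=q-3$, $s=2$, $P(x')=\epsilon_{q-2}(x_{q-2}+\epsilon_{q-1}'x_{q-1})^2$, and $R(x')=\epsilon_{q-1}x_{q-1}^3$. Indeed $P$ is a quadratic in the two variables $x_{q-2},x_{q-1}$, and $R$ is homogeneous of degree $3$ in the same variables, so the hypotheses are met. The corollary then yields the baseline inclusion $\overline{D}(q-3,Q)\setminus\{(0,\ldots,0,d_{q-2},d_{q-1},0,\ldots,0)\mid R(d')>0\}\subset C^+(g)$ together with the identity $C^+(g)=\overline{D}(q-3,Q)$ whenever $\epsilon_j=-1$ for some $j\geq q$. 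The latter clause already settles the ``otherwise'' conclusion in the presence of a negative $\epsilon_j$ with $j\geq q$, so the remaining work is confined to the case $\epsilon_q=\cdots=\epsilon_n=1$, where I must decide the fate of the exceptional set $\{R(d')>0\}\cap\overline{D}(q-3,Q)$.

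First I would observe that, since every $d\in\overline{D}(q-3,Q)$ satisfies $d_{q-1}\leq 0$, the sign of $R(d')=\epsilon_{q-1}d_{q-1}^3$ is controlled by $\epsilon_{q-1}$: if $\epsilon_{q-1}=+1$ the exceptional set is empty and $C^+(g)=\overline{D}(q-3,Q)$ at once. If $\epsilon_{q-1}=-1$, the exceptional points are those with $d_{q-1}<0$, all coordinates outside $\{q-2,q-1\}$ vanishing, and $Q(d')=\epsilon_{q-2}(d_{q-2}+\epsilon_{q-1}'d_{q-1})^2\leq 0$; the behaviour then bifurcates on $\epsilon_{q-2}$, i.e. on the definiteness of $P$. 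When $\epsilon_{q-2}=-1$, the form $P$ is negative semidefinite and I would recover every exceptional point through Lemma~\ref{lem:tangent cone for r=0}(2): if $d_{q-2}+\epsilon_{q-1}'d_{q-1}\neq 0$ then $Q(d')<0$ and the first bullet applies, while if $d_{q-2}+\epsilon_{q-1}'d_{q-1}=0$ (which forces $d_{q-2}=d_{q-1}<0$, since $d_{q-2}\leq 0$ rules out $\epsilon_{q-1}'=+1$) one has $\nabla Q(d')=0$ and the escape vector $v=(0,\ldots,0,1,\epsilon_{q-1}',0,\ldots,0)$ gives ${}^{t}v'\mathrm{Hess}(Q)v'<0$ while meeting the sign condition (a), because $d_{q-2}\neq 0$. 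Hence $C^+(g)=\overline{D}(q-3,Q)$ in this subcase as well.

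The hard part is the remaining subcase $\epsilon_{q-1}=-1$, $\epsilon_{q-2}=\epsilon_q=\cdots=\epsilon_n=1$, where $P$ is positive semidefinite with null line $\{x_{q-2}+\epsilon_{q-1}'x_{q-1}=0\}$ on which the cubic $\epsilon_{q-1}x_{q-1}^3$ does not vanish; this is precisely the degeneracy that aligns the null direction of $Q$ with the cubic, so $\mathrm{Hess}(Q)$ is positive semidefinite and the escape test of Lemma~\ref{lem:tangent cone for r=0} is unavailable. Here I would exclude the exceptional directions by a global sign argument rather than a local one: rewriting $\widetilde{g}(x)\leq 0$ as $\epsilon_{q-1}x_{q-1}^3\leq\sum_{j=1}^{q-3}x_j-\epsilon_{q-2}(x_{q-2}+\epsilon_{q-1}'x_{q-1})^2-\sum_{j\geq q}\epsilon_j x_j^2$, I note that on $M(g)$ the right-hand side is a sum of terms each $\leq 0$ (using $x_j\leq 0$ for $j\leq q-3$ together with $\epsilon_{q-2}=\epsilon_q=\cdots=\epsilon_n=1$), so $\epsilon_{q-1}x_{q-1}^3\leq 0$ throughout $M(g)$; multiplying by $t_m^3$ and letting $t_m x^{(m)}\to d$ yields $\epsilon_{q-1}d_{q-1}^3\leq 0$ for every $d\in C^+(g)$, which removes exactly the exceptional directions.

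The main obstacle I anticipate is the final bookkeeping in this last subcase. The excluded ray lies on $\{d_{q-2}+\epsilon_{q-1}'d_{q-1}=0\}$ inside $\overline{D}(q-3,Q)$, so matching it to the set displayed in the statement requires care with the sign $\epsilon_{q-1}'$, which fixes where this null ray sits in the $(x_{q-2},x_{q-1})$-plane; it is precisely here that I would check the stated description against the two possibilities $\epsilon_{q-1}'=\pm 1$ and verify that the intersection of the written exceptional line with $\overline{D}(q-3,Q)$ agrees with the ray produced by the sign argument. Once that identification is confirmed, the two halves (the baseline inclusion from Corollary~\ref{cor:tangent cone for r=0} and the sign-argument exclusion) combine to give the claimed equality.
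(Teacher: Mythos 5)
Your proposal follows the paper's proof essentially step for step: the baseline inclusion from Corollary~\ref{cor:tangent cone for r=0} with $P(x')=\epsilon_{q-2}(x_{q-2}+\epsilon_{q-1}'x_{q-1})^2$ and $s=2$, the observation that $\epsilon_{q-1}=+1$ makes the exceptional set empty (since $d_{q-1}\leq 0$ on $\overline{D}$), the escape-vector arguments from Lemma~\ref{lem:tangent cone for r=0} when $\epsilon_{q-2}=-1$ (the paper uses $v=-e_{q-1}$ where you use $v=e_{q-2}+\epsilon_{q-1}'e_{q-1}$; both give ${}^tv'\mathrm{Hess}(Q)v'<0$ and satisfy condition (a)), and the global sign argument showing $\epsilon_{q-1}x_{q-1}^3\leq 0$ on $M(g)$, hence $d_{q-1}\geq 0$ on $C^+(g)$, in the remaining case $\epsilon_{q-2}=\epsilon_q=\cdots=\epsilon_n=1$, $\epsilon_{q-1}=-1$. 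All of these steps are correct and match the paper's.

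The one caveat concerns the ``bookkeeping'' you defer at the end: it is not a formality that can be ``confirmed,'' and you were right to be suspicious of it. When $\epsilon_{q-1}=-1$, $\epsilon_{q-2}=\epsilon_q=\cdots=\epsilon_n=1$ and $\epsilon_{q-1}'=-1$, the set $\overline{D}(q-3,Q)$ is the ray $\{t(e_{q-2}+e_{q-1})\mid t\leq 0\}$, and your sign argument correctly excludes the open ray $t<0$, so $C^+(g)=\{0\}$. But the set displayed in the statement, $\{(0,\ldots,0,d_{q-1},0,\ldots,0)\mid d_{q-1}<0\}$, has $d_{q-2}=0$ and hence is \emph{disjoint} from $\overline{D}(q-3,Q)$ (at such points $Q(d')=\epsilon_{q-2}d_{q-1}^2>0$), so the formula as literally written would return the whole ray. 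What your argument (and the paper's, which stops at exactly the same point without performing this reconciliation) actually establishes is $C^+(g)=\overline{D}(q-3,Q)\setminus\{d\in\R^n\mid d_{q-1}<0\}$. The mismatch is therefore a defect in the proposition's displayed exceptional set rather than in your reasoning; in substance your proof coincides with the paper's, and the step you flagged is precisely where the displayed formula and the proof disagree.
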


\begin{proof}
By Corollary~\ref{cor:tangent cone for r=0} (with $P(x')=\epsilon_{q-2}(x_{q-2}+\epsilon_{q-1}'x_{q-1})^2$ and $s=2$), $\overline{D}(q-3,Q)\setminus \{(0,\ldots,0,d_{q-2},d_{q-1},0,\ldots, 0)\in \R^n~|~\epsilon_{q-1}d_{q-1}^3>0\}$ is contained in $C^+(g)$, and $C^+(g)=\overline{D}(q-3,Q)$ if $\epsilon_j=-1$ for some $j\geq q$. 
In particular, $C^+(g)=\overline{D}(q-3,Q)$ if $\epsilon_{q-1}=1$ since any $d\in \overline{D}(q-3,Q)$ satisfies $d_{q-1}\leq 0$.

In what follows, we assume $\epsilon_{q-1}=-1$ and $\epsilon_{q}=\cdots =\epsilon_n = +1$. 
Let $d=d_{q-2}e_{q-2}+d_{q-1}e_{q-1}\in \overline{D}(q-3,Q)$ with $\epsilon_{q-1}d_{q-1}^3>0\Leftrightarrow d_{q-1}<0$. 
If $\epsilon_{q-2}=-1$ and $d_{q-2}+\epsilon_{q-1}'d_{q-1}\neq 0$, $Q(d')$ is equal to $\epsilon_{q-2}(d_{q-2}+\epsilon_{q-1}'d_{q-1})^2 <0$, and thus $d$ in contained in $C^+(g)$ by Lemma~\ref{lem:tangent cone for r=0}. 
If $\epsilon_{q-2}=-1$ and $d_{q-2}+\epsilon_{q-1}'d_{q-1}=0$, the vector $v=-e_{q-1}$ satisfies the conditions in Lemma~\ref{lem:tangent cone for r=0}.
Indeed, $v_1=\cdots =v_{q-2}=0$, $v_{q-1}=-1<0$, $v'\cdot \nabla Q(d')=0$, and ${}^tv'\mathrm{Hess}(Q)v' =-2 <0$. 
We thus obtain $C^+(g)=\overline{D}(q-2,Q)$ if $\epsilon_{q-2}=-1$. 

If $\epsilon_{q-2}=1$, any $x\in M(g)$ satisfies the following inequality: 
\begin{align*}
&0\geq -\sum_{j=1}^{q-3}x_j+(x_{q-2}+\epsilon_{q-1}'x_{q-1})^2-x_{q-1}^3+\sum_{j=q}^{n}x_j^2\\
\Leftrightarrow& x_{q-1}^3\geq -\sum_{j=1}^{q-1}x_j+(x_{q-2}+\epsilon_{q-1}'x_{q-1})^2+\sum_{j=q}^{n}x_j^2\geq 0. 
\end{align*}
Thus, any $d\in C^+(g)$ also satisfies the inequality $d_{q-1}^3\geq 0\Leftrightarrow d_{q-1}\geq 0$. 
\end{proof}

\begin{proposition}\label{prop:ACQ type (7) r=0}

ACQ holds for $g$ if and only if $\epsilon_{q-2}=\epsilon_q=\cdots =\epsilon_n=-1$, 

\end{proposition}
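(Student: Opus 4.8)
The plan is to read off ACQ-validity directly from the tangent cone computed in Proposition~\ref{prop:tangent cone type (7) r=0} and to compare it with the linearized cone. Recall that for a germ of Table~\ref{table:generic constraint r=0} with $l=q-3$ one has
\[
L^+(g)=\{d\in \R^n~|~d_1=\cdots=d_{q-3}=0,\ d_{q-2}\leq 0,\ d_{q-1}\leq 0\},
\]
and that $\overline{D}(q-3,Q)$ is obtained from this set by imposing the one extra inequality $Q(d')\leq 0$. Since ACQ is the equality $C^+(g)=L^+(g)$, and since Proposition~\ref{prop:tangent cone type (7) r=0} always gives $C^+(g)\subseteq \overline{D}(q-3,Q)\subseteq L^+(g)$, the whole task reduces to deciding precisely when these two inclusions are equalities.

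For the ``if'' direction I would assume $\epsilon_{q-2}=\epsilon_q=\cdots=\epsilon_n=-1$. As $\epsilon_{q-2}=-1\neq 1$, we are in the second (``otherwise'') branch of Proposition~\ref{prop:tangent cone type (7) r=0}, so $C^+(g)=\overline{D}(q-3,Q)$. Under this sign choice the quadratic part becomes
\[
Q=-(x_{q-2}+\epsilon_{q-1}'x_{q-1})^2-\sum_{j=q}^n x_j^2,
\]
which is nonpositive on all of $\R^n$. Hence the defining inequality $Q(d')\leq 0$ of $\overline{D}(q-3,Q)$ holds automatically, so $\overline{D}(q-3,Q)=L^+(g)$, and therefore $C^+(g)=L^+(g)$, i.e.\ ACQ holds. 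I note that the remaining signs $\epsilon_{q-1}$ and $\epsilon_{q-1}'$ are irrelevant to this conclusion.

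For the ``only if'' direction I would argue by contraposition: suppose the sign condition fails, so that $\epsilon_j=1$ for some $j\in\{q-2\}\cup\{q,\ldots,n\}$. In either case the vector $-e_j$ lies in $L^+(g)$, since its first $q-3$ coordinates vanish and its $(q-2)$- and $(q-1)$-coordinates are $\leq 0$, while $Q\bigl((-e_j)'\bigr)=\epsilon_j=1>0$, so $-e_j\notin \overline{D}(q-3,Q)$. Because $C^+(g)\subseteq \overline{D}(q-3,Q)$ holds unconditionally, we get $-e_j\in L^+(g)\setminus C^+(g)$, whence ACQ fails.

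This argument is essentially routine once Proposition~\ref{prop:tangent cone type (7) r=0} is available, and I do not expect a genuine obstacle. The single point that requires care, and the conceptual heart of the statement, is the observation that switching $\epsilon_{q-2}$ to $-1$ (together with $\epsilon_q=\cdots=\epsilon_n=-1$) renders $Q$ everywhere nonpositive and thereby collapses the extra inequality that cuts $\overline{D}(q-3,Q)$ out of $L^+(g)$. The cross term $\epsilon_{q-1}'x_{q-1}$ and the cubic $R=\epsilon_{q-1}x_{q-1}^3$ play no role, reflecting the fact that—unlike for GCQ—ACQ here is governed entirely by the definiteness of the quadratic part $Q$ on the linearized cone.
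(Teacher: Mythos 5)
Your proof is correct and follows essentially the same route as the paper's: the ``only if'' direction by exhibiting $-e_j\in L^+(g)\setminus\overline{D}(q-3,Q)$ when some $\epsilon_j=1$ ($j=q-2$ or $j\geq q$), and the ``if'' direction by invoking Proposition~\ref{prop:tangent cone type (7) r=0} to get $C^+(g)=\overline{D}(q-3,Q)$ and observing that the sign assumption makes $Q$ everywhere nonpositive, so $\overline{D}(q-3,Q)=L^+(g)$. The only difference is that you spell out the computation $Q=-(x_{q-2}+\epsilon_{q-1}'x_{q-1})^2-\sum_{j=q}^n x_j^2$ and the irrelevance of $\epsilon_{q-1},\epsilon_{q-1}'$ explicitly, which the paper leaves implicit.
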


\begin{proof}
Suppose that the condition in the proposition does not hold, say $\epsilon_j=1$ for $j=q-2$ or $j\geq q$. 
It is easy to check that $-e_j$ is contained in $L^+(g)$ but not in $\overline{D}(q-3,Q)$, in particular $C^+(g)\subset \overline{D}(q-3,Q)\subsetneq L^+(g)$.  
Suppose conversely that the condition in the proposition hold. 
By Proposition \ref{prop:tangent cone type (7) r=0}, the tangent cone $C^+(g)$ is equal to $\overline{D}(q-3,Q)$, which is equal to $\{d\in \R^n~|~d_1=\cdots =d_{q-3}=0,d_{q-2},d_{q-1}\leq 0\}=L^+(g)$ by the assumption. 
\end{proof}

\begin{proposition}

GCQ holds for $g$ if and only if one of 
\begin{math}
\epsilon_q, \ldots, \epsilon_n
\end{math}
is 
\begin{math}
-1
\end{math}.

\end{proposition}

\begin{proof}
It is easy to check that $L^+ \left(g\right)^\circ$ is equal to 
\begin{equation}
\left\{ w \in \mathbb{R}^n \middle| w_{q-2} \ge 0, w_{q-1} \ge 0, w_q =\cdots = w_n=0\right\}.
\end{equation}

\noindent
\textbf{Proof of ``if'' part:} Without loss of generality, we can assume 
\begin{math}
\epsilon_q = -1
\end{math}. Take any 
\begin{math}
w \in C^+ \left( g \right)^\circ
\end{math}. Since 
\begin{math}
\pm e_q \in C^+ \left( g \right)
\end{math}, 
\begin{math}
w_q = 0
\end{math}
holds. Since 
\begin{math}
\pm e_q \pm e_j \in C^+ \left( g \right)
\end{math}
holds for 
\begin{math}
j \in \left\{ q+1, \ldots, n \right\}
\end{math}, 
\begin{math}
w_j = 0
\end{math}
holds. Since 
\begin{math}
- e_j \pm e_q \in C^+ \left( g \right)
\end{math}
holds for 
\begin{math}
j \in \left\{ q-2, q-1 \right\}
\end{math}, 
\begin{math}
w_j \ge 0
\end{math}
holds for 
\begin{math}
j \in \left\{ q-2, q-1 \right\}
\end{math}. This proves 
\begin{math}
w \in L^+ \left( g \right)^\circ
\end{math}. 

\noindent
\textbf{Proof of ``only if'' part:} We show the contraposition of the statement, that is, if all of 
\begin{math}
\epsilon_q, \ldots, \epsilon_n
\end{math}
are 
\begin{math}
1
\end{math}, GCQ is violated. 
Since the eigenvalues of 
\begin{math}
\begin{pmatrix}
\epsilon_{q-2} & \epsilon_{q-2} \epsilon_{q-1}' \\
\epsilon_{q-2} \epsilon_{q-1}' & \epsilon_{q-2}
\end{pmatrix}
\end{math}
are $0$ and $2\epsilon_{q-2}$, we can obtain $\sum_{j=q}^n d_j^2 \le 2\|(d_{q-2},d_{q-1}) \|^2$ for $d\in C^+(g)$ in the same way as in the proof of Proposition~\ref{prop:GCQ type (6) r=0}.
%\begin{align}
%\sum_{j=q}^n d_j^2 \le 2\|(d_{q-2},d_{q-1}) \|^2.
%\end{align}
Let $w = \sqrt{2}(e_{q-2}+e_{q-1})+e_q$.
As in the proof of Proposition~\ref{prop:GCQ type (6) r=0}, we obtain the following inequality for any $d\in C^+(g)$:
\begin{equation}
w \cdot d = \sqrt{2} \left( d_{q-2} + d_{q-1} \right) + d_q \leq \left(-\frac{\sqrt{2}}{\sqrt{|2\epsilon_{q-2}|}}+1\right)|d_q| = 0.
\end{equation}
This proves 
\begin{math}
w \in C^+ \left( g \right)^\circ
\end{math}. Since 
\begin{math}
w
\end{math}
is not contained in 
\begin{math}
L^+ \left( g \right)^\circ
\end{math}, GCQ is violated. 
\end{proof}

\subsubsection*{The germ of type $(8)$ in Table~\ref{table:generic constraint r=0}}

Let $g = \left(x_1,\ldots, x_{q-1},-\sum_{j=1}^{q-3}x_j+\epsilon_{q-2}x_{q-2}^3+\epsilon_{q-1}x_{q-1}^2 +\epsilon_{q-1}'x_{q-2}x_{q-1}+\sum_{j=q}^{n}\epsilon_jx_j^2\right)$, where $\epsilon_j,\epsilon_{q-1}'\in \{1,-1\}$, $Q =\epsilon_{q-1}x_{q-1}^2 +\epsilon_{q-1}'x_{q-2}x_{q-1}+\sum_{j=q}^{n} \epsilon_j x_j^2$, and $R=\epsilon_{q-2}x_{q-2}^3$. 

\begin{proposition}\label{prop:tangent cone type (8) r=0}

The tangent cone $C^+(g)$ is equal to $\overline{D}(q-3,Q)\setminus \{(0,\ldots,0,d_{q-2},0,\ldots, 0)\in \R^n~|~d_{q-2}< 0\}$ if $\epsilon_{q-2}=-1$ and $\epsilon_{q-1}'=\epsilon_{q}=\cdots = \epsilon_n=1$, and $C^+(g)=\overline{D}(q-3,Q)$ otherwise. 

\end{proposition}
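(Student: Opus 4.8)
The plan is to reduce everything to Corollary~\ref{cor:tangent cone for r=0} and Lemma~\ref{lem:tangent cone for r=0} and then isolate the one family of points whose membership in the tangent cone is genuinely delicate. Here $l=q-3$, $s=2$, and in the notation of Corollary~\ref{cor:tangent cone for r=0} we have $P(x')=\epsilon_{q-1}x_{q-1}^2+\epsilon_{q-1}'x_{q-2}x_{q-1}$ (quadratic in $x_{q-2},x_{q-1}$) and $R(x')=\epsilon_{q-2}x_{q-2}^3$ (homogeneous in those variables). First I would apply the corollary to record the two coarse facts: the inclusion $\overline{D}(q-3,Q)\setminus E\subset C^+(g)$ with $E=\{(0,\ldots,0,d_{q-2},d_{q-1},0,\ldots,0)\mid \epsilon_{q-2}d_{q-2}^3>0\}$, together with the equality $C^+(g)=\overline{D}(q-3,Q)$ whenever $\epsilon_j=-1$ for some $j\ge q$. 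Since $d\in\overline{D}(q-3,Q)$ forces $d_{q-2}\le 0$, the set $E\cap\overline{D}(q-3,Q)$ is empty unless $\epsilon_{q-2}=-1$, in which case each of its points has $d_{q-2}<0$. This already settles every situation with $\epsilon_{q-2}=1$, or with some $\epsilon_j=-1$ ($j\ge q$): there $C^+(g)=\overline{D}(q-3,Q)$, the ``otherwise'' alternative.

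It remains to treat $\epsilon_{q-2}=-1$ together with $\epsilon_q=\cdots=\epsilon_n=1$, where the only candidates for exclusion are the points of $E\cap\overline{D}(q-3,Q)$ supported on the $(q-2,q-1)$-plane. Points with $Q(d')<0$ lie in $C^+(g)$ by the first bullet of Lemma~\ref{lem:tangent cone for r=0}, so I restrict to $Q(d')=0$, i.e.\ $d_{q-1}\bigl(\epsilon_{q-1}d_{q-1}+\epsilon_{q-1}'d_{q-2}\bigr)=0$. This yields Case~B, where $d_{q-1}=-\epsilon_{q-1}\epsilon_{q-1}'d_{q-2}\neq 0$ (compatible with $d_{q-1}\le 0$ only if $\epsilon_{q-1}\epsilon_{q-1}'=-1$), and Case~A, where $d_{q-1}=0$. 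In Case~B both $d_{q-2},d_{q-1}<0$, so the sign constraints of condition (a) in Lemma~\ref{lem:tangent cone for r=0} are vacuous; computing $\nabla Q(d')$ and taking $v=-\epsilon_{q-1}e_{q-2}$ gives $v'\cdot\nabla Q(d')=d_{q-2}<0$, so every Case~B point lies in $C^+(g)$.

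The decisive family is Case~A, the points $d=d_{q-2}e_{q-2}$ with $d_{q-2}<0$, where $\nabla Q(d')=\epsilon_{q-1}'d_{q-2}\,e_{q-1}$. When $\epsilon_{q-1}'=-1$ I would take $v=-e_{q-1}$: admissibility $v_{q-1}\le 0$ holds since $d_{q-1}=0$, and $v'\cdot\nabla Q(d')=-\epsilon_{q-1}'d_{q-2}=d_{q-2}<0$, so $d\in C^+(g)$ and hence $C^+(g)=\overline{D}(q-3,Q)$, again the ``otherwise'' alternative. When $\epsilon_{q-1}'=1$ the gradient points the wrong way: any admissible $v$ (forced by $v_{q-1}\le 0$) gives $v'\cdot\nabla Q(d')\ge 0$, and setting $v_{q-1}=0$ collapses condition (c) to ${}^tv'\mathrm{Hess}(Q)v'=2\sum_{j\ge q}v_j^2\ge 0$; thus Lemma~\ref{lem:tangent cone for r=0} cannot certify membership, and I must prove directly that these points are \emph{not} in $C^+(g)$. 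This exclusion is the main obstacle of the proof.

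For the exclusion I would argue by contradiction. Suppose $x^{(m)}\in M(g)$ and $t_m>0$ satisfy $x^{(m)}\to 0$ and $t_mx^{(m)}\to d=d_{q-2}e_{q-2}$. Writing $x_{q-2}^{(m)}=-s_m$ and $x_{q-1}^{(m)}=-u_m$ with $s_m,u_m\ge 0$, one has $t_ms_m\to|d_{q-2}|>0$ while $t_mu_m\to 0$, so $u_m=o(s_m)$. Using $x_j^{(m)}\le 0$ for $j\le q-1$ and the signs $\epsilon_{q-2}=-1$, $\epsilon_{q-1}'=1$, $\epsilon_j=1$ ($j\ge q$), the inequality $\widetilde{g}(x^{(m)})\le 0$ discards the nonnegative contributions $-\sum_{j\le q-3}x_j^{(m)}$ and $\sum_{j\ge q}(x_j^{(m)})^2$ and forces
\[
s_m^3+\epsilon_{q-1}u_m^2+s_mu_m\le 0.
\]
If $\epsilon_{q-1}=1$ all three summands are nonnegative, so $s_m=0$, contradicting $t_ms_m\to|d_{q-2}|>0$. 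If $\epsilon_{q-1}=-1$ then $u_m^2\ge s_m^3+s_mu_m$; dividing by $s_m^2$ and setting $\rho_m=u_m/s_m$ gives $\rho_m^2\ge s_m+\rho_m$, whence $\rho_m\ge\tfrac{1+\sqrt{1+4s_m}}{2}>1$, so $t_mu_m>t_ms_m\to|d_{q-2}|>0$, contradicting $t_mu_m\to 0$. In either case no such sequence exists, so these Case~A points are excluded. Collecting the cases, exclusion occurs exactly when $\epsilon_{q-2}=-1$, $\epsilon_{q-1}'=1$, and $\epsilon_q=\cdots=\epsilon_n=1$, and the excluded set is precisely $\{(0,\ldots,0,d_{q-2},0,\ldots,0)\mid d_{q-2}<0\}$, matching the statement.
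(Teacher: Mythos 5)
Your proof is correct and takes essentially the same route as the paper's: both start from Corollary~\ref{cor:tangent cone for r=0}, certify the borderline points with $Q(d')=0$ via explicit admissible vectors in Lemma~\ref{lem:tangent cone for r=0}, and exclude the points $d_{q-2}e_{q-2}$ with $d_{q-2}<0$ (when $\epsilon_{q-2}=-1$, $\epsilon_{q-1}'=1$, $\epsilon_q=\cdots=\epsilon_n=1$) by a direct sequence contradiction. The only difference is computational: the paper's exclusion handles both signs of $\epsilon_{q-1}$ at once by writing $\epsilon_{q-1}\bigl(x_{q-1}^{(m)}\bigr)^2+x_{q-2}^{(m)}x_{q-1}^{(m)}=\tfrac{x_{q-1}^{(m)}}{t_m}\bigl(\epsilon_{q-1}t_mx_{q-1}^{(m)}+t_mx_{q-2}^{(m)}\bigr)$, whereas you split on $\epsilon_{q-1}$ and solve a quadratic inequality in $u_m/s_m$; both computations are valid.
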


\begin{proof}
By Corollary~\ref{cor:tangent cone for r=0} (with $P(x')=\epsilon_{q-1}x_{q-1}^2 +\epsilon_{q-1}'x_{q-2}x_{q-1}$ and $s=2$), $\overline{D}(q-3,Q)\setminus \{(0,\ldots,0,d_{q-2},d_{q-1},0,\ldots, 0)\in \R^n~|~\epsilon_{q-2}d_{q-2}^3>0\}$ is contained in $C^+(g)$, and $C^+(g)=\overline{D}(q-3,Q)$ if $\epsilon_j=-1$ for some $j\geq q$. 
In particular, $C^+(g)=\overline{D}(q-3,Q)$ if $\epsilon_{q-2}=1$ since any $d\in \overline{D}(q-3,Q)$ satisfies $d_{q-2}\leq 0$.

In what follows, we assume $\epsilon_{q-2}=-1$ and $\epsilon_{q}=\cdots =\epsilon_n = +1$. 
Let $d=d_{q-2}e_{q-2}+d_{q-1}e_{q-1}\in \overline{D}(q-3,Q)$ with $\epsilon_{q-2}d_{q-2}^3>0\Leftrightarrow d_{q-2}<0$. 
Since $d\in \overline{D}(q-3,Q)$, $Q(d') = d_{q-1}(\epsilon_{q-1}d_{q-1}+\epsilon_{q-1}'d_{q-2})$ is less than or equal to $0$. 
By Lemma~\ref{lem:tangent cone for r=0}, $d$ is contained in $C^+(g)$ if $d_{q-1}$ and $\epsilon_{q-1}d_{q-1}+\epsilon_{q-1}'d_{q-2}$ are not equal to $0$. 
If $\epsilon_{q-1}d_{q-1}+\epsilon_{q-1}'d_{q-2}=0$, $d_{q-1}$ is not $0$ since $d_{q-2}<0$, and $\epsilon_{q-1}$ is not equal to $\epsilon_{q-1}'$ since $d_{q-2},d_{q-1}<0$. 
The gradient $\nabla Q (d')$ is calculated as follows: 
\[
\nabla Q (d') = \left(\epsilon_{q-1}'d_{q-1}, \epsilon_{q-1}'d_{q-2}+2\epsilon_{q-1}d_{q-1},0,\ldots, 0\right) = \left(\epsilon_{q-1}'d_{q-1}, \epsilon_{q-1}d_{q-1},0,\ldots, 0\right).
\]
In particular, both the 1st and 2nd components of $\nabla Q(d')$ are not $0$, and their signs are opposite.
Thus, there exists $v\in \R^n$ with $v_j\leq 0$ for $j\leq q-1$ and $v'\cdot \nabla Q(d') <0$.  
By Lemma~\ref{lem:tangent cone for r=0}, $d$ is contained in $C^+(g)$. 
If $d_{q-1}=0$ and $\epsilon_{q-1}'=-1$, the vector $v=-e_{q-1}$ satisfies the conditions in Lemma~\ref{lem:tangent cone for r=0}.
Indeed, $v_1=\cdots =v_{q-2}=0$, $v_{q-1}=-1<0$, $v'\cdot \nabla Q(d')=d_{q-2}<0$. 

So far we have shown that
\[
\overline{D}(q-3,Q)\setminus \{(0,\ldots,0,d_{q-2},0,\ldots, 0)\in \R^n~|~d_{q-2}< 0\} \subset C^+(g),
\]
and $C^+(g) = \overline{D}(q-3,Q)$ if $\epsilon_{q-1}'=-1$. 
We will show that $d= (0,\ldots,0,d_{q-2},0,\ldots, 0)$ ($d_{q-2}<0$) is not contained in $C^+(g)$ if $\epsilon_{q-1}'=1$. 
Suppose contrary that $d$ is in $C^+(g)$. 
Take $t_m>0$ and $x^{(m)}\in M(g)$ so that $\lim_{m\to \infty} x^{(m)}=0$ and $\lim_{m\to \infty}t_m\cdot x^{(m)}=d$.
The following inequality holds: 
\begin{align*}
&0 \geq -\sum_{j=1}^{q-3}x_j^{(m)}-\left(x_{q-2}^{(m)}\right)^3+\epsilon_{q-1}\left(x_{q-1}^{(m)}\right)^2+x_{q-2}^{(m)}x_{q-1}^{(m)}+\sum_{j=q}^{n}(x_j^{(m)})^2\\
\Leftrightarrow&\left(x_{q-2}^{(m)}\right)^3\geq -\sum_{j=1}^{q-3}x_j^{(m)}+\frac{x_{q-1}^{(m)}}{t_m}\left(\epsilon_{q-1}t_mx_{q-1}^{(m)}+t_mx_{q-2}^{(m)}\right)+\sum_{j=q}^{n}(x_j^{(m)})^2.
\end{align*}
Since $\lim_{m\to \infty}t_m\cdot x^{(m)}=d$, $t_mx_{q-1}^{(m)}$ and $t_mx_{q-2}^{(m)}$ tend to $0$ and $d_{q-2}$, respectively, as $m\to \infty$.  
Hence $\epsilon_{q-1}t_mx_{q-1}^{(m)}+t_mx_{q-2}^{(m)}$ is less than $0$ for $m\gg 0$. 
We thus obtain $x_{q-2}^{(m)}\geq 0$, contradicting $0>d_{q-2}=\lim_{m\to \infty}t_m\cdot x_{q-2}^{(m)}$. 
We eventually obtain:
\[
\overline{D}(q-3,Q)\setminus \{(0,\ldots,0,d_{q-2},0,\ldots, 0)\in \R^n~|~d_{q-2}< 0\} =C^+(g)
\]
if $\epsilon_{q	-1}'=1$.
\end{proof}

\begin{proposition}\label{prop:ACQ type (8) r=0}

ACQ holds for $g$ if and only if $\epsilon_{q-1}=\epsilon_{q-1}'=\epsilon_q=\cdots =\epsilon_n=-1$, 

\end{proposition}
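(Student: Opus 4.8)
The plan is to decide ACQ by comparing the tangent cone $C^+(g)$ with the linearized cone $L^+(g)=\{d\in\R^n \mid d_1=\cdots=d_{q-3}=0,\ d_{q-2}\le 0,\ d_{q-1}\le 0\}$, exploiting the explicit description of $C^+(g)$ from Proposition~\ref{prop:tangent cone type (8) r=0}. The inclusions $C^+(g)\subseteq \overline{D}(q-3,Q)\subseteq L^+(g)$ always hold (the first by Lemma~\ref{lem:tangent cone for r=0}, the second because $\overline{D}(q-3,Q)$ only adds the constraint $Q(d')\le 0$ to $L^+(g)$), so ACQ is equivalent to the two equalities $C^+(g)=\overline{D}(q-3,Q)$ and $\overline{D}(q-3,Q)=L^+(g)$. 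By Proposition~\ref{prop:tangent cone type (8) r=0} the first equality holds outside the exceptional case $\epsilon_{q-2}=-1,\ \epsilon_{q-1}'=\epsilon_q=\cdots=\epsilon_n=1$, while the second holds precisely when the inequality $Q(d')\le 0$ is redundant on $L^+(g)$. The whole problem therefore reduces to deciding when $Q(d')\le 0$ for every $d\in L^+(g)$.

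For the ``only if'' direction I would argue by contraposition: assuming at least one of $\epsilon_{q-1},\epsilon_{q-1}',\epsilon_q,\ldots,\epsilon_n$ equals $1$, I would exhibit in each case an explicit $d\in L^+(g)$ with $Q(d')>0$, which puts $d$ outside $\overline{D}(q-3,Q)\supseteq C^+(g)$ and hence witnesses $C^+(g)\subsetneq L^+(g)$. Concretely, if some $\epsilon_j=1$ with $j\ge q$ I would take $d=e_j$; if $\epsilon_{q-1}=1$ I would take $d=-e_{q-1}$; and if $\epsilon_{q-1}'=1$ I would take $d=-t\,e_{q-2}-e_{q-1}$ with $t$ large, so that the cross term $\epsilon_{q-1}'d_{q-2}d_{q-1}=t$ dominates and forces $Q(d')=\epsilon_{q-1}+t>0$. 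Each such $d$ manifestly lies in $L^+(g)$ but not in $\overline{D}(q-3,Q)$.

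For the ``if'' direction, under the hypothesis $\epsilon_{q-1}=\epsilon_{q-1}'=\epsilon_q=\cdots=\epsilon_n=-1$, I would first note that $\epsilon_{q-1}'=-1$ keeps us outside the exceptional case of Proposition~\ref{prop:tangent cone type (8) r=0}, so $C^+(g)=\overline{D}(q-3,Q)$ already. It then remains to verify $\overline{D}(q-3,Q)=L^+(g)$, i.e.\ that $Q(d')\le 0$ on $L^+(g)$. The key algebraic step is the rewrite $Q(d')=-d_{q-1}(d_{q-1}+d_{q-2})-\sum_{j\ge q}d_j^2$: since $d_{q-1}\le 0$ and $d_{q-1}+d_{q-2}\le 0$ for $d\in L^+(g)$, the product $d_{q-1}(d_{q-1}+d_{q-2})$ is nonnegative, making the whole expression $\le 0$ and yielding $\overline{D}(q-3,Q)=L^+(g)=C^+(g)$.

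I expect the main subtlety (more a point of care than a genuine obstacle) to be the treatment of the cross coefficient $\epsilon_{q-1}'$. Unlike the pure square coefficients $\epsilon_{q-1}$ and $\epsilon_j$ ($j\ge q$), whose signs are pinned down by testing single coordinate vectors such as $-e_{q-1}$ and $e_j$, the term $\epsilon_{q-1}'x_{q-2}x_{q-1}$ only reveals its influence through a vector with both $d_{q-2}$ and $d_{q-1}$ nonzero and with $|d_{q-2}|$ taken large; matching the completion-of-square form $Q(d')=-d_{q-1}(d_{q-1}+d_{q-2})-\sum_{j\ge q}d_j^2$ against the sign constraints on $L^+(g)$ is the step where one must be slightly careful.
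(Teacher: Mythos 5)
Your proof is correct and follows essentially the same route as the paper: both reduce ACQ to the two equalities $C^+(g)=\overline{D}(q-3,Q)$ (via Proposition~\ref{prop:tangent cone type (8) r=0}) and $\overline{D}(q-3,Q)=L^+(g)$, prove the latter under the hypothesis by the same algebraic rewrite $Q(d')=-d_{q-1}(d_{q-1}+d_{q-2})-\sum_{j\ge q}d_j^2\le 0$, and prove the converse by exhibiting vectors of $L^+(g)$ violating $Q(d')\le 0$. The only differences are cosmetic choices of witnesses; in fact your handling of the cross-term case (assuming $\epsilon_{q-1}'=1$ and taking $d=-t\,e_{q-2}-e_{q-1}$ with $t$ large) silently corrects a sign typo in the paper's own proof, which reads ``If $\epsilon_{q-1}'=-1$'' where it must read ``If $\epsilon_{q-1}'=1$'' for the witness $-2e_{q-2}-e_{q-1}$ to lie outside $\overline{D}(q-3,Q)$.
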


\begin{proof}
If $\epsilon_j=1$ for $j\geq q-1$, the vector $-e_j$ is contained in $L^+(g)$ but not in $\overline{D}(q-3,Q)$.
If $\epsilon_{q-1}'=-1$, the vector $-2e_{q-2}-e_{q-1}$ is contained in $L^+(g)$ but not in $\overline{D}(q-3,Q)$.
In each case, $C^+(g)\subset \overline{D}(q-3,Q)$ is a proper subset of $L^+(g)$.  

Suppose that the condition in the proposition hold. 
By Proposition \ref{prop:tangent cone type (8) r=0}, the tangent cone $C^+(g)$ is equal to $\overline{D}(q-3,Q)$, which is equal to $\{d\in \R^n~|~d_1=\cdots =d_{q-3}=0,d_{q-2},d_{q-1}\leq 0\}=L^+(g)$ by the assumption. 
\end{proof}

\begin{proposition}

GCQ holds for $g$ if and only if one of 
\begin{math}
\epsilon_q, \ldots, \epsilon_n
\end{math}
is
\begin{math}
-1
\end{math}.

\end{proposition}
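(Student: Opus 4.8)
The plan is to test the defining condition $C^+(g)^\circ = L^+(g)^\circ$ for GCQ. Since $C^+(g)\subset L^+(g)$ always holds, we automatically have $C^+(g)^\circ\supset L^+(g)^\circ$, so only the reverse inclusion needs attention. First I would record, exactly as in the preceding cases with $l=q-3$, that the polar of the linearized cone is
\[
L^+(g)^\circ = \left\{w\in\R^n \;\middle|\; w_{q-2}\ge 0,\ w_{q-1}\ge 0,\ w_q=\cdots=w_n=0\right\},
\]
while the tangent cone $C^+(g)$ has already been determined in Proposition~\ref{prop:tangent cone type (8) r=0}. These two objects are what the whole argument is phrased against.

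For the \textbf{if} part I would assume, after relabelling the free squares, that $\epsilon_q=-1$. Then Proposition~\ref{prop:tangent cone type (8) r=0} gives $C^+(g)=\overline{D}(q-3,Q)$, since the exceptional (punctured) case there requires $\epsilon_q=\cdots=\epsilon_n=1$. Taking an arbitrary $w\in C^+(g)^\circ$, I would substitute the explicit test vectors $\pm e_q$, then $\pm e_q\pm e_j$ for $q<j\le n$, and finally $-e_{q-2}\pm e_q$ and $-e_{q-1}\pm e_q$. Each of these lies in $\overline{D}(q-3,Q)$ precisely because the sign $\epsilon_q=-1$ keeps $Q$ nonpositive on them; pairing with $w$ in turn forces $w_q=0$, then $w_{q+1}=\cdots=w_n=0$, and then $w_{q-2},w_{q-1}\ge 0$, i.e.\ $w\in L^+(g)^\circ$. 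This is the routine, bookkeeping half.

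For the \textbf{only if} part I would argue the contrapositive: assume $\epsilon_q=\cdots=\epsilon_n=1$ and exhibit a vector in $C^+(g)^\circ\setminus L^+(g)^\circ$. The decisive input is a quadratic estimate on $C^+(g)$. Every $d\in C^+(g)\subset\overline{D}(q-3,Q)$ satisfies $Q(d')\le 0$, which under $\epsilon_q=\cdots=\epsilon_n=1$ reads $\sum_{j=q}^n d_j^2\le -P(d_{q-2},d_{q-1})$ with $P=\epsilon_{q-1}x_{q-1}^2+\epsilon_{q-1}'x_{q-2}x_{q-1}$. Diagonalizing the symmetric matrix of $P$, whose determinant is $-\tfrac14$ and whose eigenvalues are $\tfrac{\epsilon_{q-1}\pm\sqrt2}{2}$, so that its largest modulus is $\Lambda=\tfrac{1+\sqrt2}{2}$, yields $\sum_{j=q}^n d_j^2\le \Lambda\,\|(d_{q-2},d_{q-1})\|^2$ and hence $|d_q|\le\sqrt\Lambda\,\|(d_{q-2},d_{q-1})\|$. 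Exactly as in Proposition~\ref{prop:GCQ type (6) r=0}, I would then verify that $w=\sqrt{\Lambda}\,(e_{q-2}+e_{q-1})+e_q$ satisfies $w\cdot d\le 0$ for all $d\in C^+(g)$, using $d_{q-2},d_{q-1}\le 0$ together with $\|(d_{q-2},d_{q-1})\|\le |d_{q-2}|+|d_{q-1}|$; thus $w\in C^+(g)^\circ$, whereas $w_q=1\ne 0$ shows $w\notin L^+(g)^\circ$, so GCQ fails.

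The main obstacle is this \emph{only if} estimate. Unlike type $(6)$, the quadratic form $P$ in the two critical variables is now degenerate—the coefficient of $x_{q-2}^2$ vanishes—and indefinite, so one cannot simply read off a positive bound. The clean way around this is to pass to the eigenvalues of the $2\times2$ symmetric matrix and bound $|P|$ by $\Lambda\|(d_{q-2},d_{q-1})\|^2$, which is precisely what makes the candidate $w$ admissible. I would also remark that this half of the argument is insensitive to whether we are in the exceptional (punctured) case of Proposition~\ref{prop:tangent cone type (8) r=0}, because it uses only the inclusion $C^+(g)\subset\overline{D}(q-3,Q)$ and never the precise description of $C^+(g)$.
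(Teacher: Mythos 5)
Your proposal is correct and follows essentially the same route as the paper: the identical test vectors $\pm e_q$, $\pm e_q\pm e_j$, $-e_{q-2}\pm e_q$, $-e_{q-1}\pm e_q$ for the ``if'' part, and for the ``only if'' part the same separating vector $w=\sqrt{\Lambda}\,(e_{q-2}+e_{q-1})+e_q$ with $\Lambda=\max\{|\lambda_1|,|\lambda_2|\}$ the spectral bound of $\begin{pmatrix}0&\epsilon_{q-1}'/2\\ \epsilon_{q-1}'/2&\epsilon_{q-1}\end{pmatrix}$, exactly as the paper does by reducing to the type-$(6)$ argument. Your explicit computation $\lambda=\tfrac{\epsilon_{q-1}\pm\sqrt 2}{2}$, $\Lambda=\tfrac{1+\sqrt 2}{2}$, and the remark that only the inclusion $C^+(g)\subset\overline{D}(q-3,Q)$ is needed, are correct details the paper leaves implicit.
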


\begin{proof}
It is easy to check that $L^+ \left(g\right)^\circ$ is equal to 
\begin{equation}
\left\{ w \in \mathbb{R}^n \middle| w_{q-2} \ge 0, w_{q-1} \ge 0, w_q =\cdots = w_n=0\right\}.
\end{equation}

\noindent
\textbf{Proof of ``if'' part:} Without loss of generality, we can assume 
\begin{math}
\epsilon_q = -1
\end{math}. Take any 
\begin{math}
w \in C^+ \left( g \right)^\circ
\end{math}. Since 
\begin{math}
\pm e_q \in C^+ \left( g \right)
\end{math}, 
\begin{math}
w_q = 0
\end{math}
holds. Since 
\begin{math}
\pm e_q \pm e_j \in C^+ \left( g \right)
\end{math}
holds for 
\begin{math}
j \in \left\{ q+1, \ldots, n \right\}
\end{math}, 
\begin{math}
w_j = 0
\end{math}
holds. Since 
\begin{math}
- e_j \pm e_q \in C^+ \left( g \right)
\end{math}
holds for 
\begin{math}
j \in \left\{ q-2, q-1 \right\}
\end{math}, 
\begin{math}
w_j \ge 0
\end{math}
holds for 
\begin{math}
j \in \left\{ q-2, q-1 \right\}
\end{math}. This proves 
\begin{math}
w \in L^+ \left( g \right)^\circ
\end{math}. 

\noindent
\textbf{Proof of ``only if'' part:} We show the contraposition of the statement, that is, if all of 
\begin{math}
\epsilon_q, \ldots, \epsilon_n
\end{math}
are 
\begin{math}
1
\end{math}, GCQ is violated. Let 
\begin{math}
\lambda_1, \lambda_2 \neq 0
\end{math}
be the eigenvalues of 
\begin{math}
\begin{pmatrix}
0 & \epsilon_{q-1}'/2 \\
\epsilon_{q-1}'/2 & \epsilon_{q-1}
\end{pmatrix}
\end{math}.
We can show that \begin{math}
w = R\left( e_{q-2} + e_{q-1} \right) + e_q
\end{math}
is contained in 
\begin{math}
C^+ \left( g \right)^\circ
\end{math}
for 
\begin{math}
R = \sqrt{\max \left\{ \left| \lambda_1 \right|, \left| \lambda_2 \right| \right\}}
\end{math} in the same way as in the proof of Proposition~\ref{prop:GCQ type (6) r=0}. 
%. Take arbitrary 
%\begin{math}
%d \in C^+ \left( g \right)
%\end{math}. Then, 
%\begin{align}
%\sum_{j=q}^n d_j^2 \le - \epsilon_{q-2}(d_{q-2}+\epsilon_{q-1}'d_{q-1})^2 &\le \max \left\{ \left| \lambda_1 \right|, \left| \lambda_2 \right| \right\} \left( d_{q-2}^2 + d_{q-1}^2 \right) \\
%&\le \max \left\{ \left| \lambda_1 \right|, \left| \lambda_2 \right| \right\} \left( d_{q-2} + d_{q-1} \right)^2
%\end{align}
%holds. This implies that 
%\begin{math}
%\left| d_q \right| \le - \sqrt{\max \left\{ \left| \lambda_1 \right|, \left| \lambda_2 \right| \right\}} \left( d_{q-2} + d_{q-1} \right)
%\end{math}
%holds. Therefore, 
%\begin{equation}
%w \cdot d = R \left( d_{q-2} + d_{q-1} \right) + d_q = \sqrt{\max \left\{ \left| \lambda_1 \right|, \left| \lambda_2 \right| \right\}} \left( d_{q-2} + d_{q-1} \right) + d_q \le 0
%\end{equation}
%holds. This proves 
%\begin{math}
%w \in C^+ \left( g \right)^\circ
%\end{math}. 
Since 
\begin{math}
w
\end{math}
is not contained in 
\begin{math}
L^+ \left( g \right)^\circ
\end{math}, GCQ is violated. 
\end{proof}

\subsubsection*{The germ of type $(9)$ in Table~\ref{table:generic constraint r=0}}

Let $g = \left(x_1,\ldots, x_{q-1},-\sum_{j=1}^{q-3}x_j+x_q^3+\epsilon_{01}x_qx_{q-1}+\epsilon_{02}x_qx_{q-2}+\epsilon_{12}x_{q-1}x_{q-2}+\sum_{j=q+1}^{n}\epsilon_jx_j^2\right)$, where $\epsilon_j,\epsilon_{ij}\in \{1,-1\}$, $Q =\epsilon_{01}x_qx_{q-1}+\epsilon_{02}x_qx_{q-2}+\epsilon_{12}x_{q-1}x_{q-2}+\sum_{j=q+1}^{n}\epsilon_jx_j^2$, and $R=x_{q}^3$. 

\begin{proposition}\label{prop:tangent cone type (9) r=0}

The tangent cone $C^+(g)$ is equal to $\overline{D}(q-3,Q)\setminus \{(0,\ldots,0,d_{q},0,\ldots, 0)\in \R^n~|~d_{q}> 0\}$ if $\epsilon_{01}=\epsilon_{02}=-1$ and $\epsilon_{q+1}=\cdots = \epsilon_n=1$, and $C^+(g)=\overline{D}(q-3,Q)$ otherwise. 

\end{proposition}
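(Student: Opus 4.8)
The plan is to imitate the type $(8)$ argument: apply Corollary~\ref{cor:tangent cone for r=0} to handle the bulk of $\overline D(q-3,Q)$ and then decide by hand the finitely many undetermined boundary directions. The germ $\tilde g$ has the shape required by Lemma~\ref{lem:tangent cone for r=0} with $l=q-3$, and $Q(x')=P(x')+\sum_{j=q+1}^n\epsilon_jx_j^2$ with $P(x')=\epsilon_{01}x_qx_{q-1}+\epsilon_{02}x_qx_{q-2}+\epsilon_{12}x_{q-1}x_{q-2}$ a quadratic form in $x_{q-2},x_{q-1},x_q$, while $R=x_q^3$ is homogeneous in these same three variables; hence Corollary~\ref{cor:tangent cone for r=0} applies with $s=3$. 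It yields $\overline D(q-3,Q)\setminus\{d:d_{q+1}=\cdots=d_n=0,\ d_q>0\}\subseteq C^+(g)$ together with $C^+(g)=\overline D(q-3,Q)$ whenever $\epsilon_j=-1$ for some $j\ge q+1$. This disposes of every ``otherwise'' case in which some $\epsilon_j$ ($j\ge q+1$) is $-1$, and reduces the proposition to the standing assumption $\epsilon_{q+1}=\cdots=\epsilon_n=1$, under which I must decide which $d$ with $d_{q+1}=\cdots=d_n=0$ and $d_q>0$ belong to $C^+(g)$. Such a $d\in\overline D(q-3,Q)$ satisfies $Q(d')\le 0$; if $Q(d')<0$ it already lies in $C^+(g)$ by the first criterion of Lemma~\ref{lem:tangent cone for r=0}, so only the locus $Q(d')=0$ remains. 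I will split it into the ray $\{d_qe_q:d_q>0\}$, where $d_{q-2}=d_{q-1}=0$, and the complementary non-ray directions.

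For a non-ray direction I expect $d\in C^+(g)$ to hold for all sign choices, via the last criterion of Lemma~\ref{lem:tangent cone for r=0}. Writing $a=d_{q-2},b=d_{q-1}\le 0$ and $c=d_q>0$ with $(a,b)\neq(0,0)$, Euler's identity for the quadratic form $Q$ gives $a\frac{\partial Q}{\partial x_{q-2}}(d')+b\frac{\partial Q}{\partial x_{q-1}}(d')+c\frac{\partial Q}{\partial x_q}(d')=2Q(d')=0$. If $\frac{\partial Q}{\partial x_q}(d')\neq 0$, then $v=-\mathrm{sign}\!\left(\frac{\partial Q}{\partial x_q}(d')\right)e_q$ obeys condition (a) trivially (its $(q-2)$- and $(q-1)$-entries vanish) and makes $v'\cdot\nabla Q(d')<0$. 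If $\frac{\partial Q}{\partial x_q}(d')=0$, I first note that neither $a$ nor $b$ can vanish alone: if, say, $b=0$ and $a\neq 0$ then Euler forces $\frac{\partial Q}{\partial x_{q-2}}(d')=0$, contradicting $\frac{\partial Q}{\partial x_{q-2}}(d')=\epsilon_{02}c\neq 0$. Hence $a,b<0$, so $v_{q-2},v_{q-1}$ are unconstrained; moreover $\nabla Q(d')\neq 0$ there (its vanishing would force $c=0$ by a direct elimination), and $v'=-\nabla Q(d')$ gives $v'\cdot\nabla Q(d')=-\lvert\nabla Q(d')\rvert^2<0$. In both subcases the required $v$ exists, so $d\in C^+(g)$.

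It remains to treat the ray $d=d_qe_q$ ($d_q>0$), where $\nabla Q(d')=(\epsilon_{02}d_q,\epsilon_{01}d_q,0,\dots,0)$ in the $x_{q-2},x_{q-1},x_q$ coordinates and condition (a) forces $v_{q-2},v_{q-1}\le 0$. If $\epsilon_{02}=1$ (respectively $\epsilon_{01}=1$) then $v=-e_{q-2}$ (respectively $v=-e_{q-1}$) gives $v'\cdot\nabla Q(d')=-d_q<0$, so $d\in C^+(g)$; together with the previous paragraph this shows $C^+(g)=\overline D(q-3,Q)$ in every remaining ``otherwise'' case. The final and subtlest point is to prove that when $\epsilon_{01}=\epsilon_{02}=-1$ the ray is genuinely excluded. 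I would argue by contradiction: assuming $d_qe_q\in C^+(g)$, pick $x^{(m)}\in M(g)$ and $t_m>0$ with $x^{(m)}\to 0$ and $\xi:=t_mx^{(m)}\to d_qe_q$, so $\xi_q\to d_q>0$, $\xi_j\to 0$ for $j\neq q$, and $t_m\to\infty$. Since $x_j^{(m)}\le 0$ for $j\le q-3$, feasibility $\tilde g(x^{(m)})\le 0$ gives $x_q^3+Q(x')\le 0$; multiplying by $t_m^2$ and using the homogeneity of $Q$ turns this into $t_m^{-1}\xi_q^3+Q(\xi')\le 0$.

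The heart of the matter is this last inequality, because the degree-two limit merely returns $Q(d')=0$; the exclusion has to come from the coupling between the vanishing quadratic part and the cubic $R=x_q^3$. Setting $\alpha=-\xi_{q-2},\ \beta=-\xi_{q-1}\ge 0$, $c=\xi_q>0$, and $s=\sum_{j\ge q+1}\xi_j^2\ge 0$, the inequality becomes $c(\alpha+\beta)+\epsilon_{12}\alpha\beta+s+t_m^{-1}c^3\le 0$. For $\epsilon_{12}=1$ every summand is nonnegative while $t_m^{-1}c^3>0$, an immediate contradiction. For $\epsilon_{12}=-1$ it forces $\alpha\beta\ge c(\alpha+\beta)$; if $\alpha,\beta>0$ this reads $\tfrac1c\ge\tfrac1\alpha+\tfrac1\beta$, whose right-hand side tends to $+\infty$ whereas $\tfrac1c\to\tfrac1{d_q}$, and if one of $\alpha,\beta$ is $0$ then $c(\alpha+\beta)\le\alpha\beta=0$ forces $\alpha=\beta=0$, leaving $s+t_m^{-1}c^3\le 0$, again impossible. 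Thus $d_qe_q\notin C^+(g)$ when $\epsilon_{01}=\epsilon_{02}=-1$, and combining the three parts yields exactly the asserted description of $C^+(g)$. I expect this exclusion step to be the main obstacle, precisely because it requires the higher-order rescaling above rather than the first-order cone analysis used elsewhere.
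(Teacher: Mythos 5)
Your proposal is correct and follows essentially the same route as the paper: Corollary~\ref{cor:tangent cone for r=0} (with $P$ the cross-term form and $s=3$) for the bulk of $\overline{D}(q-3,Q)$, explicit vectors $v$ via Lemma~\ref{lem:tangent cone for r=0} for the remaining directions with $Q(d')=0$ and $d_q>0$, and a rescaled-sequence contradiction to exclude the ray $d_qe_q$ ($d_q>0$) exactly when $\epsilon_{01}=\epsilon_{02}=-1$ and $\epsilon_{q+1}=\cdots=\epsilon_n=1$. The only differences are tactical: you organize the non-ray case via Euler's identity and split the exclusion into $\epsilon_{12}=\pm 1$ (using the $1/\alpha+1/\beta\to\infty$ blow-up when $\epsilon_{12}=-1$), whereas the paper chooses $v$ directly from the components of $\nabla Q(d')$ together with the regularity of $\mathrm{Hess}(Q)$, and handles both signs of $\epsilon_{12}$ at once by factoring the feasibility inequality through $\frac{x_{q-2}^{(m)}}{t_m}\left(t_mx_q^{(m)}-\epsilon_{12}t_mx_{q-1}^{(m)}\right)$.
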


\begin{proof}
By Corollary~\ref{cor:tangent cone for r=0} (with $P(x')=\epsilon_{01}x_qx_{q-1}+\epsilon_{02}x_qx_{q-2}+\epsilon_{12}x_{q-1}x_{q-2}$ and $s=3$), $\overline{D}(q-3,Q)\setminus \{(0,\ldots,0,d_{q-2},d_{q-1},d_q,0,\ldots, 0)\in \R^n~|~d_{q}^3>0\}$ is contained in $C^+(g)$, and $C^+(g)=\overline{D}(q-3,Q)$ if $\epsilon_j=-1$ for some $j\geq q+1$.

In what follows, we assume $\epsilon_{q+1}=\cdots =\epsilon_n = +1$. 
Let $d=d_{q-2}e_{q-2}+d_{q-1}e_{q-1}+d_qe_q\in \overline{D}(q-3,Q)$ with $d_{q}^3>0\Leftrightarrow d_{q}>0$. 
Since $d\in \overline{D}(q-3,Q)$, $Q(d')$ is less than or equal to $0$. 
By Lemma~\ref{lem:tangent cone for r=0}, $d$ is contained in $C^+(g)$ if $Q(d')<0$. 
Suppose that $Q(d') = \frac{1}{2}({}^td' \mathrm{Hess}(Q) d')$ is equal to $0$.
If $(\nabla Q(d'))_3 =\epsilon_{01}d_{q-1}+\epsilon_{02}d_{q-2}$ is not $0$, the vector $v = d - \mathrm{sign}(\epsilon_{01}d_{q-1}+\epsilon_{02}d_{q-2})e_q$ satisfies the conditions in Lemma~\ref{lem:tangent cone for r=0}, where $ \mathrm{sign}(\epsilon_{01}d_{q-1}+\epsilon_{02}d_{q-2})$ is $+1$ (resp.~$-1$) if $\epsilon_{01}d_{q-1}+\epsilon_{02}d_{q-2}$ is positive (resp.~negative). 
Indeed, $v_1=\cdots =v_{q-1}=0$, and 
\[
v'\cdot \nabla Q(d')={}^t d'\mathrm{Hess}(Q)d'- \mathrm{sign}(\epsilon_{01}d_{q-1}+\epsilon_{02}d_{q-2}) (\nabla Q(d'))_3<0. 
\]
If $(\nabla Q(d'))_3=\epsilon_{01}d_{q-1}+\epsilon_{02}d_{q-2}=0$, either $d_{q-1}=d_{q-2}=0$ or $\epsilon_{01}$ is not equal to $\epsilon_{02}$.
If $(d_{q-2},d_{q-1})\neq (0,0)$ and the latter condition holds, $d_{q-1}$ is equal to $d_{q-2}$, in particular both are not equal to $0$. 
Since $\mathrm{Hess}(Q)$ is regular and $d'\neq 0$, either the 1st or the 2nd component of $\nabla(Q)(d')=\mathrm{Hess}(Q)d'$ is not $0$. 
If $(\nabla Q(d'))_1\neq 0$, the vector $v = d - \varepsilon\mathrm{sign}((\nabla Q(d'))_1)e_{q-2}$ satisfies the conditions in Lemma~\ref{lem:tangent cone for r=0} for $0<\varepsilon \ll 1$. 
Indeed, $v_1=\cdots =v_{q-3}=0$, $v_{q-1}=d_{q-1}< 0$, $v_{q-2} = d_{q-2}-\varepsilon\mathrm{sign}((\nabla Q(d'))_1) <0$ for $\varepsilon \ll 1$, and 
\[
v'\cdot \nabla Q(d')={}^t d'\mathrm{Hess}(Q)d'- \varepsilon\mathrm{sign}((\nabla Q(d'))_1) (\nabla Q(d'))_1<0. 
\]
By Lemma~\ref{lem:tangent cone for r=0}, $d$ is contained in $C^+(g)$.
One can also show that $d\in C^+(g)$ if  $(\nabla Q(d'))_2\neq 0$ in the same manner. 

So far, we have shown the following inclusion:
\[
\overline{D}(q-3,Q)\setminus \{(0,\ldots,0,d_q,0,\ldots, 0)\in \R^n~|~d_{q}> 0\}\subset C^+(g). 
\]
Let $d=(0,\ldots,0,d_q,0,\ldots, 0)$ with $d_q>0$. 
Since $\nabla Q(d') = (\epsilon_{02}d_q,\epsilon_{01}d_q,0,\ldots, 0)$, there exists $v\in \R^n$ with $v_j\leq 0$ for $j\leq q-1$ and $v'\cdot \nabla Q(d')<0$ if either $\epsilon_{02}$ or $\epsilon_{01}$ is $1$. 
Thus, $C^+(g)=\overline{D}(q-3,Q)$ unless $\epsilon_{02}=\epsilon_{01}=-1$. 
Suppose that $\epsilon_{02}=\epsilon_{01}=-1$ and $d$ is contained in $C^+(g)$. 
Take $t_m>0$ and $x^{(m)}\in M(g)$ so that $\lim_{m\to \infty} x^{(m)}=0$ and $\lim_{m\to \infty}t_m\cdot x^{(m)}=d$.
The following inequality holds: 
\begin{align*}
&0 \geq -\sum_{j=1}^{q-3}x_j^{(m)}+\left(x_{q}^{(m)}\right)^3-x_{q}^{(m)}x_{q-2}^{(m)}-x_{q}^{(m)}x_{q-1}^{(m)}+\epsilon_{12}x_{q-1}^{(m)}x_{q-2}^{(m)}+\sum_{j=q+1}^{n}(x_j^{(m)})^2\\
\Leftrightarrow&
\left(x_{q}^{(m)}\right)^3\leq\sum_{j=1}^{q-3}x_j^{(m)}+\frac{x_{q-2}^{(m)}}{t_m}\left(t_mx_{q}^{(m)}-\epsilon_{12}t_mx_{q-1}^{(m)}\right)+x_{q}^{(m)}x_{q-1}^{(m)}-\sum_{j=q+1}^{n}(x_j^{(m)})^2. 
\end{align*}
Since $\lim_{m\to \infty}t_m\cdot x^{(m)}=d$, $t_mx_{q-1}^{(m)}$ and $t_mx_{q}^{(m)}$ tend to $0$ and $d_{q}$, respectively, as $m\to \infty$.  
Hence both $t_mx_{q}^{(m)}-\epsilon_{12}t_mx_{q-1}^{(m)}$ and $x_q^{(m)}$ are larger than $0$ for $m\gg 0$. 
We thus obtain $x_{q}^{(m)}\leq 0$ when $m\gg 0$, contradicting the fact $x_{q}^{(m)}>0$. 
We eventually obtain:
\[
\overline{D}(q-3,Q)\setminus \{(0,\ldots,0,d_{q},0,\ldots, 0)\in \R^n~|~d_{q}> 0\} =C^+(g)
\]
if $\epsilon_{01}=\epsilon_{02}=-1$.
\end{proof}

\begin{proposition}\label{prop:ACQ type (9) r=0}

ACQ does not hold for $g$. 

\end{proposition}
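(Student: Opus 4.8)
The plan is to exhibit a single vector lying in the linearized cone $L^+(g)$ but not in the tangent cone $C^+(g)$, which by Definition~\ref{def:ACQ} immediately shows that ACQ fails. Recall from the discussion preceding this germ that $L^+(g)=\{d\in\R^n\mid d_1=\cdots=d_{q-3}=0,\ d_{q-2}\le 0,\ d_{q-1}\le 0\}$, so that the coordinates $d_q,\ldots,d_n$ are entirely unconstrained; this freedom is exactly what I intend to exploit. On the other side, the first part of Lemma~\ref{lem:tangent cone for r=0} gives the containment $C^+(g)\subseteq\overline{D}(q-3,Q)$. Hence it suffices to produce a vector $d\in L^+(g)$ with $Q(d')>0$, since such a $d$ cannot lie in $\overline{D}(q-3,Q)$ and therefore cannot lie in $C^+(g)$. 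This route bypasses the full case analysis of Proposition~\ref{prop:tangent cone type (9) r=0} and uses only the elementary inclusion into $\overline{D}(q-3,Q)$.

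Concretely, I would take the witness $d=-e_{q-2}-\epsilon_{02}\,e_q$; that is, $d_{q-2}=-1$, $d_{q-1}=0$, $d_q=-\epsilon_{02}$, and all remaining components zero. Since $d_1=\cdots=d_{q-3}=0$ and $d_{q-2},d_{q-1}\le 0$, this $d$ lies in $L^+(g)$ by the description above. Evaluating the quadratic part, the terms $\epsilon_{01}d_qd_{q-1}$ and $\epsilon_{12}d_{q-1}d_{q-2}$ vanish because $d_{q-1}=0$, the square terms $\sum_{j\ge q+1}\epsilon_jd_j^2$ vanish because $d_{q+1}=\cdots=d_n=0$, and the single surviving term is $\epsilon_{02}d_qd_{q-2}=\epsilon_{02}(-\epsilon_{02})(-1)=\epsilon_{02}^2=1$. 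Thus $Q(d')=1>0$, so $d\notin\overline{D}(q-3,Q)\supseteq C^+(g)$, giving $C^+(g)\subsetneq L^+(g)$ as required.

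The design principle behind the construction is that setting $d_{q-1}=0$ collapses $Q$, on the support of $d$, to the single cross term $\epsilon_{02}x_qx_{q-2}$, and the unconstrained coordinate $x_q$ then lets me choose the sign of $d_q$ so that this term is positive no matter what $\epsilon_{02}$ is, and independently of $\epsilon_{01}$, $\epsilon_{12}$, and the $\epsilon_j$. Consequently the same witness works uniformly over all admissible sign choices, and no case split is needed. I do not anticipate a genuine obstacle here; the only point requiring care is to confirm that $d$ satisfies every defining (in)equality of $L^+(g)$ while simultaneously forcing $Q(d')>0$, and the chosen vector does both by direct inspection. (By the evident symmetry between the roles of $x_{q-2}$ and $x_{q-1}$, the alternative witness $-e_{q-1}-\epsilon_{01}e_q$ would serve equally well.)
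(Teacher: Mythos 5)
Your proof is correct and takes essentially the same route as the paper: exhibit one explicit vector lying in $L^+(g)$ but outside $\overline{D}(q-3,Q)\supseteq C^+(g)$ (via part 1 of Lemma~\ref{lem:tangent cone for r=0}), so that $C^+(g)\subsetneq L^+(g)$. In fact your witness $-e_{q-2}-\epsilon_{02}e_q$ (giving $Q(d')=+1$) is sound, whereas the paper's printed witness $\epsilon_{01}e_q-e_{q-1}$ yields $Q(d')=-1$ and hence \emph{does} lie in $\overline{D}(q-3,Q)$ — evidently a sign slip for $-\epsilon_{01}e_q-e_{q-1}$, which is precisely the symmetric alternative you note at the end.
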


\begin{proof}
The vector $\epsilon_{01}e_q - e_{q-1}$ is contained in $L^+(g)$ but not in $\overline{D}(q-3,Q)$, in particular $C^+(g)\subset \overline{D}(q-3,Q)\subsetneq L^+(g)$.  
\end{proof}

\begin{proposition}

GCQ holds for $g$ if and only if either
\begin{enumerate}
\item one of 
\begin{math}
\epsilon_{q+1}, \ldots, \epsilon_n
\end{math}
is 
\begin{math}
-1
\end{math},
or 
\item 
\begin{math}
\epsilon_{01}
\end{math}
or 
\begin{math}
\epsilon_{02}
\end{math}
is 
\begin{math}
1
\end{math}.
\end{enumerate}

\end{proposition}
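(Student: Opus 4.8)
The plan is to exploit the dichotomy already recorded in Proposition~\ref{prop:tangent cone type (9) r=0}. The negation of the stated GCQ condition is precisely ``$\epsilon_{01}=\epsilon_{02}=-1$ and $\epsilon_{q+1}=\cdots=\epsilon_n=1$'', which is exactly the exceptional case in that proposition; in every other case $C^+(g)=\overline{D}(q-3,Q)$, while in the exceptional case $C^+(g)=\overline{D}(q-3,Q)\setminus\{(0,\ldots,0,d_q,0,\ldots,0)\mid d_q>0\}$. Thus the two halves of the claim correspond to the two rows of the tangent-cone formula, and it suffices to prove that GCQ holds exactly when the tangent cone is the full cone $\overline{D}(q-3,Q)$. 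As a preliminary I would record that $L^+(g)^\circ=\{w\in\R^n\mid w_{q-2}\ge 0,\ w_{q-1}\ge 0,\ w_q=\cdots=w_n=0\}$, exactly as in Proposition~\ref{prop:GCQ type (6) r=0}; since $C^+(g)\subset L^+(g)$ always gives $L^+(g)^\circ\subset C^+(g)^\circ$, the entire proof reduces to deciding the reverse inclusion $C^+(g)^\circ\subset L^+(g)^\circ$.

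For the ``if'' part I would assume the GCQ condition, so that $C^+(g)=\overline{D}(q-3,Q)$, take $w\in\overline{D}(q-3,Q)^\circ$, and show $w\in L^+(g)^\circ$ by testing $w$ against explicit members of $\overline{D}(q-3,Q)$. The vectors $\pm e_q$ (for which $Q$ vanishes) give $w_q=0$, and $-e_{q-2},-e_{q-1}$ give $w_{q-2},w_{q-1}\ge 0$; these are immediate. The real content is forcing $w_{q+1}=\cdots=w_n=0$, and here the argument splits. If some $\epsilon_j=-1$ with $j\ge q+1$, then $\pm e_j$ and $\pm e_j\pm e_{j'}$ lie in $\overline{D}(q-3,Q)$ and annihilate all far coordinates exactly as in the proofs for types (6)--(8). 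If instead $\epsilon_{q+1}=\cdots=\epsilon_n=1$, I may assume $\epsilon_{02}=1$ after the symmetric relabelling $x_{q-1}\leftrightarrow x_{q-2}$ (which interchanges $\epsilon_{01}$ and $\epsilon_{02}$ and preserves the constraints and $L^+(g)^\circ$). Then for each $j\ge q+1$ and each $t>0$ the vector $t^2 e_q-e_{q-2}\pm t\,e_j$ lies in $\overline{D}(q-3,Q)$, because its $Q$-value is identically $\epsilon_{02}t^2(-1)+t^2=0$; pairing with $w$ yields $-w_{q-2}\pm t\,w_j\le 0$ for all $t$, and letting $t\to\infty$ forces $w_j=0$. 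This rescaling (quadratic in $e_q$, linear in $e_j$) is the device that makes the indefinite cross term $\epsilon_{02}x_qx_{q-2}$ do the work of a negative square, and it is the first genuinely new step.

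For the ``only if'' part I would argue by contraposition: assume the exceptional case $\epsilon_{01}=\epsilon_{02}=-1$, $\epsilon_{q+1}=\cdots=\epsilon_n=1$, so $C^+(g)$ is the punctured cone above, and exhibit a vector in $C^+(g)^\circ\setminus L^+(g)^\circ$. I propose $w=e_{q-2}+e_{q-1}+4e_q$, which already fails to lie in $L^+(g)^\circ$ since $w_q\ne 0$. To see $w\in C^+(g)^\circ$, write $a=-d_{q-2}\ge 0$, $b=-d_{q-1}\ge 0$ for $d\in C^+(g)$; the only nontrivial case is $d_q>0$, where the constraint $Q(d')\le 0$ reads $d_q(a+b)+\epsilon_{12}ab+\sum_{j\ge q+1}d_j^2\le 0$ and hence $d_q(a+b)\le ab$ with $a+b>0$ (otherwise $d$ would be the excluded ray). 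Then
\[
w\cdot d=-a-b+4d_q\le -(a+b)+\frac{4ab}{a+b}=\frac{-(a-b)^2}{a+b}\le 0
\]
by the AM--GM inequality, uniformly in $\epsilon_{12}$; for $d_q\le 0$ the bound $w\cdot d\le 0$ is clear. Thus $w\in C^+(g)^\circ\setminus L^+(g)^\circ$, so GCQ is violated.

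The two delicate points are therefore (i) the rescaled test vectors $t^2e_q-e_{q-2}\pm t\,e_j$ in the ``if'' part, which let the indefinite cross terms substitute for the missing negative squares, and (ii) the explicit separating functional $w=e_{q-2}+e_{q-1}+4e_q$ together with the AM--GM estimate in the ``only if'' part; everything else is the same polar-cone bookkeeping used for types (6)--(8). I expect the AM--GM bound---which is precisely what lets a single $w$ work for both signs of $\epsilon_{12}$---to be the main obstacle to state cleanly.
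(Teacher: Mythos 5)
Your proposal is correct, and it follows the same overall route as the paper: compute $L^+(g)^\circ$, use the tangent-cone dichotomy of Proposition~\ref{prop:tangent cone type (9) r=0}, prove the ``if'' direction by testing polar vectors against rescaled elements of $\overline{D}(q-3,Q)$ whose cross terms make $Q$ vanish, and prove the ``only if'' direction by exhibiting an explicit separating functional. Two small differences are worth noting. In the ``if'' part the paper avoids your case split and relabelling entirely by using the single family $-\tfrac{1}{s}e_{q-1}+\epsilon_{01}\epsilon_j s\,e_q\pm e_j$, whose sign-adapted coefficient $\epsilon_{01}\epsilon_j$ makes $Q$ vanish for \emph{all} sign patterns, so that $\overline{D}(q-3,Q)^\circ\subset L^+(g)^\circ$ holds unconditionally; your vectors $t^2e_q-e_{q-2}\pm t\,e_j$ are the same device after scaling, just specialized to the case $\epsilon_{02}=\epsilon_j=1$. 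In the ``only if'' part the paper splits on $\epsilon_{12}$ and uses two different separating vectors ($e_q$ when $\epsilon_{12}=1$, and $e_{q-2}+e_{q-1}+e_q$ when $\epsilon_{12}=-1$), each with a direct quotient estimate; your single vector $e_{q-2}+e_{q-1}+4e_q$ with the bound $d_q(a+b)\le ab$ and the AM--GM inequality $(a+b)^2\ge 4ab$ handles both signs of $\epsilon_{12}$ at once, which is a genuine (if modest) streamlining of that step. I verified the key computations in both of your delicate points and they are sound.
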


\begin{proof}
It is easy to check that $L^+ \left(g\right)^\circ$ is equal to 
\begin{equation}
\left\{ w \in \mathbb{R}^n \middle| w_{q-2} \ge 0, w_{q-1} \ge 0, w_q =\cdots = w_n=0\right\}.
\end{equation}
First we show that 
\begin{math}
\overline{D} \left( q-3, Q \right)^\circ \subset L^+ \left( g \right)^\circ
\end{math}. Take any 
\begin{math}
w \in \overline{D} \left( q-3, Q \right)^\circ
\end{math}. Since 
\begin{math}
\pm e_q \in \overline{D} \left( q-3, Q \right)^\circ
\end{math}, 
\begin{math}
w_q = 0
\end{math}
holds. Since 
\begin{math}
- e_j \in \overline{D} \left( q-3, Q \right)^\circ
\end{math}
holds for 
\begin{math}
j \in \left\{ q-2, q-1 \right\}
\end{math}, 
\begin{math}
w_j \ge 0
\end{math}
holds for 
\begin{math}
j \in \left\{ q-2, q-1 \right\}
\end{math}.
Since 
\begin{math}
\displaystyle - \frac{1}{s} e_{q-1} + \epsilon_{01} \epsilon_j s e_q \pm e_j \in \overline{D} \left( q-3, Q \right)^\circ
\end{math}
holds for 
\begin{math}
s > 0
\end{math}
and
\begin{math}
j \in \left\{ q+1, \ldots, n \right\}
\end{math}, 
\begin{math}
\displaystyle - \frac{1}{s} w_{q-1} \pm w_j \le 0
\end{math}
holds. Since this holds for arbitrarily large 
\begin{math}
s > 0
\end{math}, we obtain 
\begin{math}
w_j = 0
\end{math}
for 
\begin{math}
j \in \left\{ q+1, \ldots, n \right\}
\end{math}. This proves
\begin{math}
w \in L^+ \left( g \right)^\circ
\end{math}.

Therefore, if 
\begin{math}
C^+ \left( g \right) = \overline{D} \left( q-3, Q \right)
\end{math}
holds, GCQ holds. By Proposition~\ref{prop:tangent cone type (9) r=0}, this holds if either one of $\epsilon_{01}, \epsilon_{02}$ is $1$, or one of $\epsilon_{q+1}, \ldots, \epsilon_n$ is $-1$.
In what follows, we consider the case that 
\begin{math}
\epsilon_{01} = \epsilon_{02} = -1
\end{math}
and 
\begin{math}
\epsilon_{q+1} = \cdots = \epsilon_n = 1
\end{math}
and prove that GCQ is violated in this case. 

Suppose
\begin{math}
\epsilon_{12} = 1
\end{math}. We claim that 
\begin{math}
w = e_q
\end{math}
is contained in 
\begin{math}
C^+ \left( g \right)^\circ
\end{math}. Take any 
\begin{math}
d \in C^+ \left( g \right)
\end{math}. Then, 
\begin{equation}
- d_qd_{q-1} - d_qd_{q-2}+d_{q-1}d_{q-2}+\sum_{j=q+1}^{n} d_j^2 \le 0
\end{equation}
holds. If 
\begin{math}
d_{q-1} + d_{q-2} = 0
\end{math}, 
\begin{math}
d_{q+1} = \cdots =d_n = 0
\end{math}
holds and thus 
\begin{math}
d_q < 0
\end{math}
holds by Proposition~\ref{prop:tangent cone type (9) r=0}. In this case, 
\begin{math}
w \cdot d = d_q < 0
\end{math}
holds. If 
\begin{math}
d_{q-1} + d_{q-2} < 0
\end{math}, 
\begin{equation}
d_q \le \frac{- d_{q-1}d_{q-2} - \sum_{j=q+1}^{n} d_j^2}{-d_{q-1}-d_{q-2}} < 0
\end{equation}
holds and thus 
\begin{math}
w \cdot d = d_q < 0
\end{math}
holds. Since 
\begin{math}
w
\end{math}
is not contained in 
\begin{math}
L^+ \left( g \right)^\circ
\end{math}, this proves that GCQ is violated in this case.

Suppose
\begin{math}
\epsilon_{12} = -1
\end{math}. We claim that 
\begin{math}
w = e_{q-2} + e_{q-1} + e_q
\end{math}
is contained in 
\begin{math}
C^+ \left( g \right)^\circ
\end{math}.
Take any 
\begin{math}
d \in C^+ \left( g \right)
\end{math}. Then, 
\begin{equation}
- d_qd_{q-1} - d_qd_{q-2}-d_{q-1}d_{q-2}+\sum_{j=q+1}^{n} d_j^2 \le 0
\end{equation}
holds. If 
\begin{math}
d_{q-1} + d_{q-2} = 0
\end{math}, 
\begin{math}
d_{q+1} = \cdots =d_n = 0
\end{math}
holds and thus 
\begin{math}
d_q < 0
\end{math}
holds by Proposition~\ref{prop:tangent cone type (9) r=0}. In this case, 
\begin{math}
w \cdot d = d_q < 0
\end{math}
holds. If 
\begin{math}
d_{q-1} + d_{q-2} < 0
\end{math}, 
\begin{equation}
d_{q-2} + d_{q-1} + d_q \le \frac{-d_{q-2}^2 - d_{q-1}^2 - d_{q-1}d_{q-2} - \sum_{j=q+1}^{n} d_j^2}{-d_{q-1}-d_{q-2}} < 0
\end{equation}
holds and thus 
\begin{math}
w \cdot d = d_{q-2} + d_{q-1} + d_q < 0
\end{math}
holds. Since 
\begin{math}
w
\end{math}
is not contained in 
\begin{math}
L^+ \left( g \right)^\circ
\end{math}, this proves that GCQ is violated in this case. 
\end{proof}

\subsubsection*{The germ of type $(10)$ in Table~\ref{table:generic constraint r=0}}

We put 
\begin{align*}
g =& \left(x_1,\ldots, x_{q-1},-\sum_{j=1}^{q-4}x_j+\sum_{j=1}^{3}\delta_jx_{q-4+j}^2+\sum_{1\leq i<j\leq 3}\alpha_{ij}x_{q-4+i}x_{q-4+j}\right. \\
&\left. \hspace{2em}+\epsilon_{0}x_{q-3}x_{q-2}x_{q-1}+\sum_{j=q}^{n}\epsilon_jx_j^2\right),
\end{align*}
where $\alpha\in \R$ and $\delta_j\in \{1,-1\}$ satisfy the condition ($\ast\ast$) in Table~\ref{table:generic constraint r=0} and $\epsilon_j\in \{1,-1\}$, $Q =+\sum_{j=1}^{3}\delta_jx_{q-4+j}^2+\sum_{1\leq i<j\leq 3}\alpha_{ij}x_{q-4+i}x_{q-4+j}+\sum_{j=q}^{n}\epsilon_jx_j^2$, and $R=\epsilon_{0}x_{q-3}x_{q-2}x_{q-1}$.

\begin{proposition}\label{prop:tangent cone type (10) r=0}

The tangent cone $C^+(g)$ is equal to $\overline{D}(q-4,Q)$. 

\end{proposition}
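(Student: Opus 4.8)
The plan is to prove the two inclusions separately. The inclusion $C^+(g)\subseteq \overline{D}(q-4,Q)$ is immediate from the first part of Lemma~\ref{lem:tangent cone for r=0}, so the substance lies in proving $\overline{D}(q-4,Q)\subseteq C^+(g)$. To this end I would apply Corollary~\ref{cor:tangent cone for r=0} with $l=q-4$, $s=3$, the quadratic polynomial $P(x')=\sum_{j=1}^{3}\delta_j x_{q-4+j}^2+\sum_{1\le i<j\le 3}\alpha_{ij}x_{q-4+i}x_{q-4+j}$ in the variables $x_{q-3},x_{q-2},x_{q-1}$, and $R=\epsilon_0 x_{q-3}x_{q-2}x_{q-1}$, which is indeed homogeneous of degree $3$ in those same variables. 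The corollary then yields $\overline{D}(q-4,Q)\setminus N\subseteq C^+(g)$, where
\[
N:=\left\{d=(0,\dots,0,d_{q-3},d_{q-2},d_{q-1},0,\dots,0)\in \overline{D}(q-4,Q)~\middle|~R(d')>0\right\},
\]
together with the full equality $C^+(g)=\overline{D}(q-4,Q)$ whenever $\epsilon_j=-1$ for some $j\ge q$. It therefore remains only to show that every point of the exceptional set $N$ also lies in $C^+(g)$.

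Next I would pin down $N$ explicitly. Any $d\in N$ satisfies $d_{q-3},d_{q-2},d_{q-1}\le 0$ and $d_q=\dots=d_n=0$, so the product $d_{q-3}d_{q-2}d_{q-1}$ is nonpositive and vanishes unless all three factors are strictly negative. Consequently $R(d')=\epsilon_0 d_{q-3}d_{q-2}d_{q-1}>0$ forces $\epsilon_0=-1$ together with $d_{q-3},d_{q-2},d_{q-1}<0$; in particular $N$ is empty when $\epsilon_0=1$, so in that case the equality follows at once. For $\epsilon_0=-1$ the set $N$ consists exactly of those $d$ whose only nonzero entries are $d_{q-3},d_{q-2},d_{q-1}<0$ and which satisfy $Q(d')=P(d')\le 0$, the constraint inherited from membership in $\overline{D}(q-4,Q)$.

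Finally I would recover each such $d$ via the second part of Lemma~\ref{lem:tangent cone for r=0}. If $P(d')<0$ then $Q(d')<0$ and the first bullet of that lemma applies directly. If $P(d')=0$ I would invoke the last bullet with the test vector $v=-\nabla Q(d')$. Here the role of condition $(\ast\ast)$ enters: writing $M_P$ for the symmetric $3\times 3$ matrix with diagonal entries $\delta_1,\delta_2,\delta_3$ and off-diagonal entries $\alpha_{ij}/2$, condition $(\ast\ast)$ guarantees that $\det M_P\neq 0$, so that $\nabla Q(d')=2M_P\,(d_{q-3},d_{q-2},d_{q-1})^{t}$ is nonzero for the nonzero vector $(d_{q-3},d_{q-2},d_{q-1})$ (the $x_j$-components of $\nabla Q(d')$ for $j\ge q$ vanish because $d_j=0$ there). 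Thus $v$ has nonzero entries only in positions $q-3,q-2,q-1$, condition (a) is vacuous since none of $d_{q-3},d_{q-2},d_{q-1}$ is zero, and $v'\cdot\nabla Q(d')=-\norm{\nabla Q(d')}^2<0$ verifies condition (b). Hence $d\in C^+(g)$, giving $N\subseteq C^+(g)$ and completing the proof.

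The genuine content, and the step I expect to be the crux, is precisely the recovery of the exceptional set $N$. In contrast with types $(7)$--$(9)$, where the quadratic form in the relevant variables is degenerate and some such directions genuinely escape the tangent cone, here the nondegeneracy encoded in $(\ast\ast)$ prevents $\nabla Q(d')$ from vanishing at any nonzero $(d_{q-3},d_{q-2},d_{q-1})$; this is exactly what lets the first-order perturbation push the approximating curve into $M(g)$ and forces the \emph{unconditional} equality $C^+(g)=\overline{D}(q-4,Q)$.
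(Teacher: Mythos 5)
Your proof is correct and follows essentially the same route as the paper: reduce via Corollary~\ref{cor:tangent cone for r=0} to the exceptional set (nonempty only when $\epsilon_0=-1$, with $d_{q-3},d_{q-2},d_{q-1}<0$), then use the regularity of the quadratic form guaranteed by $(\ast\ast)$ to produce a test vector for Lemma~\ref{lem:tangent cone for r=0}. The only difference is cosmetic: you take $v=-\nabla Q(d')$ directly, while the paper perturbs $d$ by a small multiple of a coordinate vector $e_{q-4+\ell}$ with $(\nabla Q(d'))_\ell\neq 0$; both choices verify the lemma's conditions.
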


\begin{proof}
By Corollary~\ref{cor:tangent cone for r=0} (with $P(x')=\sum_{j=1}^{3}\delta_jx_{q-4+j}^2+\sum_{1\leq i<j\leq 3}\alpha_{ij}x_{q-4+i}x_{q-4+j}$ and $s=3$), $\overline{D}(q-4,Q)\setminus \{(0,\ldots,0,d_{q-3},d_{q-2},d_{q-1},0,\ldots, 0)\in \R^n~|~\epsilon_0d_{q-3}d_{q-2}d_{q-1}>0\}$ is contained in $C^+(g)$, and $C^+(g)=\overline{D}(q-4,Q)$ if $\epsilon_j=-1$ for some $j\geq q$. 
Since $d_{q-3},d_{q-2},d_{q-1}\leq 0$ for any $d\in \overline{D}(q-4,Q)$, $C^+(g)=\overline{D}(q-4,Q)$ if $\epsilon_0=1$.

In what follows, we assume $\epsilon_0=-1$ and $\epsilon_{q}=\cdots =\epsilon_n = +1$. 
Let $d=d_{q-3}e_{q-3}+d_{q-2}e_{q-2}+d_{q-1}e_{q-1}\in \overline{D}(q-4,Q)$ with $d_{q-3}d_{q-2}d_{q-1}<0\Leftrightarrow d_{q-3},d_{q-2},d_{q-1}\neq 0$. 
Since $d\in \overline{D}(q-4,Q)$, $Q(d')$ is less than or equal to $0$. 
By Lemma~\ref{lem:tangent cone for r=0}, $d$ is contained in $C^+(g)$ if $Q(d')<0$. 
Suppose that $Q(d') = \frac{1}{2}({}^td' \mathrm{Hess}(Q) d')$ is equal to $0$.
The matrix $\mathrm{Hess}(Q)$ is regular since $\delta_j$ and $\alpha_{ij}$ satisfy the condition ($\ast\ast$) is Table~\ref{table:generic constraint r=0}.
Since $d'\neq 0$, one of the components of $\nabla(Q)(d')=\mathrm{Hess}(Q)d'$, say the $\ell$-th component ($\ell\in \{1,2,3\}$), is not $0$. 
The vector $v = d - \varepsilon\mathrm{sign}((\nabla Q(d'))_\ell)e_{q-4+\ell}$ satisfies the conditions in Lemma~\ref{lem:tangent cone for r=0} for $0<\varepsilon \ll 1$. 
Indeed, $v_1=\cdots =v_{q-4}=0$, $v_{q-4+j}=d_{q-4+j}< 0$ for $j\neq \ell$, $v_{q-4+\ell} = d_{q-4+\ell}-\varepsilon\mathrm{sign}((\nabla Q(d'))_\ell) <0$ for $0<\varepsilon \ll 1$, and 
\[
v'\cdot \nabla Q(d')={}^t d'\mathrm{Hess}(Q)d'- \varepsilon\mathrm{sign}((\nabla Q(d'))_\ell) (\nabla Q(d'))_\ell<0. 
\]
By Lemma~\ref{lem:tangent cone for r=0}, $d$ is contained in $C^+(g)$.
\end{proof}

\begin{proposition}\label{prop:ACQ type (10) r=0}

ACQ holds for $g$ if and only if 
\begin{math}
\delta_1 = \delta_2 = \delta_3 = -1
\end{math}, 
\begin{math}
\epsilon_q = \cdots = \epsilon_n = -1
\end{math}
and there exist distinct indices $i,j,k \in \{1,2,3\}$ such that either
\begin{enumerate}
\item $\alpha_{ij}\le 0,\ \alpha_{ik}\le 0,\ \alpha_{jk}<2$, \quad or
\item $0<\alpha_{ij}<2,\ 0<\alpha_{ik}<2,\ 
      \displaystyle \alpha_{jk}+\frac{\alpha_{ij}\alpha_{ik}}{2}
      < 2\sqrt{\Bigl(1-\frac{\alpha_{ij}^2}{4}\Bigr)\Bigl(1-\frac{\alpha_{ik}^2}{4}\Bigr)}$,
\end{enumerate}
%
%one of the following holds.
%\begin{enumerate}
%\item $\alpha_{12} \le 0, \alpha_{13} \le 0, \alpha_{23} < 2$,
%\item $\alpha_{12} \le 0, \alpha_{23} \le 0, \alpha_{13} < 2$,
%\item $\alpha_{13} \le 0, \alpha_{23} \le 0, \alpha_{12} < 2$,
%\item $\displaystyle 0 < \alpha_{12} < 2, 0 < \alpha_{13} < 2, \alpha_{23} + \frac{\alpha_{12} \alpha_{13}}{2} < 2 \sqrt{\left( 1-\frac{\alpha_{12}^2}{4} \right) \left( 1-\frac{\alpha_{13}^2}{4} \right)}$,
%\item $\displaystyle 0 < \alpha_{12} < 2, 0 < \alpha_{23} < 2, \alpha_{13} + \frac{\alpha_{12} \alpha_{23}}{2} < 2 \sqrt{\left( 1-\frac{\alpha_{12}^2}{4} \right) \left( 1-\frac{\alpha_{23}^2}{4} \right)}$,
%\item $\displaystyle 0 < \alpha_{13} < 2, 0 < \alpha_{23} < 2, \alpha_{12} + \frac{\alpha_{13} \alpha_{23}}{2} < 2 \sqrt{\left( 1-\frac{\alpha_{13}^2}{4} \right) \left( 1-\frac{\alpha_{23}^2}{4} \right)}$.
%\end{enumerate}
where we regard that $\alpha_{ij}$ is attached to the unordered pair $\{i,j\}$ (that is, we assume $\alpha_{ji}=\alpha_{ij}$).

\end{proposition}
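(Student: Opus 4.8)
The plan is to combine the tangent-cone computation already in hand with a finite-dimensional copositivity analysis. By Proposition~\ref{prop:tangent cone type (10) r=0} we have $C^+(g)=\overline{D}(q-4,Q)$ unconditionally, while the linearized cone is $L^+(g)=\{d\mid d_1=\cdots=d_{q-4}=0,\ d_{q-3},d_{q-2},d_{q-1}\le 0\}$ with $d_q,\dots,d_n$ unconstrained. Since $\overline{D}(q-4,Q)=L^+(g)\cap\{Q(d')\le 0\}$ and $C^+\subseteq L^+$ always holds, ACQ is equivalent to the implication ``$d\in L^+(g)\Rightarrow Q(d')\le 0$''. First I would reduce this to a statement about the ternary form in the active variables: writing $Q(d')=\widetilde{Q}(d_{q-3},d_{q-2},d_{q-1})+\sum_{j=q}^{n}\epsilon_j d_j^2$ with $\widetilde{Q}(y)=\sum_{j=1}^{3}\delta_j y_j^2+\sum_{i<j}\alpha_{ij}y_iy_j$, the free coordinates $d_q,\dots,d_n$ force $\epsilon_q=\cdots=\epsilon_n=-1$ (otherwise $e_j\in L^+(g)$ gives $Q=1>0$), after which the residual diagonal $-\sum d_j^2$ only helps, so ACQ reduces to $\widetilde{Q}(y)\le 0$ for all $y\le 0$, equivalently, by evenness of $\widetilde Q$, to \emph{copositivity of $-\widetilde{Q}$ on $\mathbb{R}^3_{\ge 0}$}. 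Testing the rays $-e_{q-4+i}$ forces $\delta_1=\delta_2=\delta_3=-1$.

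Second, I would establish sufficiency of (1) and (2) by completing the square in a pivot variable $y_i$, with remaining indices $j,k$: here $\widetilde{Q}$ is a downward parabola in $y_i$ with maximizer $y_i^{\ast}=(\alpha_{ij}y_j+\alpha_{ik}y_k)/2$. Under condition (1) the mixed terms $\alpha_{ij}y_iy_j,\alpha_{ik}y_iy_k$ are already $\le 0$ on the orthant, so $\widetilde{Q}(y)\le -y_j^2-y_k^2+\alpha_{jk}y_jy_k\le 0$ because $\alpha_{jk}<2$. Under condition (2) one has $\alpha_{ij}y_j+\alpha_{ik}y_k\ge 0$, so $y_i^{\ast}\ge 0$ is feasible and the constrained maximum over $y_i\ge 0$ equals the unconstrained one; substituting gives $\widetilde{Q}_{\max}(y_j,y_k)=-Ay_j^2-By_k^2+Cy_jy_k$ with $A=1-\alpha_{ij}^2/4>0$, $B=1-\alpha_{ik}^2/4>0$ and $C=\alpha_{jk}+\alpha_{ij}\alpha_{ik}/2$, whence $\widetilde{Q}_{\max}\le -(\sqrt{A}\,y_j-\sqrt{B}\,y_k)^2\le 0$ exactly because $C<2\sqrt{AB}$.

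Third, for necessity I would run these computations in reverse. The key combinatorial observation is that for any $2$-colouring of the three edges $\{12\},\{13\},\{23\}$ of a triangle by the signs ``$\alpha\le 0$'' and ``$\alpha>0$'', some vertex is incident to two edges of the same colour (the no-monochromatic-vertex configuration is self-contradictory). Choosing such a monochromatic pivot $i$ and reading off the $2$-dimensional faces $y_i=0$, which force $\alpha_{jk}\le 2$ under copositivity and hence $\alpha_{jk}<2$ via the clause $\alpha_{jk}\ne 2$ of $(\ast\ast)$, lands me in case (1) when both incident coefficients are $\le 0$; when both are positive, the boundary faces force $\alpha_{ij},\alpha_{ik}\in(0,2)$ and the full-orthant inequality $\widetilde{Q}_{\max}\le 0$ forces $C\le 2\sqrt{AB}$, i.e.\ $C^2\le 4AB$. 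The identity $C^2-4AB=4\det\!\big(\tfrac12\mathrm{Hess}(Q)\big)$, together with the second nondegeneracy clause of $(\ast\ast)$ (which is precisely $\det(\tfrac12\mathrm{Hess}(Q))\ne 0$ when all $\delta_j=-1$), upgrades this to the strict inequality of (2).

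The main obstacle I anticipate is the constrained nature of the pivot completion: the maximum of $\widetilde{Q}$ over $y_i\ge 0$ coincides with the unconstrained maximum only when $y_i^{\ast}$ is nonnegative, which is not automatic. Isolating the pivot for which the sign of $\alpha_{ij}y_j+\alpha_{ik}y_k$ is controlled on the whole orthant is exactly what forces the monochromatic-pivot dichotomy and the split into cases (1) and (2); verifying that such a pivot always exists, and that $(\ast\ast)$ removes every boundary equality so that the characterization is an exact ``if and only if'', is the delicate part of the argument.
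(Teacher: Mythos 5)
Your proposal is correct and follows essentially the same route as the paper's proof: reduce ACQ to non-positivity of the ternary form $\widetilde Q$ on the negative orthant (after forcing $\delta_1=\delta_2=\delta_3=-1$ and $\epsilon_q=\cdots=\epsilon_n=-1$ by testing coordinate rays), prove sufficiency of (1) by sign bounds on the mixed terms and of (2) by completing the square in the pivot variable, and prove necessity by the same case split on the signs of the off-diagonal coefficients combined with the nondegeneracy condition $(\ast\ast)$. Your two refinements---the pigeonhole argument guaranteeing a vertex incident to two same-signed edges, and the identity $C^2-4AB=4\det\bigl(\tfrac12\mathrm{Hess}(\widetilde Q)\bigr)$ that links strictness to the second clause of $(\ast\ast)$---merely make explicit what the paper handles via ``by symmetry'' and its unstated appeal to that clause, so the arguments coincide in substance.
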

\begin{proof}
Note that, if
\begin{math}
\delta_1 = \delta_2 = \delta_3 = -1
\end{math}
and
\begin{math}
\epsilon_q = \cdots \epsilon_n = -1
\end{math}
hold, ACQ holds if and only if 
\begin{equation}
Q' \left( d_{q-3}, d_{q-2}, d_{q-1} \right) = - \sum_{j=1}^{3} d_{q-4+j}^2+\sum_{1\leq i<j\leq 3}\alpha_{ij}d_{q-4+i}d_{q-4+j} \le 0
\end{equation}
holds for all 
\begin{math}
d_{q-3}, d_{q-2}, d_{q-1} \le 0
\end{math}.

\noindent
\textbf{Proof of ``if'' part:} 
First of all, we show that ACQ holds for the case 1. 
Without loss of generality, we can assume $\alpha_{12},\alpha_{13}\leq 0$ and $\alpha_{23}<2$.
Take any 
\begin{math}
d_{q-3}, d_{q-2}, d_{q-1} \le 0
\end{math}. Then, 
\begin{align}
Q' &= \left( -d_{q-3}^2 + \alpha_{12} d_{q-3} d_{q-2} + \alpha_{13} d_{q-3} d_{q-2} \right) + \left( -d_{q-2}^2-d_{q-1}^2+\alpha_{23} d_{q-2}d_{q-1} \right) \\
&\le -d_{q-2}^2-d_{q-1}^2+\alpha_{23} d_{q-2}d_{q-1} =- \left( d_{q-2} - d_{q-1} \right)^2 + \left( \alpha_{23} - 2 \right) d_{q-2}d_{q-1} \le 0
\end{align}
holds. Therefore, ACQ holds in the case 1. 

Next, we show ACQ holds in the case 2. 
Without loss of generality, we can assume the inequalities in the case 2. hold for $i=1,j=2,k=3$. 
Take any 
\begin{math}
d_{q-3}, d_{q-2}, d_{q-1} \le 0
\end{math}. Then, 
\begin{multline}
Q' = - \left( d_{q-3} - \frac{\alpha_{12}}{2} d_{q-2} - \frac{\alpha_{13}}{2} d_{q-1} \right)^2 \\
- \left( 1-\frac{\alpha_{12}^2}{4} \right) d_{q-2}^2 - \left( 1-\frac{\alpha_{13}^2}{4} \right) d_{q-1}^2 + \left( \alpha_{23} + \frac{\alpha_{12} \alpha_{13}}{2} \right) d_{q-2} d_{q-1} \label{eq:Qtwopos}
\end{multline}
holds. Since the second line of the equation is non-positive for all 
\begin{math}
d_{q-2}, d_{q-1} \le 0
\end{math}
by the condition, ACQ holds.

\noindent
\textbf{Proof of ``only if'' part:} 
The condition
\begin{math}
\delta_1 = \delta_2 = \delta_3 = -1
\end{math}
is necessary for ACQ to hold since if one of 
\begin{math}
\delta_1, \delta_2, \delta_3
\end{math}
is 
\begin{math}
1
\end{math}, say 
\begin{math}
\delta_1 = 1
\end{math}, then, 
\begin{math}
e_{q-3} \in L^+ \left( g \right)
\end{math}
whereas 
\begin{math}
Q \left( e_{q-3} \right) > 0
\end{math}
and thus
\begin{math}
e_{q-3} \notin C^+ \left( g \right)
\end{math}. The condition 
\begin{math}
\epsilon_q = \cdots = \epsilon_n = -1
\end{math}
is also necessary for ACQ to hold since if one of 
\begin{math}
\epsilon_q, \ldots, \epsilon_n
\end{math}
is 
\begin{math}
1
\end{math}, say 
\begin{math}
\epsilon_n = 1
\end{math}, then, 
\begin{math}
e_n \in L^+ \left( g \right)
\end{math}
whereas 
\begin{math}
Q \left( e_n \right) > 0
\end{math}
and thus 
\begin{math}
e_n \notin C^+ \left( g \right)
\end{math}. Therefore, in what follows, we assume 
\begin{math}
\delta_1 = \delta_2 = \delta_3 = -1
\end{math}
and
\begin{math}
\epsilon_q = \cdots = \epsilon_n = -1
\end{math}. 

We show the contraposition of the claim. 
Suppose that the conditions 1. and 2. do not hold for any distinct $i,j,k\in \{1,2,3\}$. 
First, we show that ACQ does not hold if two of 
\begin{math}
\alpha_{12}, \alpha_{13}, \alpha_{23}
\end{math}
is non-positive. 
By symmetry, we can assume $\alpha_{12},\alpha_{13}\leq 0$. 
Since the condition 1. does not hold for $i=1,j=2,k=3$, $\alpha_{23}$ is greater than $2$.
In this case, 
\begin{math}
Q' \left( 0, -1, -1 \right) = \alpha_{23} - 2 > 0
\end{math}
holds and thus ACQ does not hold. In what follows, we assume more than one of 
\begin{math}
\alpha_{12}, \alpha_{13}, \alpha_{23}
\end{math}
is positive. By symmetry, we can assume 
\begin{math}
\alpha_{12}, \alpha_{13} > 0
\end{math}
without loss of generality. Then, Eq.~\eqref{eq:Qtwopos}, as a function of 
\begin{math}
d_{q-3}
\end{math}, attains the maximum 
\begin{equation}
Q'' = - \left( 1-\frac{\alpha_{12}^2}{4} \right) d_{q-2}^2 - \left( 1-\frac{\alpha_{13}^2}{4} \right) d_{q-1}^2 + \left( \alpha_{23} + \frac{\alpha_{12} \alpha_{13}}{2} \right) d_{q-2} d_{q-1}.
\end{equation}
at
\begin{math}
\displaystyle d_{q-3} = \frac{\alpha_{12}}{2} d_{q-2} + \frac{\alpha_{13}}{2} d_{q-3} \le 0
\end{math}. By noting that 
\begin{equation}
\alpha_{12} \neq 2, \alpha_{13} \neq 2, \alpha_{23} + \frac{\alpha_{12} \alpha_{13}}{2} \neq 2 \sqrt{\left( 1-\frac{\alpha_{12}^2}{4} \right) \left( 1-\frac{\alpha_{13}^2}{4} \right)}
\end{equation}
holds, 
\begin{math}
Q'' \le 0
\end{math}
holds for all 
\begin{math}
d_{q-2}, d_{q-1} \le 0
\end{math}
if and only if the condition 2. (for $i=1,j=2,k=3$) hold (cf.~the proof of Proposition~\ref{prop:ACQ type (6) r=0}). 
\end{proof}

\begin{proposition}

GCQ holds for $g$ if and only if one of 
\begin{math}
\epsilon_q, \ldots, \epsilon_n
\end{math}
is 
\begin{math}
-1
\end{math}.

\end{proposition}

\begin{proof}
It is easy to check that $L^+ \left(g\right)^\circ$ is equal to 
\begin{equation}
\left\{ w \in \mathbb{R}^n \middle| w_{q-3} \ge 0, w_{q-2} \ge 0, w_{q-1} \ge 0, w_q =\cdots = w_n=0\right\}.
\end{equation}

\noindent
\textbf{Proof of ``if'' part:} Without loss of generality, we can assume 
\begin{math}
\epsilon_q = -1
\end{math}. Take any 
\begin{math}
w \in C^+ \left( g \right)^\circ
\end{math}. Since 
\begin{math}
\pm e_q \in C^+ \left( g \right)
\end{math}, 
\begin{math}
w_q = 0
\end{math}
holds. Since 
\begin{math}
\pm e_q \pm e_j \in C^+ \left( g \right)
\end{math}
holds for 
\begin{math}
j \in \left\{ q+1, \ldots, n \right\}
\end{math}, 
\begin{math}
w_j = 0
\end{math}
holds. Since 
\begin{math}
- e_j \pm e_q \in C^+ \left( g \right)
\end{math}
holds for 
\begin{math}
j \in \left\{ q-3, q-2, q-1 \right\}
\end{math}, 
\begin{math}
w_j \ge 0
\end{math}
holds for 
\begin{math}
j \in \left\{ q-3, q-2, q-1 \right\}
\end{math}. This proves 
\begin{math}
w \in L^+ \left( g \right)^\circ
\end{math}. 

\noindent
\textbf{Proof of ``only if'' part:} We show the contraposition of the statement, that is, if all of 
\begin{math}
\epsilon_q, \ldots, \epsilon_n
\end{math}
are 
\begin{math}
1
\end{math}, GCQ is violated. Let 
\begin{math}
\lambda_1, \lambda_2, \lambda_3 \neq 0
\end{math}
be the eigenvalues of 
\begin{math}
\begin{pmatrix}
\delta_1 & \alpha_{12}/2 & \alpha_{13}/2 \\
\alpha_{12}/2 & \delta_2 & \alpha_{23}/2 \\
\alpha_{13}/2 & \alpha_{23}/2 & \delta_3 
\end{pmatrix}
\end{math}. We claim that 
\begin{math}
w = R\left( e_{q-1} + e_{q-2} + e_{q-3} \right) + e_q
\end{math}
is contained in 
\begin{math}
C^+ \left( g \right)^\circ
\end{math}
for 
\begin{math}
R = \sqrt{\max \left\{ \left| \lambda_1 \right|, \left| \lambda_2 \right|, \left| \lambda_3 \right| \right\}}
\end{math}. Take arbitrary 
\begin{math}
d \in C^+ \left( g \right)
\end{math}. Then, 
\begin{align}
\sum_{j=q}^n d_j^2 &\le - \left( \sum_{j=1}^{3}\delta_jd_{q-4+j}^2+\sum_{1\leq i<j\leq 3}\alpha_{ij}d_{q-4+i}d_{q-4+j} \right) \\
&\le \max \left\{ \left| \lambda_1 \right|, \left| \lambda_2 \right|, \left| \lambda_3 \right| \right\} \|\left( d_{q-3}, d_{q-2},d_{q-1}\right)\|^2
% \\
%&\le \max \left\{ \left| \lambda_1 \right|, \left| \lambda_2 \right|, \left| \lambda_3 \right| \right\} \left( d_{q-3} + d_{q-2} + d_{q-1} \right)^2
\end{align}
holds. 
%This implies that 
%\begin{math}
%\left| d_q \right| \le - \sqrt{\max \left\{ \left| \lambda_1 \right|, \left| \lambda_2 \right|, \left| \lambda_3 \right| \right\}} \left( d_{q-2} + d_{q-1} + d_{q-3} \right)
%\end{math}
%holds. 
Therefore, we obtain the following inequality:
{\allowdisplaybreaks
\begin{align*}
w \cdot d =& R \left( d_{q-3} + d_{q-2} + d_{q-1} \right) + d_q \\
\leq & -R(|d_{q-1}|+|d_{q-2}|+|d_{q-3}|) +|d_q| & (\because d_{q-1},d_{q-2},d_{q-3}\leq 0)\\
\leq & -R\left\|\left(d_{q-1},d_{q-2},d_{q-3}\right)\right\| + |d_q| & \left(\because \left\|\left(d_{q-1},d_{q-2},d_{q-3}\right)\right\| \leq |d_{q-1}|+|d_{q-2}|+|d_{q-3}|\right)\\
\leq & -R \sqrt{\dfrac{\sum_{j=q}^{n} d_j^2}{\max\{|\lambda_1|,|\lambda_2|,|\lambda_3|\}}}+|d_q| \leq 0.
\end{align*}
}%
This proves 
\begin{math}
w \in C^+ \left( g \right)^\circ
\end{math}. Since 
\begin{math}
w
\end{math}
is not contained in 
\begin{math}
L^+ \left( g \right)^\circ
\end{math}, GCQ is violated. 
\end{proof}

{
\subsubsection{ACQ and GCQ in Table~\ref{table:generic constraint q>0 r=1}}

\begin{proposition}\label{prop:ACQ for q>0 r=1}

ACQ does not hold for any germ in Table~\ref{table:generic constraint q>0 r=1}.

\end{proposition}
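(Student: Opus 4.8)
The plan is to exploit the uniform shape of the germs in Table~\ref{table:generic constraint q>0 r=1}, where $g = (x_1,\ldots,x_q)$ and $h$ is a scalar function with $\corank(dh_0)=1$, hence $dh_0 = 0$. Because all first-order derivatives of $h$ vanish while the gradients of the $g_j$ are the coordinate covectors, the linearized cone collapses to the pure orthant condition $L^+(g,h) = \{d \in \R^n \mid d_1 \le 0, \ldots, d_q \le 0\}$, with no additional equality contributed by $h$. I would then produce a single vector lying in $L^+(g,h)$ but not in $C^+(g,h)$, which immediately forces $C^+(g,h) \subsetneq L^+(g,h)$ and hence the failure of ACQ.

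To bound $C^+(g,h)$ from above, the plan is to invoke Lemma~\ref{lem:tangent cone for r=1}(1), which gives $C^+(g,h) \subset \overline{C}(q,Q) = \{d \mid d_1,\ldots,d_q \le 0,\ Q(d)=0\}$, where $Q$ is the quadratic part of $h$. The key observation is that in every normal form of Table~\ref{table:generic constraint q>0 r=1}, the quadratic part contains the diagonal block $\sum_{j=q+1}^n \epsilon_j x_j^2$ with $\epsilon_j \in \{1,-1\}$; since $n \gg q$, there is at least one index $j_0 > q$. Taking $d = e_{j_0}$, we have $d_1 = \cdots = d_q = 0$, so $d \in L^+(g,h)$, while $Q(e_{j_0}) = \epsilon_{j_0} \neq 0$, so $e_{j_0} \notin \overline{C}(q,Q)$ and therefore $e_{j_0} \notin C^+(g,h)$.

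Thus the argument is entirely uniform across the eight types: the presence of a free square variable with index exceeding $q$ supplies a linearized-cone direction on which $Q$ does not vanish, and the containment $C^+ \subset \{Q=0\}$ rules that direction out of the tangent cone. The only thing to check case by case is the bookkeeping that each $h$ in the table indeed carries such a term $\epsilon_{j_0} x_{j_0}^2$ in its quadratic part, which is a direct read-off from the table (for types $(1,k)$, $(2)$, $(3,k)$ the tail sum starts at $j = q+1 = 2$; for $(4)$--$(6)$ at $j = q+1 = 3$; for $(7)$--$(8)$ at $j = q+1 = 4$). I do not anticipate any genuine obstacle: the proof is a short application of Lemma~\ref{lem:tangent cone for r=1} together with the trivial computation of $L^+(g,h)$, and the writeup can simply exhibit $e_n$ as the separating vector.
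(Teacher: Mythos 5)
Your proposal is correct and is essentially the paper's own proof: the paper likewise computes $L^+(g,h)=\{d\in\R^n \mid d_1,\ldots,d_q\le 0\}$, applies Lemma~\ref{lem:tangent cone for r=1} to get $C^+(g,h)\subset \overline{C}(q,Q)$, and exhibits $e_n$ as a vector in $L^+(g,h)\setminus\overline{C}(q,Q)$. The only blemish is your index bookkeeping (e.g., for type $(7)$ one has $q=2$ while the tail sum starts at $j=4$, and for types $(2)$, $(3,k)$ it starts at $j=3$ rather than $j=q+1=2$), but this is immaterial since every normal form's quadratic part contains $\epsilon_n x_n^2$, so $Q(e_n)=\epsilon_n\neq 0$ in all cases.
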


\begin{proof}
Let $(g,h) = (x_1,\ldots, x_q,h)$ be a germ in Table~\ref{table:generic constraint q>0 r=1}.
It is easy to check that the linearized cone $L^+(g,h)$ is equal to $\{d\in \R^n~|~d_1,\ldots, d_q\leq 0\}$. 
By Lemma~\ref{lem:tangent cone for r=1}, the tangent cone $C^+(g,h)$ is contained in $\overline{C}(q,Q) =\{d\in \R^n~|~d_1,\ldots, d_q\leq 0, Q(d)=0\}$, where $Q$ is the quadratic part of $h$.
One can easily check that $e_n$ is contained in $L^+(g,h)$ but not in $\overline{C}(q,Q)$, in particular $C^+(g,h) \subset \overline{C}(q,Q)\subsetneq L^+(g,h)$. 
\end{proof}

\subsubsection*{The germ of type $(1,k)$ ($k\geq 3$) in Table~\ref{table:generic constraint q>0 r=1}}

Let $(g,h) = \left(x_1,x_1^k+\sum_{j=2}^{n} \epsilon_j x_j^2\right)$, where $\epsilon_j\in \{1,-1\}$, $Q = \sum_{j=2}^{n} \epsilon_j x_j^2$, and $R=x_1^k$. 

\begin{proposition}\label{prop:tangent cone type (1,k) q,r>0}

The tangent cone $C^+(g,h)$ is equal to $\{0\}$ if all the $\epsilon_j$'s are $(-1)^k$, and $C^+(g,h)$ is equal to $\overline{C}(1,Q)$ otherwise. 

\end{proposition}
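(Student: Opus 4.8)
The plan is to exploit the two general tangent-cone lemmas already in hand, namely Lemma~\ref{lem:tangent cone for r=1}, specialized to $q=1$, $Q=\sum_{j=2}^n\epsilon_jx_j^2$, and $R=x_1^k$. Part (1) of that lemma gives the inclusion $C^+(g,h)\subset \overline{C}(1,Q)$ for free, so the whole content of the proposition is to decide when this inclusion is an equality and when instead the cone degenerates to $\{0\}$. I would therefore split the argument according to the dichotomy in the statement: whether or not every $\epsilon_j$ ($2\le j\le n$) equals $(-1)^k$.

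First I would dispose of the case in which all $\epsilon_j=(-1)^k$ by showing the feasible set is a single point. Writing $x_1=-s$ with $s\ge 0$ on $\{x_1\le 0\}$, one has $x_1^k=(-1)^ks^k$ and $\epsilon_jx_j^2=(-1)^kx_j^2$, so that $(-1)^k h(x)=s^k+\sum_{j=2}^nx_j^2\ge 0$, with equality forcing $s=0$ and $x_2=\cdots=x_n=0$. Hence $M(g,h)=\{0\}$ and $C^+(g,h)=\{0\}$ directly from the definition of the tangent cone. The only mild point to record here is that a sequence in $\{0\}$ can only produce the limit $0$.

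For the complementary case I would prove the reverse inclusion $\overline{C}(1,Q)\subset C^+(g,h)$ via Lemma~\ref{lem:tangent cone for r=1}(2). Take $d$ with $d_1\le 0$ and $Q(d)=0$. If $d_1=0$ then $R(d)=d_1^k=0$ (indeed $R_r(d)=0$ for every $r\ge 3$, since $R$ is a pure power of $x_1$), so the first alternative of the lemma applies and $d\in C^+(g,h)$. If $d_1<0$ I would build the auxiliary vector $v$: the constraint ``$v_j\le 0$ for $j$ with $d_j=0$'' is vacuous because $d_1<0$, and $\nabla Q(d)$ has vanishing first component. When $\nabla Q(d)\neq 0$ I take $v=-\operatorname{sign}(d_1^k)\,\nabla Q(d)$, so that $v\cdot\nabla Q(d)$ is nonzero with sign opposite to $R_{r_0}(d)=d_1^k$. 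When $\nabla Q(d)=0$, that is $d_2=\cdots=d_n=0$, I fall back on the second-order test: since not all $\epsilon_j$ equal $(-1)^k$, there is an index $j_0$ with $\epsilon_{j_0}=(-1)^{k+1}$, and the choice $v=e_{j_0}$ gives ${}^tv\,\mathrm{Hess}(Q)\,v=2\epsilon_{j_0}$, whose sign $(-1)^{k+1}$ is opposite to $\operatorname{sign}(d_1^k)=(-1)^k$. In every subcase the hypotheses of Lemma~\ref{lem:tangent cone for r=1}(2) are met, so $d\in C^+(g,h)$, completing the equality $C^+(g,h)=\overline{C}(1,Q)$.

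I expect the only genuinely delicate step to be the boundary subcase $d_1<0$ with $\nabla Q(d)=0$, where the first-order descent test degenerates and one is forced into the Hessian criterion; this is precisely the place where the hypothesis ``not all $\epsilon_j=(-1)^k$'' is indispensable, as it is exactly what guarantees a second-order direction of the correct sign. The remaining bookkeeping (the vacuity of the sign condition on $v_1$, and matching the parity of $d_1^k$ to the sign of $\epsilon_{j_0}$) is routine but worth stating explicitly so that the appeal to Lemma~\ref{lem:tangent cone for r=1}(2) is airtight.
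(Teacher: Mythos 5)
Your proposal is correct and follows essentially the same route as the paper's proof: the degenerate case is handled by showing $M(g,h)=\{0\}$ directly, and the other case by applying Lemma~\ref{lem:tangent cone for r=1}(2), using the first-order test $v=-\operatorname{sign}(d_1^k)\nabla Q(d)$ when $\nabla Q(d)\neq 0$ and the Hessian test with $v=e_{j_0}$, $\epsilon_{j_0}=(-1)^{k+1}$, when $d_2=\cdots=d_n=0$ (the paper's choices $v=(-1)^{k+1}\nabla Q(d)$ and $v=e_2$ after relabeling are identical). Your explicit verification that the feasible set is a point, and of the vacuity of the sign condition on $v_1$, merely fills in details the paper leaves to the reader.
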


\begin{proof}
In the former case, it is easy to see that $M(g,h)$ is equal to $\{0\}$, and thus its tangent cone is also $\{0\}$. 
In the latter case, we can assume that $\epsilon_2=(-1)^{k+1}$ without loss of generality. 
Let $d\in \overline{C}(1,Q)$. 
Since $R_k=x_1^k$ and $R_r=0$ for $r\neq k$, one can deduce from Lemma~\ref{lem:tangent cone for r=1} that $d$ is contained in $C^+(g,h)$ if $d_1=0$.
Suppose that $d_1 <  0$. 
If $(d_2,\ldots, d_n)\neq (0,\ldots,0)$, the vector $v = (-1)^{k+1}\nabla Q(d)$ satisfies the conditions in Lemma~\ref{lem:tangent cone for r=1}. 
Indeed, 
\[
v\cdot \nabla Q(d) = (-1)^{k+1}\| (0,\ldots,2\epsilon_j d_j,\ldots)\| = (-1)^{k+1} \sum_{j=2}^{n}d_j^2, 
\]
whose sign is opposite to $R_k(d) = d_1^k$. 
Thus, $d$ is contained in $C^+(g,h)$ by Lemma~\ref{lem:tangent cone for r=1}.
If $d_2=\cdots = d_n=0$, $\nabla Q (d)=0$ and the vector $v=e_2$ satisfies the conditions in Lemma~\ref{lem:tangent cone for r=1}. 
Indeed, ${}^tv\mathrm{Hess}(Q)v =2\epsilon_2 = 2(-1)^{k+1}$, whose sign is opposite to $R_k(d)$. 
Again, we can deduce from Lemma~\ref{lem:tangent cone for r=1} that $d$ is contained in $C^+(g,h)$. 
\end{proof}

\begin{proposition}

ACQ does not hold for $(g,h)$. 

\end{proposition}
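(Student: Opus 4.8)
The plan is to recognize that this germ is a special case of a family already handled. The germ $(g,h) = (x_1, x_1^k + \sum_{j=2}^n \epsilon_j x_j^2)$ is of the form $(x_1,\ldots, x_q, h)$ with $q=1$, so it appears in Table~\ref{table:generic constraint q>0 r=1}; hence the statement is an immediate consequence of Proposition~\ref{prop:ACQ for q>0 r=1}, which asserts that ACQ fails for every germ in that table. I would simply invoke that proposition.

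For a self-contained argument I would instead compare the two cones directly. First I would record that the single inequality constraint is $g = x_1$ while $dh_0 = 0$ (the normal form has no linear part), so the linearized cone is the half-space $L^+(g,h) = \{d \in \R^n : d_1 \leq 0\}$. Next I would invoke Proposition~\ref{prop:tangent cone type (1,k) q,r>0}, according to which $C^+(g,h)$ equals either $\{0\}$ or $\overline{C}(1,Q) = \{d : d_1 \leq 0,\ Q(d) = 0\}$ with $Q = \sum_{j=2}^n \epsilon_j x_j^2$.

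Finally I would exhibit a vector witnessing $C^+(g,h) \subsetneq L^+(g,h)$. When $C^+(g,h) = \{0\}$ this is clear, since $L^+(g,h)$ is a half-space. When $C^+(g,h) = \overline{C}(1,Q)$, the vector $e_n$ lies in $L^+(g,h)$ (as its first coordinate is $0$) but not in $\overline{C}(1,Q)$, because $Q(e_n) = \epsilon_n \neq 0$. In either case ACQ is violated. I do not anticipate any real obstacle: the only subtlety is observing that $dh_0 = 0$, so that the equality constraint contributes no linear condition to $L^+(g,h)$, which is visible directly from the normal form.
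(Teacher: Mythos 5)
Your proposal is correct and follows essentially the same route as the paper: the paper's proof also identifies $L^+(g,h)=\{d\in\mathbb{R}^n \mid d_1\le 0\}$ and concludes from $C^+(g,h)\subset \overline{C}(1,Q)\subsetneq L^+(g,h)$ (the witness being exactly a vector like $e_n$ with $Q(e_n)=\epsilon_n\neq 0$), and indeed the paper's earlier Proposition on Table~\ref{table:generic constraint q>0 r=1} already covers this germ by the same argument. Your only deviation is cosmetic: you invoke the full tangent-cone determination of Proposition~\ref{prop:tangent cone type (1,k) q,r>0}, whereas the inclusion $C^+(g,h)\subset\overline{C}(1,Q)$ from Lemma~\ref{lem:tangent cone for r=1} suffices.
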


\begin{proof}
It is easy to check that $L^+(g,h)$ is equal to $\{d\in \R^n~|~d_1\leq 0\}$, which contains $\overline{C}(1,Q)=\{d\in \R^n~|~ d_1\leq 0, \sum_{j=2}^{n}\epsilon_jd_j^2=0\}$ as a proper subset. 
Thus, $C^+(g,h)$ is not equal to $L^+(g,h)$ since $C^+(g,h)\subset \overline{C}(1,Q)$. 
\end{proof}

\begin{proposition}\label{prop:GCQ type (1,k) q,r>0}

GCQ holds for $(g,h)$ if and only if $\epsilon_j\epsilon_l=-1$ for some $j,l\in \{2,\ldots, n\}$. 

\end{proposition}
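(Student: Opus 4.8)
The plan is to compute both polars explicitly and compare them. First I would record the linearized cone and its polar. Since $g = x_1$ and $dh_0 = 0$ (recall $k \geq 3$, so every term of $h$ vanishes to order $\geq 2$ and its gradient at the origin is zero), the linearized cone is $L^+(g,h) = \{d \in \R^n ~|~ d_1 \leq 0\}$, a half-space. Probing with $-e_1$ and with $\pm e_j$ for $j \geq 2$ shows that its polar is the ray $L^+(g,h)^\circ = \{t e_1 ~|~ t \geq 0\}$. This is the target against which $C^+(g,h)^\circ$ must be measured; since $C^+(g,h) \subset L^+(g,h)$ always holds, the inclusion $L^+(g,h)^\circ \subset C^+(g,h)^\circ$ is automatic, and GCQ reduces to the reverse inclusion $C^+(g,h)^\circ \subset \{t e_1 ~|~ t \geq 0\}$.

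For the ``if'' part, assume $\epsilon_j \epsilon_l = -1$ for some pair $j, l \in \{2, \ldots, n\}$. Then not all $\epsilon_j$ equal $(-1)^k$, so Proposition~\ref{prop:tangent cone type (1,k) q,r>0} gives $C^+(g,h) = \overline{C}(1,Q)$ with $Q = \sum_{j=2}^n \epsilon_j x_j^2$. I would then take any $w \in \overline{C}(1,Q)^\circ$ and show $w \in \{t e_1 ~|~ t \geq 0\}$. The vector $-e_1$ lies in $\overline{C}(1,Q)$ (it satisfies $d_1 \leq 0$ and $Q(-e_1) = 0$), yielding $w_1 \geq 0$. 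For each $j \geq 2$ I would choose an index $l'$ of the opposite sign, so that $\epsilon_j \epsilon_{l'} = -1$; such an $l'$ exists by hypothesis. Then all four vectors $\pm e_j \pm e_{l'}$ satisfy $Q(\pm e_j \pm e_{l'}) = \epsilon_j + \epsilon_{l'} = 0$ and have vanishing first coordinate, hence lie in $\overline{C}(1,Q)$; testing $w$ against them forces $w_j = 0$. This establishes $\overline{C}(1,Q)^\circ \subset \{t e_1 ~|~ t \geq 0\}$ and hence GCQ.

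For the ``only if'' part I would argue by contraposition: suppose all $\epsilon_j$ with $j \geq 2$ share a common sign $\delta$, and split into two sub-cases according to the value of $\delta$. If $\delta = (-1)^k$, then $C^+(g,h) = \{0\}$ by Proposition~\ref{prop:tangent cone type (1,k) q,r>0}, so $C^+(g,h)^\circ = \R^n$, which strictly contains the ray $\{t e_1 ~|~ t \geq 0\}$, and GCQ fails. If $\delta = (-1)^{k+1}$, then $Q(d) = \delta \sum_{j \geq 2} d_j^2$ vanishes on $\overline{C}(1,Q)$ only when $d_2 = \cdots = d_n = 0$, so $C^+(g,h) = \overline{C}(1,Q) = \{t e_1 ~|~ t \leq 0\}$, whose polar is the half-space $\{w \in \R^n ~|~ w_1 \geq 0\}$, again strictly larger than the ray. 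In both sub-cases $C^+(g,h)^\circ \neq L^+(g,h)^\circ$, so GCQ is violated.

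The main subtlety I anticipate lies in the ``only if'' direction: the single hypothesis ``all $\epsilon_j$ have the same sign'' does not determine the tangent cone uniquely, but must be resolved into the two sub-cases $\delta = (-1)^k$ and $\delta = (-1)^{k+1}$, which yield genuinely different cones ($\{0\}$ versus the ray $\{t e_1 ~|~ t \leq 0\}$) and hence different polars, even though GCQ fails in both. Keeping this dichotomy explicit and matching it correctly to the two outputs of Proposition~\ref{prop:tangent cone type (1,k) q,r>0} is where care is needed; the remaining polar computations are the routine probe-vector arguments used repeatedly in this section.
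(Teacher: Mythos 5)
Your proposal is correct and follows essentially the same route as the paper's proof: the same probe vectors $\pm e_j \pm e_{l}$ (with $\epsilon_j\epsilon_{l}=-1$) to force $w_j=0$ in the ``if'' direction, and the same contrapositive split into the sub-cases where all $\epsilon_j$ equal $(-1)^k$ (cone $\{0\}$, polar $\mathbb{R}^n$) or all equal $(-1)^{k+1}$ (cone a ray, polar a half-space), both compared against $L^+(g,h)^\circ=\{w~|~w_1\geq 0,\ w_2=\cdots=w_n=0\}$. The only difference is cosmetic: you make explicit the step $w_1\geq 0$ via the test vector $-e_1$, which the paper leaves implicit.
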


\begin{proof}
It is easy to check that $L^+(g,h)^\circ$ is equal to $\{w\in \R^n~|~w_1\geq 0, w_2=\cdots =w_n=0\}$. 

\noindent
\textbf{Proof of ``if'' part:}
%Suppose that $\epsilon_j\epsilon_l=-1$ for some $j,l$.
Take $w\in C^+(g,h)^\circ$. 
For any $j\in \{2,\ldots, n\}$, we can take $l\in \{2,\ldots, n\}$ so that $\epsilon_j\epsilon_l=-1$ by the assumption. 
Since $\pm e_j \pm e_l\in C^+(g,h)$, the following inequality holds for any pair of signs:
\[
0 \leq w \cdot (\pm e_j\pm e_l) = \pm w_j \pm w_l. 
\]
We can thus deduce $w_j=0$, and $C^+(g,h)^\circ \subset L^+(g,h)^\circ$. 
The opposite inclusion also holds since $C^+(g,h)\subset L^+(g,h)$. 

\noindent
\textbf{Proof of ``only if'' part:}
If $\epsilon_j=(-1)^k$ for any $j\geq 0$, $C^+(g,h)$ is equal to $\{0\}$ by Proposition~\ref{prop:tangent cone type (1,k) q,r>0}, and its polar is $\R^n \neq L^+(g,h)^\circ$. 
If $\epsilon_j=(-1)^{k+1}$ for any $j\geq 0$, $C^+(g,h)$ is equal to $\overline{C}(1,Q) = \{d\in \R^n~|~d_1\leq 0,d_2=\cdots =d_n=0\}$ by Proposition~\ref{prop:tangent cone type (1,k) q,r>0}, and its polar is $\{w\in \R^n~|~w_1\geq 0\} \neq L^+(g,h)^\circ$. 
\end{proof}

\subsubsection*{The germ of type $(2)$ in Table~\ref{table:generic constraint q>0 r=1}}

Let $(g,h) = \left(x_1,x_2^3+x_1^2 + \sum_{j=3}^{n} \epsilon_j x_j^2\right)$, where $\epsilon_j\in \{1,-1\}$, $Q =x_1^2+ \sum_{j=3}^{n} \epsilon_j x_j^2$, and $R=x_2^3$. 

\begin{proposition}\label{prop:tangent cone type (2) q,r>0}

The tangent cone $C^+(g,h)$ is equal to $\overline{C}(1,Q)\setminus \{(0,d,0,\ldots, 0)\in \R^n~|~d>0\}$ if $\epsilon_3=\cdots = \epsilon_n=1$, and $C^+(g,h)$ is equal to $\overline{C}(1,Q)$ otherwise. 

\end{proposition}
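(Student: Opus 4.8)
The plan is to apply Lemma~\ref{lem:tangent cone for r=1} to the data $Q = x_1^2 + \sum_{j=3}^n \epsilon_j x_j^2$ and $R = x_2^3$, observing that the only nonzero homogeneous part of $R$ is $R_3(d) = d_2^3$, so that $r_0 = 3$ whenever $d_2 \neq 0$. Part (1) of the lemma immediately gives $C^+(g,h) \subseteq \overline{C}(1,Q)$, so it remains to decide, for each $d \in \overline{C}(1,Q)$, whether $d$ lies in the tangent cone, and this is where the case split on the $\epsilon_j$ enters.

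First I would dispatch the routine cases via part (2) of the lemma. If $d_2 = 0$ then $R_r(d) = 0$ for all $r \geq 3$, so $d \in C^+(g,h)$. If $d_2 \neq 0$ but $\nabla Q(d) \neq 0$, I take $v = -\mathrm{sign}(d_2^3)\,\nabla Q(d)$; since the second coordinate of $\nabla Q$ is identically zero and its first coordinate is $2d_1$, the sign condition on $v \cdot \nabla Q(d)$ holds, and the admissibility requirement $v_1 \leq 0$ (needed only when $d_1 = 0$, where $(\nabla Q(d))_1 = 0$) is automatic, so $d \in C^+(g,h)$. The only delicate configuration is $\nabla Q(d) = 0$, which forces $d = (0, d_2, 0, \ldots, 0)$ with $d_2 \neq 0$; here I pass to $\mathrm{Hess}(Q) = \diag(2, 0, 2\epsilon_3, \ldots, 2\epsilon_n)$. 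For $d_2 < 0$ the vector $v = -e_1$ satisfies $v_1 \leq 0$ and ${}^t v\,\mathrm{Hess}(Q)\,v = 2 > 0$, opposite in sign to $R_3(d) = d_2^3 < 0$, so $d \in C^+(g,h)$; for $d_2 > 0$ I instead need ${}^t v\,\mathrm{Hess}(Q)\,v < 0$ with $v_1 \leq 0$, which is achievable precisely when some $\epsilon_j = -1$ with $j \geq 3$, taking $v = e_j$. This already shows $C^+(g,h) = \overline{C}(1,Q)$ whenever some $\epsilon_j = -1$ ($j\geq 3$), and that in the remaining case $\epsilon_3 = \cdots = \epsilon_n = 1$ everything except possibly the open ray $\{(0, d_2, 0, \ldots, 0) \mid d_2 > 0\}$ lies in the tangent cone.

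To finish I would prove that this ray is genuinely excluded when $\epsilon_3 = \cdots = \epsilon_n = 1$. The key observation is a sign constraint read directly off the defining equation: for $x \in M(g,h)$ the equality $h(x) = 0$ gives $x_2^3 = -x_1^2 - \sum_{j=3}^n x_j^2 \leq 0$, hence $x_2 \leq 0$; passing to any approximating data $t_m x^{(m)} \to d$ with $t_m > 0$ then forces $d_2 \leq 0$ for every $d \in C^+(g,h)$, ruling out $d_2 > 0$. I expect the verifications in the second paragraph to be unproblematic, essentially bookkeeping with the lemma; the genuine content, and the step worth stating most carefully, is this final sign argument $x_2 \leq 0$ on the feasible set, since it is exactly what distinguishes the two cases and pins down which ray must be removed.
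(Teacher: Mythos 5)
Your proposal is correct and follows essentially the same route as the paper's proof: both apply Lemma~\ref{lem:tangent cone for r=1} with $Q = x_1^2 + \sum_{j=3}^n \epsilon_j x_j^2$, $R = x_2^3$, split into the cases $d_2 = 0$, $\nabla Q(d) \neq 0$, and $d = (0,d_2,0,\ldots,0)$, and conclude with the same sign argument $x_2^3 = -x_1^2 - \sum_{j\geq 3} x_j^2 \leq 0$ on $M(g,h)$ to exclude the ray $d_2 > 0$ when $\epsilon_3 = \cdots = \epsilon_n = 1$. The only (immaterial) difference is your unified choice $v = -\mathrm{sign}(d_2^3)\,\nabla Q(d)$ where the paper uses the coordinate vector $v = -d_2\epsilon_j d_j e_j$; you correctly verify the admissibility condition $v_1 \leq 0$, so both work.
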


\begin{proof}
The tangent cone $C^+(g,h)$ is contained in $\overline{C}(1,Q)$ by Lemma~\ref{lem:tangent cone for r=1}. 
Let $d\in \overline{C}(1,Q)$. 
Since $R_3 = R = x_2^3$, $d$ is contained in $\underline{C}(1,Q,R)\subset C^+(g,h)$ if $d_2=0$. 
If $d_2\neq 0$ and $d_j$ is not $0$ for $j\geq 3$, the vector $v = -d_2\epsilon_jd_je_j$ satisfies the conditions in Lemma~\ref{lem:tangent cone for r=1} for $d$. 
Indeed, $R_3(d) = d_2^3$ and $v\cdot \nabla Q(d) = -2d_2 \epsilon_j^2d_j^2$, in particular the signs of these values are mutually opposite. 

In what follows, we assume $d_2\neq 0$ and $d_3=\cdots =d_n=0$. 
Since $Q(d) = d_1^2$, $d_1$ is equal to $0$ and $\nabla Q (d)=0$. 
If $d_2<0$, the vector $v=-e_1$ satisfies the conditions in Lemma~\ref{lem:tangent cone for r=1} for $d$ since $v_1=-1<0$, $R_3(d)=d_2^3<0$, $v\cdot \nabla Q(d)=0$, and ${}^tv\mathrm{Hess}(Q)v = 2>0$. 
If $d_2>0$ and some $\epsilon_j$ is $-1$ for $j\geq 3$, the vector $v=e_j$ satisfies the conditions in Lemma~\ref{lem:tangent cone for r=1} for $d$ since $R_3(d)=d_2^3>0$, $v\cdot \nabla Q(d)=0$, and ${}^tv\mathrm{Hess}(Q)v = -2<0$. 
 
So far, we have shown that $\overline{C}(1,Q)\setminus \{(0,d,0,\ldots, 0)\in \R^n~|~d>0\}$ is contained in $C^+(g,h)$, and $C^+(g,h)=\overline{C}(1,Q)$ if $\epsilon_j=-1$ for some $j\geq 3$. 
Lastly, we will show that $(0,d,0,\ldots, 0)$ ($d>0$) is not contained in $C^+(g,h)$ when $\epsilon_3=\cdots =\epsilon_n = 1$. 
Let $x\in M(g,h)$.
Since $h(x)=0$, $x_2^3$ is equal to $-x_1^2 - \sum_{j=3}^{n}x_j^2 \leq 0$, and thus $x_2\leq 0$. 
Thus, $d_2$ is less than or equal to $0$ for any $d\in C^+(g,h)$. 
\end{proof}

\begin{proposition}

GCQ holds for $(g,h)$ if and only if $\epsilon_j\epsilon_l=-1$ for some $j,l\geq 3$. 

\end{proposition}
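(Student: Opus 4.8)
The plan is to follow the template of the preceding GCQ proofs in this subsection: first compute the linearized cone and its polar, then reduce the claim to a single inclusion, and finally dispatch the two directions using the tangent cone already determined in Proposition~\ref{prop:tangent cone type (2) q,r>0}. First I would record that $L^+(g,h)=\{d\in\R^n~|~d_1\leq 0\}$, since $g_1=x_1$ contributes the half-space condition while $dh_0=0$. Its polar is then $L^+(g,h)^\circ=\{w\in\R^n~|~w_1\geq 0,\ w_2=\cdots=w_n=0\}$, obtained by testing against $\pm e_j$ ($j\geq 2$) and $-e_1$. Because $C^+(g,h)\subset L^+(g,h)$ always holds, one gets $L^+(g,h)^\circ\subset C^+(g,h)^\circ$ for free, so GCQ is equivalent to the reverse inclusion $C^+(g,h)^\circ\subset L^+(g,h)^\circ$.

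For the ``if'' direction, assume $\epsilon_j\epsilon_l=-1$ for some $j,l\geq 3$; in particular the $\epsilon$'s do not all agree, so Proposition~\ref{prop:tangent cone type (2) q,r>0} gives $C^+(g,h)=\overline{C}(1,Q)$. I would take $w\in\overline{C}(1,Q)^\circ$. Since $Q$ does not involve $x_2$, the vectors $\pm e_2$ lie in $\overline{C}(1,Q)$, forcing $w_2=0$. For each $j\geq 3$ I would choose $l\geq 3$ with $\epsilon_l=-\epsilon_j$ (possible by hypothesis); then $\pm e_j\pm e_l\in\overline{C}(1,Q)$ because $Q(\pm e_j\pm e_l)=\epsilon_j+\epsilon_l=0$ and the first coordinate vanishes, and testing $w$ against all four sign choices yields $w_j=w_l=0$. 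Finally, fixing some $l$ with $\epsilon_l=-1$, the vector $-e_1+e_l\in\overline{C}(1,Q)$ gives $-w_1+w_l=-w_1\leq 0$, hence $w_1\geq 0$. Thus $w\in L^+(g,h)^\circ$, establishing GCQ.

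For the ``only if'' direction I would argue the contrapositive: if no such pair exists, then $\epsilon_3,\ldots,\epsilon_n$ share a common sign, and I split into two cases. If they are all $+1$, Proposition~\ref{prop:tangent cone type (2) q,r>0} collapses $\overline{C}(1,Q)$ to the $x_2$-axis and $C^+(g,h)$ to its nonpositive half $\{(0,t,0,\ldots,0)~|~t\leq 0\}$, whence $C^+(g,h)^\circ=\{w\in\R^n~|~w_2\geq 0\}$; the vector $-e_1$ lies in this polar but not in $L^+(g,h)^\circ$, so GCQ fails. If they are all $-1$, then $C^+(g,h)=\overline{C}(1,Q)=\{d\in\R^n~|~d_1\leq 0,\ d_1^2=\sum_{j\geq 3}d_j^2\}$ (with $x_2$ free), and I would verify directly that $w=e_1+e_3$ lies in $C^+(g,h)^\circ$: for $d$ in this cone, $w\cdot d=d_1+d_3\leq d_1+\|(d_3,\ldots,d_n)\|=d_1+|d_1|\leq 0$, using $d_3\leq\|(d_3,\ldots,d_n)\|$ and $d_1\leq 0$. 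Since $w\notin L^+(g,h)^\circ$, GCQ again fails.

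I expect the only delicate point to be the final case $\epsilon_3=\cdots=\epsilon_n=-1$, where $\overline{C}(1,Q)$ is a genuine (non-polyhedral) quadratic cone rather than a coordinate subspace; here one cannot read off the polar by testing signed basis vectors alone, and the key is to exhibit the explicit witness $w=e_1+e_3$ and certify $w\in C^+(g,h)^\circ$ through the norm estimate above. Everything else is a routine polar computation parallel to the earlier germs treated in this subsection.
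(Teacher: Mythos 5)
Your proposal is correct and follows essentially the same route as the paper: the same reduction of GCQ to the inclusion $C^+(g,h)^\circ\subset L^+(g,h)^\circ$, the same use of Proposition~\ref{prop:tangent cone type (2) q,r>0} to identify the tangent cone in each case, the same signed-basis-vector tests for the ``if'' part (which the paper delegates to the proof of Proposition~\ref{prop:GCQ type (1,k) q,r>0}), and even the same witness $e_1+e_3$ with the same norm estimate in the all-negative case. No gaps; your write-up is merely more explicit than the paper's in the ``if'' direction.
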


\begin{proof}
It is easy to check that $L^+(g,h)^\circ$ is equal to $\{w\in \R^n~|~w_1\geq 0, w_2=\cdots =w_n=0\}$. 

\noindent
\textbf{Proof of ``if'' part:}
When $\epsilon_j\epsilon_l=-1$ for some $j,l\geq 3$, one can show that $C^+(g,h)^\circ = L^+(g,h)^\circ$ in the same way as in the proof of Proposition~\ref{prop:GCQ type (1,k) q,r>0}. 

\noindent
\textbf{Proof of ``only if'' part:}
If $\epsilon_j=1$ for any $j\geq 3$, $C^+(g,h)$ is equal to $\overline{C}(1,Q)\setminus \{(0,d,0,\ldots, 0)\in \R^n~|~d>0\} = \{d\in \R^n~|~d_2\leq 0,d_1=d_3=\cdots =d_n=0\}$ by Proposition~\ref{prop:tangent cone type (2) q,r>0}, and its polar is $\{w\in \R^n~|~w_2\geq 0\} \neq L^+(g,h)^\circ$. 
If $\epsilon_j=-1$ for any $j\geq 3$, $(1,0,1,0,\ldots,0)$ is contained in $C^+(g,h)^\circ$. 
Indeed, any $d\in C^+(g,h)=\overline{C}(1,Q)$ satisfies $d_1^2 = \sum_{j=3}^{n}d_j^2 \geq d_3^2$, and thus $(1,0,1,0,\ldots,0)\cdot d = d_1+d_3\leq 0$ since $d_1\leq -|d_3|$. 
Hence $C^+(g,h)^\circ \neq L^+(g,h)^\circ$ in this case. 
\end{proof}

\subsubsection*{The germ of type $(3,k)$ in Table~\ref{table:generic constraint q>0 r=1}}

Let $(g,h) = \left(x_1,x_2^k+\epsilon_1x_1x_2 + \sum_{j=3}^{n} \epsilon_j x_j^2\right)$, where $\epsilon_j\in \{1,-1\}$, $Q =\epsilon_1x_1x_2+ \sum_{j=3}^{n} \epsilon_j x_j^2$, and $R=x_2^k$. 

\begin{proposition}

If $k$ is even and $\epsilon_3=\cdots =\epsilon_n=1$, $C^+(g,h)$ is equal to $\overline{C}(1,Q)\setminus \{(0,d,0,\ldots, 0)\in \R^n~|~\epsilon_1d<0\}$.
If $k$ is odd, $\epsilon_1=-1$, and $\epsilon_3=\cdots =\epsilon_n = \delta$ for some $\delta\in \{1,-1\}$, $C^+(g,h)$ is equal to $\overline{C}(1,Q)\setminus \{(0,d,0,\ldots, 0)\in \R^n~|~\delta d >0\}$. 
In the other cases, $C^+(g,h)$ is equal to $\overline{C}(1,Q)$.

\end{proposition}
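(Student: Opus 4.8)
The plan is to apply Lemma~\ref{lem:tangent cone for r=1} directly, since $(g,h)=(x_1,Q+R)$ with $q=1$, quadratic part $Q=\epsilon_1 x_1 x_2+\sum_{j=3}^n\epsilon_j x_j^2$ and higher-order part $R=x_2^k$ (here $k\geq 3$, so $x_2^k$ is genuinely of degree $>2$). Part~(1) gives $C^+(g,h)\subseteq \overline{C}(1,Q)=\{d\mid d_1\leq 0,\ Q(d)=0\}$, so the task is to decide membership for each $d\in\overline{C}(1,Q)$. I would first dispose of the routine directions. If $d_2=0$ then $R_r(d)=0$ for all $r$, so $d\in C^+(g,h)$ by the first alternative of Lemma~\ref{lem:tangent cone for r=1}(2). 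If $d_2\neq 0$ and $d_{j_0}\neq 0$ for some $j_0\geq 3$, then $v=c\,e_{j_0}$ has $v\cdot\nabla Q(d)=2c\epsilon_{j_0}d_{j_0}$, whose sign is free as $c$ varies; choosing it opposite to that of $R_k(d)=d_2^k$ (and noting $v_1=0$) gives $d\in C^+(g,h)$. Finally, a vector with $d_2\neq 0$, $d_1<0$ and $d_3=\cdots=d_n=0$ cannot satisfy $Q(d)=\epsilon_1 d_1 d_2=0$, so it never occurs. Hence the only points whose membership is genuinely in question are the \emph{pure} vectors $d=(0,d_2,0,\ldots,0)$ with $d_2\neq 0$, and the whole statement reduces to these.

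For a pure vector one computes $\nabla Q(d)=\epsilon_1 d_2\,e_1$ and ${}^t v\,\mathrm{Hess}(Q)\,v=2\sum_{j\geq 3}\epsilon_j v_j^2$ whenever $v_1=0$, while $R_{r_0}(d)=d_2^k$; the only restriction on admissible $v$ is $v_1\leq 0$ (as $d_1=0$, $q=1$). I would then test two candidate families: $v=-e_1$, giving the first-order value $-\epsilon_1 d_2$, and $v=e_j$ ($j\geq 3$), giving $v\cdot\nabla Q(d)=0$ and second-order value $2\epsilon_j$. Matching the sign of $d_2^k$ (which is $+1$ for $k$ even and $\mathrm{sign}(d_2)$ for $k$ odd) against these, one finds: $v=-e_1$ succeeds exactly when $\epsilon_1 d_2>0$ (for $k$ even) or when $\epsilon_1=1$ (for $k$ odd); and $v=e_j$ succeeds whenever some $\epsilon_j$ produces the sign opposing $d_2^k$. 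Running through the finitely many sign patterns shows that outside the two distinguished families every pure vector lies in $C^+(g,h)$, whence $C^+(g,h)=\overline{C}(1,Q)$; and in Cases~1 and~2 the same analysis places in $C^+(g,h)$ every pure vector except those in the claimed excluded sets.

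The genuinely substantive step, and the main obstacle, is to show that the excluded pure vectors really do lie outside $C^+(g,h)$ — failure of the sufficient criterion of Lemma~\ref{lem:tangent cone for r=1}(2) is not by itself conclusive. Here I would argue directly from the equation $x_2^k+\epsilon_1 x_1 x_2+\sum_{j\geq 3}\epsilon_j x_j^2=0$ on $M(g,h)$ together with $x_1\leq 0$. In Case~1 ($k$ even, $\epsilon_3=\cdots=\epsilon_n=1$) this yields $\epsilon_1 x_1 x_2=-x_2^k-\sum_{j\geq3}x_j^2\leq 0$, so any $x\in M(g,h)$ with $x_2\neq 0$ has $x_1<0$ and hence $\epsilon_1 x_2\geq 0$; in Case~2 ($k$ odd, $\epsilon_1=-1$, $\epsilon_j=\delta$) multiplying the relation by $\delta$ and using that $k$ is odd gives $\delta x_2\leq 0$ on $M(g,h)$. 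In either case, if $d=(0,d_2,0,\ldots,0)\in C^+(g,h)$ there are $x^{(m)}\to 0$ and $t_m>0$ with $t_m x^{(m)}\to d$; since $t_m x_2^{(m)}\to d_2\neq 0$ we have $x_2^{(m)}\neq 0$ for large $m$, and multiplying the sign inequality by $t_m>0$ and passing to the limit forces $\epsilon_1 d_2\geq 0$ (Case~1) or $\delta d_2\leq 0$ (Case~2), excluding exactly the claimed vectors. Combined with the membership analysis this proves the three formulas; I would finish by checking that the sign patterns $(\epsilon_1,k,\epsilon_3,\ldots,\epsilon_n)$ are partitioned exhaustively into Cases~1, 2, and the remaining ``full'' case.
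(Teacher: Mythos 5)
Your proposal is correct and follows essentially the same route as the paper's proof: the inclusion $C^+(g,h)\subset \overline{C}(1,Q)$ via Lemma~\ref{lem:tangent cone for r=1}, membership for non-pure directions via the test vectors $v=ce_{j}$ and, for pure vectors $(0,d_2,0,\ldots,0)$, via $v=-e_1$ or $v=e_j$ according to the parity of $k$ and the signs $\epsilon_1,\epsilon_j$, and finally exclusion of the remaining pure vectors by the same sign inequalities ($\epsilon_1 x_2\geq 0$, resp.\ $\delta x_2\leq 0$, on $M(g,h)$) passed to the limit along the defining sequences. Your treatment is in fact slightly more careful than the paper's at one point, namely in Case~1 where you note explicitly that $x_2\neq 0$ forces $x_1<0$ before dividing out.
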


\begin{proof}
The tangent cone $C^+(g,h)$ is contained in $\overline{C}(1,Q)$ by Lemma~\ref{lem:tangent cone for r=1}. 
Let $d\in \overline{C}(1,Q)$. 
Since $R_k = R = x_2^k$ and $R_j=0$ for $j\neq k$, $d$ is contained in $\underline{C}(1,Q,R)\subset C^+(g,h)$ if $d_2=0$. 
If $d_2\neq 0$ and $d_j$ is not $0$ for $j\geq 3$, the vector $v = -\epsilon_jd_2^kd_je_j$ satisfies the conditions in Lemma~\ref{lem:tangent cone for r=1} for $d$.
Indeed, $R_3(d) = d_2^k$ and $v\cdot \nabla Q(d) = -2d_2^k\epsilon_j^2d_j^2$, in particular the signs of these values are mutually opposite. 

In what follows, we assume $d_2\neq 0$ and $d_3=\cdots =d_n=0$. 
Since $Q(d) = \epsilon_1d_1d_2$, $d_1$ is equal to $0$ and $\nabla Q (d)=(\epsilon_1d_2,0,\ldots, 0)$. 
If $k$ is even and $\epsilon_1d_2 > 0$, the vector $v=-e_1$ satisfies the conditions in Lemma~\ref{lem:tangent cone for r=1} for $d$ since $v_1=-1<0$, $R_3(d)=d_2^k>0$, and $v\cdot \nabla Q(d)=-\epsilon_1d_2<0$.
If $k$ is even and $\epsilon_j=-1$ for some $j\geq 3$, the vector $v=e_j$ satisfies the conditions in Lemma~\ref{lem:tangent cone for r=1} for $d$ since $v_1=0$, $R_3(d)=d_2^k>0$, $v\cdot \nabla Q(d)=0$, and ${}^tv\mathrm{Hess}(Q)v = -2<0$.
If $k$ is odd and $\epsilon_1=1$, the vector $v=-e_1$ satisfies the conditions in Lemma~\ref{lem:tangent cone for r=1} for $d$ since $v_1=-1<0$, and $v\cdot \nabla Q(d)=-d_2$, whose sign is opposite to $R_3(d)=d_2^k$.
If $k$ is odd, $\epsilon_jd_2<0$ for some $j\geq 3$, the vector $v=e_j$ satisfies the conditions in Lemma~\ref{lem:tangent cone for r=1} for $d$ since $v_1=0$, $v\cdot \nabla Q(d)=0$, and ${}^tv\mathrm{Hess}(Q)v = 2\epsilon_j$, whose sign is opposite to $R_3(d)=d_2^k$.

So far, we have shown that 

\begin{itemize}

\item 
$C^+(g,h) = \overline{C}(1,Q)$ if one of the following conditions holds:

\begin{itemize}

\item 
$k$ is even and $\epsilon_j=-1$ for some $j\geq 3$, 

\item 
$k$ is odd and $\epsilon_1 = 1$, 

\item 
$k$ is odd and $\epsilon_j\epsilon_l=-1$ for some $j, l\geq 3$, 

\end{itemize}

\item 
$\overline{C}(1,Q)\setminus \{(0,d,0,\ldots, 0)\in \R^n~|~\epsilon_1d<0\}$ is contained in $C^+(g,h)$ when $k$ is even and $\epsilon_3=\cdots =\epsilon_n=1$,

\item 
$\overline{C}(1,Q)\setminus \{(0,d,0,\ldots, 0)\in \R^n~|~\delta d >0\}$ is contained in $C^+(g,h)$ when $k$ is odd, $\epsilon_1=-1$, and $\epsilon_3=\cdots =\epsilon_n = \delta$ for some $\delta\in \{1,-1\}$. 

\end{itemize}

\noindent
We will first show that $(0,d,0,\ldots, 0)$ ($\epsilon_1d<0$) is not contained in $C^+(g,h)$ when $k$ is even and $\epsilon_3=\cdots =\epsilon_n = 1$. 
Let $x\in M(g,h)$.
Since $h(x)=0$, $\epsilon_1x_1x_2$ is equal to $-x_2^k - \sum_{j=3}^{n}x_j^2 \leq 0$.
Since $g(x)=x_1\leq 0$, $\epsilon_1x_2\geq 0$. 
Thus, $\epsilon_1d_2$ is larger than or equal to $0$ for any $d\in C^+(g,h)$. 

We will next show that $(0,d,0,\ldots, 0)$ ($\delta d >0$) is not contained in $C^+(g,h)$ when $k$ is odd, $\epsilon_1=-1$, and $\epsilon_3=\cdots = \epsilon_n=\delta$.  
Suppose that there exists a sequence $\{x^{(m)}\}_{m=1}^\infty$ of points in $M(g,h)$ and $t_m>0$ such that $\lim_{m\to\infty} x^{(m)}=0$ and $\lim_{m\to\infty} t_mx^{(m)}=(0,d,0,\ldots,0)$. 
Since $h(x^{(m)})=0$, $x_1^{(m)}x_2^{(m)}$ is equal to $(x_2^{(m)})^k +\delta \sum_{j=3}^{n}(x_j^{(m)})^2$. 
By the assumption, $\delta x_2^{(m)}$ is larger than $0$ for $m\gg 0$, and thus the following inequality holds:
\[
x_1^{(m)}\cdot (\delta x_2^{(m)}) = \delta \left((x_2^{(m)})^k +\delta \sum_{j=3}^{n}(x_j^{(m)})^2\right)>0.
\]
However, $x_1^{(m)}\cdot (\delta x_2^{(m)})$ never be larger than $0$ since $x_1^{(m)} = g(x^{(m)})\leq 0$. 
\end{proof}

\begin{proposition}

GCQ holds for $(g,h)$ if and only if one of the following holds:
\begin{enumerate}
\item 
\begin{math}
k
\end{math}
is even and one of 
\begin{math}
\epsilon_3, \ldots, \epsilon_n
\end{math}
is 
\begin{math}
-1
\end{math}, 
\item
\begin{math}
k
\end{math}
is odd and 
(\begin{math}
\epsilon_1 = 1
\end{math}
or
\begin{math}
\left\{ \epsilon_3, \ldots, \epsilon_n \right\} = \left\{ -1, 1 \right\}
\end{math}).
\end{enumerate}

\end{proposition}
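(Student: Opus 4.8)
The plan is to leverage the tangent-cone description from the preceding proposition and reduce the verification of GCQ to comparing $C^+(g,h)$ with the set $\overline{C}(1,Q)$. I would first record that, because $h$ vanishes to order $\geq 2$ and $g = x_1$, the linearized cone is $L^+(g,h) = \{d \in \R^n \mid d_1 \leq 0\}$, so its polar is $L^+(g,h)^\circ = \{w \in \R^n \mid w_1 \geq 0,\ w_2 = \cdots = w_n = 0\}$. The organizing observation is that, by the tangent-cone proposition, $C^+(g,h) = \overline{C}(1,Q)$ holds precisely under hypothesis (1) or (2), while the two remaining regimes (namely $k$ even with all $\epsilon_j = 1$ for $j \geq 3$, and $k$ odd with $\epsilon_1 = -1$ and all $\epsilon_j$ equal for $j \geq 3$) are exactly those in which $C^+(g,h)$ is a proper subset of $\overline{C}(1,Q)$.

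The central lemma I would prove is that $\overline{C}(1,Q)^\circ = L^+(g,h)^\circ$ for every choice of signs. Since $\overline{C}(1,Q) \subset L^+(g,h)$, only the inclusion ``$\subset$'' is at issue. Given $w \in \overline{C}(1,Q)^\circ$, testing against $-e_1 \in \overline{C}(1,Q)$ yields $w_1 \geq 0$ and testing against $\pm e_2 \in \overline{C}(1,Q)$ yields $w_2 = 0$. To control the remaining coordinates I would use, for each $j \geq 3$ and each $c \in \R$, the curve $d_c = -e_1 + \epsilon_1\epsilon_j c^2\, e_2 + c\, e_j$, which lies in $\overline{C}(1,Q)$ because its first coordinate is $-1 \leq 0$ and $Q(d_c) = -\epsilon_1^2\epsilon_j c^2 + \epsilon_j c^2 = 0$; since $w_2 = 0$, the requirement $w \cdot d_c = -w_1 + c\,w_j \leq 0$ for all $c$ forces $w_j = 0$. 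This identifies $\overline{C}(1,Q)^\circ$ with $L^+(g,h)^\circ$.

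Granting this lemma, the ``if'' direction is immediate: under (1) or (2) we have $C^+(g,h) = \overline{C}(1,Q)$, hence $C^+(g,h)^\circ = L^+(g,h)^\circ$ and GCQ holds. For the ``only if'' direction I would prove the contrapositive by exhibiting, in each of the two exceptional regimes, a witness $w \in C^+(g,h)^\circ \setminus L^+(g,h)^\circ$. In the regime ``$k$ even, $\epsilon_3 = \cdots = \epsilon_n = 1$'' I would take $w = -\epsilon_1 e_2$: for $d \in C^+(g,h)$ with $d_1 < 0$ the relation $\epsilon_1 d_1 d_2 = -\sum_{j \geq 3} d_j^2 \leq 0$ gives $\epsilon_1 d_2 \geq 0$, while for $d_1 = 0$ the constraint collapses to $d = (0, d_2, 0, \ldots, 0)$, where the excised half-line is exactly $\{\epsilon_1 d_2 < 0\}$; both cases yield $w \cdot d = -\epsilon_1 d_2 \leq 0$. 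In the regime ``$k$ odd, $\epsilon_1 = -1$, $\epsilon_3 = \cdots = \epsilon_n = \delta$'' I would take $w = \delta e_2$ and argue symmetrically, using $d_1 d_2 = \delta \sum_{j\geq 3} d_j^2$ and the excised set $\{\delta d_2 > 0\}$. In each case $w_2 \neq 0$, so $w \notin L^+(g,h)^\circ$, and GCQ fails.

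The main obstacle I anticipate is the central lemma of the second paragraph. In contrast with the purely-inequality germs of Table~\ref{table:generic constraint r=0}, where the linearized cone is cut out by sign conditions and the standard basis vectors are themselves feasible, here the constraint $Q(d) = 0$ is a genuine quadratic equality, so the directions $e_j$ with $j \geq 3$ are \emph{not} in $\overline{C}(1,Q)$ and must instead be reached through the one-parameter curves $d_c$. The device that makes this work is letting the mixed term $\epsilon_1 d_1 d_2$ absorb the contribution $\epsilon_j c^2$ for every $c$ simultaneously; once that family is identified, the witness verification in the last paragraph is a routine sign computation.
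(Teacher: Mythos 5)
Your proof is correct and, at the level of strategy, it is the same as the paper's: compute $L^+(g,h)^\circ$, use the preceding tangent-cone proposition to conclude that $C^+(g,h)=\overline{C}(1,Q)$ exactly under conditions (1)--(2), prove $\overline{C}(1,Q)^\circ=L^+(g,h)^\circ$, and, in the two exceptional regimes, exhibit a witness in $C^+(g,h)^\circ\setminus L^+(g,h)^\circ$; your witnesses $-\epsilon_1e_2$ and $\delta e_2$ are literally the ones the paper uses, verified by the same case split on $d_1<0$ versus $d_1=0$. The one genuine difference is in your central lemma, and there your argument is the more solid one. The paper tests $w\in\overline{C}(1,Q)^\circ$ only against the fixed vectors $-e_1+\epsilon_1\epsilon_je_2\pm e_j$, which yields $\pm w_j\le w_1$; since one has already shown $w_1\ge 0$, this alone does not force $w_j=0$, and the paper's displayed chain ``$\pm w_j\le w_1\le 0$'' is not justified as written. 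Your one-parameter family $d_c=-e_1+\epsilon_1\epsilon_jc^2e_2+ce_j$ repairs exactly this point: $Q(d_c)=-\epsilon_jc^2+\epsilon_jc^2=0$ and the first coordinate is $-1\le 0$, so $d_c\in\overline{C}(1,Q)$ for every $c$, and $w\cdot d_c=-w_1+cw_j\le 0$ for all $c\in\R$ does force $w_j=0$. This unbounded-parameter device is the same one the paper itself deploys for the type $(4,k)$ and type $(9)$ germs of Table~\ref{table:generic constraint r=0} and for the type $(7)$ germ of Table~\ref{table:generic constraint q>0 r=1} (there written with a parameter $s\to\infty$), so your proof in effect supplies the scaling that the paper's own verification of this particular proposition omits.
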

\begin{proof}
It is easy to check that 
\begin{math}
L^+ \left( g, h \right)^\circ
\end{math}
is equal to 
\begin{math}
\left\{ w \in \mathbb{R}^n \middle| w_1 \ge 0, w_2 = \cdots = w_n = 0 \right\}
\end{math}.

\noindent
\textbf{Proof of ``if'' part:} By the previous proposition, the conditions implies that 
\begin{math}
C^+ \left( g, h \right)
\end{math}
is equal to 
\begin{math}
\overline{C} \left( 1, Q \right)
\end{math}. Since 
\begin{math}
C^+ \left( g, h \right) \subset L^+ \left( g, h \right)
\end{math}
always holds, it is enough to show 
\begin{math}
\overline{C} \left( 1, Q \right)^\circ \subset L^+ \left( g, h \right)^\circ
\end{math}.

Take any 
\begin{math}
w \in \overline{C} \left( 1, Q \right)^\circ
\end{math}. Since 
\begin{math}
\left( d_1, 0, \ldots, 0 \right) \in \overline{C} \left( 1, Q \right)
\end{math}
holds for 
\begin{math}
d_1 \le 0
\end{math}, 
\begin{math}
w_1 \ge 0
\end{math}
holds. Since 
\begin{math}
\left( 0, d_2, 0, \ldots, 0 \right) \in \overline{C} \left( 1, Q \right)
\end{math}
holds for 
\begin{math}
d_2 \in \mathbb{R}
\end{math}, 
\begin{math}
w_2 = 0
\end{math}
holds. Since 
\begin{math}
-e_1 + \epsilon_1 \epsilon_j e_2 \pm e_j \in \overline{C} \left( 1, Q \right)
\end{math}
holds for all 
\begin{math}
j \in \left\{ 3, \ldots, n \right\}
\end{math}, the following inequality holds: 
{\allowdisplaybreaks
\begin{align*}
&0 \geq w \cdot \left(-e_1 + \epsilon_1 \epsilon_j e_2 \pm e_j\right)=-w_1 \pm w_j\\
\Leftrightarrow & \pm w_j \leq w_1 \leq 0. 
\end{align*}
}%
We thus obtain 
\begin{math}
w_j = 0
\end{math}
for all 
\begin{math}
j \in \left\{ 3, \ldots, n \right\}
\end{math}. This proves 
\begin{math}
w \in L^+ \left( g, h \right)^\circ
\end{math}.

\noindent
\textbf{Proof of ``only if'' part:}
We show the contraposition of the statement.
%, that is, if either
%\begin{enumerate}
%\item $k$ is even and
%\begin{math}
%\epsilon_3 = \cdots = \epsilon_n = 1
%\end{math}, 
%\item $k$ is odd and 
%\begin{math}
%\epsilon_1 = -1
%\end{math}, and 
%\begin{math}
%\epsilon_3 = \cdots = \epsilon_n = \delta
%\end{math}
%for some 
%\begin{math}
%\delta \in \left\{ 1, -1 \right\}
%\end{math}, 
%\end{enumerate}
%holds, GCQ is violated. 
%
If $k$ is even and
\begin{math}
\epsilon_3 = \cdots = \epsilon_n = 1
\end{math}, the previous proposition implies that 
\begin{math}
C^+ \left( g, h \right)
\end{math}
is equal to 
\begin{math}
\overline{C} \left( 1, Q \right) \setminus \left\{ \left( 0, d, 0, \ldots, 0 \right) \in \mathbb{R}^n \middle| \epsilon_1 d < 0 \right\}
\end{math}. We claim that 
\begin{math}
w = \left( 0, w_2, 0, \ldots, 0 \right)
\end{math}
for 
\begin{math}
\epsilon_1 w_2 \le 0
\end{math}
is contained in
\begin{math}
C^+ \left( g, h \right)^\circ
\end{math}. Take any 
\begin{math}
d \in C^+ \left( g, h \right)
\end{math}. Then, 
\begin{math}
d_1 \le 0
\end{math}
and 
\begin{math}
\epsilon_1 d_1 d_2 + \sum_{j=3}^n d_j^2 = 0
\end{math}
holds. The latter implies that 
\begin{math}
\epsilon_1 d_1 d_2 \le 0
\end{math}. If 
\begin{math}
d_1 < 0
\end{math}, 
\begin{math}
\epsilon_1 d_2 \ge 0
\end{math}
holds and thus 
\begin{math}
w \cdot d = w_2 d_2 \le 0
\end{math}
holds. If 
\begin{math}
d_1 = 0
\end{math}, 
\begin{math}
d_3 = \cdots d_n = 0
\end{math}
holds and thus 
\begin{math}
\epsilon_1 d_2 \ge 0
\end{math}. Therefore, 
\begin{math}
w \cdot d = w_2 d_2 \le 0
\end{math}
holds. This proves the claim and thus GCQ is violated. 

If $k$ is odd and 
\begin{math}
\epsilon_1 = -1
\end{math}, and 
\begin{math}
\epsilon_3 = \cdots = \epsilon_n = \delta
\end{math}
for some 
\begin{math}
\delta \in \left\{ 1, -1 \right\}
\end{math}, the previous proposition implies that 
\begin{math}
C^+ \left( g, h \right)
\end{math}
is equal to 
\begin{math}
\overline{C} \left( 1, Q \right) \setminus \left\{ \left( 0, d, 0, \ldots, 0 \right) \in \mathbb{R}^n \middle| \delta d > 0 \right\}
\end{math}. We claim that 
\begin{math}
w = \left( 0, w_2, 0, \ldots, 0 \right)
\end{math}
for 
\begin{math}
\delta w_2 \ge 0
\end{math}
is contained in
\begin{math}
C^+ \left( g, h \right)^\circ
\end{math}. Take any 
\begin{math}
d \in C^+ \left( g, h \right)
\end{math}. Then, 
\begin{math}
d_1 \le 0
\end{math}
and 
\begin{math}
- d_1 d_2 + \delta \sum_{j=3}^n d_j^2 = 0
\end{math}
holds. The latter implies that 
\begin{math}
\delta d_1 d_2 \ge 0
\end{math}. If 
\begin{math}
d_1 < 0
\end{math}, 
\begin{math}
\delta d_2 \le 0
\end{math}
holds and thus 
\begin{math}
w \cdot d = w_2 d_2 \le 0
\end{math}
holds. If 
\begin{math}
d_1 = 0
\end{math}, 
\begin{math}
d_3 = \cdots =d_n = 0
\end{math}
holds and thus 
\begin{math}
\delta d_2 \le 0
\end{math}. Therefore, 
\begin{math}
w \cdot d = w_2 d_2 \le 0
\end{math}
holds. This proves the claim and thus GCQ is violated. 
\end{proof}

\subsubsection*{The germ of type $(4)$ in Table~\ref{table:generic constraint q>0 r=1}}
Let $(g,h) = \left(x_1,x_2,\delta_1 x_1^2+\delta_2x_2^2+ \alpha x_1x_2+\sum_{j=3}^{n} \epsilon_j x_j^2\right)$, where $\delta_j,\epsilon_j\in \{1,-1\}$, $\alpha \in \mathbb{R}, 4 \delta_1 \delta_2 - \alpha^2 \neq 0$, and $Q =\delta_1 x_1^2+\delta_2x_2^2+ \alpha x_1x_2+\sum_{j=3}^{n} \epsilon_j x_j^2$. In this case, 
\begin{math}
C^+ \left( g, h \right) = \overline{C} \left( 2, Q \right)
\end{math}
holds.

\begin{proposition}\label{prop:GCQ type (4) q,r>0}

GCQ holds for $(g,h)$ if and only if 
\begin{math}
\left\{ \epsilon_3, \ldots, \epsilon_n \right\} = \left\{ 1, -1 \right\}
\end{math}
holds.

\end{proposition}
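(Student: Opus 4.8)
The plan is to exploit, exactly as in the previous GCQ computations, the fact that the inclusion $C^+(g,h)\subset L^+(g,h)$ always holds, so that $L^+(g,h)^\circ \subset C^+(g,h)^\circ$ is automatic and GCQ is equivalent to the reverse inclusion $C^+(g,h)^\circ \subset L^+(g,h)^\circ$. First I would record the two cones explicitly: by Proposition~\ref{prop:ACQ for q>0 r=1} we have $L^+(g,h)=\{d\in\R^n\mid d_1\leq 0,\ d_2\leq 0\}$, whose polar is easily seen to be $L^+(g,h)^\circ=\{w\in\R^n\mid w_1\geq 0,\ w_2\geq 0,\ w_3=\cdots=w_n=0\}$; and by the statement preceding the proposition, $C^+(g,h)=\overline{C}(2,Q)=\{d\in\R^n\mid d_1\leq 0,\ d_2\leq 0,\ Q(d)=0\}$. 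The whole argument then reduces to testing which linear functionals $w$ are nonpositive on $\overline{C}(2,Q)$.

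For the \textbf{``if'' part}, assume $\{\epsilon_3,\ldots,\epsilon_n\}=\{1,-1\}$ and fix indices $j_+,j_-\geq 3$ with $\epsilon_{j_+}=1$ and $\epsilon_{j_-}=-1$. Take any $w\in C^+(g,h)^\circ$. Mimicking Proposition~\ref{prop:GCQ type (1,k) q,r>0}, for each $j\geq 3$ I would choose $l\geq 3$ with $\epsilon_l=-\epsilon_j$, so that the four vectors $\pm e_j\pm e_l$ (with first two coordinates $0$) all lie in $\overline{C}(2,Q)$ because $Q(\pm e_j\pm e_l)=\epsilon_j+\epsilon_l=0$; the inequalities $w\cdot(\pm e_j\pm e_l)\leq 0$ then force $w_j=0$ for every $j\geq 3$. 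To get $w_1\geq 0$, I would use the freedom in the $\epsilon$'s to balance the quadric: if $\delta_1=1$ set $d=d_1e_1+|d_1|e_{j_-}$ with $d_1<0$, and if $\delta_1=-1$ set $d=d_1e_1+|d_1|e_{j_+}$, in either case $d\in\overline{C}(2,Q)$ with $d_2=0$, whence $w\cdot d=w_1 d_1\leq 0$ gives $w_1\geq 0$; the argument for $w_2\geq 0$ is symmetric. Together these show $w\in L^+(g,h)^\circ$, completing this direction.

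For the \textbf{``only if'' part} I would argue by contraposition, assuming $\epsilon_3=\cdots=\epsilon_n=\delta$ for a fixed $\delta\in\{1,-1\}$, and this is where the main obstacle lies, since it requires producing an explicit witness in $C^+(g,h)^\circ\setminus L^+(g,h)^\circ$ rather than a finite probing computation. The key estimate, borrowed directly from Proposition~\ref{prop:GCQ type (6) r=0}, is that for $d\in\overline{C}(2,Q)$ the relation $Q(d)=0$ forces $\sum_{j\geq 3}d_j^2 = \mp\bigl(\delta_1 d_1^2+\delta_2 d_2^2+\alpha d_1 d_2\bigr)$ (sign according to $\delta$), so that $\sum_{j\geq 3}d_j^2\leq \max\{|\lambda_1|,|\lambda_2|\}\,\|(d_1,d_2)\|^2$, where $\lambda_1,\lambda_2$ are the (nonzero, by $4\delta_1\delta_2-\alpha^2\neq 0$) eigenvalues of $\begin{pmatrix}\delta_1 & \alpha/2\\ \alpha/2 & \delta_2\end{pmatrix}$. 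Setting $R=\sqrt{\max\{|\lambda_1|,|\lambda_2|\}}$ and $w=R(e_1+e_2)+e_3$, I would then verify for every $d\in C^+(g,h)$ that $w\cdot d = R(d_1+d_2)+d_3 \leq -R\,\|(d_1,d_2)\| + |d_3| \leq -R\,\|(d_1,d_2)\| + R\,\|(d_1,d_2)\| = 0$, using $d_1,d_2\leq 0$ and $|d_3|\leq\sqrt{\sum_{j\geq 3}d_j^2}$. Hence $w\in C^+(g,h)^\circ$, while $w_3=1\neq 0$ shows $w\notin L^+(g,h)^\circ$, so GCQ fails. The delicate point throughout is the spectral bound and the ensuing triangle-inequality estimate, but since the constraint $4\delta_1\delta_2-\alpha^2\neq 0$ guarantees nonsingularity of the quadratic part, this proceeds exactly as in the type $(6)$ case and presents no essential new difficulty.
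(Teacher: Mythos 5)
Your proposal is correct and follows essentially the same route as the paper's proof: the ``if'' part probes $C^+(g,h)^\circ$ with sign-balanced vectors $\pm e_j \pm e_l$ (with $\epsilon_l=-\epsilon_j$) to force $w_j=0$ for $j\geq 3$ and then with vectors of the form $-e_1+e_{j_\pm}$ to get $w_1,w_2\geq 0$, while the ``only if'' part produces the same witness $R(e_1+e_2)+e_3 \in C^+(g,h)^\circ \setminus L^+(g,h)^\circ$ via the spectral bound $\sum_{j\geq 3}d_j^2 \leq \max\{|\lambda_1|,|\lambda_2|\}\,\|(d_1,d_2)\|^2$ and the triangle inequality. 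The only cosmetic differences (taking $R=\sqrt{\max\{|\lambda_1|,|\lambda_2|\}}$ rather than a strict inequality, and scaling the probing vectors) do not affect the argument.
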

\begin{proof}
It is easy to check that 
\begin{math}
L^+ \left( g, h \right)^\circ
\end{math}
is equal to 
\begin{math}
\left\{ w \in \mathbb{R}^n \middle| w_1 \ge 0, w_2 \ge 0, w_3 =\cdots = w_n = 0 \right\}
\end{math}.

\noindent
\textbf{Proof of ``if'' part:} Without loss of generality, we can assume 
\begin{math}
\epsilon_3 = 1
\end{math}
and 
\begin{math}
\epsilon_4 = -1
\end{math}. Take any 
\begin{math}
w \in C^+ \left( g, h \right)^\circ
\end{math}. For each 
\begin{math}
j \in \left\{ 3, \ldots, n \right\}
\end{math}, either 
\begin{math}
\pm e_j \pm e_3
\end{math}
or 
\begin{math}
\pm e_j \pm e_4
\end{math}
is contained in 
\begin{math}
C^+ \left( g, h \right)
\end{math}. This proves 
\begin{math}
w_j = 0
\end{math}
for all 
\begin{math}
j \in \left\{ 3, \ldots, n \right\}
\end{math}. Since either
\begin{math}
-e_j \pm e_3 \in C^+ \left( g, h \right)
\end{math}
or 
\begin{math}
-e_j \pm e_4 \in C^+ \left( g, h \right)
\end{math}
holds for 
\begin{math}
j \in \left\{ 1, 2 \right\}
\end{math}, 
\begin{math}
w_1 \ge 0
\end{math}
and 
\begin{math}
w_2 \ge 0
\end{math}
hold. This proves 
\begin{math}
C^+ \left( g, h \right)^\circ \subset L^+ \left( g, h \right)^\circ
\end{math}
and thus GCQ holds in this case. 

\noindent
\textbf{Proof of ``only if'' part:} 
We assume $\epsilon_3=\cdots = \epsilon_n = \delta$ for some $\delta\in \{1,-1\}$.
We take $P\in O(2)$ so that the following equality holds: 
\[
\delta_1x_1^2 + \delta_2x_2^2 +\alpha x_1x_2 = (x_1,x_2){}^tP\begin{pmatrix}
\lambda_1&0\\
0&\lambda_2
\end{pmatrix}P\begin{pmatrix}
x_1\\x_2
\end{pmatrix},
\]
where $\lambda_1,\lambda_2\neq 0$ is the eigenvalues of $\begin{pmatrix}
\delta_1&\alpha/2\\
\alpha/2&\delta_2
\end{pmatrix}$.
The following then holds for $d\in C^+(g,h)$: 
{\allowdisplaybreaks
\begin{align*}
& \delta_1d_1^2 + \delta_2d_2^2 +\alpha d_1d_2 = -\delta\sum_{j=3}^{n} d_j^2\\
\Rightarrow & \left|(d_1,d_2){}^tP\begin{pmatrix}
\lambda_1&0\\
0&\lambda_2
\end{pmatrix}P\begin{pmatrix}
d_1\\d_2
\end{pmatrix}\right| = \left|\sum_{j=3}^{n} d_j^2\right|\\
\Rightarrow& \max\{|\lambda_1|,|\lambda_2|\} \left\| \left(d_1,d_2\right)\right\|^2 \geq \left|\sum_{j=3}^{n} d_j^2\right|. 
\end{align*}
}%
Let $R > \sqrt{\max\{|\lambda_1|,|\lambda_2|\}}$. 
For any $d\in C^+(g,h)$, the inner product $(R(e_1+e_2)+e_3)\cdot d$ is estimated as follows:
{\allowdisplaybreaks
\begin{align*}
& (R(e_1+e_2)+e_3)\cdot d\\
=& R(d_1+d_2) + d_3 \\
\leq & -R(|d_1|+|d_2|) +|d_3| & (\because d_1,d_2\leq 0)\\
\leq & -R\left\|\left(d_1,d_2\right)\right\| + |d_3| & \left(\because \left\|\left(d_1,d_2\right)\right\| = \sqrt{d_1^2+d_2^2} \leq |d_1|+|d_2|\right)\\
\leq & -R \sqrt{\dfrac{\left|\sum_{j=3}^{n} d_j^2\right|}{\max\{|\lambda_1|,|\lambda_2|\}}}+|d_3| \\
\leq & \left(-\frac{R}{\sqrt{\max\{|\lambda_1|,|\lambda_2|\}}}+1\right)|d_3|\leq 0.
\end{align*}
}%
Thus, $R(e_1+e_2)+e_3$ is contained in $C^+(g,h)^\circ$. 
However, it is not in $L^+(g,h)^\circ$, and thus GCQ is violated. 
\end{proof}

\subsubsection*{The germ of type $(5)$ in Table~\ref{table:generic constraint q>0 r=1}}

Let $(g,h) = \left(x_1,x_2,x_1^3+\epsilon_1x_2^2+\epsilon_2x_1x_2+\sum_{j=3}^{n} \epsilon_j x_j^2\right)$, where $\epsilon_j\in \{1,-1\}$, $Q =\epsilon_1x_2^2+\epsilon_2x_1x_2 +\sum_{j=3}^{n} \epsilon_j x_j^2$, and $R=x_1^3$. 

\begin{proposition}\label{prop:GCQ type (5) q,r>0}

The tangent cone $C^+(g,h)$ is equal to $\overline{C}(2,Q)\setminus \{(d,0,\ldots, 0)\in \R^n~|~d<0\}$ if $\epsilon_2= \cdots =\epsilon_n=-1$, and $C^+(g,h)$ is equal to $\overline{C}(2,Q)$ otherwise. 

\end{proposition}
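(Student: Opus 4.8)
The plan is to follow the three-step pattern already used for the type $(2)$ and type $(3,k)$ germs in this table, applying Lemma~\ref{lem:tangent cone for r=1} with $q=2$, quadratic part $Q=\epsilon_1x_2^2+\epsilon_2x_1x_2+\sum_{j=3}^n\epsilon_jx_j^2$ and higher-order part $R=x_1^3$. Since $R$ is homogeneous of degree $3$ we have $R_3(d)=d_1^3$ and $R_r(d)=0$ for $r\neq3$, so $r_0=3$ and $R_{r_0}(d)=d_1^3$ whenever $d_1\neq0$. First I would record the automatic inclusion $C^+(g,h)\subseteq\overline{C}(2,Q)$ from Lemma~\ref{lem:tangent cone for r=1}(1) and dispose of every $d\in\overline{C}(2,Q)$ with $d_1=0$ at once: there $R(d)=0$, so the first alternative of Lemma~\ref{lem:tangent cone for r=1}(2) gives $d\in C^+(g,h)$.

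Next I would treat $d\in\overline{C}(2,Q)$ with $d_1<0$, where $R_{r_0}(d)=d_1^3<0$, by exhibiting a vector $v$ as in Lemma~\ref{lem:tangent cone for r=1}(2). Writing $\nabla Q(d)=(\epsilon_2d_2,\ 2\epsilon_1d_2+\epsilon_2d_1,\ 2\epsilon_3d_3,\dots,2\epsilon_nd_n)$ and noting that $\mathrm{Hess}(Q)$ acts on the last $n-2$ coordinates as $\mathrm{diag}(2\epsilon_3,\dots,2\epsilon_n)$, I distinguish three cases. If $d_j\neq0$ for some $j\geq3$, take $v=\epsilon_jd_je_j$, so that $v\cdot\nabla Q(d)=2d_j^2>0$ while $v_2=0$. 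If $d_3=\cdots=d_n=0$ but $d_2\neq0$, then $Q(d)=0$ forces $\epsilon_1d_2+\epsilon_2d_1=0$, whence $\nabla Q(d)\neq0$; since neither $d_1$ nor $d_2$ vanishes there is no sign constraint on $v$, and $v=\nabla Q(d)$ works. The only remaining points are $(d_1,0,\dots,0)$ with $d_1<0$, where $\nabla Q(d)=(0,\epsilon_2d_1,0,\dots,0)$: if $\epsilon_2=1$ the vector $v=-e_2$ (legal because $d_2=0$ and $v_2\leq0$) gives $v\cdot\nabla Q(d)=-d_1>0$, while if $\epsilon_2=-1$ but $\epsilon_j=1$ for some $j\geq3$ the vector $v=e_j$ gives $v\cdot\nabla Q(d)=0$ and ${}^tv\,\mathrm{Hess}(Q)v=2\epsilon_j=2>0$. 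Thus every element of $\overline{C}(2,Q)$ lies in $C^+(g,h)$ except possibly the ray $\{(d_1,0,\dots,0)\mid d_1<0\}$, and that ray is captured precisely when some $\epsilon_j$ with $j\geq2$ equals $1$; this already establishes $C^+(g,h)=\overline{C}(2,Q)$ in the ``otherwise'' case.

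Finally I would prove the exclusion when $\epsilon_2=\cdots=\epsilon_n=-1$, namely that the ray $\{(d_1,0,\dots,0)\mid d_1<0\}$ is not tangent. For $x\in M(g,h)$ the equation $h(x)=0$ reads $x_1x_2=x_1^3+\epsilon_1x_2^2-\sum_{j\geq3}x_j^2$, and feasibility ($x_1\leq0$, $x_2\leq0$) gives $x_1x_2\geq0$. When $\epsilon_1=-1$ the right-hand side is strictly negative for $x_1<0$, so no feasible point has $x_1<0$; when $\epsilon_1=1$ the same identity gives $|x_1|\,|x_2|+|x_1|^3\leq|x_2|^2$, hence $|x_1|\leq|x_2|$ along any feasible sequence with $x_1<0$. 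Passing to $x^{(m)}\to0$ with $t_mx^{(m)}\to(d_1,0,\dots,0)$ then forces $|d_1|\leq\lim|t_mx_2^{(m)}|=0$, contradicting $d_1<0$. I expect this last step---extracting the sharp comparison $|x_1|\leq|x_2|$ in the subcase $\epsilon_1=1$, rather than the weaker $|x_1|^{3/2}\leq|x_2|$ that falls out of a crude estimate---to be the main obstacle; the subcase $\epsilon_1=-1$ and all the sufficiency arguments are routine applications of Lemma~\ref{lem:tangent cone for r=1}.
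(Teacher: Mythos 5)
Your proposal is correct and takes essentially the same route as the paper: the same application of Lemma~\ref{lem:tangent cone for r=1}, the same case split on $d_1=0$, $d_j\neq 0$ for some $j\geq 3$, $d_2\neq 0$, and the residual ray, with the same choices of test vectors $v$ up to trivial variation (your $v=\nabla Q(d)$ in the mixed case versus the paper's $-e_1$ or $-e_2$). The exclusion step also matches: the paper's inequality $x_1\geq x_2$ for feasible points when $\epsilon_1=1$ is exactly your $|x_1|\leq |x_2|$, and for $\epsilon_1=-1$ the paper shows $M(g,h)=\{0\}$ while you show the (sufficient) weaker fact that feasible points have $x_1=0$.
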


\begin{proof}
The tangent cone $C^+(g,h)$ is contained in $\overline{C}(2,Q)$ by Lemma~\ref{lem:tangent cone for r=1}. 
Let $d\in \overline{C}(2,Q)$. 
Since $R_3 = R = x_1^3$ and $R_r=0$ for $r\neq 3$, $d$ is contained in $\underline{C}(2,Q,R)\subset C^+(g,h)$ if $d_1=0$. 
If $d_1< 0$ and some $d_j$ is not $0$ for $j\geq 3$, the vector $v = \epsilon_jd_je_j$ satisfies the conditions in Lemma~\ref{lem:tangent cone for r=1} for $d$. 
Indeed, $R_3(d) = d_1^3<0$ and $v\cdot \nabla Q(d) = 2\epsilon_j^2 d_j^2>0$.
Hence $d$ is contained in $C^+(g,h)$.

In what follows, we assume $d_1<0$ and $d_3=\cdots =d_n=0$.
Since $Q(d) = \epsilon_1 d_2^2+\epsilon_2 d_1d_2$ is equal to $0$, either $d_2=0$ or $\epsilon_2d_1 = -\epsilon_1d_2 $. 
If $d_2\neq 0$, $\nabla Q(d)$ is equal to 
\[
(\epsilon_2d_2,2\epsilon_1d_2+\epsilon_2 d_1,0,\ldots, 0) =(\epsilon_2d_2,-\epsilon_2 d_1,0,\ldots, 0).
\]
Since either the first or the second component is negative, we can take $v\in \R^n$ so that $v_1,v_2\leq 0$ and $v\cdot \nabla Q (d)>0$ (by putting $v = -e_1$ or $-e_2$).
Hence $d$ is contained in $C^+(g,h)$.  

So far, we have shown that $\overline{C}(2,Q)\setminus \{(d,0,\ldots, 0)\in \R^n~|~d<0\}$ is contained in $C^+(g,h)$.
In what follows, we assume $d_2=0$.
If $\epsilon_2 = 1$, the vector $v = -e_2$ satisfies the conditions in Lemma~\ref{lem:tangent cone for r=1} for $d$ since $v\cdot \nabla Q (d) = -\epsilon_2d_1=-d_1>0$. 
Hence $d$ is contained in $C^+(g,h)$. 
If some $\epsilon_j$ is equal to $1$ for $j\geq 3$, the vector $v = e_j$ satisfies the conditions in Lemma~\ref{lem:tangent cone for r=1} since $v\cdot \nabla Q (d)=0$ and ${}^tv \mathrm{Hess}(Q)v = 2\epsilon_j=2>0$. 
Lastly, we will show that $(d,0,\ldots, 0)$ ($d<0$) is not contained in $C^+(g,h)$ when $\epsilon_2=\cdots =\epsilon_n = -1$. 
Let $x\in M(g,h)$. 
Since $x_1,x_2\leq 0$, $\epsilon_1x_2^2 -x_1x_2 = -x_1^3 +\sum_{j\geq 3}x_j^2 \geq 0$. 
If $\epsilon_1= -1$, $\epsilon_1x_2^2 -x_1x_2 =x_2(-x_1-x_2)\leq 0$ since $x_1,x_2\leq 0$. 
Thus, $x_2(-x_1-x_2)$ and $-x_1^3 +\sum_{j\geq 3}x_j^2$ are both equal to $0$, meaning that $x=0$. 
Hence $M(g,h)= C^+(g,h)=\{0\}$. (Note that $\overline{C}(2,Q)=\{(d,0,\ldots,0)\in \R^n~|~ d\leq 0\}$ in this case.)
If $\epsilon_1 = 1$, $x_2(x_1-x_2) \leq 0$, and thus, $x_1\geq x_2$. 
Thus, $d_1$ is not less than $d_2$ for any $d\in C^+(g,h)$, implying that $(d,0,\ldots, 0)$ is not in $C^+(g,h)$ for $d<0$. 
\end{proof}

\begin{proposition}

GCQ holds for $(g,h)$ if and only if 
\begin{math}
\left\{ \epsilon_3, \ldots, \epsilon_n \right\} = \left\{ 1, -1 \right\}
\end{math}
holds.

\end{proposition}
\begin{proof}
It is easy to check that 
\begin{math}
L^+ \left( g, h \right)^\circ
\end{math}
is equal to 
\begin{math}
\left\{ w \in \mathbb{R}^n \middle| w_1 \ge 0, w_2 \ge 0, w_3 = \cdots = w_n = 0 \right\}
\end{math}.

\noindent
\textbf{Proof of ``if'' part:} Under the assumption, 
\begin{math}
C^+ \left( g, h \right) = \overline{C} \left( 2,Q\right)
\end{math}
holds by the previous proposition. In addition, we can assume 
\begin{math}
\epsilon_3 = 1
\end{math}
and 
\begin{math}
\epsilon_4 = -1
\end{math}
without loss of generality. Since either 
\begin{math}
\pm e_j \pm e_3
\end{math}
or 
\begin{math}
\pm e_j \pm e_4
\end{math}
is in 
\begin{math}
\overline{C} \left( Q, 2 \right)
\end{math}
for 
\begin{math}
j \in \left\{ 3, \ldots, n \right\}
\end{math}, 
\begin{math}
w_j = 0
\end{math}
holds for all 
\begin{math}
j \in \left\{ 3, \ldots, n \right\}
\end{math}. Since either 
\begin{math}
- e_2 \pm e_3
\end{math}
or 
\begin{math}
- e_2 \pm e_4
\end{math}
is in 
\begin{math}
\overline{C} \left( 2,Q \right)
\end{math}, 
\begin{math}
w_2 \ge 0
\end{math}
holds. Since 
\begin{math}
- e_1 \in \overline{C} \left( 2,Q \right)
\end{math}
holds, 
\begin{math}
w_1 \ge 0
\end{math}
holds. This proves 
\begin{math}
w \in L^+ \left( g, h \right)^\circ
\end{math}. 

\noindent
\textbf{Proof of ``only if'' part:} 
We assume $\epsilon_3=\cdots = \epsilon_n$.
Let $\lambda_1,\lambda_2\neq 0$ be the eigenvalues of $\begin{pmatrix}
0&\epsilon_2/2\\
\epsilon_2/2&\epsilon_1
\end{pmatrix}$.
As in the proof of Proposition~\ref{prop:GCQ type (5) q,r>0}, we obtain the following inequality for $d\in C^+(g,h)$: 
{\allowdisplaybreaks
\begin{align*}
\max\{|\lambda_1|,|\lambda_2|\} \left\| \left(d_1,d_2\right)\right\|^2 \geq \left|\sum_{j=3}^{n} d_j^2\right|. 
\end{align*}
}%
We can thus show that $R(e_1+e_2)+e_3$ is contained in $C^+(g,h)^\circ$ for $R > \sqrt{\max\{|\lambda_1|,|\lambda_2|\}}$ in the same way as in the proof of Proposition~\ref{prop:GCQ type (5) q,r>0}.
\end{proof}

\subsubsection*{The germ of type $(6)$ in Table~\ref{table:generic constraint q>0 r=1}}

Let $(g,h) = \left(x_1,x_2,(x_1+\epsilon_1x_2)^2+\epsilon_2x_2^3 + \sum_{j=3}^{n} \epsilon_j x_j^2\right)$, where $\epsilon_j\in \{1,-1\}$, $Q =(x_1+\epsilon_1x_2)^2+ \sum_{j=3}^{n} \epsilon_j x_j^2$, and $R=\epsilon_2x_2^3$. 

\begin{proposition}

The tangent cone $C^+(g,h)$ is equal to $\{0\}$ if $\epsilon_2=-1$ and $\epsilon_3= \cdots =\epsilon_n=1$, and $C^+(g,h)$ is equal to $\overline{C}(2,Q)$ otherwise.

\end{proposition}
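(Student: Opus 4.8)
The plan is to run the same machinery as in the preceding propositions, namely Lemma~\ref{lem:tangent cone for r=1}. The inclusion $C^+(g,h)\subset \overline{C}(2,Q)$ is automatic from part~(1) of that lemma, so the whole task is to decide which $d\in \overline{C}(2,Q)$ lie in $C^+(g,h)$. Since $R=\epsilon_2 x_2^3$ has the single homogeneous part $R_3=\epsilon_2 x_2^3$, every $d$ with $d_2=0$ satisfies $R_r(d)=0$ for all $r\geq 3$ and hence lies in $C^+(g,h)$ by part~(2). Recalling that $d_2\leq 0$ on $\overline{C}(2,Q)$, only the directions with $d_2<0$ require analysis, and for these $R_{r_0}(d)=\epsilon_2 d_2^3$ has sign $-\epsilon_2$.

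First I would treat the degenerate case $\epsilon_2=-1,\ \epsilon_3=\cdots=\epsilon_n=1$, where the claim is $C^+(g,h)=\{0\}$. Here I would bypass the tangent-cone lemma and compute the feasible set directly: on $M(g,h)$ the equation $h=0$ reads $(x_1+\epsilon_1 x_2)^2+\sum_{j=3}^n x_j^2 = x_2^3$, whose left-hand side is nonnegative while the right-hand side is nonpositive because $x_2\leq 0$. Both sides therefore vanish, which forces $x_2=0$ and then $x=0$, so $M(g,h)=\{0\}$ and its tangent cone is trivial. This is the conceptual heart of the statement: when $\epsilon_1=-1$ the set $\overline{C}(2,Q)$ is the nontrivial ray $\{(t,t,0,\ldots,0)\in\R^n\mid t\leq 0\}$, yet the cubic term collapses the feasible set, so $C^+(g,h)$ is strictly smaller than $\overline{C}(2,Q)$.

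For the complementary case I would show $\overline{C}(2,Q)\subset C^+(g,h)$ by producing, for each $d$ with $d_2<0$, a vector $v$ satisfying the hypotheses of Lemma~\ref{lem:tangent cone for r=1}(2). Writing $u=d_1+\epsilon_1 d_2$ I would use $\nabla Q(d)=(2u,2\epsilon_1 u,2\epsilon_3 d_3,\ldots,2\epsilon_n d_n)$ and ${}^t v\,\mathrm{Hess}(Q)v=2Q(v)$. If $u\neq 0$, an appropriately signed multiple of $e_2$ (unconstrained since $d_2<0$) makes $v\cdot \nabla Q(d)$ carry the sign opposite to $R_{r_0}(d)$; if $u=0$ but $d_j\neq 0$ for some $j\geq 3$, a signed multiple of that $e_j$ does the same. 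In the remaining subcase $\nabla Q(d)=0$ one has $u=0$ and $d_3=\cdots=d_n=0$, so $d_1=-\epsilon_1 d_2$; together with $d_1\leq 0$ and $d_2<0$ this is possible only for $\epsilon_1=-1$, giving $d=(t,t,0,\ldots,0)$ with $t<0$. Then $d_1,d_2<0$ leave $v$ unconstrained, and I would apply the Hessian test with $v=e_1$ (so $Q(v)=1>0$) when $\epsilon_2=1$, or with $v=e_{j_0}$ for some $j_0\geq 3$ with $\epsilon_{j_0}=-1$ (so $Q(v)=-1<0$) when $\epsilon_2=-1$; in each case $\mathrm{sign}({}^t v\,\mathrm{Hess}(Q)v)$ is opposite to $\mathrm{sign}(\epsilon_2 d_2^3)=\mathrm{sign}(R_{r_0}(d))$, as needed. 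Since $\epsilon_2=1$ and ``$\epsilon_2=-1$ with some $\epsilon_j=-1$'' together exhaust the complementary case, this yields $C^+(g,h)=\overline{C}(2,Q)$.

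The only delicate point, and the step I expect to demand the most care, is honoring the sign constraints $v_j\leq 0$ (for $j\in\{1,2\}$ with $d_j=0$) while simultaneously forcing $v\cdot\nabla Q(d)$—or $Q(v)$ when the gradient vanishes—to take the prescribed sign. The borderline direction $(t,t,0,\ldots,0)$ in the $\nabla Q(d)=0$ subcase is exactly the one admitted in the generic case but excluded in the degenerate case, so verifying that $\epsilon_1=-1$ is forced there and that the chosen $v$ is legitimate precisely because $d_1=d_2<0$ is what makes the two halves of the statement dovetail.
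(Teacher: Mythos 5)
Your proposal is correct and follows essentially the same route as the paper: the inclusion $C^+(g,h)\subset \overline{C}(2,Q)$ and the membership checks come from Lemma~\ref{lem:tangent cone for r=1} with explicit test vectors, and the degenerate case $\epsilon_2=-1$, $\epsilon_3=\cdots=\epsilon_n=1$ is settled by showing $M(g,h)=\{0\}$ directly. The only difference is cosmetic: you split cases on $u=d_1+\epsilon_1 d_2$ being zero or not (perturbing along $e_2$ when $u\neq 0$), whereas the paper splits on whether some $d_j\neq 0$ for $j\geq 3$ (perturbing along $e_j$); since $Q(d)=0$ forces $u=0$ whenever $d_3=\cdots=d_n=0$, the two decompositions cover the same ground.
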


\begin{proof}
The tangent cone $C^+(g,h)$ is contained in $\overline{C}(2,Q)$ by Lemma~\ref{lem:tangent cone for r=1}. 
Let $d\in \overline{C}(2,Q)$. 
Since $R_3 = R = \epsilon_2x_2^3$ and $R_j=0$ for $j\neq 3$, $d$ is contained in $\underline{C}(2,Q,R)\subset C^+(g,h)$ if $d_2=0$. 
If $d_2\neq 0$ and $d_j$ is not $0$ for $j\geq 3$, the vector $v = -\epsilon_2\epsilon_jd_2d_je_j$ satisfies the conditions in Lemma~\ref{lem:tangent cone for r=1} for $d$.
Indeed, $R_3(d) = d_2^3$ and $v\cdot \nabla Q(d) = -2d_2\epsilon_j^2d_j^2$, in particular the signs of these values are mutually opposite. 

In what follows, we assume $d_2\neq 0$ and $d_3=\cdots =d_n=0$. 
Since $Q(d) = (d_1+\epsilon_1d_2)^2$, $d_1+\epsilon_1d_2=0$ and $\nabla Q (d)=0$. 
If $\epsilon_2=+1$, the vector $v=-e_1$ satisfies the conditions in Lemma~\ref{lem:tangent cone for r=1} for $d$ since $v_1=-1<0$, $R_3(d)=\epsilon_2d_2^3<0$ (note that $d_2\leq 0$), and ${}^tv\mathrm{Hess}(Q)v=2>0$.
If $\epsilon_2\epsilon_j > 0$ for some $j\geq 3$, the vector $v=e_j$ satisfies the conditions in Lemma~\ref{lem:tangent cone for r=1} for $d$ since $v_1=0$, $R_3(d)=\epsilon_2d_2^3$, and ${}^tv\mathrm{Hess}(Q)v=2\epsilon_j$.

So far, we have shown that $C^+(g,h)$ is equal to $\overline{C}(2,Q)$ unless $\epsilon_2=-1$ and $\epsilon_3=\cdots =\epsilon_n=1$. 
We can easily show that $M(g,h)$ is equal to $\{0\}$ if $\epsilon_2=-1$ and $\epsilon_3=\cdots =\epsilon_n=1$. 
Thus, the tangent cone $C^+(g,h)$ is also equal to $\{0\}$. 
\end{proof}

\begin{proposition}

GCQ holds for $(g,h)$ if and only if 
\begin{math}
\left\{ \epsilon_3, \ldots, \epsilon_n \right\} = \left\{ 1, -1 \right\}
\end{math}
holds.

\end{proposition}
\begin{proof}
It is easy to check that 
\begin{math}
L^+ \left( g, h \right)^\circ
\end{math}
is equal to 
\begin{math}
\left\{ w \in \mathbb{R}^n \middle| w_1 \ge 0, w_2 \ge 0, w_3 = \cdots = w_n = 0 \right\}
\end{math}.

\noindent
\textbf{Proof of ``if'' part:}  Under the assumption, 
\begin{math}
C^+ \left( g, h \right) = \overline{C} \left( 2, Q \right)
\end{math}
holds. Without loss of generality, we can assume 
\begin{math}
\epsilon_3 = 1
\end{math}
and 
\begin{math}
\epsilon_4 = -1
\end{math}. Take any 
\begin{math}
w \in C^+ \left( g, h \right)^\circ
\end{math}. Since either 
\begin{math}
\pm e_j \pm e_3
\end{math}
or 
\begin{math}
\pm e_j \pm e_4
\end{math}
is contained in 
\begin{math}
\overline{C} \left( 2, Q \right)
\end{math}
for 
\begin{math}
j \in \left\{ 3, \ldots, n \right\}
\end{math}, 
\begin{math}
w_j = 0
\end{math}
holds for all 
\begin{math}
j \in \left\{ 3, \cdots, n \right\}
\end{math}. Since either 
\begin{math}
-e_j \pm e_3
\end{math}
or 
\begin{math}
-e_j \pm e_4
\end{math}
is contained in 
\begin{math}
C^+ \left( g, h \right)
\end{math}
for 
\begin{math}
j \in \left\{ 1, 2 \right\}
\end{math}, 
\begin{math}
w_j \ge 0 
\end{math}
holds. This proves 
\begin{math}
w \in L^+ \left( g, h \right)^\circ
\end{math}.

\noindent
\textbf{Proof of ``only if'' part:}
We assume $\epsilon_3=\cdots = \epsilon_n$.
The eigenvalues of $\begin{pmatrix}
1&\epsilon_1\\
\epsilon_1&1
\end{pmatrix}$ are $0$ and $2$.
Thus, as in the proof of Proposition~\ref{prop:GCQ type (5) q,r>0}, we obtain $2\left\| \left(d_1,d_2\right)\right\|^2 \geq \left|\sum_{j=3}^{n} d_j^2\right|$ for $d\in C^+(g,h)$.
%: 
%{\allowdisplaybreaks
%\begin{align*}
%2\left\| \left(d_1,d_2\right)\right\|^2 \geq \left|\sum_{j=3}^{n} d_j^2\right|. 
%\end{align*}
%}%
We can thus show that $2(e_1+e_2)+e_3$ is contained in $C^+(g,h)^\circ$ in the same way as in the proof of Proposition~\ref{prop:GCQ type (5) q,r>0}.
\end{proof}

\subsubsection*{The germ of type $(7)$ in Table~\ref{table:generic constraint q>0 r=1}}

Let $(g,h) = \left(x_1,x_2,x_3^3+\epsilon_1x_2x_3+\epsilon_2x_3x_1+\epsilon_3x_1x_2 + \sum_{j=4}^{n} \epsilon_j x_j^2\right)$, where $\epsilon_j\in \{1,-1\}$, $Q =\epsilon_1x_2x_3+\epsilon_2x_3x_1+\epsilon_3x_1x_2 + \sum_{j=4}^{n} \epsilon_j x_j^2$, and $R=x_3^3$. 

\begin{proposition}\label{prop:tangent cone type (7) q,r>0}

The tangent cone $C^+(g,h)$ is equal to $\overline{C}(2,Q)\setminus \{(0,0,d,0,\ldots, 0)\in \R^n~|~ \delta d>0\}$ if $\epsilon_1=\epsilon_2=-1$ and $\epsilon_4=\cdots =\epsilon_n=\delta$ for some $\delta\in \{0,1\}$, and $C^+(g,h)=\overline{C}(2,Q)$ otherwise. 

\end{proposition}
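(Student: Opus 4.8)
The plan is to follow the template of the earlier $q>0,\ r=1$ computations (e.g.\ Proposition~\ref{prop:tangent cone type (2) q,r>0} and the type $(3,k)$ analysis): first pin down $C^+(g,h)$ up to the two $x_3$-rays by applying Lemma~\ref{lem:tangent cone for r=1}, and then decide membership of those rays by a direct sequence argument. By Lemma~\ref{lem:tangent cone for r=1}(1) we have $C^+(g,h)\subset \overline{C}(2,Q)$, and since the only nonzero homogeneous part of $R$ is $R_3=x_3^3$, the first clause of Lemma~\ref{lem:tangent cone for r=1}(2) immediately shows that every $d\in\overline{C}(2,Q)$ with $d_3=0$ lies in $C^+(g,h)$. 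It therefore remains to treat $d\in\overline{C}(2,Q)$ with $d_3\neq 0$, for which $r_0=3$ and $R_{r_0}(d)=d_3^3$.

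\textbf{Disposing of everything off the $x_3$-axis.} First I would handle all admissible $d$ with $d_3\neq0$ that do not lie on the $x_3$-axis, using the gradient
\[
\nabla Q(d)=\bigl(\epsilon_2 d_3+\epsilon_3 d_2,\ \epsilon_1 d_3+\epsilon_3 d_1,\ \epsilon_1 d_2+\epsilon_2 d_1,\ 2\epsilon_4 d_4,\dots,2\epsilon_n d_n\bigr).
\]
If some $d_j\neq 0$ with $j\geq 4$, then $(\nabla Q(d))_j=2\epsilon_j d_j\neq0$, and $v=\pm e_j$ is admissible (it has $v_1=v_2=0$); choosing the sign so that $v\cdot\nabla Q(d)=\pm 2\epsilon_j d_j$ is opposite to $d_3^3$ puts $d\in C^+(g,h)$. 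If instead $d_4=\dots=d_n=0$ but $(d_1,d_2)\neq(0,0)$, I claim $(\nabla Q(d))_3=\epsilon_1 d_2+\epsilon_2 d_1\neq0$: the vanishing of this component forces $d_1+d_2=0$ (if $\epsilon_1=\epsilon_2$) or $d_1=d_2$ (if $\epsilon_1=-\epsilon_2$), and combined with $d_1,d_2\leq0$ and $Q(d)=\epsilon_3 d_1 d_2=0$ this collapses to $d_1=d_2=0$, a contradiction. Since there is no inequality constraint $g_3$, the vector $v=v_3e_3$ is admissible for every $v_3\in\mathbb{R}$, and choosing the sign of $v_3$ makes $v\cdot\nabla Q(d)$ opposite to $d_3^3$, so $d\in C^+(g,h)$. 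Thus, uniformly in the signs, all of $\overline{C}(2,Q)$ except possibly the two open $x_3$-rays already lies in $C^+(g,h)$.

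\textbf{The axis via the second-order test.} On the axis, $d=(0,0,d_3,0,\dots,0)$ with $d_3\neq0$, one has $\nabla Q(d)=(\epsilon_2 d_3,\epsilon_1 d_3,0,\dots,0)$, and for admissible $v=(0,0,v_3,v_4,\dots,v_n)$ the second-order form reduces to ${}^t v\,\mathrm{Hess}(Q)\,v=\sum_{j\geq4}2\epsilon_j v_j^2$. An admissible $v$ gives $v\cdot\nabla Q(d)=d_3(\epsilon_2 v_1+\epsilon_1 v_2)$ with $v_1,v_2\leq0$. If $\epsilon_1=1$ or $\epsilon_2=1$ one can force $\epsilon_2 v_1+\epsilon_1 v_2<0$, so the first-order clause places both rays in $C^+(g,h)$ and $C^+(g,h)=\overline{C}(2,Q)$. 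If $\epsilon_1=\epsilon_2=-1$, then $\epsilon_2 v_1+\epsilon_1 v_2=-(v_1+v_2)\geq0$, so the first-order test is useless and one must use the second-order one with $v_1=v_2=0$; the sign of $\sum_{j\geq4}2\epsilon_j v_j^2$ that is achievable then governs which ray enters. Hence when the $\epsilon_j$ $(j\geq4)$ are not all equal both rays enter and $C^+(g,h)=\overline{C}(2,Q)$, while when $\epsilon_4=\dots=\epsilon_n=\delta$ only the ray $\{\delta d_3<0\}$ is produced, leaving $\{(0,0,d_3,0,\dots,0)\mid \delta d_3>0\}$ as the sole candidate for exclusion.

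\textbf{The converse, and the main obstacle.} The hard part will be the genuine exclusion of this candidate ray, since Lemma~\ref{lem:tangent cone for r=1} only gives sufficient conditions; here I would argue by contradiction as in Proposition~\ref{prop:tangent cone type (9) r=0}. Suppose $d=(0,0,d_3,0,\dots,0)$ with $\delta d_3>0$ equals $\lim_{m}t_m x^{(m)}$ for $x^{(m)}\in M(g,h)$, $x^{(m)}\to0$, $t_m>0$. Then $\delta\,t_m x_3^{(m)}\to\delta d_3>0$ gives $\mathrm{sign}(x_3^{(m)})=\delta$ for large $m$, so setting $a=-x_1^{(m)}\geq0$, $b=-x_2^{(m)}\geq0$, $c=|x_3^{(m)}|>0$, $e=\sum_{j\geq4}(x_j^{(m)})^2\geq0$ and multiplying $h(x^{(m)})=0$ by $\delta$ yields
\[
c^3+ac+bc+e+\delta\epsilon_3\,ab=0.
\]
For $\delta\epsilon_3=1$ every term is nonnegative with $c^3>0$, an immediate contradiction. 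For $\delta\epsilon_3=-1$ this reads $ab=c^3+ac+bc+e\geq c(a+b)$, and the essential extra input is that $t_m x_1^{(m)},t_m x_2^{(m)}\to0$ while $t_m x_3^{(m)}\to d_3\neq0$ force $a/c,b/c\to0$; writing $a=\alpha c,\ b=\beta c$ with $\alpha,\beta\to0$ the inequality becomes $\alpha\beta\geq\alpha+\beta$, which is impossible for small nonnegative $\alpha,\beta$ (and $\alpha=\beta=0$ returns $c^3+e=0$, again impossible). I expect this last asymptotic comparison to be the only delicate point: the cubic direction is too degenerate for a leading-order magnitude estimate, so it must be analysed through the normalized ratios $a/c,\,b/c$, exactly as above.
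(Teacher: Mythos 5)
Your proposal is correct and follows essentially the same route as the paper's proof: the same use of Lemma~\ref{lem:tangent cone for r=1} to get $C^+(g,h)\subset\overline{C}(2,Q)$ and to absorb everything except the two $x_3$-rays, the same first-order/second-order choices of $v$ on the axis (splitting on $\epsilon_1=1$ or $\epsilon_2=1$, then on the signs of $\epsilon_j$ for $j\geq 4$), and the same sequence-based contradiction to exclude the ray $\{\delta d_3>0\}$. The only difference is cosmetic, in the final algebra: the paper derives the pointwise bound $\delta x_3\le -(x_1+x_2)$ on $M(g,h)$ via monotonicity of $u\mapsto u^3-u(x_1+x_2)$, whereas you normalize by $c=|x_3|$ and rule out $\alpha\beta\ge\alpha+\beta$ for ratios $\alpha,\beta\to 0$; both rest on the same factorization of $h(x)=0$ and both correctly split on the sign of $\delta\epsilon_3$.
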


\begin{proof}
The tangent cone $C^+(g,h)$ is contained in $\overline{C}(2,Q)$ by Lemma~\ref{lem:tangent cone for r=1}. 
Let $d\in \overline{C}(2,Q)$. 
Since $R_3 = R = x_3^3$ and $R_j=0$ for $j\neq 3$, $d$ is contained in $\underline{C}(2,Q,R)\subset C^+(g,h)$ if $d_3=0$. 
In what follows, we assume $d_3\neq 0$.
If $d_j$ is not $0$ for $j\geq 4$, the vector $v = -\epsilon_jd_3d_je_j$ satisfies the conditions in Lemma~\ref{lem:tangent cone for r=1} for $d$.
Indeed, $R_3(d) = d_3^3$ and $v\cdot \nabla Q(d) = -2d_3\epsilon_j^2d_j^2$, in particular the signs of these values are mutually opposite. 
If $\epsilon_1d_2+\epsilon_2d_1$ is not $0$, the vector $v = -d_3(\epsilon_1d_2+\epsilon_2d_1)e_3$ satisfies the conditions in Lemma~\ref{lem:tangent cone for r=1} for $d$.
Indeed, $R_3(d) = d_3^3$ and $v\cdot \nabla Q(d) = -d_3(\epsilon_1d_2+\epsilon_2d_1)^2$, in particular the signs of these values are mutually opposite.

In what follows, we assume $d_3\neq 0$ and $\epsilon_1d_2+\epsilon_2d_1=d_4=\cdots =d_n=0$. 
Since $Q(d) = \epsilon_3d_1d_2 = -\frac{\epsilon_1\epsilon_3}{\epsilon_2}d_2^2$, $d_1=d_2=0$ and $\nabla Q (d)=(\epsilon_2d_3,\epsilon_1d_3,0,\ldots, 0)$. 
If $\epsilon_i=1$ for $i=1$ or $i=2$, the vector $v = -e_{i'}$ satisfies the conditions in Lemma~\ref{lem:tangent cone for r=1} for $d$, where $\{i,i'\}=\{1,2\}$.
Indeed, $R_3(d) = d_3^3$, $v_i=0$, $v_{i'} = -1<0$, and $v\cdot \nabla Q(d) = -d_3$.
If $\epsilon_j d_3<0$ for some $j\geq 4$, the vector $v=e_j$ satisfies the conditions in Lemma~\ref{lem:tangent cone for r=1} for $d$.
Indeed, $v_1=v_2=0$, $v\cdot \nabla Q(d)=0$, and ${}^tv\mathrm{Hess}(Q) v = 2\epsilon_j$, whose sign is opposite to $R_3(d)=d_3^3$.

So far, we have shown that 

\begin{itemize}
\item 
$C^+(g,h)$ is equal to $\overline{C}(2,Q)$ unless $\epsilon_1=\epsilon_2=-1$ and $\epsilon_4=\cdots =\epsilon_n=\delta$ for some $\delta\in \{-1,1\}$, and

\item
if $\epsilon_1=\epsilon_2=-1$ and $\epsilon_4=\cdots =\epsilon_n=\delta$ for some $\delta\in \{-1,1\}$, the set 
\[
\overline{C}(2,Q)\setminus\left\{(0,0,d,0,\ldots,0)\in \R^n~|~\delta d> 0\right\}
\]
is contained in $C^+(g,h)$. 

\end{itemize}

\noindent
In what follows, we show that $(0,0, d,0,\ldots, 0)$ is not contained in $C^+(g,h)$ for $\delta d>0$ when $\epsilon_1=\epsilon_2=-1$ and $\epsilon_4=\cdots =\epsilon_n=\delta$ for some $\delta\in \{-1,1\}$. 
Let $x\in M(g,h)$. 
The following equality holds: 
\[
0=h(x) = x_3(x_3^2-x_1-x_2)+\epsilon_3x_1x_2 + \delta\sum_{j=4}^{n}x_j^2.
\]
If $\epsilon_3x_1x_2 + \delta\sum_{j=4}^{n}x_j^2$ is equal to $0$, either $x_3$ or $x_3^2-x_1-x_2$ is also equal to $0$. 
Since $x_1$ and $x_2$ are less than or equal to $0$, $x_3$ is equal to $0$ even in the latter case. 
If $\epsilon_3x_1x_2 + \delta\sum_{j=4}^{n}x_j^2\neq 0$ and $\epsilon_3 = \delta$, the sign of $\epsilon_3x_1x_2 + \delta\sum_{j=4}^{n}x_j^2$ is same as that of $\delta$. 
Since $x_3^2-x_1-x_2>0$, the sign of $x_3$ is opposite to that of $\delta$. 
Thus, $(0,0,d,0,\ldots,0)$ is not contained in $C^+(g,h)$ for $\delta d >0$ when $\epsilon_3=\delta$. 
Suppose that $\epsilon_3$ is equal to $-\delta$ and $\delta x_3>-(x_1+x_2)$. 
The following equality holds:
\[
0 = \delta h(x) > -(x_1+x_2)^3 + (x_1+x_2)^2 -x_1x_2 + \sum_{j=4}^{n}x_j^2.
\]
However, the last value is larger than or equal to $0$ since $(x_1+x_2)^2 -x_1x_2\geq 0$. 
This contradiction implies that $\delta x_3\leq -(x_1+x_2)$, and thus $(0,0,d,0,\ldots, 0)$ is not contained in $C^+(g,h)$ for $\delta d >0$. 
\end{proof}

\begin{proposition}

GCQ holds for $(g,h)$ if and only if 
\begin{math}
\{\epsilon_4 ,\ldots,\epsilon_n\} =\{1,-1\}
\end{math}
or
either $\epsilon_1$ or $\epsilon_2$ is $1$.

%\begin{math}
%1\in \{\epsilon_1,\epsilon_2\}
%\end{math}
%or

%for some 
%\begin{math}
%\delta \in \left\{ 0, 1 \right\}
%\end{math}
%do not hold. 

\end{proposition}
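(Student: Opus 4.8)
The plan is to follow the same scheme as the preceding GCQ computations: record the linearized cone and its polar, reduce everything to the tangent-cone description in Proposition~\ref{prop:tangent cone type (7) q,r>0}, and then compare polars. One checks directly that $L^+(g,h) = \{d\in\R^n\mid d_1\le 0,\ d_2\le 0\}$, so that $L^+(g,h)^\circ = \{w\in\R^n\mid w_1\ge 0,\ w_2\ge 0,\ w_3=\cdots=w_n=0\}$. Since $C^+(g,h)\subset L^+(g,h)$ always holds, we always have $L^+(g,h)^\circ\subset C^+(g,h)^\circ$, and GCQ is equivalent to the reverse inclusion $C^+(g,h)^\circ\subset L^+(g,h)^\circ$.

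For the ``if'' part I would first observe that under either hypothesis the tangent cone is the full set $\overline{C}(2,Q)$: if $\{\epsilon_4,\dots,\epsilon_n\}=\{1,-1\}$ the signs $\epsilon_j$ ($j\ge 4$) are not all equal, and if $\epsilon_1$ or $\epsilon_2$ equals $1$ then the condition $\epsilon_1=\epsilon_2=-1$ fails; in both situations Proposition~\ref{prop:tangent cone type (7) q,r>0} gives $C^+(g,h)=\overline{C}(2,Q)$. Taking any $w\in\overline{C}(2,Q)^\circ$, the test vectors $-e_1,-e_2\in\overline{C}(2,Q)$ force $w_1\ge 0$ and $w_2\ge 0$, while $\pm e_3\in\overline{C}(2,Q)$ forces $w_3=0$. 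It then remains to kill the coordinates $w_j$ for $j\ge 4$, and this is where the two hypotheses enter. If $\{\epsilon_4,\dots,\epsilon_n\}=\{1,-1\}$, for each $j\ge 4$ I choose $l\ge 4$ with $\epsilon_l=-\epsilon_j$, so that $\pm e_j\pm e_l\in\overline{C}(2,Q)$ for all four sign choices, which forces $w_j=0$. If instead $\epsilon_2=1$ (the case $\epsilon_1=1$ being symmetric under $x_1\leftrightarrow x_2$), I would use the family $d_\tau=-\frac{1}{\tau}e_1+\epsilon_j\tau e_3\pm e_j$ for $\tau>0$: a direct computation gives $Q(d_\tau)=-\epsilon_j+\epsilon_j=0$ and $d_1,d_2\le 0$, so $d_\tau\in\overline{C}(2,Q)$, and pairing with $w$ (using $w_3=0$ and $w_1\ge 0$) and letting $\tau\to\infty$ yields $\pm w_j\le 0$, hence $w_j=0$. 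In either case $w\in L^+(g,h)^\circ$, establishing GCQ.

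For the ``only if'' part I would argue the contrapositive, so assume $\epsilon_1=\epsilon_2=-1$ and $\epsilon_4=\cdots=\epsilon_n=\delta$. The key observation, already contained in the proof of Proposition~\ref{prop:tangent cone type (7) q,r>0}, is that every feasible point satisfies $\delta x_3+x_1+x_2\le 0$: when $\epsilon_3=-\delta$ this is exactly the inequality $\delta x_3\le -(x_1+x_2)$ derived there, and when $\epsilon_3=\delta$ the same analysis shows $\delta x_3\le 0$, which together with $x_1+x_2\le 0$ gives the same bound. Multiplying by $t_m>0$ and passing to the limit along the sequences defining a tangent vector shows $C^+(g,h)\subset\{d\mid \delta d_3+d_1+d_2\le 0\}$. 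Consequently $w=e_1+e_2+\delta e_3$ lies in $C^+(g,h)^\circ$, while $w_3=\delta\neq 0$ shows $w\notin L^+(g,h)^\circ$, so GCQ fails.

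The main obstacle is the ``only if'' direction: one must verify that the \emph{whole} tangent cone, not merely the single excluded ray from Proposition~\ref{prop:tangent cone type (7) q,r>0}, is confined to the half-space $\{\delta d_3+d_1+d_2\le 0\}$, and that this holds uniformly in the auxiliary sign $\epsilon_3$. This is precisely why I would extract the feasible-point inequality $\delta x_3+x_1+x_2\le 0$ from the earlier proof and pass to its homogeneous limit, rather than try to separate the ray directly; the single linear functional $e_1+e_2+\delta e_3$ then works in both sign cases at once.
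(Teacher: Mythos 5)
Your proof is correct and takes essentially the same route as the paper: both arguments rest on Proposition~\ref{prop:tangent cone type (7) q,r>0}, prove the ``if'' direction by showing $\overline{C}(2,Q)^\circ\subset L^+(g,h)^\circ$ with explicit test vectors once $C^+(g,h)=\overline{C}(2,Q)$ is known, and prove the ``only if'' direction by exhibiting an element of $C^+(g,h)^\circ\setminus L^+(g,h)^\circ$. The only tactical differences are that the paper's single family $-s^{-1}e_2+\epsilon_1\epsilon_j s\,e_3\pm e_j$ lies in $\overline{C}(2,Q)$ with no hypotheses on $\epsilon_1,\epsilon_2$, so it avoids your two-case split in the ``if'' part, while conversely your single functional $w=e_1+e_2+\delta e_3$, justified by extracting the feasible-set inequality $\delta x_3+x_1+x_2\le 0$ from the earlier tangent-cone proof and homogenizing it, handles both signs of $\epsilon_3$ at once where the paper distinguishes $w=\delta e_3$ (for $\epsilon_3=\delta$) from $w=e_1+e_2+\delta e_3$ (for $\epsilon_3=-\delta$).
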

\begin{proof}
It is easy to check that 
\begin{math}
L^+ \left( g, h \right)^\circ
\end{math}
is equal to 
\begin{math}
\left\{ w \in \mathbb{R}^n \middle| w_1 \ge 0, w_2 \ge 0, w_3 = \cdots = w_n = 0 \right\}
\end{math}.

\noindent
\textbf{Proof of ``if'' part:}
Under the assumption, $C^+(g,h) = \overline{C}(2,Q)$ holds. 
Take any $w\in C^+(g,h)^\circ$.
Since $-e_1,-e_2,\pm e_3$ are contained in $\overline{C}(2,Q)$, $w_1,w_2\geq 0$ and $w_3=0$ hold. 
For any $s>0$, since $-\frac{e_2}{s}+\epsilon_1\epsilon_jse_3\pm e_j$ is contained in $\overline{C}(2,Q)$, $-\frac{w_2}{s}\pm w_j \leq 0$ holds. 
Thus, $w_j=0$ holds and $w$ is contained in $L^+(g,h)^\circ$. 

\noindent
\textbf{Proof of ``only if'' part:}
We assume $\epsilon_1=\epsilon_2=-1$ and $\epsilon_4=\cdots =\epsilon_n = \delta$. 
By Proposition~\ref{prop:tangent cone type (7) q,r>0}, $C^+(g,h)$ is equal to $\overline{C}(2,Q)\setminus\{(0,0,d,0,\ldots, 0)\in \R^n~|~\delta d>0\}$. 
Let $d\in C^+(g,h)$.
If $d_1=d_2=0$, $d_4,\ldots,d_n$ are also equal to $0$ since $Q(d)=\delta \sum_{j=4}^{n}d_j^2=0$, and $\delta d_3\leq 0$. 
Otherwise, $d_3$ is equal to $\left.\left(\epsilon_3 d_1d_2+\delta\sum_{j=4}^{n}d_j^2\right)\right/(d_1+d_2)$. 

If $\epsilon_3 = \delta$, $w=\delta e_3$ is contained in $C^+(g,h)^\circ$. 
Indeed, for any $d\in C^+(g,h)$, $d\cdot w$ is equal to $\delta d_3\leq 0$ if $d_1=d_2=0$, and $\left.\left(d_1d_2+\sum_{j=4}^{n}d_j^2\right)\right/(d_1+d_2)\leq 0$ otherwise. (Note that $d_1,d_2\leq 0$.)
If $\epsilon_3 = -\delta$, $w=e_1+e_2+\delta e_3$ is contained in $C^+(g,h)^\circ$. 
Indeed, for any $d\in C^+(g,h)$, $d\cdot w$ is equal to $\delta d_3\leq 0$ if $d_1=d_2=0$, and $\left.\left((d_1+d_2)^2-d_1d_2+\sum_{j=4}^{n}d_j^2\right)\right/(d_1+d_2)\leq 0$ otherwise. 
In each case, the given $w$ is not contained in $L^+(g,h)^\circ$. 
\end{proof}

\subsubsection*{The germ of type $(8)$ in Table~\ref{table:generic constraint q>0 r=1}}

Let $(g,h) = \left(x_1,x_2,x_3,\sum_{j=1}^{3}\delta_jx_j^2+\sum_{1\leq i<j\leq 3}\alpha_{ij}x_ix_j+\epsilon_1x_1x_2x_3 + \sum_{j=4}^{n} \epsilon_j x_j^2\right)$, where $\delta_j, \epsilon_j\in \{1,-1\}$ and $\alpha_{ij}\in \R$ satisfies the condition $(\ast\ast)$ in Table~\ref{table:generic constraint r=0}, $Q =\sum_{j=1}^{3}\delta_jx_j^2+\sum_{1\leq i<j\leq 3}\alpha_{ij}x_ix_j+ \sum_{j=4}^{n} \epsilon_j x_j^2$, and $R=\epsilon_1x_1x_2x_3$. 

\begin{proposition}

The tangent cone $C^+(g,h)$ is equal to $\overline{C}(3,Q)$.

\end{proposition}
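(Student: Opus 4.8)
The plan is to apply Lemma~\ref{lem:tangent cone for r=1} with $q=3$, observing that here $R=\epsilon_1x_1x_2x_3$ is homogeneous of degree $3$, so that $R_3=R$ and $R_r=0$ for every $r\neq 3$. Part~(1) of that lemma immediately gives $C^+(g,h)\subset \overline{C}(3,Q)$, so the whole content of the proposition is the reverse inclusion $\overline{C}(3,Q)\subset C^+(g,h)$. I would prove this by taking an arbitrary $d\in \overline{C}(3,Q)$ (so that $d_1,d_2,d_3\leq 0$ and $Q(d)=0$) and exhibiting, for each such $d$, the data demanded by part~(2) of the lemma.

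The argument splits according to whether $R(d)=\epsilon_1d_1d_2d_3$ vanishes. If $R(d)=0$ then $R_r(d)=0$ for all $r\geq 3$, so the first alternative in part~(2) applies and $d\in C^+(g,h)$; this routine case parallels the opening of the proof of Proposition~\ref{prop:tangent cone type (10) r=0}.

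The substantive case is $R(d)\neq 0$. Then $d_1,d_2,d_3$ are all nonzero, hence, being $\leq 0$, strictly negative; in particular $d\neq 0$ and the constraint ``$v_j\leq 0$ whenever $d_j=0$'' of part~(2) is vacuous, so $v$ may be chosen freely. The key step is that $\mathrm{Hess}(Q)$ is invertible: its block in the variables $x_1,x_2,x_3$ is twice the matrix with diagonal entries $\delta_j$ and off-diagonal entries $\alpha_{ij}/2$, whose determinant equals, up to a nonzero scalar, the expression in the second inequality of condition $(\ast\ast)$, while the complementary block is diagonal with nonzero entries $2\epsilon_j$. Since $Q$ is a pure quadratic form, $\nabla Q(d)=\mathrm{Hess}(Q)\,d$, and invertibility together with $d\neq 0$ forces $\nabla Q(d)\neq 0$. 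I would then set $v=-\mathrm{sign}(R(d))\,\nabla Q(d)$, so that $v\cdot\nabla Q(d)=-\mathrm{sign}(R(d))\,\|\nabla Q(d)\|^2$ is nonzero with sign opposite to that of $R(d)=R_{r_0}(d)$; part~(2) of Lemma~\ref{lem:tangent cone for r=1} then yields $d\in C^+(g,h)$. Combining the two cases gives $\overline{C}(3,Q)\subset C^+(g,h)$ and hence equality.

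The main obstacle is the verification that condition $(\ast\ast)$ is precisely the nondegeneracy of $Q$ in the variables $x_1,x_2,x_3$, and thus of the full form $Q$; once this is established the gradient of $Q$ never vanishes away from the origin and the choice of $v$ becomes forced and trivial. I expect no genuine difficulty beyond this bookkeeping, since the structure is entirely parallel to the cubic-product case of Proposition~\ref{prop:tangent cone type (10) r=0}, the only changes being that the equality $Q(d)=0$ replaces the inequality $Q(d')\leq 0$ and that Lemma~\ref{lem:tangent cone for r=1} is used in place of Lemma~\ref{lem:tangent cone for r=0}.
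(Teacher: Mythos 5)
Your proposal is correct and follows essentially the same route as the paper's proof: both use Lemma~\ref{lem:tangent cone for r=1}, split on whether $R(d)=\epsilon_1 d_1d_2d_3$ vanishes, and in the nonvanishing case (where $d_1,d_2,d_3<0$ makes the sign constraint on $v$ vacuous) exhibit a vector $v$ with $v\cdot\nabla Q(d)$ of sign opposite to $R(d)$. The only difference is cosmetic: the paper treats two subcases separately (some $d_j\neq 0$ with $j\geq 4$, taking $v=\epsilon_1\epsilon_jd_je_j$; versus $d_4=\cdots=d_n=0$, using regularity of the $3\times 3$ block $A$ and $v=d+\varepsilon e_\ell$), whereas you merge them by noting that the full Hessian of $Q$ is block-diagonal and nondegenerate under condition $(\ast\ast)$, so $\nabla Q(d)\neq 0$ and $v=-\mathrm{sign}(R(d))\,\nabla Q(d)$ works uniformly --- a valid and slightly cleaner choice.
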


\begin{proof}
The tangent cone $C^+(g,h)$ is contained in $\overline{C}(3,Q)$ by Lemma~\ref{lem:tangent cone for r=1}. 
Let $d\in \overline{C}(3,Q)$. 
Since $R_3 = R = \epsilon_1x_1x_2x_3$ and $R_j=0$ for $j\neq 3$, $d$ is contained in $\underline{C}(3,Q,R)\subset C^+(g,h)$ if $d_1d_2d_3=0$. 
In what follows, we assume $d_1,d_2,d_3 < 0$.
If $d_j$ is not $0$ for $j\geq 4$, the vector $v = \epsilon_1\epsilon_jd_je_j$ satisfies the conditions in Lemma~\ref{lem:tangent cone for r=1} for $d$.
Indeed, $R_3(d) = \epsilon_1 d_1d_2d_3$ and $v\cdot \nabla Q(d) = \epsilon_1\epsilon_j^2d_j^2$, in particular the signs of these values are mutually opposite. 
Suppose that $d_4,\ldots, d_n$ are all equal to $0$. 
We put $d' = {}^t(d_1,d_2,d_3)$ and $A = \begin{pmatrix}
2\delta_1&\alpha_{12}&\alpha_{13}\\
\alpha_{12}&2\delta_2&\alpha_{23}\\
\alpha_{13}&\alpha_{23}&2\delta_3
\end{pmatrix}$, which is a $3\times 3$-submatrix of $\mathrm{Hess}(Q)$. 
By the direct calculation, we can deduce $\nabla Q(d) = Ad'$, which is not $0$ since $A$ is regular. 
Suppose that the $\ell$-th component of $Ad'$, denoted by $\beta$, is not $0$. 
Since $d$ is contained in $\overline{C}(3,Q)$, $Q(d) = \frac{1}{2}{}^td' A d'$ is equal to $0$. 
For $0<|\varepsilon|\ll 1$, the vector $v=d+\varepsilon e_\ell$ satisfies the conditions in Lemma~\ref{lem:tangent cone for r=1}. 
Indeed, $v_1,v_2,v_3 < 0$ since $|\varepsilon|\ll 1$ and $d_1,d_2,d_3<0$, and $v\cdot \nabla Q(d) = {}^td' A d' + \varepsilon \beta = \varepsilon \beta$. 
We can make the sign of $\varepsilon \beta$ opposite to that of $R_3(d)$ by making an appropriate $\varepsilon$.  
\end{proof}

\begin{proposition}
GCQ holds for $(g,h)$ if and only if 
\begin{math}
\left\{ \epsilon_4, \ldots, \epsilon_n \right\} = \left\{ 1, -1 \right\}
\end{math} 
\end{proposition}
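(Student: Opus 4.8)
The plan is to combine the previous proposition, which identifies $C^+(g,h)$ with $\overline{C}(3,Q)=\{d\in\R^n\mid d_1,d_2,d_3\leq 0,\ Q(d)=0\}$, with a direct analysis of polars, exactly as in the proofs for types $(6)$--$(10)$ of Table~\ref{table:generic constraint r=0}. First I would record that $L^+(g,h)=\{d\in\R^n\mid d_1,d_2,d_3\leq 0\}$, so that $L^+(g,h)^\circ=\{w\in\R^n\mid w_1,w_2,w_3\geq 0,\ w_4=\cdots=w_n=0\}$. Since $C^+(g,h)\subset L^+(g,h)$ always holds, the inclusion $L^+(g,h)^\circ\subset C^+(g,h)^\circ$ is automatic, and GCQ reduces to proving the reverse inclusion $C^+(g,h)^\circ=\overline{C}(3,Q)^\circ\subset L^+(g,h)^\circ$.

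For the ``if'' direction I assume $\{\epsilon_4,\ldots,\epsilon_n\}=\{1,-1\}$ and take an arbitrary $w\in\overline{C}(3,Q)^\circ$; the goal is to force $w_4=\cdots=w_n=0$ and $w_1,w_2,w_3\geq 0$. The key observation is that $Q$ restricted to the coordinates $x_4,\ldots,x_n$ is the diagonal form $\sum_{j\geq 4}\epsilon_jx_j^2$, whose cross terms with $x_1,x_2,x_3$ vanish. Hence for any $j\geq 4$ I can pick $l\geq 4$ with $\epsilon_l=-\epsilon_j$ (available precisely because both signs occur), so that $\pm e_j\pm e_l\in\overline{C}(3,Q)$; testing $w$ against all four sign choices yields $w_j=0$. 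Likewise, for each $i\in\{1,2,3\}$ I can pick $j\geq 4$ with $\epsilon_j=-\delta_i$, so that $-e_i\pm e_j\in\overline{C}(3,Q)$ (here $Q(-e_i\pm e_j)=\delta_i+\epsilon_j=0$), and testing, recalling $w_j=0$, gives $-w_i\leq 0$, i.e.\ $w_i\geq 0$. This establishes $w\in L^+(g,h)^\circ$.

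For the ``only if'' direction I argue by contraposition, assuming $\epsilon_4=\cdots=\epsilon_n=\delta$ for a single sign $\delta$, and I would reuse the eigenvalue estimate from Proposition~\ref{prop:GCQ type (6) r=0}. Let $\lambda_1,\lambda_2,\lambda_3$ be the eigenvalues of the symmetric matrix $M$ with diagonal $(\delta_1,\delta_2,\delta_3)$ and off-diagonal entries $\alpha_{ij}/2$; condition $(\ast\ast)$ of Table~\ref{table:generic constraint r=0} guarantees $M$ is regular, so all $\lambda_i\neq 0$. For $d\in\overline{C}(3,Q)$ the equation $Q(d)=0$ rewrites as $\sum_{j\geq 4}d_j^2=-\delta\,{}^t d'' M d''$ with $d''=(d_1,d_2,d_3)$, giving $\sum_{j\geq 4}d_j^2\leq \max_i|\lambda_i|\,\|d''\|^2$. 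Setting $R=\sqrt{\max_i|\lambda_i|}$ and $w=R(e_1+e_2+e_3)+e_4$, the same chain of inequalities as in Proposition~\ref{prop:GCQ type (6) r=0} (using $d_1,d_2,d_3\leq 0$ and $\|d''\|\leq|d_1|+|d_2|+|d_3|$) yields $w\cdot d\leq 0$ for every $d\in C^+(g,h)$, so $w\in C^+(g,h)^\circ$; since $w_4=1$, $w\notin L^+(g,h)^\circ$ and GCQ fails. The only steps requiring care are confirming the norm bound and the sign of each term in this final estimate, but because $R$ and $M$ play exactly the roles they did for types $(6)$ and $(10)$, I expect no genuine obstacle beyond transcribing that computation; the mild subtlety is ensuring that in the ``if'' part the pairing of each $\delta_i$ with an $\epsilon_j$ of opposite sign is always possible, which is exactly what the hypothesis $\{\epsilon_4,\ldots,\epsilon_n\}=\{1,-1\}$ provides.
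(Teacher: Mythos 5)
Your proposal is correct and follows essentially the same route as the paper: the ``if'' part deduces $C^+(g,h)^\circ=\overline{C}(3,Q)^\circ\subset L^+(g,h)^\circ$ by pairing each coordinate with one of opposite sign and testing $w$ against vectors of the form $\pm e_j\pm e_l$ and $-e_i\pm e_j$ (the paper fixes $\epsilon_4=1$, $\epsilon_5=-1$ without loss of generality, which is the same idea), and the ``only if'' part produces the separating vector $R(e_1+e_2+e_3)+e_4$ with $R=\sqrt{\max_i|\lambda_i|}$ via the identical eigenvalue estimate the paper borrows from its earlier GCQ computations. No gaps.
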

\begin{proof}
It is easy to check that 
\begin{math}
L^+ \left( g, h \right)^\circ
\end{math}
is equal to
\begin{equation}
\left\{ w \in \mathbb{R}^n \middle| w_1 \ge 0, w_2 \ge 0, w_3 \ge 0, w_4 = \cdots = w_n = 0 \right\}.
\end{equation}

\noindent
\textbf{Proof of ``if'' part:} Without loss of generality, we can assume 
\begin{math}
\epsilon_4 = 1
\end{math}
and 
\begin{math}
\epsilon_5 = -1
\end{math}. Take any 
\begin{math}
w \in C^+ \left( g, h \right)^\circ
\end{math}. For each 
\begin{math}
j \in \left\{ 4, \ldots, n \right\}
\end{math}, either 
\begin{math}
\pm e_j \pm e_4
\end{math}
or 
\begin{math}
\pm e_j \pm e_5
\end{math}
is contained in 
\begin{math}
C^+ \left( g, h \right)
\end{math}. This proves 
\begin{math}
w_j = 0
\end{math}
for all 
\begin{math}
j \in \left\{ 4, \ldots, n \right\}
\end{math}. Since either
\begin{math}
-e_j \pm e_4 \in C^+ \left( g, h \right)
\end{math}
or 
\begin{math}
-e_j \pm e_5 \in C^+ \left( g, h \right)
\end{math}
holds for 
\begin{math}
j \in \left\{ 1, 2, 3 \right\}
\end{math}, 
\begin{math}
w_1 \ge 0
\end{math}, 
\begin{math}
w_2 \ge 0
\end{math}, and 
\begin{math}
w_3 \ge 0
\end{math}
hold. This proves 
\begin{math}
C^+ \left( g, h \right)^\circ \subset L^+ \left( g, h \right)^\circ
\end{math}
and thus GCQ holds in this case. 

\noindent
\textbf{Proof of ``only if'' part:} 
We assume $\epsilon_4=\cdots = \epsilon_n = \delta$ for some $\delta\in \{1,-1\}$.
%
%We take $P\in O(3)$ so that the following equality holds: 
%\[
%\sum_{j=1}^{3}\delta_jx_j^2+\sum_{1\leq i<j\leq 3}\alpha_{ij}x_ix_j = (x_1,x_2,x_3){}^tP\begin{pmatrix}
%\lambda_1&0&0\\
%0&\lambda_2&0\\
%0&0&\lambda_3
%\end{pmatrix}P\begin{pmatrix}
%x_1\\x_2\\x_3
%\end{pmatrix},
%\]
Let $\lambda_1,\lambda_2,\lambda_3	
\neq 0$ be the eigenvalues of $\begin{pmatrix}
\delta_1&\alpha_{12}/2&\alpha_{13}/2\\
\alpha_{12}/2&\delta_2&\alpha_{23}/2\\
\alpha_{13}/2&\alpha_{23}/2&\delta_3\\
\end{pmatrix}$.
The following then holds for $d\in C^+(g,h)$ (cf.~the proof of Proposition~\ref{prop:GCQ type (4) q,r>0}): 
{\allowdisplaybreaks
\begin{align*}
& \sum_{j=1}^{3}\delta_jd_j^2+\sum_{1\leq i<j\leq 3}\alpha_{ij}d_id_j = -\delta\sum_{j=4}^{n} d_j^2\\
%\Rightarrow & \left|(d_1,d_2,d_3){}^tP\begin{pmatrix}
%\lambda_1&0&0\\
%0&\lambda_2&0\\
%0&0&\lambda_3
%\end{pmatrix}P\begin{pmatrix}
%d_1\\d_2\\d_3
%\end{pmatrix}\right| = \left|\sum_{j=3}^{n} d_j^2\right|\\
\Rightarrow& \max\{|\lambda_1|,|\lambda_2|,|\lambda_3|\} \left\| \left(d_1,d_2,d_3\right)\right\|^2 \geq \left|\sum_{j=4}^{n} d_j^2\right|. 
\end{align*}
}%
Let $R =\sqrt{\max\{|\lambda_1|,|\lambda_2|,|\lambda_3|\}}$. 
For any $d\in C^+(g,h)$, the inner product $(R(e_1+e_2+e_3)+e_4)\cdot d$ is estimated as follows:
{\allowdisplaybreaks
\begin{align*}
& (R(e_1+e_2+e_3)+e_4)\cdot d\\
=& R(d_1+d_2+d_3) + d_4 \\
\leq & -R(|d_1|+|d_2|+|d_3|) +|d_4| & (\because d_1,d_2,d_3\leq 0)\\
\leq & -R\left\|\left(d_1,d_2,d_3\right)\right\| + |d_4| & \left(\because \left\|\left(d_1,d_2.d_3\right)\right\| = \sqrt{d_1^2+d_2^2+d_3^2} \leq |d_1|+|d_2|+|d_3|\right)\\
\leq & -R \sqrt{\dfrac{\left|\sum_{j=4}^{n} d_j^2\right|}{\max\{|\lambda_1|,|\lambda_2|,|\lambda_3\}}}+|d_4|\leq 0.
\end{align*}
}%
Thus, $R(e_1+e_2+e_3)+e_4$ is contained in $C^+(g,h)^\circ$. 
However, it is not in $L^+(g,h)^\circ$, and thus GCQ is violated. 
\end{proof}

The results of this section are summarized in Theorem~\ref{thm:generic_CQ_classification}.

\begin{theorem}[Generic CQ Classification]\label{thm:generic_CQ_classification}

Only the fully regular class (with $n = q + r$ and constraints locally equivalent to $(g,h)$ with $g(x)=(x_1, \dots, x_q)$ and $h(x)=(x_{q+1}, \dots, x_{q+r})$) satisfies LICQ. 
All the other constraint classes (especially those in Tables~\ref{table:generic constraint q=0}--\ref{table:generic constraint q>0 r=1}) violate LICQ.

MFCQ fails whenever a singular equality constraint is present, in particular all the classes in Tables~\ref{table:generic constraint q=0} and \ref{table:generic constraint q>0 r=1} violate MFCQ.
In classes consisting of only inequalities (Table~\ref{table:generic constraint r=0}), MFCQ holds if and only if the parameter $l_1$ in the normal form in the caption of Table~\ref{table:generic constraint r=0} is positive.

The satisfaction or failure of ACQ and GCQ for the classes in Tables~\ref{table:generic constraint q=0}--\ref{table:generic constraint q>0 r=1} are completely determined as summarized in Tables~\ref{tab:eq-only}--\ref{tab:mixed}. 
(As explained in the captions, all the classes in Tables~\ref{table:generic constraint q=0} and \ref{table:generic constraint q>0 r=1} violate ACQ.)

%In particular:
%
%\begin{itemize}
%  \item \textbf{LICQ:} Only the fully regular class (with $n = q + r$ and constraints locally equivalent to $(x_1, \dots, x_n) \mapsto (x_1, \dots, x_q; x_{q+1}, \dots, x_{q+r})$) satisfies LICQ. All other generic constraint classes violate LICQ.
%
%  \item \textbf{MFCQ:} MFCQ fails whenever a singular equality constraint is present. In classes consisting of only inequalities (Table~\ref{tab:ineq-only}), MFCQ holds \emph{if and only if} the parameter $l_1$ in Eq.~\eqref{eq:q0_1jet} is positive.
%
%  \item \textbf{ACQ:} 
%ACQ does not hold for germs in Tables~\ref{table:generic constraint q=0} and \ref{table:generic constraint q>0 r=1}. 
%For germs in Table~\ref{table:generic constraint r=0}, ACQ holds whenever $l_1 > 0$ (hence whenever MFCQ holds) and also in the borderline case $l_1 = 0$ provided the condition $Q \leq 0$ is trivial, where, as in Lemma~\ref{lem:tangent cone for r=0}, we denote the quadratic part of $\tilde{g}$ by $Q$. 
%
%  \item \textbf{GCQ:} GCQ holds in the widest range of generic classes. Any class that satisfies ACQ also satisfies GCQ, and beyond that, GCQ remains valid in additional classes where ACQ (and stronger CQs) fail. Specifically, Table~\ref{tab:eq-only}, Table~\ref{tab:ineq-only}, and Table~\ref{tab:mixed} are summaries of the condition for GCQ to hold.
%\end{itemize}

\end{theorem}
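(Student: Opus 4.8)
The plan is to treat this theorem as the consolidation of every per-class computation carried out in this section, organized according to the reduction principle established earlier. The starting point is that, by Theorems~\ref{thm:invariance four CQs under K[G]-eq} and~\ref{thm:invariance four CQs under reduction}, each of the four CQs is invariant under $\mathcal{K}[G]$-equivalence and under reduction; hence CQ-validity at an arbitrary point of a generic feasible set depends only on the $\mathcal{K}[G]$-class of the fully reduced germ at that point. By Theorem~\ref{thm:5.1}, these classes are exhausted, for generic families with at most four parameters, by the regular germ together with the normal forms in Tables~\ref{table:generic constraint q=0}--\ref{table:generic constraint q>0 r=1}. Thus it suffices to verify each assertion on this finite list of representatives, which is precisely what the preceding propositions accomplish, and the proof assembles them.

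For LICQ I would record the one-line reason already noted at the start of the section: LICQ holds iff the differentials of the active constraints are linearly independent, and after passing (via Theorem~\ref{thm:invariance four CQs under reduction}) to a full reduction---in which all inequalities are active and $dh_0$ has rank zero---this can occur only for the regular germ with $n=q+r$. Each nonregular normal form in Tables~\ref{table:generic constraint q=0}--\ref{table:generic constraint q>0 r=1} has either a rank-deficient equality differential or a distinguished inequality component whose gradient lies in the span of the remaining ones, so LICQ fails. The MFCQ statement is a restatement of Theorem~\ref{thm:MFCQ}: a positive corank of $dh_0$ immediately obstructs MFCQ, disposing of Tables~\ref{table:generic constraint q=0} and~\ref{table:generic constraint q>0 r=1}, while for the inequality-only germs of Table~\ref{table:generic constraint r=0} the directional condition is governed entirely by the $1$-jet \eqref{eq:q0_1jet}, yielding the criterion $l_1>0$.

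The substantive part is ACQ and GCQ. Here the plan is to cite, class by class, the tangent-cone computations (via Lemmas~\ref{lem:tangent cone for r=1} and~\ref{lem:tangent cone for r=0} and Corollary~\ref{cor:tangent cone for r=0}) together with the ensuing ACQ/GCQ propositions, and to transcribe their sign conditions into the summary Tables~\ref{tab:eq-only}--\ref{tab:mixed}. The uniform failure of ACQ for Tables~\ref{table:generic constraint q=0} and~\ref{table:generic constraint q>0 r=1} is Propositions~\ref{prop:ACQ for r=0} and~\ref{prop:ACQ for q>0 r=1}; the remaining ACQ entries for Table~\ref{table:generic constraint r=0} and all GCQ entries follow from the type-by-type propositions established above.

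The main obstacle is not any single hard argument but the completeness of the case analysis. One must check that, for every type, the disjunction of sign hypotheses ($\epsilon_j$, $\delta_j$, $\alpha_{ij}$, and the parities of $k$) appearing across the individual propositions exhausts the parameter ranges allowed in Theorem~\ref{thm:5.1}, and that the resulting conditions are faithfully compressed into the table entries. I expect the most error-prone cases to be type~$(10)$ of Table~\ref{table:generic constraint r=0}, whose ACQ criterion unwinds into the two-branch inequality system of Proposition~\ref{prop:ACQ type (10) r=0}, and type~$(7)$ of Table~\ref{table:generic constraint q>0 r=1}, where the cone depends on the auxiliary sign $\delta$; for these I would verify the table directly against the propositions rather than rely on the general pattern.
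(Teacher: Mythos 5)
Your proposal matches the paper's own proof: the paper likewise combines the invariance results (Theorems~\ref{thm:invariance four CQs under K[G]-eq} and \ref{thm:invariance four CQs under reduction}) with the classification of Theorem~\ref{thm:5.1} to reduce to the normal forms, and then assembles the per-class results (Theorem~\ref{thm:MFCQ}, Propositions~\ref{prop:ACQ for r=0} and \ref{prop:ACQ for q>0 r=1}, and the type-by-type tangent-cone and ACQ/GCQ propositions) into Tables~\ref{tab:eq-only}--\ref{tab:mixed}. Your identification of the delicate points (exhaustiveness of the sign/parity case analysis, notably type~$(10)$ of Table~\ref{table:generic constraint r=0} and type~$(7)$ of Table~\ref{table:generic constraint q>0 r=1}) is exactly where the paper's verification burden lies, so no gap remains.
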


%%%%%%%%%%%%%%%%%%%%%%%%%%%%%%%%%%%%%%%%%%%%%%%%%%%%%%%%%%%%%%%%%%%%%%%%%%%%%%
% Constraint‑qualification summary tables
% Requires: \usepackage{booktabs,array}
%%%%%%%%%%%%%%%%%%%%%%%%%%%%%%%%%%%%%%%%%%%%%%%%%%%%%%%%%%%%%%%%%%%%%%%%%%%%%%

% ---------------------------------------------------------------------------
% Helper: a narrower paragraph column type for conditions (wraps automatically)
\newcolumntype{C}[1]{>{\centering\arraybackslash}p{#1}} % centred
\newcolumntype{L}[1]{>{\raggedright\arraybackslash}p{#1}}% left‑aligned
% ---------------------------------------------------------------------------

%==============================  TABLE 1  ====================================
%\begin{table}[htbp]
%\centering
%\begin{tabular}{|c|l|}
%\hline
%\textbf{Type} &\textbf{Conditions for GCQ} \\ \hline
%$\left( 1, k \right)$ &
%\begin{minipage}[c]{58mm}
%\V{.2em}
%$(k=2\land (\epsilon_2, \ldots, \epsilon_n)\neq (1,\ldots, 1))$
%
%$\lor (k\ge 3\land \{\epsilon_2,\dots,\epsilon_n\} = \{ 1,-1 \})$) 
%\V{.2em}
%\end{minipage}\\ \hline
%
%$\left( 2 \right)$ &
%$(\epsilon_4, \ldots, \epsilon_n)\neq (1,\ldots, 1)$ \\ \hline
%\end{tabular}
%\caption{Classes in Table~\ref{table:generic constraint q=0} with one equality ($h(x)=0$) and \emph{no} inequalities – LICQ, MFCQ and ACQ fail.}
%\label{tab:eq-only}
%\end{table}

\begin{table}[htbp]
\centering
\begin{tabular}{|c|l|}
\hline
\textbf{Type} & \textbf{Conditions for GCQ} \\ \hline
$\left( 1, k \right)$ &
\begin{minipage}[c]{64mm}
\V{.2em}
($k=2$ and one of $\epsilon_2, \ldots, \epsilon_n$ is $-1$) 

\textbf{or}
($k\ge 3$ and $\{\epsilon_2,\dots,\epsilon_n\} = \{ 1,-1 \}$) 
\V{.2em}
\end{minipage}\\ \hline

$\left( 2 \right)$ &
one of $\epsilon_4, \ldots, \epsilon_n$ is $-1$ \\ \hline
\end{tabular}
\caption{Conditions for the classes in Table~\ref{table:generic constraint q=0} to satisfy GCQ. 
Note that ACQ fail for all the classes in Table~\ref{table:generic constraint q=0}.
}
\label{tab:eq-only}
\end{table}

%==============================  TABLE 2  ====================================
\begin{table}[htbp]
\centering
\renewcommand{\arraystretch}{1.2}
\begin{tabular}{|c|l|l|}
\hline
\textbf{Type} &\textbf{Conditions for ACQ}& \textbf{Conditions for GCQ} \\ \hline
$\left( 1 ,k \right)$ &
\begin{minipage}[c]{55mm}
\V{.2em}
$\left(\epsilon_q=\cdots =\epsilon_n=-1\right)$

$\lor \left(k\ge3\land \epsilon_{q+1}=\cdots =\epsilon_n=-1\right)$\V{.2em}
\end{minipage}&
\begin{minipage}[c]{60mm}
\V{.2em}
$\left(k=2\land (\epsilon_q, \ldots, \epsilon_n)\neq(1,\ldots, 1)\right)$

$\lor \left((\epsilon_{q+1}, \ldots, \epsilon_n)\neq (1,\ldots, 1)\right)$\V{.2em}
\end{minipage}\\ \hline
$\left( 2 \right)$ &$\epsilon_{q+2}=\cdots =\epsilon_n=-1$&
$(\epsilon_{q+2}, \ldots, \epsilon_n)\neq (1,\ldots, 1)$ \\ \hline

$\left( 3, k \right)$ &
\begin{minipage}[c]{55mm}
\V{.2em}
$\left(\epsilon_{q-1}=\cdots =\epsilon_n=-1\right)$

$\lor \left(k\ge3\land \epsilon_{q}=\cdots =\epsilon_n=-1\right)$\V{.2em}
\end{minipage}
& 
$(\epsilon_q, \ldots, \epsilon_n)\neq (1,\ldots,1)$\\ \hline

$\left( 4, k \right)$ &$\bot$ (i.e.,~ACQ does not hold.)&
$(\epsilon_{q+1}, \ldots, \epsilon_n)\neq (1,\ldots, 1) \lor \epsilon_q = (-1)^{k+1}$\\ \hline

$\left( 5 \right)$ & $\epsilon_{q-1}=\epsilon_{q+1}=\cdots =\epsilon_n =-1$&
$(\epsilon_{q+1}, \ldots, \epsilon_n)\neq (1,\ldots, 1)$\\ \hline 

$\left( 6 \right)$ &
\begin{minipage}[c]{50mm}
\V{.2em}
$\delta_1=\delta_2=-1\land \alpha <2$

$\land \epsilon_{q}=\cdots =\epsilon_n =-1$\V{.2em}
\end{minipage}
&
$(\epsilon_q, \ldots, \epsilon_n)\neq (1,\ldots, 1)$\\ \hline 

$\left( 7 \right)$ &$\epsilon_{q-2}=\epsilon_q=\cdots =\epsilon_n=-1$&
$(\epsilon_q, \ldots, \epsilon_n)\neq (1,\ldots, 1)$ \\ \hline 

$\left( 8 \right)$ &$\epsilon_{q-1}=\epsilon_{q-1}'=\epsilon_q=\cdots =\epsilon_n=-1$&
$(\epsilon_q, \ldots, \epsilon_n)\neq (1,\ldots ,1)$\\ \hline 

$\left( 9 \right)$ &$\bot$ (i.e.,~ACQ does not hold.)&
$(\epsilon_{q+1}, \ldots, \epsilon_n)\neq (1,\ldots, 1)\lor \epsilon_{01}=1 \lor \epsilon_{02}=1$ \\ \hline 

$\left( 10 \right)$ & $(\dagger)$ &
$(\epsilon_q, \ldots, \epsilon_n)\neq (1,\ldots ,1)$\\ \hline
\end{tabular}
\captionsetup{singlelinecheck=off}
\caption{
Conditions for the classes in Table~\ref{table:generic constraint r=0}\emph{ with $l_1=0$} to satisfy ACQ or GCQ. 
Note that MFCQ holds if and only if $l_1 > 0$, and in this case ACQ and GCQ also hold.
The condition $(\dagger)$ (for type (10)) is the following condition: 
\begin{math}
\delta_1 = \delta_2 = \delta_3 = -1
\end{math}, 
\begin{math}
\epsilon_q = \cdots = \epsilon_n = -1
\end{math}
and there exist distinct indices $i,j,k \in \{1,2,3\}$ with
\begin{center}
$\left(\alpha_{ij}\le 0\land\alpha_{ik}\le 0\land\alpha_{jk}<2\right)\lor \left(0<\alpha_{ij}<2\land 0<\alpha_{ik}<2\land\alpha_{jk}+\frac{\alpha_{ij}\alpha_{ik}}{2}< 2\sqrt{\Bigl(1-\frac{\alpha_{ij}^2}{4}\Bigr)\Bigl(1-\frac{\alpha_{ik}^2}{4}\Bigr)}\right)$, 
\end{center}
where we regard that $\alpha_{ij}$ is attached to the unordered pair $\{i,j\}$ (that is, we assume $\alpha_{ji}=\alpha_{ij}$).
}
\label{tab:ineq-only}
\end{table}

%==============================  TABLE 3  ====================================
\begin{table}[htbp]
\centering
\begin{tabular}{|c|l|}
\hline
\textbf{Type} & \textbf{Conditions on which GCQ holds} \\ \hline
$\left( 1, k \right)$ & 
$\{\epsilon_2,\dots,\epsilon_n\} = \{ 1, -1 \}$ \\

$\left( 2 \right)$ &
$\{\epsilon_3,\dots,\epsilon_n\} = \{ 1, -1 \}$ \\

$\left( 3, k \right)$ &
($k$ is even and one of $\epsilon_3, \ldots, \epsilon_n$ is $-1$) \textbf{or} \\
& ($k$ is odd and ($\epsilon_1 = 1$ or $\left\{ \epsilon_3, \ldots, \epsilon_n \right\} = \left\{ 1, -1 \right\}$)) \\

$\left( 4 \right)$ &
$\left\{ \epsilon_3, \ldots, \epsilon_n \right\} = \left\{ 1, -1 \right\}$ \\

$\left( 5 \right)$ &
$\left\{ \epsilon_3, \ldots, \epsilon_n \right\} = \left\{ 1, -1 \right\}$ \\

$\left( 6 \right)$ &
$\left\{ \epsilon_3, \ldots, \epsilon_n \right\} = \left\{ 1, -1 \right\}$ \\

$\left( 7 \right)$ &
$\left\{ \epsilon_4, \ldots, \epsilon_n \right\} = \left\{ 1, -1 \right\}$ \\

$\left( 8 \right)$ &
$\left\{ \epsilon_4, \ldots, \epsilon_n \right\} = \left\{ 1, -1 \right\}$ \\ \hline
\end{tabular}
\caption{Conditions for the classes in Table~\ref{table:generic constraint q>0 r=1} to satisfy GCQ.
Note that ACQ fails for all the classes in the table.}
\label{tab:mixed}
\end{table}

\section*{Acknowledgements}
K. H. and H. T. were supported by JSPS Grant-in-Aid for Scientific Research (C) (No. 23K03123). 
H. T. was supported by JSPS Grant-in-Aid for Scientific Research (B) (No. 23K26608), and Scientific Research (A) (No. 24H00685), Transformative Research Areas (B) (No. 23H03797). 

\appendix

\section{Genericity of counter examples to show the strictness of the hierarchy of the four classical constraint qualifications} \label{sec:appendix}
To demonstrate the strictness of the hierarchy of the four classical constraint qualifications in Eq.~\eqref{eq:strict_hierarchy}, several counterexamples have been constructed in the literature \cite{Peterson1973,Wright2001EffectsOF,andreani_silva_2014,doi:10.1287/moor.2017.0879}. Here, we examine these examples through the lens of $\mathcal{K}[G]$-equivalence and determine whether or not they are generic. As a measure of genericity, we compute the $\mathcal{K}[G]_e$-codimension for each case. (For the computational details, see the Appendix of \cite{HamadaHayanoTeramoto2025_1}.)

\begin{example}[Peterson \cite{Peterson1973} (MFCQ but not LICQ)]
Let
\begin{math}
n = 1
\end{math}, 
\begin{math}
q = 1
\end{math}, and 
\begin{math}
r = 0
\end{math}. Let 
\begin{math}
g \left( x \right) = \left( x, 2x \right)
\end{math}. Then, LICQ is violated at the origin since both of the inequality constraints are active there and their gradients are linearly dependent. Contrastingly, MFCQ holds because one can take 
\begin{math}
d = \left( -1 \right)
\end{math}
as an MF-vector. In this case, 
\begin{math}
\displaystyle T\mathcal{K} \left[ G \right]_e \left( g \right) = \langle 
\begin{pmatrix}
1 \\
2 
\end{pmatrix}, 
\begin{pmatrix}
x \\
0
\end{pmatrix}, 
\begin{pmatrix}
0 \\
2x
\end{pmatrix}
\rangle_{\mathcal{E}_1}
\end{math}
holds and thus 
\begin{math}
\displaystyle \frac{\mathcal{E}_1^2}{T\mathcal{K} \left[ G \right]_e \left( g \right)} \cong \langle
\begin{pmatrix}
0 \\
1
\end{pmatrix} \rangle_{\mathbb{R}}
\end{math}
holds. Therefore, this constraint has 
\begin{math}
\mathcal{K} \left[ G \right]_e
\end{math}-codimension $1$. 
\end{example}
\begin{example}[Wright \cite{Wright2001EffectsOF} (MFCQ but not LICQ)]
Let
\begin{math}
n = 2
\end{math}, 
\begin{math}
q = 2
\end{math}, and 
\begin{math}
r = 0
\end{math}. Let 
\begin{equation}
g \left( x \right) = \left( \left( x_1 - \frac{1}{3} \right)^2 + x_2^2 - \frac{1}{9}, \left( x_1 - \frac{2}{3} \right)^2 + x_2^2 - \frac{4}{9} \right).
\end{equation}
Then, LICQ is violated at the origin since both of the inequality constraints are active there but their gradients
\begin{math}
\left( - 2/3, 0 \right)
\end{math}
and 
\begin{math}
\left( - 4/3, 0 \right)
\end{math} are not linearly independent. However, there exists an MF-vector 
\begin{math}
d = \left( 1, 0 \right)
\end{math}
and thus MFCQ holds at the origin. In this case, note that 
\begin{math}
g \left( x \right) = \left( - \frac{2}{3} x_1 + x_1^2 + x_2^2, -\frac{4}{3} x_1 + x_1^2 + x_2^2 \right)
\end{math}
holds. By the coordinate transformation 
\begin{math}
\phi \colon \left( x_1, x_2 \right) \mapsto \left( X_1 = - \frac{2}{3} x_1 + x_1^2 + x_2^2, X_2 = x_2 \right)
\end{math}, we obtain
\begin{equation}
g \circ \phi^{-1} \left( X_1, X_2 \right) = \left( X_1, X_1 - \frac{2}{9} \left( 1 - \sqrt{1 + 9 X_1 - 9 X_2^2} \right) \right).
\end{equation}
and its $2$-jet at the origin is 
\begin{math}
j^2 \left( g \circ \phi^{-1} \right) \left( X \right) = \left( X_1, 2 X_1 - X_2^2 + \frac{9}{4} X_1^2 \right)
\end{math} This is 
\begin{math}
\mathcal{K} \left[ G \right]^2
\end{math}-equivalent to 
\begin{math}
\left( X_1, X_1 - X_2^2 \right)
\end{math}, which coincides with the germ of type 
\begin{math}
\left( 1, 2 \right)
\end{math}
in Table~\ref{table:generic constraint r=0} (with
\begin{math}
\epsilon_2 = -1
\end{math}). Since this normal form is $2$-$\mathcal{K} \left[ G \right]$-determined, 
\begin{math}
g
\end{math}
itself is 
\begin{math}
\mathcal{K} \left[ G \right]
\end{math}-equivalent to the normal form 
\begin{math}
\left( 1, 2 \right)
\end{math}(with $\epsilon_2 = -1$) in Table~\ref{table:generic constraint r=0}. Therefore, $g$ has
\begin{math}
\mathcal{K} \left[ G \right]_e
\end{math}-codimension 
\begin{math}
1
\end{math}. 
\end{example}
\begin{example}[Peterson \cite{Peterson1973} (ACQ but not MFCQ)]
Let
\begin{math}
n = 2
\end{math}, 
\begin{math}
q = 2
\end{math}, and 
\begin{math}
r = 1
\end{math}. Let 
\begin{math}
g \left( x \right) = \left( x_2 - x_1^2, -x_2 + x_1^2 \right)
\end{math}. This is 
\begin{math}
\mathcal{K} \left[ G \right]
\end{math}-equivalent to 
\begin{math}
g' \left( x \right) = \left( x_1, -x_1 \right)
\end{math}. Since MFCQ and ACQ are invariant under the action of
\begin{math}
\mathcal{K} \left[ G \right]
\end{math}, it is enough to consider 
\begin{math}
g'
\end{math}. Then, MFCQ is violated because there is no MF-vector 
\begin{math}
d \in \mathbb{R}^2
\end{math}
such that 
\begin{math}
dg'_{1,0} \left( d \right) = d_1 < 0
\end{math}
and 
\begin{math}
dg'_{2,0} \left( d \right) = -d_1 < 0
\end{math}
hold. ACQ holds because
\begin{math}
g'
\end{math}
is linear in 
\begin{math}
x_1, x_2
\end{math}
and thus the linearized cone coincides with the tangent cone. In this case, 
\begin{math}
T \mathcal{K} \left[ G \right]_e \left( g, h \right) = \langle 
\begin{pmatrix}
1 \\
-1
\end{pmatrix}, 
\begin{pmatrix}
x_1 \\
0
\end{pmatrix}, 
\begin{pmatrix}
0 \\
x_1
\end{pmatrix}
\rangle_{\mathcal{E}_2}
\end{math}
holds and thus 
\begin{math}
\displaystyle \frac{\mathcal{E}_1^2}{T\mathcal{K} \left[ G \right]_e \left( g, h \right)} \supset \left\langle
\begin{pmatrix}
0 \\
x_2^j
\end{pmatrix}
\middle| j \in \mathbb{N}
\right\rangle_{\mathbb{R}}
\end{math}
holds. This implies the 
\begin{math}
\mathcal{K} \left[ G \right]_e
\end{math}-codimension of 
\begin{math}
g
\end{math}
is infinite. 
\end{example}
\begin{example}[Andreani et al. \cite{doi:10.1287/moor.2017.0879} (ACQ but not MFCQ)]
Let
\begin{math}
n = 2
\end{math}, 
\begin{math}
q = 2
\end{math}, and 
\begin{math}
r = 0
\end{math}. Let 
\begin{math}
g \left( x \right) = \left( -x_1, -x_1^2-x_2^2 \right)
\end{math}. This is 
\begin{math}
\mathcal{K} \left[ G \right]
\end{math}-equivalent to the germ of type
\begin{math}
\left( 3, 2 \right)
\end{math}
in Table~\ref{table:generic constraint r=0}, 
\begin{math}
\epsilon_1 = \epsilon_2 = -1
\end{math}
of 
\begin{math}
\mathcal{K} \left[ G \right]_e
\end{math}-codimension $2$. In this case, 
\begin{math}
l_1 = 0
\end{math}
and thus MFCQ is violated. In addition, the result in Table~\ref{tab:ineq-only} implies that ACQ holds. 
\end{example}
\begin{example}[Andreani and Silva \cite{andreani_silva_2014} (GCQ but not ACQ)]
Let
\begin{math}
n = 2
\end{math}, 
\begin{math}
q = 2
\end{math}, and 
\begin{math}
r = 1
\end{math}. Let 
\begin{math}
g \left( x \right) = \left( x_1, x_2 \right).
\end{math}
and 
\begin{math}
h \left( x \right) = x_1 x_2
\end{math}. Then, the feasible set-germ at the origin is 
\begin{math}
M \left( g, h \right) = \left\{ \left( x_1, x_2 \right) \in \left( \mathbb{R}^2, 0 \right) \middle| x_1 = 0, x_2 \le 0 \; \text{or} \; x_1 \le 0, x_2 = 0 \right\}
\end{math}. In this case, it is easy to check that 
\begin{equation}
L^+ \left( g, h \right) = \left\{ \left( x_1, x_2 \right) \in \mathbb{R}^2 \middle| x_1 \le 0, x_2 \le 0 \right\}
\end{equation}
and 
\begin{equation}
C^+ \left( g, h \right) = \left\{ \left( x_1, x_2 \right) \in \mathbb{R}^2 \middle| x_1 \le 0, x_2 = 0 \; \text{or} \; x_1 = 0, x_2 \le 0 \right\}
\end{equation}
hold. Therefore, ACQ does not hold. However, 
\begin{math}
L^+ \left( g, h \right)^\circ = C^+ \left( g, h \right)^\circ
\end{math}
holds and thus GCQ holds. In this case, 
\begin{math}
T \mathcal{K} \left[ G \right]_e \left( g, h \right) = \langle 
\begin{pmatrix}
1 \\
0 \\
x_2
\end{pmatrix}, 
\begin{pmatrix}
0 \\
1 \\
x_1
\end{pmatrix}, 
\begin{pmatrix}
x_1 \\
0 \\
0
\end{pmatrix}, 
\begin{pmatrix}
0 \\
x_2 \\
0
\end{pmatrix}, 
\begin{pmatrix}
0 \\
0 \\
x_1 x_2
\end{pmatrix}
\rangle_{\mathcal{E}_2}
\end{math}
holds and thus 
\begin{math}
\displaystyle \frac{\mathcal{E}_1^2}{T\mathcal{K} \left[ G \right]_e \left( g, h \right)} \supset \left\langle
\begin{pmatrix}
0 \\
0 \\
x_1^{j_1}
\end{pmatrix}, 
\begin{pmatrix}
0 \\
0 \\
x_2^{j_2}
\end{pmatrix} 
\middle| j_1, j_2 \in \mathbb{N}
\right\rangle_{\mathbb{R}}
\end{math}
holds. This implies the 
\begin{math}
\mathcal{K} \left[ G \right]_e
\end{math}-codimension of 
\begin{math}
\left( g, h \right)
\end{math}
is infinite. 
\end{example}
\begin{example}[Peterson \cite{Peterson1973} (GCQ but not ACQ)]
Let
\begin{math}
n = 2
\end{math}, 
\begin{math}
q = 1
\end{math}, and 
\begin{math}
r = 0
\end{math}. Let 
\begin{math}
g \left( x \right) = x_1^2 x_2^2
\end{math}. Then, the feasible set-germ at the origin is 
\begin{math}
M \left( g \right) = \left\{ \left( x_1, x_2 \right) \in \left( \mathbb{R}^2, 0 \right) \middle| x_1 = 0 \; \text{or} \; x_2 = 0 \right\}
\end{math}. In this case, it is easy to check that 
\begin{math}
L^+ \left( g \right) = \mathbb{R}^2
\end{math}
and 
\begin{math}
C^+ \left( g \right) = \left\{ \left( x_1, x_2 \right) \in \left( \mathbb{R}^2, 0 \right) \middle| x_1 = 0 \; \text{or} \; x_2 = 0 \right\}
\end{math}
hold. Therefore, ACQ does not hold. However, 
\begin{math}
L^+ \left( g, h \right)^\circ = C^+ \left( g, h \right)^\circ
\end{math}
holds and thus GCQ holds. In this case, 
\begin{math}
T \mathcal{K} \left[ G \right]_e \left( g \right) = \langle 2 x_1 x_2^2, 2 x_1^2 x_2, x_1^2 x_2^2 \rangle_{\mathcal{E}_2} = \langle x_1 x_2^2, x_1^2 x_2 \rangle_{\mathcal{E}_2}
\end{math}
and thus
\begin{math}
\displaystyle \frac{\mathcal{E}_1^2}{T\mathcal{K} \left[ G \right]_e \left( g \right)} \supset \left\langle x_1^{j_1}, x_2^{j_2}  \middle| j_1, j_2 \in \mathbb{N} \right\rangle_{\mathbb{R}}
\end{math}
holds. This implies the 
\begin{math}
\mathcal{K} \left[ G \right]_e
\end{math}-codimension of 
\begin{math}
g
\end{math}
is infinite. 
\end{example}

\bibliography{dyn_recomb_initials_v3}

\begin{thebibliography}{10}

\bibitem{Abadie1967}
J.~M. Abadie.
\newblock {On the Kuhn-Tucker theorem}.
\newblock In J.~M. Abadie, editor, {\em {Nonlinear Programming}}, pages 21--36.
  John Wiley, New York, 1967.

\bibitem{doi:10.1287/moor.2017.0879}
R.~Andreani, J.~M. Mart\'{\i}nez, A.~Ramos, and P.~J.~S. Silva.
\newblock Strict constraint qualifications and sequential optimality conditions
  for constrained optimization.
\newblock {\em Mathematics of Operations Research}, 43(3):693--717, 2018.

\bibitem{andreani_silva_2014}
R.~Andreani and P.~J.~S. Silva.
\newblock Constant rank constraint qualifications: A geometric introduction.
\newblock {\em Pesquisa Operacional}, 34(Pesqui. Oper., 2014 34(3)):481–494,
  Sep 2014.

\bibitem{Bazarra1974}
M.~S. Bazaraa, J.~J. Goode, and M.~Z. Nashed.
\newblock On the cones of tangents with applications to mathematical
  programming.
\newblock {\em J Optim Theory Appl}, 13:389–426, 1974.

\bibitem{Bazaraa2006}
M.~S. Bazaraa, H.~D. Sherali, and C.~M. Shetty.
\newblock {\em Nonlinear Programming: Theory and Algorithms}.
\newblock John Wiley \& Sons, Hoboken, NJ, 3rd edition, 2006.

\bibitem{BertsekasNP}
D.~P. Bertsekas.
\newblock {\em {Nonlinear Programming}}.
\newblock {Athena Scientific}, third edition edition, 2016.

\bibitem{Bertsekas2002}
O.~A. Bertsekas~D.
\newblock {Pseudonormality and a Lagrange Multiplier Theory for Constrained
  Optimization}.
\newblock {\em Journal of Optimization Theory and Applications}, 114:287–343,
  2002.

\bibitem{Gauvin1977}
J.~Gauvin.
\newblock A necessary and sufficient regularity condition to have bounded
  multipliers in nonconvex programming.
\newblock {\em Mathematical Programming}, 12:136–138, 1977.

\bibitem{01bf7f42-b4ce-3a05-b3a1-87856b488f37}
F.~J. Gould and J.~W. Tolle.
\newblock A necessary and sufficient qualification for constrained
  optimization.
\newblock {\em SIAM Journal on Applied Mathematics}, 20(2):164--172, 1971.

\bibitem{Guddat1986}
J.~Guddat, H.~T.~H. Jongen, and J.~Rueckmann.
\newblock On stability and stationary points in nonlinear optimization.
\newblock {\em J. Austral. Math. Soc. Ser. B}, 28:36--56, 1986.

\bibitem{Guignard1969}
M.~Guignard.
\newblock {Generalized Kuhn-Tucker conditions for mathematical programming
  problems in a Banach space}.
\newblock {\em SIAM J. Control}, 7:232--241, 1969.

\bibitem{HamadaHayanoTeramoto2025_1}
N.~Hamada, K.~Hayano, and H.~Teramoto.
\newblock Characterization of generic parameter families of constraint mappings
  in optimization.
\newblock {\em Journal of Singularities}, 28:104--147, 2025.

\bibitem{Hestenes1966}
M.~R. Hestenes.
\newblock {\em {Calculus of Variations and Optimal Control}}.
\newblock John Wiley, New York, 1966.

\bibitem{doi:10.1137/S1052623495287927}
W.~Li.
\newblock Abadie's constraint qualification, metric regularity, and error
  bounds for differentiable convex inequalities.
\newblock {\em SIAM Journal on Optimization}, 7(4):966--978, 1997.

\bibitem{Lu2011}
S.~Lu.
\newblock Implications of the constant rank constraint qualification.
\newblock {\em Mathematical Programming}, 126(2):365--392, 2011.

\bibitem{Luenberger2008}
D.~G. Luenberger and Y.~Ye.
\newblock {\em Linear and Nonlinear Programming}.
\newblock Springer, New York, 3rd edition, 2008.

\bibitem{Mangasarian1967}
O.~L. Mangasarian and S.~Fromovitz.
\newblock {The Fritz John necessary optimality condition in the presence of
  equality and inequality constraints}.
\newblock {\em J. Math. Anal. Appl.}, 17:37--47, 1967.

\bibitem{Mather1968}
J.~N. Mather.
\newblock {Stability of $C^{\infty}$-mappings III. Finitely determined
  map-germs}.
\newblock {\em Publications Math\'ematiques, Institute des Hautes \'Etudes
  Scientifiques (IHES)}, 35:127, 1968.

\bibitem{Mather1969}
J.~N. Mather.
\newblock {Stability of C$^\infty$ mappings, IV: Classification of stable germs
  by $\mathbb{R}$ algebras}.
\newblock {\em Publ. Math. I. H. E. S.}, 37:223, 1969.

\bibitem{Miettinen_book}
K.~Miettinen.
\newblock {\em {Nonlinear Multiobjective Optimization}}.
\newblock Springer, New York, 1998.

\bibitem{Nocedal2006}
J.~Nocedal and S.~J. Wright.
\newblock {\em Numerical Optimization}.
\newblock Springer, New York, 2nd edition, 2006.

\bibitem{Peterson1973}
D.~W. Peterson.
\newblock {A Review of Constraint Qualifications in Finite-Dimensional Spaces}.
\newblock {\em SIAM Review}, pages 639--654, 1973.

\bibitem{Slater2014}
M.~Slater.
\newblock {\em Lagrange Multipliers Revisited}, pages 293--306.
\newblock Springer Basel, Basel, 2014.

\bibitem{Wright2001EffectsOF}
S.~J. Wright.
\newblock Effects of finite-precision arithmetic on interior-point methods for
  nonlinear programming.
\newblock {\em SIAM J. Optim.}, 12:36--78, 2001.

\end{thebibliography}
\bibliographystyle{plain}

\end{document}